\documentclass{article}
\usepackage[left=3cm,right=3cm,top=2.6cm,bottom=3.6cm]{geometry} 
\usepackage{graphicx} 
\usepackage{amsmath}
\usepackage{amsfonts}
\usepackage{amsthm}
\usepackage{amssymb}
\usepackage{mathrsfs}
\usepackage[all,cmtip,2cell]{xy}\UseAllTwocells
\usepackage[hidelinks]{hyperref}
\usepackage{cleveref}
\usepackage{xcolor}
\usepackage{comment}
\usepackage{verbatim}

\usepackage[backend=biber,style=alphabetic,maxalphanames=99, maxnames=99]{biblatex}
\title{Relative and lax volutive categories}
\author{Tim Lüders}
\date{}

\newcommand{\yon}{y}%yoneda embedding
\newcommand{\Prof}{\operatorname{Prof}}%Profunctor
\newcommand{\opp}{\operatorname{op}}%opposite
\newcommand{\id}{\operatorname{id}}%identity
\newcommand{\ev}{\operatorname{ev}}%evaluation
\newcommand{\coev}{\operatorname{coev}}%coevaluation
\newcommand{\Cat}{\operatorname{Cat}}%Categories
\newcommand{\Aut}{\operatorname{Aut}}%Automorphism
\newcommand{\Set}{\operatorname{Set}}%Sets
\newcommand{\Ban}{\operatorname{Ban}}%Banach spaces
\newcommand{\Fun}{\operatorname{Fun}}%Functors
\newcommand{\Hom}{\operatorname{Hom}}%Hom
\newcommand{\Hilb}{\operatorname{Hilb}}%Hilbert spaces
\newcommand{\Cha}{\operatorname{Ch}}%Chain complex
\newcommand{\Rmod}{R\text{-}\hspace{-0.1cm}\operatorname{mod}}%R-modules
\newcommand{\Mor}{\operatorname{Mor}}%Morphisms also Morita 2-category
\newcommand{\Born}{\mathfrak{B}\hspace{-0.05cm}\operatorname{orn}}%bornological space
\newcommand{\sBorn}{s\mathfrak{B}\hspace{-0.05cm}\operatorname{orn}}%separated bornological space
\newcommand{\cBorn}{c\mathfrak{B}\hspace{-0.05cm}\operatorname{orn}}%complete bornological space 
\newcommand{\Ev}{\operatorname{Ev}}%evaluation
\newcommand{\Nor}{\operatorname{Nor}}%normed space
\newcommand{\sNor}{\operatorname{sNor}}%semi normed space
\newcommand{\vN}{\operatorname{vN}}%von Neumann
\newcommand{\Alg}{\operatorname{Alg}}%algebras
\newcommand{\bi}{\operatorname{bi}}%bi-involutive
\newcommand{\Adj}{\operatorname{Adj}}%Adjunction
\newcommand{\AdjEqv}{\operatorname{AdjEqv}}%ADjoint equivalence
\newcommand{\Mod}{\operatorname{Mod}}%Modules
\newcommand{\Grpd}{\operatorname{Grpd}}%groupoid
\newcommand{\unb}{\operatorname{unb}}%unbounded
\newcommand{\HilbRel}{\operatorname{HilbRel}}%Hilbert spaces and relations
\newcommand{\rev}{\operatorname{rev}}%reverse
\newcommand{\refl}{\operatorname{refl}}%reflexive module
\newcommand{\Ring}{\operatorname{Ring}}%The category of rings
\newcommand{\coeq}{\operatorname{coeq}}%coequalizer
\newcommand{\equa}{\operatorname{eq}}%equalizer
\newcommand{\AmAdj}{\operatorname{AmbAdj}}%Ambidextrous adjunction
\newcommand{\Ch}{\operatorname{Ch}}%Chain complexes
\newcommand{\LaxHerm}{\operatorname{LaxHerm}}%LaxHerm
\newcommand{\hh}{\operatorname{h}\hspace{-0.05cm}}%homotopy 
\newcommand{\Oone}{\operatorname{O(1)}}
\newcommand{\Sotwo}{\operatorname{SO(2)}}

\newcommand{\Onn}{\operatorname{O(n)}}
\newcommand{\B}{\mathcal{B}}
\newcommand{\C}{\mathcal{C}}
\newcommand{\D}{\mathcal{D}}
\newcommand{\E}{\mathcal{E}}
\newcommand{\G}{\mathcal{G}}

\newcommand{\Z}{\mathbb{Z}}

\numberwithin{equation}{section}

\makeatletter
\newcommand{\xRrightarrow}[2][]{\ext@arrow 0359\Rrightarrowfill@{#1}{#2}}
\newcommand{\Rrightarrowfill@}{\arrowfill@\equiv\equiv\Rrightarrow}
\newcommand{\xLleftarrow}[2][]{\ext@arrow 3095\Lleftarrowfill@{#1}{#2}}
\newcommand{\Lleftarrowfill@}{\arrowfill@\Lleftarrow\equiv\equiv}
\makeatother
\newcommand{\RRightarrow}{\xRrightarrow{\phantom{--}}} %A long right arrow with triple lines

\theoremstyle{definition}               
\newtheorem{definition}{Definition}[subsection] 
\newtheorem{construction}[definition]{Construction} 
\newtheorem{example}[definition]{Example} 
\newtheorem{lemma}[definition]{Lemma} 
\newtheorem{proposition}[definition]{Proposition} 
\newtheorem{remark}[definition]{Remark} 
 
\newtheorem{variant}[definition]{Variant} 
\newtheorem{corollary}[definition]{Corollary} 
\newtheorem{warning}[definition]{Warning} 
\newtheorem{notation}[definition]{Notation} 
\newtheorem{Iexample}{Example}
\newtheorem{Idefinition}{Definition}
\theoremstyle{theorem}               
\newtheorem{theorem}[definition]{Theorem} 
\newtheorem{Itheorem}{Theorem}

\begin{document}
\maketitle
\begin{abstract}
    In this paper we introduce the notion of a relative volutive (higher) category, specializing to the notion of 
    a lax volutive (higher) category. Our primary motivation to study these objects is the following: while any 
    rigid symmetric monoidal category admits a volutive structure, any closed symmetric monoidal category admits 
    a lax volutive structure. We develop some of the basic theory of relative volutive categories and provide 
    several equivalent formulations of lax volutive categories. We then study examples of interest, including 
    categories of complete bornological vector spaces and modules over star-rings. We will also separately 
    discuss unbounded operators between Hilbert spaces and Morita 2-categories, the latter of which in the context of 
    fully closed symmetric monoidal 2-categories.
\end{abstract}

\tableofcontents

\section{Introduction}
Duality is a powerful tool in the construction of new (higher) volutive and dagger categories. A prominent conjecture \cite{ferrer2024daggerncategories} (proven for 
$n \leq 2$) asserts that any symmetric monoidal $n$-category $\C$ with all duals and adjoints admits an $\Onn$-volutive structure, which reduces on the maximal 
subgroupoid $\C^\times$ to the canonical $\Onn$-action predicted by the cobordism hypothesis. On the other hand, many symmetric monoidal categories that appear 
in practice only admit weaker notions of duality, such as the categories of chain complexes or bornological vector spaces. In this paper, we provide a first glance 
at the appropriate generalizations of (higher) volutive structures that account for such weaker kinds of duality. In dimension one, we prove the following result, 
see \Cref{Theorem: closed symmetric monoidal categories are lax volutive} in the main text.
\begin{Itheorem}\label{Theorem: csm categories are lax volutive}
    Any closed symmetric monoidal category $\C$ admits a lax $\Oone$-volutive structure.
\end{Itheorem}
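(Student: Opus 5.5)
The plan is to construct the lax $\Oone$-volutive structure from the duality functor $D = [-,1]\colon \C^{\opp} \to \C$, where $1$ is the monoidal unit and $[-,-]$ the internal hom. I take a lax $\Oone$-volutive structure to consist of a functor $\overline{(-)}\colon \C \to \C$ (or $\C^{\opp}\to\C$, depending on how the $\Oone$-action is set up), together with a natural transformation $\eta\colon \id_\C \Rightarrow \overline{\overline{(-)}}$ that is not required to be invertible, subject to a coherence condition $\overline{\eta_c} \circ \eta_{\overline{c}} = \id_{\overline{c}}$ or its lax analogue. The paper will make this precise later, presumably via relative volutive categories. This is exactly the structure of a self-adjoint-on-the-right functor, i.e. a functor $D\colon\C^{\opp}\to\C$ with $D \dashv D^{\opp}$.

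The key steps, in order, are as follows.

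First, define $D(c) = [c,1]$. By closedness and symmetry there are natural bijections
\[
\Hom(a,[b,1]) \cong \Hom(a\otimes b,1) \cong \Hom(b\otimes a,1) \cong \Hom(b,[a,1]).
\]
Hence $D\colon \C^{\opp}\to\C$ is adjoint to its own opposite $D^{\opp}\colon \C\to\C^{\opp}$.

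Second, extract the unit of this adjunction. This is the canonical double-dual map $\eta_c\colon c\to [[c,1],1]$, obtained as the transpose of evaluation precomposed with the symmetry, $c\otimes[c,1]\to[c,1]\otimes c\to 1$.

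Third, verify the triangle identity. For an adjunction $D\dashv D^{\opp}$, the unit and counit coincide up to $\opp$, so the single triangle identity reads $D(\eta_c)\circ\eta_{D c}=\id_{Dc}$. This follows formally from the adjunction, and is where the symmetry $\beta^2=\id$ is used. Note that the braided, non-symmetric case would fail precisely here, since the two bijections would differ by a double braiding.

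Fourth, match this data with the paper's equivalent formulations of lax volutive categories. I expect one of them to be a categorical $\mathbb{Z}/2$-fixed point of $\C$ in a lax sense, or a "self-adjoint functor $\C^{\opp}\to\C$" under the $\Oone$-action $\C\mapsto\C^{\opp}$ on the $(2,1)$- or $(\infty,2)$-category of categories with lax morphisms. I then conclude that $(D,\eta)$ defines the structure.

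I expect the main obstacle to be this final identification. The statement is formulated in terms of $\Oone$-actions and relative volutive categories, so one has to show that the lax homotopy fixed point data for the $\Oone$-action given by $\opp$ is exactly an adjunction $D\dashv D^{\opp}$, or equivalently a functor with a unit satisfying the one triangle identity, with all higher coherences automatic. My first attempt would be to use the fact that the space of adjunctions $F\dashv G$ with fixed $F$ is contractible or a proposition. The lax fixed point data should then reduce to the 1-categorical data above, and no higher coherences would need to be checked by hand.
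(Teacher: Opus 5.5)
Your proof is correct, but it takes a different route from the paper's.

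\textbf{The paper's route.} The paper (\Cref{Construction: O(1)-volutive categories from closed monoidal ones}) works entirely by hand. It defines $1^X:=\psi^{-1}(\ev_b\circ(\id\otimes X))$ and $\eta_a:=\psi^{-1}(\ev_a\circ\beta_{a,1^a})$, which is the same map as your unit. It then checks three things by explicit chains of equalities: functoriality, naturality of $\eta$ (using only the braiding), and $1^{\eta_a}\circ\eta_{1^a}=\id_{1^a}$ (using $\beta\circ\beta=\id$).

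\textbf{Your route.} You package closedness and symmetry into a natural bijection $\phi_{a,b}\colon\Hom_{\C}(a,Db)\cong\Hom_{\C}(b,Da)$, that is, an adjunction between $D^{\opp}$ and $D$. Naturality of $\eta$ is then automatic, since it is the unit, and the coherence becomes a triangle identity. This is the mechanism the paper uses in the converse direction of \Cref{Lemma: lax Oone volutive categories and representable symmetric functors}, here applied to the pairing $(a,b)\mapsto\Hom_{\C}(a\otimes b,1)$. It lands directly on the paper's formulation as a volutive adjunction in $\Cat$ (\Cref{Definition: volutive adjunction}).

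\textbf{Comparison.} Your argument is shorter and shows conceptually why naturality needs only a braiding while the coherence needs symmetry. The paper's argument is self-contained and gives explicit formulas without any unit/counit bookkeeping.

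\textbf{Correction to step 3.} The claim ``for an adjunction between $D$ and $D^{\opp}$ the unit and counit coincide up to $\opp$'' is false in general. For vector spaces, with $\iota$ the canonical map into the double dual, rescale the standard bijection by a scalar $\lambda$. This is still natural, so it gives an adjunction, with unit $\lambda\iota$ and counit $\lambda^{-1}\iota$. Then $D(\eta)\circ\eta_{D}=\lambda^{2}\id$.

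What you actually need, and what $\beta_{b,a}\circ\beta_{a,b}=\id$ provides, is that the bijection is an involution: $\phi_{b,a}\circ\phi_{a,b}=\id$. This identifies the counit $\phi_{b,Db}^{-1}(\id_{Db})$ with the unit $\phi_{Db,b}(\id_{Db})$. After that, the triangle identity is exactly $D(\eta_b)\circ\eta_{Db}=\id_{Db}$. State this explicitly.

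\textbf{Step 4 is unnecessary.} \Cref{Definition: lax O(1)-volutive category} is precisely the 1-categorical data $(d,\eta)$ with $d(\eta_a)\circ\eta_{d(a)}=\id_{d(a)}$, so your proof is complete after step 3.

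The identification you anticipate in step 4 would in fact fail. By \Cref{Remark: lax homotopy fixed points vs lax volutive categories}, lax homotopy fixed points for the $\opp$-action on $\Cat_{(2,1)}$ are just honest $\Oone$-volutive categories. The correct structural repackaging is as 2-contravariant $\Oone$-volutive functors $\Adj\to\Cat$, which is what your adjunction already is.
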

We summarize several good classes of examples (not necessarily constructed via \Cref{Theorem: csm categories are lax volutive}) of lax $\Oone$-volutive 
categories in the following. 
\begin{Iexample}
    The following categories admit lax $\Oone$-volutive structures:
    \begin{itemize}
        \item[(i)] the category of (unbounded) chain complexes over a commutative ring, 
        \item[(ii)] the category of complete (convex) bornological vector spaces, 
        \item[(iii)] the full subcategory of Banach spaces,
        \item[(iv)] the category of modules over a $*$-ring (e.g. any commutative ring), and
        \item[(v)] any cartesian closed category.
    \end{itemize}
\end{Iexample}
By definition, $\Oone$-volutive categories are $\Oone$-homotopy fixed points with respect to the canonical non-trivial $\Oone$-action on the (2,1)-category 
of categories, assigning each category $\C$ to its opposite $\C^{\opp}$. Since it is not true that lax $\Oone$-volutive categories are lax $\Oone$-homotopy 
fixed points with respect to the aforementioned action, a different approach to formalizing these structures is required. Our second main result gives several 
equivalent characterizations of lax $\Oone$-volutive categories, see \Cref{Theorem: four equivalent versions of lax O(1)-volutive categories} in the main text.
\begin{Itheorem}\label{Theorem: equivalent formulations of lax Oone volutive categories}
    The following notions are equivalent: 
    \begin{itemize}
        \item[(i)] Lax $\Oone$-volutive categories in the sense of \Cref{Definition: lax O(1)-volutive category},
        \item[(ii)] representable symmetric pairings in the sense of \Cref{Definition: properties of pairings},
        \item[(iii)] volutive adjunctions in $\Cat$ in the sense of \Cref{Definition: volutive adjunction}, and
        \item[(iv)] (2-contravariant) $\Oone$-volutive functors $\Adj \to \Cat$.
    \end{itemize}
\end{Itheorem}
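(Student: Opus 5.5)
We need to guess the definitions, since they are not given yet. Presumably a lax $\Oone$-volutive category is a category $\C$ with a functor $D:\C\to\C^{\opp}$, self-adjoint on the right, i.e. $D\dashv D^{\opp}$, together with a unit $\eta:\id\Rightarrow D^{\opp}D$ satisfying the compatibility $D\eta \circ \eta_{D} = \id$ (up to orientation). A symmetric pairing is a functor $P:\C^{\opp}\times\C^{\opp}\to\Set$ with a symmetry isomorphism $P(x,y)\cong P(y,x)$ squaring to the identity; it is representable if each $P(x,-)$ is representable. A volutive adjunction is an adjunction $F\dashv G$ in $\Cat$ that is a homotopy fixed point for the $\Oone$-action on adjunctions given by $(F\dashv G)\mapsto (G^{\opp}\dashv F^{\opp})$ together with taking opposites. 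Finally, $\Adj$ is the walking adjunction, which carries an $\Oone$-action swapping the two objects and reversing 2-cells.

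The plan is to establish the cycle of equivalences (i)$\Leftrightarrow$(ii)$\Leftrightarrow$(iii)$\Leftrightarrow$(iv), each step as an equivalence of (2-)categories or groupoids of such structures.

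\textbf{(i)$\Leftrightarrow$(ii).} Given $D$ with unit $\eta$, set $P(x,y)=\Hom_\C(x,Dy)$. The adjunction $D\dashv D^{\opp}$ gives $\Hom_\C(x,Dy)\cong\Hom_{\C}(y,Dx)$, natural in both variables. The triangle-type axiom on $\eta$ is exactly the condition that this symmetry is an involution. Conversely, a representable symmetric pairing yields $D$ by the Yoneda lemma applied to the representing objects. The symmetry then gives the adjunction, and its involutivity translates back into the axiom on $\eta$. The Yoneda lemma makes these constructions mutually inverse up to canonical isomorphism.

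\textbf{(i)$\Leftrightarrow$(iii).} A volutive adjunction whose underlying adjunction is $D\dashv D^{\opp}$ between $\C$ and $\C^{\opp}$ should be precisely data (i). The fixed-point isomorphism identifies the left adjoint with the opposite of the right adjoint, and the coherence for the $\Oone$-action (the involution squares to identity) imposes the compatibility between the unit and counit. One has to check that a general volutive adjunction $F\dashv G:\C\rightleftarrows\D$ reduces to this form: the fixed-point structure provides an equivalence $\D\simeq\C^{\opp}$, and transporting along it gives the normalized form.

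\textbf{(iii)$\Leftrightarrow$(iv).} This is formal. Since $\Adj$ is the free 2-category on an adjunction, 2-functors $\Adj\to\Cat$ are equivalent to adjunctions in $\Cat$. One then checks that the $\Oone$-action on $\Adj$ (swapping objects, reversing 2-cells) corresponds under this equivalence to the action on adjunctions in $\Cat$ used in (iii), combined with the 2-contravariant action $\C\mapsto\C^{\opp}$ on $\Cat$. Homotopy fixed points for the induced action on $\Fun(\Adj,\Cat)$ are then exactly $\Oone$-equivariant functors, that is, (iv).

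\textbf{Main obstacle.} The hardest step is the coherence bookkeeping in (iii)$\Leftrightarrow$(iv) and in (i)$\Leftrightarrow$(iii). One must verify that the higher data of a homotopy fixed point carries no extra information beyond a single involutivity equation. I would handle this by strictifying: the relevant 2-categories are $(2,1)$-truncated, and $\Oone$-homotopy fixed points in a 2-category reduce to an object, an equivalence, and a single 2-cell satisfying one coherence equation. Matching that equation with the triangle identity is then a direct diagram chase.
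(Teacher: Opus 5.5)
Your argument is correct in substance and uses the same ingredients as the paper: Yoneda for (i)$\Leftrightarrow$(ii), and adjunctions in $\Cat$ as 2-functors out of $\Adj$. The work is distributed differently, however, because your guessed notion of volutive adjunction is not the paper's.

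\textbf{Where the content sits.} The paper defines a volutive adjunction in a 2-contravariant $\Oone$-volutive 2-category $(\B,d,\eta,\tau)$ already in \emph{normalized} form. It is a triple $(c,Z,h)$ with $Z\colon c\to d(c)$ and $h\colon \id_c\to \eta_c^{-1}\circ d(Z)\circ Z$, subject to two zigzag identities. For $\B=\Cat$ this is literally a lax $\Oone$-volutive category, so (i)$\Leftrightarrow$(iii) is pure unwinding. The content then sits in (iii)$\Leftrightarrow$(iv), which the paper proves for arbitrary $\B$ by explicit formulas: $G(Y)=\eta_c^{-1}\circ d(Z)$, $G(g)$ built from $d(h)$, $\tau_c$, $\eta_Z$, and $\omega_b=\eta_c$; conversely $Z:=\omega_a\circ G(X)$.

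Your (iii), an adjunction together with homotopy-fixed-point data, is essentially (iv) itself. So your (iii)$\Leftrightarrow$(iv) is formal, and all the content moves into your normalization step ``transport along $\D\simeq\C^{\opp}$''. That step is exactly the paper's $Z=\omega_a\circ G(X)$ specialized to $\Cat$. The paper's route yields a statement valid in any 2-contravariant $\Oone$-volutive 2-category. Yours makes the fixed-point nature of (iv) more transparent.

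A minor correction: $\Cat$ and $\Fun(\Adj,\Cat)$ are not $(2,1)$-categories, since unit and counit are non-invertible. However, the fixed-point data only involves invertible transformations and modifications, so working in the $(2,1)$-core is legitimate.

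\textbf{Conventions for (i)$\Leftrightarrow$(ii).} Your conventions differ from the paper's, and they are the ones under which the Yoneda argument actually closes up.
\begin{itemize}
\item The adjunction $D\dashv D^{\opp}$ coming from $\eta$ makes $\Hom_\C(x,Dy)\cong\Hom_\C(y,Dx)$ symmetric. The paper instead uses the covariant pairing $\Phi_d(a,b)=\Hom_\C(d(a),b)$ on $\C\times\C$ with $\Hom_\C(d(a),b)\cong\Hom_\C(d(b),a)$, which would encode $d^{\opp}\dashv d$ rather than $d\dashv d^{\opp}$.
\item Your insistence that the symmetry be involutive is genuinely needed; the paper only asks $F\circ\beta\cong F$. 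By Yoneda, any natural symmetry has the form $f\mapsto D(f)\circ\eta$, and its square is postcomposition with $D(\eta)\circ\eta_{D}$. A bare isomorphism therefore only makes this composite invertible, not equal to the identity as the definition of a lax $\Oone$-volutive category requires.
\end{itemize}
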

The last characterization in \Cref{Theorem: equivalent formulations of lax Oone volutive categories} is our primary motivation to introduce the theory of 
(higher) relative volutive categories, generalizing (higher) lax volutive categories. Built on the observation that both $\Adj$ (the walking adjunction) and 
$\Cat$ (the 2-category of categories) carry (2-contravariant) $\Oone$-volutive structures, the $\Oone$-case presents as follows, see 
\Cref{Definition: relative Oone volutive category} in the main text.
\begin{Idefinition}\label{Definition: relative Oone volutive categories}
    A \emph{relative $\Oone$-volutive category} consists of a 2-contravariant $\Oone$-volutive 2-category $\B$ and a 
    2-contravariant $\Oone$-volutive functor $F \colon \B \to \Cat$. 
\end{Idefinition}
For $\B = \star$ and $\B=\Adj$, \Cref{Definition: relative Oone volutive categories} reduces to the notions of $\Oone$-volutive and lax $\Oone$-volutive categories, 
respectively; it may be instructive for the reader to think of the final 2-category $\star$ as equivalent to the walking adjoint equivalence, so that a lax $\Oone$-volutive 
category may be understood as an $\Oone$-volutive category with lesser degrees of invertibility, which is consistent with the explicit descriptions we provide. \\

As the primary instance of higher relative and lax volutive categories, we introduce relative and lax $\Sotwo$-volutive 2-categories. In our discussion we put 
a special emphasis on analogies with the relative and lax $\Oone$-volutive case, respectively. On one hand we will establish that any closed symmetric monoidal
2-category with adjoints admits a lax $\Sotwo$-volutive structure, given by the Serre morphism. On the other hand, our definition of a relative $\Sotwo$-volutive 
2-category as a (2-contravariant) $\Sotwo$-volutive 3-functor $\B \to 2\hspace{-0.05cm}\Cat^{\operatorname{adj}}$ generalizes the one of $\Sotwo$-volutive 
2-categories for $\B = \star$ and lax $\Sotwo$-volutive 2-categories for (a suitable localization of) $\B = \AmAdj[1]$, respectively, where $\AmAdj[1]$ refers to a categorification 
of the walking ambidextrous adjunction. Thus, our two main theorems (establishing that dualities give rise to lax volutive structures, and that lax volutive structures are special relative volutive 
structures) regarding relative and lax $\Oone$-volutive categories have corresponding analogues in the $\Sotwo$-case.\\

As an application of the theory of lax $\Oone$-volutive categories, we recontextualize the dagger structure on the category of Hilbert spaces from the perspective
of the following table, containing motivating examples of categories and the structures (we discuss here) they support.
\begin{center}
\begin{tabular}{c|c|c|c} 
 Dagger & $\Oone$-volutive & lax $\Oone$-volutive & closed symmetric monoidal \\ 
  \hline
 $\Hilb$ & $\Ban^{\refl}$ & $\Ban$ & $\cBorn$ \\
\end{tabular}
\end{center}
From left to right, the second line refers to the categories of Hilbert spaces, (reflexive) Banach spaces, and complete (convex) bornological vector spaces (each 
with bounded linear maps as morphisms). \\

We also discuss lax volutive structures in the context of fully closed symmetric monoidal 2-categories. In particular, we will show that each fully closed 
symmetric monoidal 2-category admits a lax $\Oone$-volutive structure which upon passing to the associated dagger 2-category yields a category enriched in 
lax $\Oone$-volutive categories. This suffices to pass locally to the categories of lax hermitian fixed points and lax isometries.
We will illustrate these constructions in detail at the example of Morita 2-categories. We prove the following result, see 
\Cref{Theorem: closedness of the Morita 2-category} and \Cref{Corollary: Morita 2-categories of nice categories} in the main text. 
\begin{Itheorem}\label{Theorem: Morita 2-categories are closed}
    Let $\C$ be a closed symmetric monoidal category with (co)equalizers. Then, $\Mor(\C)$ is a symmetric monoidal 1-closed 2-category with duals for objects.
\end{Itheorem}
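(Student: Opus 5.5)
We prove the statement by working directly in $\Mor(\C)$. Objects are monoids (algebras) $A$ in $\C$. 1-morphisms $A \to B$ are $A$-$B$-bimodules, and 2-morphisms are bimodule maps. Composition of bimodules $M\colon A\to B$ and $N\colon B\to C$ is the relative tensor product $M\otimes_B N$, defined as the coequalizer of the two action maps $M\otimes B\otimes N \rightrightarrows M\otimes N$. Since $\C$ is closed, $-\otimes X$ preserves colimits, in particular coequalizers. This is the standard input showing that relative tensor products are associative and unital up to coherent isomorphism, so $\Mor(\C)$ is a bicategory. The symmetric monoidal structure is $A\otimes B$ on objects and the external tensor product on bimodules, with the symmetry of $\C$ inducing the braiding; this is routine.

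\textbf{Duals for objects.} The dual of $A$ will be the opposite monoid $A^{\opp}$, defined using the symmetry of $\C$. Evaluation $A^{\opp}\otimes A\to \mathbb{1}$ and coevaluation $\mathbb{1}\to A\otimes A^{\opp}$ are both given by $A$ regarded as a bimodule over $A\otimes A^{\opp}$ on the appropriate side. The snake composites reduce to $A\otimes_A A\cong A$, using that $A\otimes_A M\cong M$. That isomorphism comes from the split coequalizer $A\otimes A\otimes M\rightrightarrows A\otimes M\to M$, which needs no hypothesis beyond what we have.

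\textbf{1-closedness.} This is the main step. We must show that for each pair of objects the hom category admits an internal hom. Concretely, for each bimodule $M\colon A\to B$, the functors $-\circ M$ and $M\circ -$, i.e.\ $N\mapsto N\otimes_A M$ and $N\mapsto M\otimes_B N$, must have right adjoints; these give right Kan extensions/lifts. For $N\colon C\to A$ and $P\colon C\to B$, we construct the right adjoint as $\underline{\Hom}_B(M,P)$, the object of $B$-linear maps. It is defined as the equalizer of the two maps
\[
\underline{\Hom}(M,P)\rightrightarrows \underline{\Hom}(M\otimes B,P),
\]
one induced by the action of $B$ on $M$, the other by its action on $P$. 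This object inherits a $C$-$A$-bimodule structure from the residual actions on $P$ and $M$, each transposed through the closed structure. The symmetric right adjoint is built analogously.

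The adjunction isomorphism
\[
\Hom_{C\text{-}B}(N\otimes_A M,P)\cong \Hom_{C\text{-}A}(N,\underline{\Hom}_B(M,P))
\]
follows in four steps: use the tensor–hom adjunction of $\C$; express $N\otimes_A M$ as a coequalizer; transpose that coequalizer into an equalizer; and check that the bimodule-linearity conditions on each side correspond.

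The main obstacle is verifying that the equalizer really carries the bimodule structure and that these linearity conditions match. This is a diagram chase in which the symmetry of $\C$ is used to move factors past each other. I would handle it first for free modules $N=C\otimes X\otimes A$, where both sides reduce to $\Hom_{\C}(X,\underline{\Hom}_B(M,P))$, and then extend to general $N$. The extension uses that every bimodule is a reflexive coequalizer of free ones, and that both sides turn such coequalizers into equalizers. Naturality, and compatibility with the symmetric monoidal structure, are then formal.
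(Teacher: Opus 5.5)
Your proposal is correct and follows the paper's argument. The paper likewise takes the internal hom to be the equalizer in $\C$ of $\underline{\Hom}(M,P)\rightrightarrows\underline{\Hom}(M\otimes B,P)$ (its $M^N_C$), transposes the residual actions through the closed structure, gets the adjunction by restricting the tensor--hom bijection of $\C$ through the universal properties of the balanced tensor product and of this equalizer, uses closedness of $\C$ for the calculus of balanced tensor products, and takes $A^{\opp}$ as the dual. Your free-module/resolution step is unnecessary and slightly delicate: transposition already matches the linearity conditions for arbitrary $N$, since a map $N\to\underline{\Hom}(M,P)$ is $C$-linear, right $A$-linear, or factors through the equalizer exactly when its transpose $N\otimes M\to P$ is $C$-linear, $A$-balanced, or right $B$-linear; and if you did use it, the free-case identifications would have to be natural in all bimodule maps between free bimodules, not only in $X$, because the bar resolution involves the multiplication maps.
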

As a particular instance of \Cref{Theorem: Morita 2-categories are closed}, the 2-category $\Mor(\cBorn)$ of complete (convex) bornological algebras is a fully 
closed symmetric monoidal 2-categories. We will discuss lax hermitian fixed points in the context of this 2-category, and compare the results to more familiar 
structures such as von Neumann algebras and hermitian/Hilbert modules.\\

\textbf{Acknowledgements.} We thank Lukas Müller, Aleksandar Ivanov, and Ödül Tetik for many helpful discussions. We acknowledge financial support from the 
Studienstiftung des deutschen Volkes e.V.\\

\textbf{Conventions.} In this article, we will generically refer to bicategories as 2-categories, to pseudofunctors as 2-functors, etc., and similarly for 3-categories. 
Whenever we refer to a 1-morphism in a 2-category as ``invertible'' or a ``1-isomorphism'', we mean that it is an equivalence in the traditional sense. Given a 
2-morphism $f \colon X \to Y$ between invertible 1-morphisms, we will generically not distinguish in notation between $f$ and the induced 2-morphism $Y^{-1} \to X^{-1}$.
We will call a (symmetric) monoidal (higher) category with all duals and adjoints \emph{rigid}.

\section{Preliminaries}\label{Section: Preliminaries}
In this section, we review essential parts of the theory of ordinary dagger category theory; the results presented here are essentially well-known to experts and 
we do not claim originality for the covered material. We start by reviewing $\Oone$-volutive and dagger categories, then once categorify the theory, 
establish the respective 2-categories of $\Oone$-volutive and dagger categories, and then discuss the adjunction between them. We highlight the microcosmic 
approach that a once-categorified theory of $\Oone$-volutive and dagger categories supplies us with. 
\subsection{$\Oone$-volutive and dagger categories}
In this section, we review the notions of $\Oone$-volutive and dagger categories. We start with the former.
\begin{definition}\label{Definition: O(1)-volutive category}
    Let $\C$ be a category. An \emph{$\Oone$-volutive structure} on $\C$ consists of a functor $d \colon \C \to \C^{\opp}$ and a natural isomorphism 
    $\eta \colon \id \to d^{\opp} \circ d$ satisfying $d(\eta_a) = \eta_{d(a)}^{-1}$ for all $a \in \C$. 
    A category together with an $\Oone$-volutive structure is called an \emph{$\Oone$-volutive category}.
\end{definition}
\begin{example}\label{Example: Banach spaces are Oone volutive}
    The category $\Ban^{\refl}$ of reflexive Banach spaces admits an $\Oone$-volutive structure. The functor $d$ assigns each Banach space $V$ to its 
    dual space $V'$ and the natural isomorphism $\eta$ has as its component at $V$ the canonical isomorphism $V \to V''$ from $V$ into its bidual. 
\end{example}
A good class of examples of $\Oone$-volutive categories comes from the following; we will prove a more general result later in this paper.
\begin{proposition}\label{Proposition: rigid symmetric monoidal categories are Oone volutive}
    Let $\C$ be a symmetric monoidal category with duals. Then, $\C$ admits an $\Oone$-volutive structure. 
\end{proposition}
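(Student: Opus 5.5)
Since every object $a$ of $\C$ has a dual, I would first choose for each $a$ a dual $a^\vee$ together with evaluation $\ev_a \colon a^\vee \otimes a \to 1$ and coevaluation $\coev_a \colon 1 \to a \otimes a^\vee$ satisfying the zig-zag identities. On objects set $d(a) = a^\vee$. On a morphism $f \colon a \to b$ take the mate
\[
f^\vee \colon b^\vee \to b^\vee \otimes a \otimes a^\vee \to b^\vee \otimes b \otimes a^\vee \to a^\vee,
\]
built from $\coev_a$, $f$ and $\ev_b$. The zig-zag identities give $\id^\vee = \id$ and $(g\circ f)^\vee = f^\vee \circ g^\vee$, so $d \colon \C \to \C^{\opp}$ is a functor. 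Equivalently, $d(a)$ represents the functor $x \mapsto \Hom(x \otimes a, 1)$ and $d$ is the induced functor; uniqueness of representing objects then makes functoriality automatic.

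For $\eta_a \colon a \to a^{\vee\vee}$, I would use the symmetry $\sigma$. The pairing $a \otimes a^\vee \xrightarrow{\sigma} a^\vee \otimes a \xrightarrow{\ev_a} 1$ exhibits $a$ as a dual of $a^\vee$, so its mate gives a map $a \to a^{\vee\vee}$. Explicitly, $\eta_a$ is the composite
\[
a \to a \otimes a^\vee \otimes a^{\vee\vee} \to a^{\vee\vee},
\]
built from $\coev_{a^\vee}$, then $\sigma$, then $\ev_a$. Its inverse is built in the same way, using $\coev_a$ and $\ev_{a^\vee}$ with symmetries, and the zig-zag identities show the two are mutually inverse. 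Naturality in $a$ follows from the fact that mates are compatible with composition, or simply from naturality of $\sigma$ together with the definition of $f^\vee$.

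The main obstacle is the coherence condition $d(\eta_a) = \eta_{a^\vee}^{-1}$, i.e.\ $(\eta_a)^\vee \circ \eta_{a^\vee} = \id_{a^\vee}$. I would reduce it to the representable picture. A map $x \to a^\vee$ corresponds to a pairing $x \otimes a \to 1$. Under this correspondence:
\begin{itemize}
\item[(a)] $\eta_{a^\vee}$ corresponds to the pairing $a^\vee \otimes a^{\vee\vee} \to 1$ given by $\ev_{a^\vee}\circ\sigma$;
\item[(b)] precomposition with $(\eta_a)^\vee$ corresponds to restricting a pairing along $\eta_a$ in the second variable.
\end{itemize}
Hence $(\eta_a)^\vee \circ \eta_{a^\vee}$ corresponds to the pairing $a^\vee \otimes a \to 1$ given by $\ev_{a^\vee} \circ \sigma \circ (\id \otimes \eta_a)$. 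Since $\eta_a$ is characterized by $\ev_{a^\vee}\circ(\eta_a\otimes \id) = \ev_a \circ \sigma$, this pairing equals $\ev_a \circ \sigma \circ \sigma = \ev_a$. Here we use $\sigma^2 = \id$, which is exactly where symmetry, rather than a mere braiding, is needed. The pairing $\ev_a$ corresponds to $\id_{a^\vee}$, which proves the condition. The string-diagram bookkeeping of which strands cross is the delicate part; I would verify it diagrammatically.
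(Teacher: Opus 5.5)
Your proof is correct and follows the paper's route. The paper obtains this as the rigid case of its closed-monoidal construction: there $d(a)=1^a$ (here $a^\vee$), $\eta_a=\psi^{-1}(\ev_a\circ\beta_{a,1^a})$ is exactly your mate of $\ev_a\circ\sigma$, the coherence is the same pairing computation $1^{\eta_a}\circ\eta_{1^a}=\psi^{-1}(\ev_a\circ\beta\circ\beta)=\id_{1^a}$, and invertibility of $\eta_a$ (which you check directly via the zig-zags) is what makes the structure an honest one rather than lax. One wording fix: in (b) the map $h\mapsto(\eta_a)^\vee\circ h$ is postcomposition, not precomposition, though the pairing statement you attach to it is correct.
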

\begin{remark}\label{Remark: Oone volutive categories as homotopy fixed points}
    An $\Oone$-volutive category is the same as an $\Oone$-homotopy fixed point with respect to the canonical non-trivial $\Oone$-action on the (2,1)-category 
    $\Cat_{(2,1)}$ of categories, functors, and natural isomorphisms. Explicitly, this $\Oone$-action is given by the strictly involutive 2-functor assigning 
    each category $\C$ to its opposite, $\C^{\opp}$. 
\end{remark}
\begin{remark}\label{Remark: Oone volutive structures and groupoids}
    Any groupoid $\G$ admits an $\Oone$-volutive structure with underlying functor being inversion of morphisms. More generally, any groupoid with $\Oone$-action 
    defines an $\Oone$-volutive category by suitably combining the $\Oone$-action and inversion. Conversely, any $\Oone$-volutive structure $(d,\eta)$ on a 
    category $\C$ induces an $\Oone$-action on the maximal subgroupoid $\C^\times \subseteq \C$. 
\end{remark}
\begin{definition}\label{Definition: coherent dagger category}
    A \emph{(coherent) dagger category} is an $\Oone$-volutive category $(\C,d,\eta)$ together with a fully faithful subgroupoid
    $\C_0 \hookrightarrow (\C^\times)^{\Oone}$ such that the induced functor $\C_0 \hookrightarrow (\C^\times)^{\Oone} \to \C^\times$ is essentially surjective. 
\end{definition}
\begin{variant}
    A \emph{(classical) dagger category} is a category $\C$ together with a functor $d \colon C \to \C^{\opp}$ satisfying $d^{\opp} \circ d=\id_{\C}$ and 
    $d(a) = a$ for all objects $a \in \C$.
\end{variant}
\begin{example}
    The category of Hilbert spaces $\Hilb$ admits a dagger structure given by the adjoint operator construction. 
\end{example}
\begin{remark}
    Coherent and classical dagger categories encode essentially the same information, with the main advantage of the former being that it generalizes arguably 
    more cleanly to higher dimensions. 
    To see the former claim, let $(\C,d)$ be a classical dagger category. We obtain an $\Oone$-volutive category $(\C,d,\id)$ and we may choose the fully faitful 
    subgroupoid $(\C^\times)^{\Oone} \subseteq (\C^\times)^{\Oone}$, for which the induced functor in the definition of a coherent dagger category is essentially 
    surjective since any object $a$ in $\C$ admits the structure of an $\Oone$-homotopy fixed point with respect to the induced $\Oone$-action by $d$: namely, 
    $(a,\id_a) \in (\C^\times)^{\Oone}$. 
    Conversely, let $(\C,d,\eta,\C_0)$ be a coherent dagger category. Define a category $\tilde{C}$ whose objects are the objects of $\C_0$ and 
    whose morphisms $(a,\theta_a) \to (b,\theta_b)$ are the morphisms $a \to b$ in $\C$. The category $\tilde{C}$ then carries a dagger structure given by sending 
    a morphism $X \colon (a,\theta_a) \to (b,\theta_b)$ in $\tilde{\C}$ to the morphism $(b,\theta_b) \to (a,\theta_a)$ defined by
    \begin{equation}
        \xymatrix{
            b \ar[r]^-{\theta_b} & d(b) \ar[r]^-{d(X)} & d(a) \ar[r]^-{\theta_a^{-1}} & a.
        }
    \end{equation}
    The second claim is part of the ongoing program of defining higher dagger categories. In this paper, we will freely choose in our usage of coherent and classical 
    dagger categories depending on the problem at hand. 
\end{remark}
\begin{remark}\label{Remark: Oone volutive categories give dagger categories}
    Any $\Oone$-volutive category $(\C,d,\eta)$ gives rise to a dagger category, as discussed in \cite{Stehouwer2023DaggerCV}. We will describe a closely 
    related version of this construction in \Cref{Remark: hermitian fixed points of a lax Oone volutive category}.
\end{remark}
\begin{example}
    Applying \Cref{Remark: Oone volutive categories give dagger categories} to the $\Oone$-volutive category $\Ban^{\refl}$ of reflexive Banach spaces 
    yields the dagger category of hermitian Banach spaces, which contains the dagger category of Hilbert spaces as a full subcategory. 
\end{example}
\begin{definition}
    Let $(\C,d)$ be a (classical) dagger category. A morphism $X \colon a \to b$ in $\C$ is said to be an \emph{isometry} if $d(X) \circ X = \id_a$ and 
    said to be \emph{unitary} if $d(X) = X^{-1}$.  
\end{definition}

\subsection{$\Oone$-volutive 2-categories and dagger categories}
In this section we give specific once-categorifed versions of the notions of $\Oone$-volutive and dagger categories. We start with the former.
\begin{definition}\label{Definition: 2-contravariant Oone volutive 2-categories}
    A \emph{2-contravariant $\Oone$-volutive structure} on a 2-category $\B$ consists of a 2-functor $d \colon \B \to \B^{2\opp}$, an invertible 2-transformation 
    $\eta \colon \id \to d^{2\opp} \circ d$, and an invertible modification $\tau \colon \id_{d^{\opp}} \to (\id_{d^{\opp}}\circ \eta^{2\opp}) \bullet (\eta \circ \id_{d^{\opp}})$
    satisying 
    \begin{equation}
        \begin{pmatrix}
            \xymatrix{
                & d^{\opp} \circ d \ar[d]^-{\id} \ar[dr]^-{\id \circ \eta} & \\
                \id \ar[r]^-{\eta} \ar[ur]^-{\eta} \ar[dr]_-{\eta} & d^{\opp} \circ d \ar[d]^-{\id} & d^{\opp} \circ d \circ d^{\opp} \circ d \ar[l]^-{\id \circ \eta^{\opp} \circ \id} \\
                & d^{\opp} \circ d \ar[ur]_-{\eta \circ \id} 
            }
        \end{pmatrix}
        = 
        \begin{pmatrix}
            \xymatrix{
                & d^{\opp} \circ d \ar[d]^-{\id \circ \eta}  \\
                \id \ar[ur]^-{\eta} \ar[dr]_-{\eta} & d^{\opp} \circ d \circ d^{\opp} \circ d \\
                & d^{\opp} \circ d \ar[u]_-{\eta \circ \id} 
        }
        \end{pmatrix}
    \end{equation}
    where in the left diagram we have used the modification $\tau$ and its 2-opposite, while in the right diagram we have only used the interchange law.
    A 2-category together with a 2-contravariant $\Oone$-volutive structure is called a \emph{2-contravariant $\Oone$-volutive 2-category}.
\end{definition}
Our primary example of a 2-contravariant $\Oone$-volutive 2-category in this paper is the following.
\begin{example}\label{Example: The Oone volutive structure on Cat}
    The 2-category $(\Cat,(-)^{\opp},\id,\id)$ of categories, functors, and natural transformations together with the 2-functor assigning each category $\C$ to 
    its opposite $\C^{\opp}$ is a 2-contravariant $\Oone$-volutive category.
\end{example}
\begin{remark}
    The 2-contravariant $\Oone$-volutive structure on $\Cat$ described in \Cref{Example: The Oone volutive structure on Cat} can be interpreted from the 
    perspective of the fully closed symmetric monoidal 2-category $\Prof$, into which $\Cat$ embedds; we will adress this point in \Cref{Subsection:Profunctors}.
\end{remark}
\begin{remark}\label{Remark: variances of Oone volutive 2-categories}
    2-contravariant $\Oone$-volutive 2-categories are $\Oone$-homotopy fixed points for the $\Oone$-action on $2\Cat_{(3,1)}$ given by the 3-functor $(-)^{2\opp}$ 
    assigning each 2-category to its 2-opposite. While this motivates our terminology, it also suggests two variants of the notion of an $\Oone$-volutive 
    2-category: 1-contravariant ones (based on the 3-functor $(-)^{1\opp}$) and (1,2)-contravariant ones (based on the 3-functor $(-)^{(1,2)\opp}$). Spelled out,
    these amount to essentially the same data as presented in \Cref{Definition: 2-contravariant Oone volutive 2-categories} with variances replaced appropriately. 
    The 1-contravariant version has been presented in \cite[Definition 2.40]{carqueville2025orbifoldshigherdaggerstructures}.
\end{remark}
\begin{remark}\label{Remark: 21categories and 2-contravariant Oone volutive structures}
    The underlying (2,1)-category of a 2-contravariant $\Oone$-volutive 2-category inherits an $\Oone$-action, analogous to the one-dimensional case discussed 
    in \Cref{Remark: Oone volutive structures and groupoids}. Similarly, any $\Oone$-action on a (2,1)-category $\D$ defines a 2-contravariant $\Oone$-volutive 
    structure. 
\end{remark}
\begin{definition}
    A \emph{(coherent) 2-contravariant $\Oone$-dagger 2-category} consists of a 2-contravariant $\Oone$-volutive 2-category $(\C,d,\eta,\tau)$ together with 
    a fully faithful subgroupoid $\C_0 \hookrightarrow (\C^\times)^{\Oone}$ such that the induced functor 
    $\C_0 \hookrightarrow (\C^\times)^{\Oone} \to \C^\times$ is essentially surjective. 
\end{definition}
\begin{definition}\label{Definition: 2-contravariant Oone dagger 2-category}
    A \emph{(classical) 2-contravariant $\Oone$-dagger 2-category} consists of an identity-on-objects 2-functor $d \colon \B \to \B^{2\opp}$ and an
    identity 1-morphism component invertible 2-transformation $\eta \colon \id \to d^{2\opp} \circ d$ satisying $\id_{d^{2\opp}} = (\id_{d^{2\opp}}\circ \eta^{2\opp}) \bullet (\eta \circ \id_{d^{2\opp}})$.
\end{definition}
\begin{remark}
    Similar to our discussion in \Cref{Remark: variances of Oone volutive 2-categories}, there are two other variances of $\Oone$-dagger 2-categories.
\end{remark}
\begin{remark}
    Any (classical) 2-contravariant $\Oone$-dagger 2-category $(\C,d,\eta)$ defines a 2-contravariant $\Oone$-volutive 2-category $(\C,d,\eta,\id)$.
\end{remark}
The following is essentially the 2-contravariant analogue (in the sense of \Cref{Remark: variances of Oone volutive 2-categories}) of the discussion in 
\cite[Section 2.3.3]{carqueville2025orbifoldshigherdaggerstructures}.
\begin{construction}\label{Construction: turning 2-contra Oone vol 2-cats dagger}
    Let $(\B,d,\eta,\tau)$ be a 2-contravariant $\Oone$-volutive 2-category. We obtain a (classical) 2-contravariant $\Oone$-dagger 2-category whose 
    underlying 2-category has as its objects those of $(\B^\times)^{\Oone}$ and as its 1- and 2-morphisms those of $\B$. 
\end{construction}
\begin{remark}\label{Remark: 2-contra dagger 2-cats and Oone volutive enrichments}
    Each Hom-category of a (classical) 2-contravariant $\Oone$-dagger 2-category $(\C,d,\eta)$ carries an $\Oone$-volutive structure. Moreover, the composition 
    functors are $\Oone$-volutive with respect to these $\Oone$-volutive structures. In other words, a 2-contravariant $\Oone$-dagger 2-category may naturally 
    be interpreted as a category enriched over $\Oone$-volutive categories. 
\end{remark}

\subsection{The 2-categories of $\Oone$-volutive and dagger categories}
In this section we briefly review the 2-categories of $\Oone$-volutive and dagger categories as well as the adjunction between them. 
\begin{remark}\label{Remark: The dagger structure of volutive categories}
    Applying \Cref{Construction: turning 2-contra Oone vol 2-cats dagger} to the 2-contravariant $\Oone$-volutive 2-category $(\Cat,(-)^{\opp},\id,\id)$ 
    described in \Cref{Example: The Oone volutive structure on Cat} yields a (classical) 2-contravariant $\Oone$-dagger 2-category whose objects are 
    $\Oone$-volutive categories, and whose 1- and 2-morphisms are functors and natural transformations, respectively.
\end{remark}
An explicit proof of the following is given in \cite[Lemma 3.9]{Stehouwer2023DaggerCV}.
\begin{lemma}
    Let $(\C,d,\eta)$ and $(\C',d',\eta')$ be $\Oone$-volutive categories. Then, $\Fun(\C,\C')$ admits an $\Oone$-volutive structure $(\tilde{d},\tilde{\eta})$.
\end{lemma}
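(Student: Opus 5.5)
We write down the structure explicitly and then check the one coherence condition of \Cref{Definition: O(1)-volutive category}. For $F \in \Fun(\C,\C')$ we define
\begin{equation}
    \tilde{d}(F) := d'^{\opp} \circ F^{\opp} \circ d \colon \C \to \C^{\opp} \to \C'^{\opp} \to \C'.
\end{equation}
For a natural transformation $\alpha \colon F \to G$, whiskering $\alpha^{\opp} \colon G^{\opp} \to F^{\opp}$ gives
\begin{equation}
    \tilde{d}(\alpha) := \id_{d'^{\opp}} \circ \alpha^{\opp} \circ \id_d \colon \tilde{d}(G) \to \tilde{d}(F),
\end{equation}
with components $d'(\alpha_{d(a)})$. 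This makes $\tilde{d}$ a functor $\Fun(\C,\C') \to \Fun(\C,\C')^{\opp}$, because whiskering is functorial and $(-)^{\opp}$ is strictly functorial on $\Cat$.

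Next we compute the double application. Using $(d'^{\opp})^{\opp} = d'$, we get
\begin{equation}
    \tilde{d}^{\opp}\tilde{d}(F) = d'^{\opp} \circ d' \circ F \circ d^{\opp} \circ d .
\end{equation}
We then define $\tilde{\eta}_F \colon F \to d'^{\opp} d' F d^{\opp} d$ as the horizontal composite $\eta' \ast \id_F \ast \eta$. Concretely, its component at $a$ is $d'^{\opp}d'F(\eta_a) \circ \eta'_{F(a)}$, which by naturality of $\eta'$ equals $\eta'_{F d^{\opp} d(a)} \circ F(\eta_a)$. This is invertible because $\eta$ and $\eta'$ are. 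Naturality of $\tilde{\eta}$ in $F$ follows from the interchange law applied to $\eta' \ast \alpha \ast \eta$.

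It remains to verify $\tilde{d}(\tilde{\eta}_F) = \tilde{\eta}_{\tilde{d}(F)}^{-1}$, which we do componentwise at $a \in \C$.
\begin{itemize}
    \item[(i)] The left side has component $d'\big((\tilde{\eta}_F)_{d(a)}\big)$. Using the second form of the component, this is
    \begin{equation}
        d'\big(\eta'_{F d^{\opp} d d(a)} \circ F(\eta_{d(a)})\big) = d'F(\eta_{d(a)}) \circ d'(\eta'_{\dots}).
    \end{equation}
    Applying the axioms $d(\eta_a) = \eta_{d(a)}^{-1}$ and $d'(\eta'_{b}) = \eta'^{-1}_{d'(b)}$, this becomes
    \begin{equation}
        d'F^{\opp}\big(d(\eta_a)^{-1}\big) \circ \eta'^{-1}_{d'(\dots)}.
    \end{equation}
    \item[(ii)] The right side is the inverse of $\eta' \ast \id_{\tilde{d}F} \ast \eta$ at $a$. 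This is a composite of $\eta'^{-1}$ at $d'F d(\dots)$ with $\tilde{d}F(\eta_a^{-1}) = d'F^{\opp}d(\eta_a^{-1})$.
    \item[(iii)] We match the two sides term by term. The $\eta'$ factors agree because they are the same components. For the $\eta$ factors, functoriality of $d$ gives $d(\eta_a)^{-1} = d(\eta_a^{-1})$, so they agree as well.
\end{itemize}
The only remaining subtlety is the ordering of composition in the opposite categories. We resolve it by choosing, in each of (i) and (ii), whichever naturality-square form of the component makes the factors line up, which naturality of $\eta$ and $\eta'$ permits.

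The main obstacle is this bookkeeping of variances, that is, keeping track of where $\opp$'s flip the direction of composition in the last verification. Everything else is formal from the 2-functoriality of $(-)^{\opp}$ on $\Cat$ and the interchange law. Conceptually, this is the internal hom in the $\Oone$-homotopy fixed points of $\Cat_{(2,1)}$ from \Cref{Remark: Oone volutive categories as homotopy fixed points}, which one could invoke as a sanity check.
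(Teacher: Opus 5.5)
Your proof is correct, but it takes a different route from the paper. The paper proves the lemma in one line. Applying \Cref{Construction: turning 2-contra Oone vol 2-cats dagger} to $(\Cat,(-)^{\opp},\id,\id)$, as in \Cref{Remark: The dagger structure of volutive categories}, gives a classical 2-contravariant $\Oone$-dagger 2-category. Its objects are $\Oone$-volutive categories and its Hom-categories are the functor categories $\Fun(\C,\C')$. By \Cref{Remark: 2-contra dagger 2-cats and Oone volutive enrichments}, every Hom-category of such a 2-category inherits an $\Oone$-volutive structure from the identity-on-objects 2-functor and the identity-component 2-transformation. Unwinding this produces your formulas: $F$ is sent to $F^{\opp}$ conjugated by the fixed-point data, i.e.\ $(d')^{\opp}\circ F^{\opp}\circ d$. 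So both proofs build the same structure.

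The paper's route places the lemma conceptually, as an instance of 2-contravariant dagger 2-categories being enriched in $\Oone$-volutive categories. However, it defers all actual checking to a construction and a remark that are stated without verification. Your route is self-contained, and invertibility plays no essential role in it. If you replace $d(\eta_a)=\eta_{d(a)}^{-1}$ and $d'(\eta'_b)=(\eta'_{d'(b)})^{-1}$ by the one-sided identities $d(\eta_a)\circ\eta_{d(a)}=\id$ and $d'(\eta'_b)\circ\eta'_{d'(b)}=\id$, the same componentwise computation verifies the coherence asserted in \Cref{Construction: Functor categories inherit lax O(1)-volutions}. The homotopy-fixed-point argument cannot reach that lax case (cf.\ \Cref{Remark: lax homotopy fixed points vs lax volutive categories}).

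One tightening: your remark about choosing ``whichever naturality-square form'' makes the factors line up is unnecessary. Use the form $(\tilde{\eta}_G)_a=\eta'_{G(d(d(a)))}\circ G(\eta_a)$ both for $G=F$ (evaluated at $d(a)$) and for $G=\tilde{d}(F)$. Then each side of the coherence equation equals $d'(F(\eta_{d(a)}))\circ(\eta'_{d'(F(d^3(a)))})^{-1}$ directly.
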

\begin{proof}
    This follows immediately from \Cref{Remark: The dagger structure of volutive categories} and \Cref{Remark: 2-contra dagger 2-cats and Oone volutive enrichments}.
\end{proof}
\begin{definition}
    Let $(\C,d,\eta)$ and $(\C',d',\eta')$ be $\Oone$-volutive categories. Applying \Cref{Remark: Oone volutive categories give dagger categories} 
    to the $\Oone$-volutive category $(\Fun(\C,\C'),\tilde{d},\tilde{\eta})$ yields a dagger category whose objects we call \emph{$\Oone$-volutive functors} 
    and whose morphisms are natural transformations. We call the isometries in this dagger category \emph{$\Oone$-volutive natural transformations}. 
\end{definition}
\begin{remark}
    This definition of $\Oone$-volutive functors and natural transformations matches the explicit one given e.g. in \cite{Stehouwer2023DaggerCV}.
\end{remark}
\begin{remark}
    Let $(\C,d,\eta)$ and $(\C',d',\eta')$ be $\Oone$-volutive categories and let $(F,\alpha) \colon (\C,d,\eta) \to (\C',d',\eta')$ be an $\Oone$-volutive functor. 
    Then, $(F,\alpha)$ induces a functor $F^{\Oone} \colon (\C^\times)^{\Oone} \to (\C'^\times)^{\Oone}$. 
\end{remark}
\begin{definition}
    Let $(\C,d,\eta,\C_0)$ and $(\C',d',\eta',\C_0')$ be (coherent) dagger categories. A \emph{dagger functor}
    $(F,\alpha) \colon (\C,d,\eta) \to (\C',d',\eta')$ is an $\Oone$-volutive functor for which the induced functor 
    \begin{equation}
        F^{\Oone}|_{\C_0} \colon \C_0 \to (\C'^\times)^{\Oone}
    \end{equation}
    factors through $\C_0' \subseteq (\C'^\times)^{\Oone}$. A \emph{dagger natural transformation} between dagger functors is an 
    $\Oone$-volutive natural transformation.
\end{definition}
\begin{definition}
    We denote by $\Cat^{\Oone\text{-vol}}$ the 2-category of $\Oone$-volutive categories, $\Oone$-volutive functors, and $\Oone$-volutive natural transformations.
\end{definition}
\begin{definition}
    We denote by $\Cat^{\dagger}$ the 2-category of (coherent) dagger categories, dagger functors, and dagger natural transformations between them.
\end{definition}
The following is \cite[Theorem 4.9]{Stehouwer2023DaggerCV}.
\begin{proposition}
    The construction of \Cref{Remark: Oone volutive categories give dagger categories} extends to a functor $S_{\Oone} \colon \Cat^{\Oone\text{-vol}} \to \Cat^{\dagger}$
    which is right adjoint to the forgetful functor $\Cat^{\dagger} \to \Cat^{\Oone\text{-vol}}$.
\end{proposition}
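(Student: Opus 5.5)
We prove the statement by exhibiting the unit and counit of the adjunction directly and checking the triangle identities. The forgetful 2-functor $U \colon \Cat^{\dagger} \to \Cat^{\Oone\text{-vol}}$ sends $(\C,d,\eta,\C_0)$ to $(\C,d,\eta)$. On objects, $S_{\Oone}(\C,d,\eta)$ is the dagger category of \Cref{Remark: Oone volutive categories give dagger categories}. We take as its underlying category the one whose objects are homotopy fixed points $(a,\theta_a) \in (\C^\times)^{\Oone}$ and whose morphisms are those of $\C$. Its dagger is $X \mapsto \theta_a^{-1}\circ d(X)\circ\theta_b$, and we take $\C_0$ to be all fixed points.

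The first step is functoriality. An $\Oone$-volutive functor $(F,\alpha)$ induces $F^{\Oone}$ on fixed points by sending $\theta_a$ to the composite $F(a)\to F(d a)\to d'F(a)$ built from $F(\theta_a)$ and $\alpha_a$. It acts as $F$ on morphisms. Compatibility of $\alpha$ with $\eta,\eta'$ makes this a dagger functor. An $\Oone$-volutive natural transformation (an isometry in the functor dagger category) has components that intertwine the induced fixed-point structures, so it defines a dagger natural transformation. Preservation of composition holds up to the evident coherence isomorphisms.

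Next comes the unit $\varepsilon_{\mathcal D} \colon \mathcal D \to S_{\Oone}U\mathcal D$ on a dagger category $(\mathcal D,d,\eta,\mathcal D_0)$. It is the fully faithful inclusion induced by $\mathcal D_0 \hookrightarrow (\mathcal D^\times)^{\Oone}$, acting as the identity on morphisms. It is essentially surjective on the level of underlying categories because $\mathcal D_0 \to \mathcal D^\times$ is essentially surjective. The counit $\delta_{\C}\colon US_{\Oone}\C \to \C$ forgets $\theta_a$. Its volutive structure $\alpha_{(a,\theta)}$ is given by $\theta_a$ itself, which is exactly what identifies the dagger on $S_{\Oone}\C$ with $d$ on $\C$.

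Finally we check the two triangle identities, or equivalently that the composite below is an equivalence of hom-categories, for the pair $U\mathcal D \to \C$:
\begin{equation}
\Hom_{\Cat^{\dagger}}(\mathcal D,S_{\Oone}\C)\to \Hom_{\Cat^{\Oone\text{-vol}}}(U\mathcal D,US_{\Oone}\C)\xrightarrow{\delta_*}\Hom_{\Cat^{\Oone\text{-vol}}}(U\mathcal D,\C).
\end{equation}
Given a volutive functor $(F,\alpha)\colon U\mathcal D\to\C$, the inverse sends it to $F^{\Oone}$ restricted to $\mathcal D_0$. This is automatically a dagger functor because $S_{\Oone}\C$ takes all fixed points as its distinguished subgroupoid.

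The main obstacle is the 2-dimensional part. We must show that volutive natural transformations $F\Rightarrow G$ between volutive functors correspond bijectively to dagger transformations between the lifts. This reduces to checking that the isometry condition in the dagger category $\Fun(\C,\C')$ matches, componentwise, the unitarity/compatibility condition with respect to the induced $\theta$'s. We would first unwind $\tilde d,\tilde\eta$ explicitly from \Cref{Remark: 2-contra dagger 2-cats and Oone volutive enrichments} and compare the two conditions. The coherence $d(\eta_a)=\eta_{d(a)}^{-1}$ handles the remaining bookkeeping.
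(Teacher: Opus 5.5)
The paper does not prove this statement itself; it only cites \cite[Theorem 4.9]{Stehouwer2023DaggerCV}. Your overall strategy is sound: exhibit a biuniversal counit $\delta_{\C}$ with volutive structure $\theta$, then check that $\delta_{\C}\circ U(-)$ is an equivalence of hom-categories. Your counit is correct, your functoriality of $S_{\Oone}$ is correct, and so is your observation that the dagger condition on functors into $S_{\Oone}\C$ is vacuous.

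\textbf{The gap is at the level of objects.} A coherent dagger category $(\mathcal{D},d,\eta,\mathcal{D}_0)$ has the objects of $\mathcal{D}$ as its objects, and these carry no hermitian structure; only the elements of $\mathcal{D}_0$ do.
\begin{itemize}
\item ``$F^{\Oone}$ restricted to $\mathcal{D}_0$'' is therefore a functor of groupoids $\mathcal{D}_0\to(\C^\times)^{\Oone}$, not a functor $\mathcal{D}\to S_{\Oone}\C$. It says nothing about objects outside the image of $\mathcal{D}_0$, nor about non-invertible morphisms.
\item Likewise, your unit (``the inclusion induced by $\mathcal{D}_0\hookrightarrow(\mathcal{D}^\times)^{\Oone}$, identity on morphisms'') is a functor out of the classical model whose objects are those of $\mathcal{D}_0$, not out of $\mathcal{D}$.
\end{itemize}
You are in effect running the classical argument, where every object carries the canonical hermitian structure $\id_a$. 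You also cannot simply swap $\mathcal{D}$ for its classical model inside $\Cat^{\dagger}$. With the paper's conventions, that model has all $\dagger$-fixed points as its distinguished subgroupoid, so it is in general not equivalent to $\mathcal{D}$ there (think of positive versus indefinite forms).

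\textbf{The missing step.} This is exactly where essential surjectivity of $\mathcal{D}_0\to\mathcal{D}^\times$ must be used.
\begin{itemize}
\item For every object $a$, choose $(a_0,\theta_{a_0})\in\mathcal{D}_0$ and an isomorphism $\phi_a\colon a\to a_0$. Put $\theta'_a:=d(\phi_a)\circ\theta_{a_0}\circ\phi_a$; this is hermitian by naturality of $\eta$.
\item Given $(F,\alpha)\colon U\mathcal{D}\to(\C,d',\eta')$ with $\alpha_a\colon F(d(a))\to d'(F(a))$, define $\tilde F(a):=(F(a),\vartheta_a)$ with $\vartheta_a:=\alpha_a\circ F(\theta'_a)$. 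This is hermitian by naturality of $\alpha$ at $\theta'_a$ together with the coherence of $\alpha$.
\item Set $\tilde F(X):=F(X)$ on morphisms, with volutive structure $\tilde\alpha_a:=F(\theta'_a)^{-1}\colon F(d(a))\to F(a)$.
\end{itemize}
Naturality of $\tilde\alpha$ reduces to naturality of $\alpha$, and its coherence holds for any choice of $\theta'_{d(a)}$. Moreover $\delta_{\C}\circ U\tilde F=(F,\alpha)$ on the nose. This gives essential surjectivity of the hom-functor, and taking $F=\id$ yields the unit.

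\textbf{The 2-dimensional part is routine.} The step you single out as the main obstacle is immediate. For dagger functors $\tilde F,\tilde G$ with hermitian structures $\vartheta^{F}_a,\vartheta^{G}_a$ and volutive structures $\tilde\alpha,\tilde\beta$, the composites with $\delta_{\C}$ have volutive structures $\alpha_a=\vartheta^{F}_a\circ\tilde\alpha_a$ and $\beta_a=\vartheta^{G}_a\circ\tilde\beta_a$. Composing on the left with $(\vartheta^{F}_a)^{-1}$ turns the isometry condition $d'(\xi_a)\circ\beta_a\circ\xi_{d(a)}=\alpha_a$ literally into the dagger condition $\dagger(\xi_a)\circ\tilde\beta_a\circ\xi_{d(a)}=\tilde\alpha_a$. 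No appeal to $d(\eta_a)=\eta_{d(a)}^{-1}$ is needed.
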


\section{Relative $\Oone$-volutive categories}\label{Section: Relative Oone volutive structures}
In this section we introduce relative $\Oone$-volutive categories. After setting up some of their general theory, we will provide a number of 
good examples of these novel structures and provide several equivalent formulations of a particular special case, namely, lax $\Oone$-volutive categories.
\subsection{Relative $\Oone$-volutive categories}
In this section we introduce the notion of a relative $\Oone$-volutive category. Recall the 2-contravariant $\Oone$-volutive 2-category $(\Cat,(-)^{\opp},\id,\id)$ 
described in \Cref{Example: The Oone volutive structure on Cat}. In the following, we will often only write $\Cat \equiv (\Cat,(-)^{\opp},\id,\id)$ and leave 
the 2-contravariant $\Oone$-volutive structure implicit. 
\begin{definition}\label{Definition: relative Oone volutive category}
    A \emph{relative $\Oone$-volutive category} consists of a 2-contravariant $\Oone$-volutive 2-category $\B$ and a 
    2-contravariant $\Oone$-volutive functor $F \colon \B \to \Cat$. 
\end{definition}
\begin{example}
    The \emph{canonical} relative $\Oone$-volutive category consists of the identity 2-contravariant $\Oone$-volutive functor $\id \colon \Cat \to \Cat$.
\end{example}
\begin{example}
    An $\Oone$-volutive category $(\C,d,\eta)$ is the same as a relative $\Oone$-volutive category whose domain is the final 2-category $\star$. 
\end{example}
Denote by $\Grpd$ the (2,1)-category of groupoids, functors, and natural isomorphisms.
\begin{example}
    Let $\B$ be a (2,1)-category and let $F \colon \B \to \Grpd$ be a functor. By \Cref{Remark: 21categories and 2-contravariant Oone volutive structures}, 
    $\B$ and $\Grpd$ carry canonical 2-contravariant $\Oone$-volutive structures, with respect to which $F$ admits a 2-contravariant $\Oone$-volutive structure. 
    Recalling that the inclusion $\Grpd \to \Cat$ admits a 2-contravariant $\Oone$-volutive structure as well, we obtain a relative $\Oone$-volutive category 
    $\B \to \Grpd \to \Cat$. 
\end{example}
\begin{construction}\label{Construction: from Oone volutive categories to 2-categories}
    Given a category $\C$, we may construct a 2-category $\C[1]$ which has two objects $0$ and $1$ and only one non-trivial Hom-category, namely 
    $\Hom_{\C[1]}(0,1) := \C$. If the category $\C$ is equiped with an $\Oone$-volutive structure $(d,\eta)$, then the 2-category $\C[1]$ 
    receives a 2-contravariant $\Oone$-volutive structure: the 2-functor $d[1] \colon \C[1] \to \C[1]^{2\opp}$ is the identity on objects 
    and $d \colon \Hom_{\C[1]}(0,1) = \C \to \C^{\opp} = \Hom_{\C[1]}(0,1)^{\opp}$ on Hom-categories. The 2-transformation $\eta[1] \colon \id \to d^{2\opp} \circ d$
    has identity 1-morphism components and as its 2-morphism components the components of $\eta^{-1}$. The 2-modification may be taken to be trivial, 
    due to the coherence condition of $\eta$. Hence, $(\C[1],d[1],\eta[1],\id)$ defines a 2-contravariant $\Oone$-volutive category. 
\end{construction}
\begin{remark}
    The terminology $\C[1]$ is motivated by the notation $[1] := \{0 \rightarrow 1\}$ for the walking arrow. In particular, we have $\star[1] \cong [1]$ where 
    $\star$ denotes the terminal category with a single object. On the other hand, we note that $\emptyset[1] \cong \{0,1\}$ is the discrete (2-)category on 
    two objects, where $\emptyset$ denotes the initial category with no objects. 
    \Cref{Construction: from Oone volutive categories to 2-categories} supplies us with a large class of examples of 2-contravariant $\Oone$-volutive 2-categories, 
    with respect to which we may consider relative $\Oone$-volutive categories. 
\end{remark}
For the convenience of the reader, we spell out some of the data of a relative $\Oone$-volutive category in the following. 
\begin{remark}
    Let $(\B,F)$ be a relative $\Oone$-volutive category. First, the functor $F \colon \B \to \Cat$
    assigns to each object $a \in \B$ a category $\C_a$, to each 1-morphism $X \colon a \to b$ a functor 
    $F(X) \colon C_a \to C_b$, and to each 2-morphism $f \colon X \to Y$ a natural transformation 
    $F(f) \colon F(X) \to F(Y)$. Second, the first piece of data of the 2-contravariant $\Oone$-volutive 
    structure is an invertible 2-transformation $\alpha \colon F^{2\opp} \circ d \cong (-)^{1\opp} \circ F$ whose 1-morphism 
    component at $a \in \B$ is given by an equivalence of categories $\alpha_a \colon C_{d(a)} \cong C_a^{\opp}$.
    Third, the second piece of data of the 2-contravariant $\Oone$-volutive structure is an invertible modification 
    fitting into the diagram 
    \begin{equation}
        \begin{aligned}
        \xymatrix{
            ((-)^{1\opp})^{2\opp} \circ F^{2\opp} \circ d \ar[r]^-{\id \circ \alpha} \ar[d]^-{(\alpha^{2\opp})^{-1} \circ \id} & ((-)^{1\opp})^{2\opp} \circ (-)^{1\opp} \circ F \ar[d]^-{\cong} \\
            F \circ d^{2\opp} \circ d \ar[r]^-{\id \circ \eta^{-1}} & F 
        }
        \end{aligned}
    \end{equation}
    whose component at $a \in \B$ is given by a natural isomorphism fitting into the diagram 
    \begin{equation}
        \begin{aligned}
            \xymatrix{
            C_a \ar[ddr]_-{\id} \ar[r]^-{\alpha_a^{\opp}} & C_{d(a)}^{\opp} \ar[d]^-{\alpha_{d(a)}} \\
            & C_{d^2(a)} \ar[d]^-{F(\eta_a^{-1})} \\
            & C_a
        }
        \end{aligned}
    \end{equation}
    which itself satisfies a coherence condition.
\end{remark}
\begin{remark}
    We may think of relative $\Oone$-volutive categories as objects in the slice 3-category $2\Cat^{\Oone_2\text{-vol}}_{/\Cat}$. Here the subscript $2$ refers 
    to the fact that we consider 2-contravariant $\Oone$-volutive 2-categories (a fact that we will leave implicit in the following). In the following we wish to 
    describe the (higher) morphisms in the 3-category 
    $2\Cat^{\Oone_2\text{-vol}}_{/\Cat}$ more explicitly. First, a \emph{functor} of relative $\Oone$-volutive categories $(\B,F)$ and $(\B',F')$ consists of 
    an $\Oone$-volutive 2-functor $G \colon \B \to \B'$ together with an invertible $\Oone$-volutive 2-transformation $\alpha \colon F \to F' \circ G$. Second, 
    a \emph{transformation} of relative $\Oone$-volutive functors $(G_1,\alpha_1)$ and $(G_2,\alpha_2)$ consists of an $\Oone$-volutive 2-transformation $\beta \colon G_1 \to G_2$ 
    and an invertible $\Oone$-volutive modification $\xi$ satisfying the evident compatibility condition with $\alpha_1,\alpha_2$. Finally, a \emph{modification} of 
    relative $\Oone$-volutive transformations $(\beta^a,\xi^a)$ and $(\beta^b,\xi^b)$ consists of an $\Oone$-volutive modification $\omega \colon \beta^a \to \beta^b$
    satisfying a compatibility condition with $\xi^a,\xi^b$.
\end{remark}
\begin{remark}
    If we wish to consider relative $\Oone$-volutive categories with a fixed domain $\B$, we may also consider the 2-category 
    $\Hom_{2\Cat^{\Oone_2\text{-vol}}}(\B,\Cat)$.
\end{remark}

\subsection{Lax $\Oone$-volutive categories}
In this section we introduce lax $\Oone$-volutive categories and prove that every closed symmetric monoidal category admits a lax $\Oone$-volutive structure.
\begin{definition}\label{Definition: lax O(1)-volutive category}
    A \emph{lax $\Oone$-volutive structure} on a category $\C$ consists of a functor $d \colon \C \to \C^{\opp}$
    and a natural transformation $\eta \colon \id \to d^2$ satisfying $d(\eta_a) \circ \eta_{d(a)} = \id_{d(a)}$ for all $a \in \C$. 
    A category $\C$ together with a lax $\Oone$-volutive structure $(d,\eta)$ is called a \emph{lax $\Oone$-volutive category}.
\end{definition}
\begin{remark}
    Lax $\Oone$-volutive categories have appeared in the literature on hermitian algebraic K-theory before, under the name \emph{category with duality}, see e.g. 
    \cite[Definition 3.1]{Schlichting2010}. However, adopting this terminology would be rather confusing in the upcoming discussion. 
\end{remark}
\begin{remark}\label{Remark: lax homotopy fixed points vs lax volutive categories}
    Lax $\Oone$-volutive categories are \emph{not} lax homotopy fixed points with respect to the canonical non-trivial $\Oone$-action on the (2,1)-category 
    $\Cat_{(2,1)}$. The latter are simply $\Oone$-volutive categories, as the invertibility of the natural transformation $\eta$ automatically 
    implies the invertiblity of $d$. 
\end{remark}
\begin{remark}\label{Remark: lax Oone volutive structures are many}
    Let $\C$ be a category and let $(d,\eta)$ be a lax $\Oone$-volutive structure on $\C$. Let $k$ be a non-negative integer. We obtain a new lax $\Oone$-volutive 
    structure whose functor is $\overline{d}_k := (d \circ d^{\opp})^k \circ d$ and whose natural transformation is 
    \begin{equation}
        \overline{\eta}_k := \eta^{k+1} \colon \id \to (d \circ d^{\opp})^{2k+2} \cong (d^{\opp} \circ d)^k \circ d^{\opp} \circ (d \circ d^{\opp})^k \circ d = \overline{d}^{\opp}_k \circ \overline{d}_k
    \end{equation}
    which satisfies the coherence condition of a lax $\Oone$-volutive structure since $\eta$ does. If we assume that $(d,\eta)$ is an $\Oone$-volutive structure, 
    $(\overline{d}_k,\overline{\eta}_k)$ is an $\Oone$-volutive structure as well and equivalent to $(d,\eta)$. 
\end{remark}
\begin{example}\label{Example: Banach spaces are lax volutive}
    Let $\Ban$ denote the category of Banach spaces and continuous/bounded linear maps. We construct a lax $\Oone$-volutive structure on $\Ban$. First, we define 
    a functor $(-)^* \colon \Ban \to \Ban^{\opp}$ by assigning each Banach space $V$ to its dual $V^* = \Hom_{\Ban}(V,\mathbb{C})$ and each continuous linear map 
    $f \colon V \to W$ to the transpose $f^* \colon W^* \to V^*, \phi \mapsto \phi \circ f$. Second, we define a natural transformation $\iota \colon \id \to (-)^{**}$ 
    whose component at $V \in \Ban$ is given by the continuous linear map $\iota_V \colon V \to V^{**}, \iota_V(v)(\phi) = \phi(v)$. To see that $\iota$ is indeed 
    a natural transformation, it suffices to note that 
    \begin{equation}
        ((\iota_W \circ f)(v))(\phi) = \iota_W(f(v))(\phi) = \phi(f(w)) = f^*(\phi)(v) = ((f^{**} \circ \iota_V)(v))(\phi).
    \end{equation}
    Lastly, we check that $\iota_V^* \circ \iota_{V^*} = \id_{V^*}$. Indeed, we have 
    \begin{equation}
        ((\iota_V^* \circ \iota_{V^*})(\phi))(v) = (\iota_V^*(\iota_{V^*}(\phi)))(v) = \iota_{V^*}(\phi)(\iota_V(v)) = \iota_V(v)(\phi) = \phi(v)
    \end{equation}
    so that $\iota_V^* \circ \iota_{V^*} = \id_{V^*}$ holds. Hence, $\Ban$ supports a lax $\Oone$-volutive structure.
\end{example}
\begin{example}
    Any $\Oone$-volutive category is a lax $\Oone$-volutive category. 
\end{example}
In the converse direction, we have the following.
\begin{construction}\label{Construction: Oone volutions from lax Oone volutions}
    Any lax $\Oone$-volutive category $(\C,d,\eta)$ gives rise to an $\Oone$-volutive category which can be described as follows: 
    the underlying category is the full subcategory $\hat{\C} \subset \C$ whose objects $a$ are those for which 
    $\eta \colon a \to d^2(a)$ is an isomorphism, and the $\Oone$-volutive structure is given by the restriction of $(d,\eta)$ to $\hat{C}$.
\end{construction}
\begin{example}
    The category of reflexive Banach spaces discussed in \Cref{Example: Banach spaces are Oone volutive} inherits its $\Oone$-volutive structure 
    from the lax $\Oone$-volutive category $\Ban$ discussed in \Cref{Example: Banach spaces are lax volutive} via \Cref{Construction: Oone volutions from lax Oone volutions}.
\end{example}
\begin{definition}\label{Definition: right closed monoidal category}
    Let $\C$ be a monoidal category. We say that $\C$ is \emph{right closed} if and only if for every pair of objects $a,b \in \C$ there exists another object 
    $b^a$ and a map $\ev_a^b \colon b^a \otimes a \to b$ with the following universal property: for every $c \in \C$, the induced map 
    $\psi \colon \hom_{\C}(c,b^a) \to \hom_{\C}(c \otimes a,b)$ is a bijection which is natural in $b$ and $c$. Put differently, the construction $b \mapsto b^a$ 
    determines a right adjoint to the functor $c \mapsto c \otimes a$ for every $a \in \C$.\\
    Similarly, we will say that a monoidal category $\C$ is \emph{left closed} if the functor $a \otimes (-) \colon \C \to \C$ 
    admits a right adjoint for every $a \in \C$. For symmetric/braided monoidal categorues being right closed is equivalent to being left closed, so that we will 
    simply refer to a category as being \emph{closed} in this case.
\end{definition}
\begin{construction} \label{Construction: O(1)-volutive categories from closed monoidal ones}
    Let $\C$ be a right closed monoidal category in the sense of \Cref{Definition: right closed monoidal category}. In the following we will use the notation 
    $\ev_a := \ev_a^1$ for $a \in \C$ and $1$ for the monoidal unit. We claim that the construction $a \mapsto 1^a$ extends to a functor 
    $1^{(-)} \colon \C \to \C^{\opp}$. Indeed, for a morphism $X \colon a \to b$, we set
    \begin{equation}
        1^X := \psi^{-1}\left( \xymatrix{1^b \otimes a \ar[r]^-{\id \otimes X} & 1^b \otimes b \ar[r]^-{\ev_b} & 1} \right) \in \hom_{\C}(1^b,1^a)
    \end{equation}
    and note first that $1^{\id_a} = \psi^{-1}(\ev_a) = \id_{1^a}$. Using the naturality of $\psi$ we find
    \begin{align}\label{Equation: pulling morphisms through evaluations}
        \begin{split}
        \ev_a \circ (1^X \otimes \id_a) 
        &= \psi(\id_{1^a}) \circ(1^X \otimes \id_a) \\ %Formula from the line above
        &= \psi(\id_{1^a} \circ 1^X)\\ %Naturality of $\psi$
        &= \psi(1^X)\\ %Trivial
        &= \psi \psi^{-1} (\ev_b \circ (\id_{1^b} \otimes X))\\ %Definition
        &= \ev_b \circ (\id_{1^b} \otimes X), %Trivial
        \end{split}
    \end{align}
    implying the commutativity of the diagram 
    \begin{equation}
        \begin{aligned}
        \xymatrix{
            & 1^c \otimes b \ar[dr]^{1^Y \otimes \id} \ar[rr]^{\id \otimes Y} && 1^c \otimes c \ar[dr]^{\ev_c} & \\
            1^c \ar[ur]^{\id \otimes X} \ar[dr]^{1^Y \otimes \id} \otimes a && 1^b \otimes b \ar[rr]^{\ev_b} && 1 \\
            & 1^b \otimes a \ar[ur]^{\id \otimes X} \ar[rr]^{1^X \otimes \id} && 1^a \otimes a \ar[ur]^{\ev_a} &
        }
        \end{aligned}
    \end{equation}
    for $X \colon a \to b$ and $Y \colon b \to c$ morphisms in $\C$ and hence functoriality. \\
    Assume now in addition that $\C$ is a (right) closed \emph{braided} monoidal category. We obtain for each $a \in \C$ a canonical map $\eta_a \colon a \mapsto 1^{1^a}$ defined as
    \begin{equation}
         \eta_a := \psi^{-1}\left( \xymatrix{a \otimes 1^a \ar[r]^-{\beta_{a,1^a}} & 1^a \otimes a \ar[r]^-{\ev_a} & 1} \right).
    \end{equation}
    We claim that $\eta \colon \id \to 1^{1^{(-)}}$ is a natural transformation. Indeed, we have 
    \begin{align}
        \begin{split}
            1^{1^X} \circ \eta_a &= 
            \psi^{-1}(\ev_{1^a} \circ (\id \otimes 1^X)) \circ \eta_a \\ %Definition
            &= \psi^{-1}(\ev_{1^a} \circ (\id \otimes 1^X) \circ (\eta_a \otimes \id)) \\ %Naturality of $\psi$
            &= \psi^{-1}(\ev_{1^a} \circ (\eta_a \otimes \id) \circ (\id \otimes 1^X)) \\ %Interchange law
            &= \psi^{-1}(\ev_{a} \circ \beta \circ (\id \otimes 1^X)) \\ %Apply $\psi^{-1}$ to both sides, use naturality of $\psi$, and $\psi(\id)=\ev$
            &= \psi^{-1}(\ev_{a} \circ (1^X \otimes \id) \circ \beta) \\
            %Use the naturality of the braiding 
            &= \psi^{-1}(\ev_{a} \circ (\id \otimes X) \circ \beta) \\
            %Ue the naturality expression of $\ev$ from above
            &= \psi^{-1}(\ev_{a} \circ \beta \circ (X \otimes \id)) \\
            %Use naturality of the braiding 
            &= \psi^{-1}(\ev_{a} \circ \beta) \circ X %Use naturality of $\psi$
            = \eta_b \circ X. %Definition
        \end{split} 
    \end{align} 
    Assume now in addition that $\C$ is a closed \emph{symmetric} monoidal category. 
    We claim that $1^{\eta_a} \circ \eta_{1^a} = \id_{1^a}$ is satisfied for every $a \in \C$. Indeed, we have 
    \begin{align}
        \begin{split}
            1^{\eta_a} \circ \eta_{1^a} 
            &= \psi^{-1}(\ev_{1^{1^a}} \circ (\id \otimes \eta_a)) \circ \eta_{1^a} \\ %Definition
            &= \psi^{-1}(\ev_{1^{1^a}} \circ (\id \otimes \eta_a) \circ (\eta_{1^a} \otimes \id)) \\ %Naturality of $\psi$
            &= \psi^{-1}(\ev_{1^{1^a}} \circ (\eta_{1^a} \otimes \id) \circ (\id \otimes \eta_a)) \\ %Interchange law
            &= \psi^{-1}(\ev_{1^a} \circ \beta \circ (\id \otimes \eta_a)) \\
            %Same relation as in the naturality proof
            &= \psi^{-1}(\ev_{1^a} \circ (\eta_a \otimes \id) \circ \beta) \\
            %Naturality of the braiding
            &= \psi^{-1}(\ev_a \circ \beta \circ \beta) \\ %Same relation as in the naturality proof
            &= \psi^{-1}(\ev_a)  %Symmetry of the braiding
            = \id_{1^a}. %Formula from functoriality proof
        \end{split}
    \end{align}
    Hence, every closed symmetric monoidal category admits a functor $1^{(-)} \colon \C \to \C^{\opp}$ and a natural transformation $\eta \colon \id \to 1^{1^{(-)}}$ satisfying $1^{\eta_a} \circ \eta_{1^a} = \id_{1^a}$ for every $a \in \C$. 
\end{construction}
We record our findings in the following.
\begin{theorem}\label{Theorem: closed symmetric monoidal categories are lax volutive}
    Every closed symmetric monoidal category admits a lax $\Oone$-volutive structure in the sense of \Cref{Definition: lax O(1)-volutive category}.
\end{theorem}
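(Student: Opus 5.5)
The plan is to read off the required data from \Cref{Construction: O(1)-volutive categories from closed monoidal ones}, which already contains essentially all the verifications, and to check it against the three requirements of \Cref{Definition: lax O(1)-volutive category}.

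Let $\C$ be closed symmetric monoidal with unit $1$. Since symmetric implies right closed (\Cref{Definition: right closed monoidal category}), we can fix, for each $a$, an internal hom $1^a$ together with the evaluation $\ev_a \colon 1^a \otimes a \to 1$ and the natural bijection $\psi$. We then take
\begin{equation}
d := 1^{(-)} \colon \C \to \C^{\opp}, \qquad \eta_a := \psi^{-1}(\ev_a \circ \beta_{a,1^a}) \colon a \to 1^{1^a}.
\end{equation}

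First we confirm that $d$ is a functor. It sends identities to identities because $\psi(\id_{1^a}) = \ev_a$. For composition, we use the identity $\ev_a \circ (1^X \otimes \id) = \ev_b \circ (\id \otimes X)$ together with injectivity of $\psi$: one gets $1^{Y\circ X} = 1^X \circ 1^Y$, which is contravariance, that is, a covariant functor into $\C^{\opp}$. Note that $d^{\opp}\circ d$ is then the covariant functor $a \mapsto 1^{1^a}$, which is the target of $\eta$.

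Next come naturality of $\eta$ (which uses only the braiding) and the triangle identity $d(\eta_a)\circ\eta_{d(a)} = 1^{\eta_a}\circ\eta_{1^a} = \id_{1^a}$. For the triangle identity the key input is $\beta\circ\beta=\id$, so this is the one place where symmetry, rather than just a braiding, is needed. Both are the displayed computations in the construction; each step there is either naturality of $\psi$ in the source variable, the interchange law, naturality of $\beta$, or the relation $\ev_{1^a}\circ(\eta_a\otimes\id)=\ev_a\circ\beta$. That last relation is just $\psi(\eta_a)=\ev_a\circ\beta$ read through $\psi(g)=\ev\circ(g\otimes\id)$.

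The only genuine subtlety, and what I expect to be the main point to check, is this identification $\psi(g) = \ev_a\circ(g\otimes \id_a)$ for arbitrary $g\colon c\to 1^a$. It follows from naturality of $\psi$ in $c$ applied to $\id_{1^a}$, and it justifies every appearance of $\psi^{-1}$ in the computations. With these three facts in hand, $(d,\eta)$ satisfies \Cref{Definition: lax O(1)-volutive category}, which proves the theorem.
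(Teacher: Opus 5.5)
Your proposal is correct and follows the paper's own proof. The paper records the theorem directly from the construction with $d=1^{(-)}$ and $\eta_a=\psi^{-1}(\ev_a\circ\beta_{a,1^a})$. It checks functoriality through the pull-through identity $\ev_a\circ(1^X\otimes\id)=\ev_b\circ(\id\otimes X)$, naturality through the braiding alone, and the triangle identity through $\beta\circ\beta=\id$ — exactly the steps you describe. Your observation that $\psi(g)=\ev_a\circ(g\otimes\id)$ follows from naturality of $\psi$ in the source together with $\psi(\id_{1^a})=\ev_a$ makes explicit what the paper uses implicitly in its ``naturality of $\psi$'' steps.
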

\begin{remark}
    More generally, any symmetric monoidal involution on a closed symmetric monoidal category gives rise to a lax $\Oone$-volutive structure; 
    \Cref{Theorem: closed symmetric monoidal categories are lax volutive} is then recovered as the special case of the identity involution. 
\end{remark}
\begin{remark}
    We have explained in \Cref{Remark: lax homotopy fixed points vs lax volutive categories} that lax $\Oone$-volutive categories are \emph{not} lax homotopy fixed 
    points. Instead, the terminology is motivated by the following observation: the functor $1^{(-)} \colon \C \to \C^{\opp}_{\otimes^{\opp}}$ is oplax monoidal. 
    Here, $\otimes^{\opp}$ refers to the opposite monoidal structure, i.e. $a \otimes^{\opp} b := b \otimes a$. To see the claim, we construct oplax\footnote{While 
    the data we supply here resembles lax monoidality data, we recall that the target of our functor is $\C^{\opp}$, so that this data is reversed.} monoidality 
    data for our functor, which amounts to morphisms $\phi_{a,b} \colon 1^a \otimes^{\opp} 1^b \to 1^{a \otimes b}$ in $\C$ for each $a,b$ and $1 \to 1^1$. The former 
    are given as the preimage of the morphism 
    \begin{equation}
        \xymatrix{
            1^a \otimes^{\opp} 1^b \otimes (a \otimes b) = 1^b \otimes 1^a \otimes (a \otimes b) \ar[rr]^-{\id \otimes \ev_a \otimes \id} &&  1^b \otimes b \ar[r]^-{\ev_b} & 1
        }
    \end{equation}
    under the bijection $\hom_{\C}(1^a \otimes^{\opp} 1^b, 1^{a \otimes b}) \cong \hom_{\C}((1^a \otimes^{\opp} 1^b) \otimes (a \otimes b), 1)$ and the 
    latter is given as the preimage $\hom_{\C}(1,1^1) \cong \hom_{\C}(1 \otimes 1, 1) \cong \hom_{\C}(1,1)$. The naturality of this data is deduced from 
    the naturality of the defining bijections $\hom_{\C}(c,b^a) \to \hom_{\C}(c \otimes a,b)$. 
\end{remark}
\begin{example}
    The category of (complete) bornological vector spaces is closed symmetric monoidal and hence admits a lax $\Oone$-volutive structure. It contains 
    the lax $\Oone$-volutive category of Banach spaces as a full subcategory. We will discuss this example in more detail in \Cref{Subsection: Bornological vector spaces}.
\end{example}
\begin{example}
    Let $R$ be a $*$-ring. Then, the category $\Mod_{R}$ of right $R$-modules admits a lax $\Oone$-volutive structure. We will discuss 
    this example in more detail in \Cref{Subsection: Modules over star-rings}. 
\end{example}
\begin{example}
    Let $R$ be a commutative ring. Then, the category $\Cha(R)$ of unbounded chain complexes of $R$-modules is a closed symmetric monoidal (model) category, 
    see e.g. \cite[Proposition 4.2.13]{hovey2007model}, and hence admits a lax $\Oone$-volutive structure. More precisely, the monoidal product on 
    $\Cha(R)$ is given by $(X \otimes Y)_n = \bigoplus_n (X_k \otimes_R Y_{n-k})$, the monoidal unit $1$ is $R$ interpreted 
    as a chain complex concentrated in degree $0$, and we have a chain complex $Y^X$ with components
    \begin{equation}
        (Y^X)_n \equiv \Hom(X,Y)_n = \Pi_k \hom_{R}(X_k,Y_{n+k}) = \hom_{\Cha(R)}(X,Y[n])
    \end{equation}
    and differentials induced %(with the appropriate signs) 
    from $X$ and $Y$, respectively. In particular, we have 
    \begin{equation}
        (1^X)_n \equiv \Hom(X,1)_n  = \Pi_k \hom_{R}(X_k,1_{n+k}) = \hom_{R}(X_n,R)
    \end{equation}
    and 
    \begin{equation}
        (1^{1^X})_n = \Hom(1^X,1)_n = \hom_{R}((1^X)_n,R) = \hom_{R}(\hom_{R}(X_n,R),R)
    \end{equation}
    with a map of chain complexes $X \to (1^{1^X})$ given in degree $n$ by 
    \begin{equation}
        X_n \to \hom_{R}(\hom_{R}(X_n,R),R), a \mapsto \Ev_a \qquad \text{where} \qquad \Ev_a(f) = f(a)
    \end{equation}
    omitting the respective differentials. In particular, under the inclusion $\Rmod \to \Cha(R)$ sending each $R$-module $X$ to the chain 
    complex complex concentrated in degree zero with value $X$, this reproduces the lax $\Oone$-volutive structure of 
    \Cref{Proposition: modules over star rings admits lax volutive structure}. Moreover, noting that $(1^{1^X})_n = (1^{1^{X_n}})$, applying 
    \Cref{Construction: Oone volutions from lax Oone volutions} yields the $\Oone$-volutive category 
    $\widehat{\Cha(R)} \subseteq \Cha(R)$ consisting of those chain complexes which are degreewise reflexive modules. 
\end{example}
\begin{remark}
    There are notions of duality for (symmetric) monoidal categories between rigidity and closedness, two of which we will recall in the following and 
    place in the (lax) $\Oone$-volutive context. A \emph{symmetric Grothendieck-Verdier (GV) category} or \emph{symmetric $*$-autonomous category} 
    is a closed symmetric monoidal category $\C$ together with a \emph{dualizing object}, that is, an object $\mathfrak{d}$ such that the canonical morphism 
    $a \to \mathfrak{d}^{\mathfrak{d}^a}$ is an isomorphism for all $a \in \C$. Repeating \Cref{Construction: O(1)-volutive categories from closed monoidal ones} 
    with the monoidal unit replaced by the dualizing object $\mathfrak{d}$ yields an $\Oone$-volutive structure on $\C$.\\
    In the special case where the dualizing object $\mathfrak{d} = 1_{\C}$ is the monoidal unit, $\C$ is called an \emph{$r$-category}.
\end{remark}
\begin{remark}
    We informally summarize the constructions of (lax) $\Oone$-volutive categories from symmetric monoidal categories with some type of 
    duality in the following commutative diagram. 
    \begin{equation}
        \begin{aligned}
            \xymatrix{
                \text{rigid} \ar[r]^-{} \ar[dr] & \text{$r$-category} \ar[d] \ar[r]^-{} & \text{GV/*-autonomous}\ar[r]^-{} \ar[dl] & \text{closed} \ar[dl] \\
                & \text{$\Oone$-volutive} & \text{lax $\Oone$-volutive} \ar@{-->}[l] & 
            }
        \end{aligned}
    \end{equation}
    Here, solid arrows preserve the underlying categories while dashed arrows generically change them.
\end{remark}
In the remainder of this section, we establish some of the general theory of lax $\Oone$-volutive categories.
\begin{construction}\label{Construction: Functor categories inherit lax O(1)-volutions}
    Let $(\C,d,\eta)$ and $(\C',d',\eta')$ be lax $\Oone$-volutive categories. We construct a lax $\Oone$-volutive structure on the category of functors 
    $\E := \Fun(\C,\C')$. First, define a functor $\tilde{d} \colon \E \to \E^{\opp}$ by setting 
    \begin{equation}
        \begin{cases}
            \tilde{d}(F) = (d')^{\opp} \circ F^{\opp} \circ d \\
            \tilde{d}(\alpha \colon F \to G) = \id_{(d')^{\opp}} \circ \alpha^{\opp} \circ \id_{d} \colon \tilde{d}(G) \to \tilde{d}(F).
        \end{cases}
    \end{equation}
    We then define the component of the natural transformation $\tilde{\eta}$ at $F \in \E$ to be 
    \begin{equation}
        \xymatrix@C=4em{
            F \cong \id_{\D} \circ F \circ \id_{\C} \ar[r]^-{\eta' \circ \id_{F} \circ \eta} & ({d'}^{\opp} \circ d') \circ F \circ (d^{\opp} \circ d) = 
            (\tilde{d}^{\opp} \circ \tilde{d})(F)
        }
    \end{equation}
    which one checks to satisfy the coherence condition so that $(\tilde{d},\tilde{\eta})$ defines a lax $\Oone$-volutive structure on $\E$.
\end{construction}
\begin{construction}
    Let $(\C,d,\eta)$ and $(\C',d',\eta')$ be lax $\Oone$-volutive categories. Consider the lax $\Oone$-volutive category $(\Fun(\C,\C'),\tilde{d},\tilde{\eta})$
    described in \Cref{Construction: Functor categories inherit lax O(1)-volutions}. We define the category of \emph{lax $\Oone$-volutive functors} between 
    $(\C,d,\eta)$ and $(\C',d',\eta')$ to be $\LaxHerm(\Fun(\C,\C'),\tilde{d},\tilde{\eta})$, see \Cref{Definition: the category of lax hermitian fixed points}. 
    We will refer to the morphisms of this category as \emph{lax $\Oone$-volutive transformations}.
\end{construction}
\begin{remark}
    In the following we wish to describe functors and transformations of lax $\Oone$-volutive categories more explicitly. A lax $\Oone$-volutive functor 
    $(\C,d,\eta) \to (\C',d',\eta')$ is a pair $(F,\alpha)$ consisting of a functor $F \colon \C \to \C'$ and a natural transformation 
    $\alpha \colon d' \circ F \to F^{\opp} \circ d$ satisfying $(\alpha^{\opp} \circ \id_d)(\id_F \circ \eta) = (\id_{d^{\opp}} \circ \alpha)(\eta' \circ \id_F)$.
    Given two such lax $\Oone$-volutive functors $(F_1,\alpha_1)$ and $(F_2,\alpha_2)$, a lax $\Oone$-volutive natural transformation $(F_1,\alpha_1) \to (F_2,\alpha_2)$
    is a natural transformation $\xi \colon F_1 \to F_2$ satisfying $(\xi^{\opp} \circ \id_d)\alpha_2(\id_{d'}\xi) = \alpha_1$. 
\end{remark}
\begin{definition}
    We denote by $\Cat^{\text{lax}\Oone\text{-vol}}$ the 2-category of lax $\Oone$-volutive categories, lax $\Oone$-volutive functors, and lax $\Oone$-volutive 
    natural transformations. 
\end{definition}
\begin{definition}\label{Definition: product of lax Oone volutive categories}
    Let $(\C,d,\eta)$ and $(\C',d',\eta')$ be lax $\Oone$-volutive categories. Their \emph{(cartesian) product} is the lax $\Oone$-volutive category
    \begin{equation}
        (\C,d,\eta) \times (\C',d',\eta') := (\C \times \C',d \times d',\eta \times \eta').
    \end{equation}
    The cartesian product endows $\Cat^{\text{lax}\Oone\text{-vol}}$ with a symmetric monoidal structure. 
\end{definition}
\begin{remark}
    Let $(\C,d,\eta)$ and $(\C',d',\eta')$ be lax $\Oone$-volutive categories and let 
    $(F,\alpha) \colon (\C,d,\eta) \to (\C',d',\eta')$ be a lax $\Oone$-volutive functor. We have described a procedure in \Cref{Construction: Oone volutions from lax Oone volutions}
    that takes as an input a lax $\Oone$-volutive category, and produces an $\Oone$-volutive category. This procedure is \emph{not} compatibile with 
    lax $\Oone$-volutive functors, i.e. $(F,\alpha)$ does not descend to an ($\Oone$-volutive) functor between the associated $\Oone$-volutive categories. 
    The situation is better if $\alpha$ is a natural isomorphism, that is, $(F,\alpha)$ is really an $\Oone$-volutive functor between lax $\Oone$-volutive categories. 
\end{remark}

\subsection{Lax $\Oone$-volutive structures are relative}
In this section we present various equivalent formulations of the notion of a lax $\Oone$-volutive category; in particular, we will show that lax $\Oone$-volutive 
categories are special relative $\Oone$-volutive categories. We start with a point of view that is essentially covered in \cite{heine2025infinitycategories}.
\begin{remark}\label{Construction: pairings and lifts}
    Let $F \colon \C \times \C \to \Set$ be a functor, which we may also refer to as a \emph{pairing} on $\C$. The pairing $F$ induces under the usual Hom-tensor 
    adjunction a functor $F \colon \C \to \Fun(\C,\Set)$. Equivalently, this can be understood as a functor $F \colon \C \to \mathcal{P}(\C^{\opp})$ into the category 
    of presheaves on $\C^{\opp}$. The latter admits the yoneda embedding $\yon \colon \C^{\opp} \to \mathcal{P}(\C^{\opp})$, so that we may ask whether there exists 
    a lift in the diagram 
    \begin{equation}\label{Equation: the lifting diagram for pairings}
        \begin{aligned}
            \xymatrix{
            \C^{\opp} \ar[dr]^-{\yon} & \\
            \C \ar[r]^-{F} \ar@{-->}[u]_-{} & \mathcal{P}(\C^{\opp}) 
        }
        \end{aligned}
    \end{equation}
    and if so whether this lift is an equivalence of categories or not.
\end{remark}
\begin{definition}\label{Definition: properties of pairings}
    In the situation of \Cref{Construction: pairings and lifts}, we will say that $F$ is \emph{representable}\footnote{The terminology is motivated by 
    \cite[Definition 5.2.1.8]{HA}.} if there exists a lift in the diagram. 
    We will say that $F$ is \emph{perfect} or \emph{non-degenerate} if it is representable and the lift is an equivalence of categories. We will say that 
    $F$ is \emph{symmetric} if $F \circ \beta_{\C,\C} \cong F$ for $\beta$ the symmetric braiding on $\Cat$. 
\end{definition}
\begin{lemma}\label{Lemma: lax Oone volutive categories and representable symmetric functors}
    Lax $\Oone$-volutive categories in the sense of \Cref{Definition: lax O(1)-volutive category} are the same as representable symmetric functors 
    in the sense of \Cref{Definition: properties of pairings}.
\end{lemma}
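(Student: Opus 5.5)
Both notions are unpacked into explicit data and matched through the Yoneda lemma. The pairings should be read with the variance that makes the lift in \eqref{Equation: the lifting diagram for pairings} land in $\C^{\opp}$, i.e.\ $F \colon \C^{\opp} \times \C^{\opp} \to \Set$. Alternatively, the lift is a functor $d \colon \C \to \C^{\opp}$ together with an isomorphism $F(a,-) \cong \yon(d(a)) = \hom_{\C}(-,d(a))$. In any case, a representable pairing amounts to a functor $d \colon \C \to \C^{\opp}$ with a natural isomorphism
\begin{equation}
    F(a,b) \cong \hom_{\C}(b,d(a)).
\end{equation}
By Yoneda, such a $d$ is unique up to unique isomorphism. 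The symmetry datum $F \circ \beta_{\C,\C} \cong F$ then becomes a natural isomorphism
\begin{equation}
    \sigma_{a,b} \colon \hom_{\C}(b,d(a)) \cong \hom_{\C}(a,d(b)).
\end{equation}
I take ``symmetric'' to mean a symmetric structure in the homotopy-coherent sense, so that $\sigma$ is required to satisfy $\sigma_{b,a} \circ \sigma_{a,b} = \id$. This is the usual convention for $\Oone$-fixed points in $\Fun(\C\times\C,\Set)$ under the swap, and it is what \cite{heine2025infinitycategories} uses. Without this condition the lemma cannot hold, so the first step of the proof is to make this interpretation explicit.

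From a pairing to a lax structure, I set $\eta_a := \sigma_{d(a),a}(\id_{d(a)}) \colon a \to d^2(a)$. Naturality of $\eta$ follows from naturality of $\sigma$ in both variables. For the coherence condition $d(\eta_a)\circ \eta_{d(a)} = \id_{d(a)}$, I would apply the Yoneda description of $\sigma$: naturality shows that $\sigma_{a,b}(f) = d(f)\circ \eta_b$ for every $f \colon b \to d(a)$. Applying this with $f = \eta_a$ computes $\sigma_{d(a),a}$ and then $\sigma_{a,d(a)}$. The involutivity $\sigma\circ\sigma = \id$ then yields exactly $d(\eta_a)\circ\eta_{d(a)}=\id$.

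From a lax structure to a pairing, I go the other way. Given $(d,\eta)$, I set $F(a,b) := \hom_{\C}(b,d(a))$, which is representable by construction. I define $\sigma_{a,b}(f) := d(f)\circ\eta_a \colon a \to d(b)$ for $f \colon b \to d(a)$. Naturality of $\sigma$ uses functoriality of $d$ and naturality of $\eta$. Involutivity is the computation
\begin{equation}
    \sigma_{b,a}\sigma_{a,b}(f) = d(\eta_a)\circ d^2(f)\circ\eta_b = d(\eta_a)\circ\eta_{d(a)}\circ f = f,
\end{equation}
which uses naturality of $\eta$ and then the coherence axiom.

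It then remains to check that the two constructions are mutually inverse. Starting from $(d,\eta)$, the first construction recovers $\sigma_{d(a),a}(\id) = d(\id)\circ\eta_a = \eta_a$. In the other direction, Yoneda forces $\sigma$ to have the form $d(f)\circ\eta$, so $\sigma$ is recovered as well. For the statement ``are the same as'', I would also match the morphisms: lax $\Oone$-volutive functors should correspond to maps of pairings compatible with $\sigma$.

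I expect the main obstacle to be this morphism-level comparison together with pinning down the variance and symmetry conventions in \Cref{Definition: properties of pairings}. The object-level computation itself is routine Yoneda bookkeeping.
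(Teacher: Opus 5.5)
Your argument is correct and follows the same route as the paper (the hom-pairing of $d$ in one direction; in the other, representability gives $d$, symmetry gives the adjunction $d \dashv d^{\opp}$, and $\eta$ is its unit extracted via Yoneda), apart from two index slips in the forward direction: it should read $\eta_a := \sigma_{a,d(a)}(\id_{d(a)})$ and $\sigma_{a,b}(f) = d(f) \circ \eta_a$, as you write correctly in the converse. Your two interpretive choices are exactly what the paper's sketch tacitly relies on: its literal $\Phi_d(a,b)=\Hom_{\C}(d(a),b)$, with the bijection $\Hom_{\C}(d(a),b)\cong \Hom_{\C}(d(b),a)$, would encode $d^{\opp}\dashv d$ rather than the $d\dashv d^{\opp}$ it then uses; and your identity $\sigma_{b,a}\sigma_{a,b}(f) = d(\eta_a)\circ\eta_{d(a)}\circ f$ shows that the paper's ``one checks'' of the coherence condition goes through precisely when the chosen symmetry isomorphism is involutive.
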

\begin{proof}
    Let $(\C,d,\eta)$ be a lax $\Oone$-volutive category. We define a functor $\Phi_d \colon \C \times \C \to \Set$ as follows: 
    given an object $(a,b) \in \C \times \C$, we define 
    \begin{equation}
        \Phi_d(a,b) = \Hom_{\C}(d(a),b),
    \end{equation}
    and given a morphism $(X,Y) \colon (a,b) \to (a',b')$ we define 
    \begin{equation}
        \Phi_d(X,Y) \colon \Hom_{\C}(d(a),b) \to \Hom_{\C}(d(a'),b'), (f \colon d(a) \to b) \mapsto Y \circ f \circ d(X),
    \end{equation}
    which is clearly functorial. The functor $\Phi_d \colon \C \times \C \to \Set$ is symmetric since we have a bijection
    \begin{equation}
        \Hom_{\C}(d(a),b) \cong \Hom_{\C^{\opp}}(a,d^{\opp}(b)) = \Hom_{\C}(d(b),a)
    \end{equation}
    which is natural in $(a,b) \in \C \times \C$, where we have identified $\C \cong (\C^{\opp})^{\opp}$. Put differently, 
    we have a natural isomorphism $\Phi_d \circ \beta \cong \Phi_d$ where $\beta$ denotes the symmetric braiding on $\Cat$.
    The functor $\Phi_d$ is representable by construction. \\
    Conversely, let $F \colon \C \times \C \to \Set$ be a functor that is symmetric and representable. We will construct a lax $\Oone$-volutive structure on $\C$ 
    as follows: first, since $F$ is representable we have a functor $d \colon \C \to \C^{\opp}$ fitting into the commutative diagram of
    \Cref{Equation: the lifting diagram for pairings}. By definition, this amounts to an equivalence of functors
    \begin{equation}
        \Hom_{\C}(d(-),-) \cong F(-,-) \colon \C \times \C \to \Set
    \end{equation}
    so that by symmetry of $F$, we have a natural bijection 
    \begin{equation}
        \Hom_{\C}(d(a),b) \cong F(a,b) \cong F(b,a) \cong \Hom_{\C}(d(b),a) = \Hom_{\C^{\opp}}(a,d^{\opp}(b))
    \end{equation}
    for any $a,b \in \C$. In other words, $d^{\opp}$ is right adjoint to $d$. The adjunction supplies us with a natural transformation $\id \to d^{\opp} \circ d$. 
    Explicitly, the functor $d$ induces a map
    \begin{equation}
        \Hom_{\C}(a,b) \to \Hom_{\C^{\opp}}(d(a),d(b)) \cong \Hom_{\C}(a,d^2(b))
    \end{equation}
    for any $a \in \C^{\opp}, b \in \C$ where in the second step we have used the adjunction. By the yoneda lemma, this
    asserts the existence of maps $\eta_b \colon b \to d^2(b)$ which assemble into a natural transformation $\id \to d^{\opp} \circ d$.
    One checks that the pair $(d,\eta)$ defines a lax $\Oone$-volutive structure on $\C$.
\end{proof}
We have seen in the proof of \Cref{Lemma: lax Oone volutive categories and representable symmetric functors} that for a lax $\Oone$-volutive category $(\C,d,\eta)$, 
$d^{\opp}$ is right adjoint to $d$. The following two points of view are centered around this observation. We start by recalling the following. 
\begin{definition}\label{Definition: walking adjunction}
    We define a 2-category $\Adj$ which has two objects $a$ and $b$, which has 1-morphisms freely generated by 1-morphisms $X \colon a \to b$ 
    and $Y \colon b \to a$, and 2-morphisms freely generated by 2-morphisms $f \colon \id_a \to Y \circ X$ and $g \colon X \circ Y \to \id_b$
    subject to the condition that $(g \circ \id_X)\bullet(\id_X \circ f) = \id_X$ and $(\id_Y \circ g) \bullet (f \circ \id_Y) = \id_Y$ 
    holds. We call the 2-category $\Adj$ the \emph{walking adjunction}. 
\end{definition}
\begin{remark}
    An adjunction in a 2-category $\B$ is the same as a 2-functor $\Adj \to \B$; in this sense, $\Adj$ classifies all possible adjunctions. 
\end{remark}
\begin{lemma}\label{Lemma: 2-contravariant Oone volutive structure on the walking adjunction}
    The walking adjunction $\Adj$ admits a 2-contravariant $\Oone$-volutive structure. 
\end{lemma}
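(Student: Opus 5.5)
We define the 2-functor $d \colon \Adj \to \Adj^{2\opp}$ by swapping the two objects and the two generating 1-morphisms, while reversing the 2-cells. Concretely, on objects $d(a) = b$ and $d(b) = a$. On generating 1-morphisms $d(X) = Y$ and $d(Y) = X$; this is well typed, since $X \colon a \to b$ must go to a 1-morphism $d(a) = b \to d(b) = a$. On generating 2-morphisms, $\Adj^{2\opp}$ reverses 2-cells, so $d(f)$ must be a 2-morphism $d(Y \circ X) = X \circ Y \to \id_b$ in $\Adj$, and we take $d(f) := g$. Symmetrically, $d(g) := f$.

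The first check is that $d$ is well defined on the generators-and-relations presentation of \Cref{Definition: walking adjunction}. Applying $d$ to the triangle identity $(g \circ \id_X)\bullet(\id_X \circ f) = \id_X$ gives, after reversing vertical composition in the 2-opposite, $(\id_Y \circ g)\bullet(f \circ \id_Y) = \id_Y$. This is the other triangle identity, and vice versa. So $d$ respects the relations and extends uniquely to a strict 2-functor.

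Next, $d^{2\opp} \circ d$ is the identity on generators, hence equal to $\id_{\Adj}$ on the nose. We take $\eta = \id$ and $\tau = \id$. The coherence condition of \Cref{Definition: 2-contravariant Oone volutive 2-categories} then reduces to an equality of identity 2-cells, so it holds trivially. In this way $\Adj$ becomes a strict instance of the structure, analogous to $(\Cat,(-)^{\opp},\id,\id)$.

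The main obstacle is the bookkeeping of variances: we must confirm that $d$ is covariant on 1-morphisms, so that it preserves the order of horizontal composition. For instance, $d(Y \circ X)$ should be $d(Y) \circ d(X) = X \circ Y$, which matches the source of $g$, and whiskerings should transform correctly. Here the chosen convention for $\Adj^{2\opp}$ (reverse 2-cells only) is essential. Conceptually, $d$ encodes the fact that an adjunction $X \dashv Y$ in $\Cat$ yields an adjunction $Y^{\opp} \dashv X^{\opp}$ with unit and counit interchanged.
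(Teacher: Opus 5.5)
Your proposal is correct and takes the same approach as the paper. The paper's proof defines the same strictly involutive 2-functor $\Adj \to \Adj^{2\opp}$ swapping $a \leftrightarrow b$, $X \leftrightarrow Y$, $f \leftrightarrow g$, with $\eta$ and $\tau$ left implicitly trivial. Your version additionally makes explicit two points the paper leaves unstated: that $d$ exchanges the two triangle identities, and that the coherence condition holds trivially.
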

\begin{proof}
    The 2-contravariant $\Oone$-volutive structure is given by the strictly involutive 2-functor $F \colon \Adj \to \Adj^{2\opp}$ determined by the assignment 
    \begin{align}
        \begin{aligned}
        &F(a) = b \\
        &F(b) = a \\
        &(F(X) \colon F(a) \to F(b)) = Y \colon b \to a \\
        &(F(Y) \colon F(b) \to F(a)) = X \colon a \to b \\
        &(F(f) \colon F(YX) = F(Y) \circ F(X) \to F(\id_a)) =  g \colon X \circ Y \to \id_b \\
        &(F(g) \colon F(\id_b) \to F(XY) = F(X) \circ F(Y)) = f \colon \id_a \to Y \circ X.
        \end{aligned}
    \end{align}
\end{proof}
Before stating our next comparison result, we consider the following. 
\begin{definition}\label{Definition: volutive adjunction}
    Let $\B$ be a 2-category together with a 2-contravariant $\Oone$-volutive structure $(d,\eta,\tau)$. Let $(c,Z,h)$ be a triple where 
    $c \in \B$ is an object, $Z \colon c \to d(c)$ is a 1-morphism, and $h \colon \id_c \to \eta_c^{-1} \circ d(Z) \circ Z$ is a 2-morphism 
    for which the composition
    \begin{equation}\label{Equation: first coherence condition of h in volutive adjunction}
        \begin{aligned}
        \xymatrix@R=0.5em{
            Z \ar[r]^-{\id \circ h} & Z \eta_c^{-1}  d(Z)  Z \ar[r]^-{\eta_Z \circ \id} & \eta_{d(c)}^{-1} d^2(Z)d(Z)Z 
            \ar[r]^-{\tau_c \circ \id} & d(\eta_c^{-1}) d^2(Z) d(Z) Z \\
            \ar[r]_-{\cong} & d(\eta_c^{-1} d(Z)Z) Z \ar[r]_-{d(h) \circ \id} & Z
        }
        \end{aligned}
    \end{equation}
    coincides with the identity 2-morphism on $Z$ and the composition 
    \begin{equation}\label{Equation: second condition of h in volutive adjunction}
        \begin{aligned}
        \xymatrix@R=0.5em{
            \eta_c^{-1} d(Z) \ar[r]^-{h \circ \id} & \eta_c^{-1} d(Z) Z \eta_c^{-1} d(Z) \ar[r]^-{\id \circ \eta_Z \circ \id} & 
            \eta_c^{-1} d(Z) \eta_{d(c)}^{-1} d^2(Z) d(Z) \\
            \ar[r]_-{\id \circ \tau_c \circ \id} & 
            \eta_c^{-1} d(Z) d(\eta_c^{-1}) d^2(Z) d(Z) \ar[r]_-{\id \circ d(h)} &\eta_c^{-1} d(Z)
        }
        \end{aligned}
    \end{equation}
    coincides with the identity 2-morphism on $\eta_c^{-1} d(Z)$. \\
    We call the triple $(c,Z,h)$ a \emph{volutive adjunction} in $\B \equiv (\B,d,\eta,\tau)$.
\end{definition}
\begin{remark}\label{Remark: Simplicification of volutive adjunctions}
    In the situation of \Cref{Definition: volutive adjunction}, assume the functor $d$ is strictly involutive and that $\eta$ and $\tau$ are trivial. 
    In this case, we can describe a volutive adjunction as a triple $(c,Z,h)$ where $c \in \B$ is an object, $Z \colon c \to d(c)$ is a 1-morphism in $\B$, 
    and $h \colon \id_c \to d(Z) \circ Z$ is a 2-morphism in $\B$ satisfying $(d(h) \circ \id)\bullet (\id \circ h) = \id_Z$ and 
    $(\id \circ d(h)) \bullet (h \circ \id) = \id_{d(Z)}$. Note that the first equation already implies the second equation, by applying the 
    2-contravariant functor $d$. We expect that the same holds true for general 2-contravariant $\Oone$-volutive structures $(d,\eta,\tau)$; however, 
    we will not need this fact in the following.
\end{remark}
\begin{lemma}\label{Lemma: volutive adjunctions and lax volutive categories}
    Lax $\Oone$-volutive categories are the same as volutive adjunctions in the 2-contravariant $\Oone$-volutive 2-category $(\Cat,(-)^{\opp},\id,\id)$. 
\end{lemma}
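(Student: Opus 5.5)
We plan to unwind \Cref{Definition: volutive adjunction} in the special case $\B = (\Cat,(-)^{\opp},\id,\id)$, using the simplification of \Cref{Remark: Simplicification of volutive adjunctions}, and compare the result directly with \Cref{Definition: lax O(1)-volutive category}. Since $(-)^{\opp}$ is strictly involutive and $\eta,\tau$ are identities, a volutive adjunction in $\Cat$ consists of three pieces of data. The first is a category $c = \C$. The second is a 1-morphism $Z \colon \C \to \C^{\opp}$, i.e. a functor $d := Z$. The third is a 2-morphism $h \colon \id_{\C} \to Z^{\opp} \circ Z = d^{\opp} \circ d$, i.e. a natural transformation $\eta := h \colon \id \to d^2$. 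These are subject to the two triangle-type equations $(d(h) \circ \id)\bullet(\id \circ h) = \id_Z$ and $(\id\circ d(h))\bullet(h \circ \id) = \id_{d(Z)}$. Thus the underlying data match exactly, and only the conditions remain to be compared.

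The key step is to translate the first equation into components. Here we must be careful with the 2-contravariance: the 2-functor $(-)^{\opp} \colon \Cat \to \Cat^{2\opp}$ sends the natural transformation $h \colon \id \to d^{\opp} d$ to $h^{\opp} \colon d^{\opp} d \to \id$ between endofunctors of $\C^{\opp}$. Its whiskering with $Z$ has component at $a \in \C$ equal to $(h_{d(a)})^{\opp}$, a morphism $d^{3}(a) \to d(a)$ in $\C^{\opp}$, i.e. $d(a) \to d^3(a)$ in $\C$. The whiskering $\id_Z \circ h$ has component $d(h_a) \colon d^3(a) \to d(a)$ in $\C$, i.e. a morphism $d(a) \to d^3(a)$ in $\C^{\opp}$. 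Composing vertically in $\Fun(\C,\C^{\opp})$, the first equation therefore says that in $\C^{\opp}$ the composite is the identity. Read in $\C$, this becomes
\[
d(\eta_a)\circ \eta_{d(a)} = \id_{d(a)} \qquad \text{for all } a\in\C,
\]
which is precisely the coherence condition of \Cref{Definition: lax O(1)-volutive category}. The main obstacle is this bookkeeping of opposites, namely identifying which of the two whiskerings contributes $\eta_{d(a)}$ and which contributes $d(\eta_a)$, and in which category each composite is formed. I would check this explicitly on components, using that composition in $\C^{\opp}$ reverses the order of composition in $\C$.

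Finally, we show that the second equation is redundant. As noted in \Cref{Remark: Simplicification of volutive adjunctions}, applying the strictly involutive 2-functor $(-)^{\opp}$ to the first equation yields the second equation. On components, the second equation reads $d(\eta_a)\circ\eta_{d(a)}=\id$ once more, now indexed over $\C^{\opp}$. Hence a volutive adjunction in $\Cat$ is exactly a triple $(\C,d,\eta)$ satisfying the lax coherence condition. Both directions of the correspondence are the identity on data, so they are mutually inverse, which proves the lemma.
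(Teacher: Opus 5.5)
Your proposal is correct and follows the same route as the paper, which spells out volutive adjunctions in $(\Cat,(-)^{\opp},\id,\id)$ via \Cref{Remark: Simplicification of volutive adjunctions} and observes that the data and conditions are exactly those of a lax $\Oone$-volutive category. Your component-level bookkeeping is accurate and makes explicit what the paper leaves to the reader: the first triangle identity becomes $d(\eta_a)\circ\eta_{d(a)}=\id_{d(a)}$, and the second is the same equation indexed over the objects of $\C^{\opp}$.
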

\begin{proof}
    Spelling out 2-contravariant $\Oone$-volutive 2-category $(\Cat,(-)^{\opp},\id,\id)$ according to \Cref{Remark: Simplicification of volutive adjunctions} yields 
    precisely the same data as that of a lax $\Oone$-volutive category. 
\end{proof}
\begin{lemma}\label{Lemma: volutive adjunctions and volutive functors from the walking adjunction}
    Let $(\B,d,\eta,\tau)$ be a 2-contravariant $\Oone$-volutive 2-category. Then, volutive adjunctions in $(\B,d,\eta,\tau)$ are the same as 
    2-contravariant $\Oone$-volutive 2-functors $\Adj \to \B$. 
\end{lemma}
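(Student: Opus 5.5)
The plan is to exploit the freeness of $\Adj$: a 2-functor out of $\Adj$ is determined, up to equivalence, by its values on the generators $a$, $X$, $Y$, $f$, $g$ subject to the triangle identities. We then identify which of these values the 2-contravariant $\Oone$-volutive structure of \Cref{Lemma: 2-contravariant Oone volutive structure on the walking adjunction} forces, and which it leaves free. The guiding principle is that the involution swaps $a \leftrightarrow b$, $X \leftrightarrow Y$ and $f \leftrightarrow g$, so the volutive structure lets us reconstruct the images of $b$, $Y$ and $g$ from those of $a$, $X$ and $f$.

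\textbf{From a volutive 2-functor to a volutive adjunction.} Let $G \colon \Adj \to \B$ be a 2-contravariant $\Oone$-volutive 2-functor. It comes with an invertible 2-transformation $\alpha \colon G^{2\opp} \circ F \cong d \circ G$ and a coherence modification relating $\alpha$, $\eta$ and $\tau$. We set $c := G(a)$. The component $\alpha_a \colon G(b) \to d(G(a))$ is an equivalence, and we use it to define $Z := \alpha_a \circ G(X) \colon c \to d(c)$. The naturality 2-cell $\alpha_X$ identifies $G(Y) = G(F(X))$ with $d(G(X))$, up to conjugation by $\alpha_a$ and $\alpha_b$. The modification component at $a$ identifies $\alpha_b$ with $\eta_c^{-1}\circ d(\alpha_a)^{-1}$, up to 2-isomorphism. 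Combining these, we obtain an isomorphism $G(Y)\circ G(X) \cong \eta_c^{-1}\circ d(Z)\circ Z$. Whiskering $G(f)$ with this isomorphism then defines $h \colon \id_c \to \eta_c^{-1} d(Z) Z$. Condition \eqref{Equation: first coherence condition of h in volutive adjunction} should follow from the first triangle identity $(g\circ \id_X)\bullet(\id_X\circ f) = \id_X$. To see this, we rewrite $G(g)$ as $G(F(f))$ and use that $\alpha$ intertwines $G(F(f))$ with $d(G(f))$. The occurrences of $\eta_Z$ and $\tau_c$ in the condition come exactly from the naturality of $\alpha$ and from the coherence modification. Condition \eqref{Equation: second condition of h in volutive adjunction} is obtained in the same way from the second triangle identity.

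\textbf{From a volutive adjunction to a volutive 2-functor.} Conversely, given $(c,Z,h)$, we define $G$ by $G(a)=c$, $G(b)=d(c)$, $G(X)=Z$, $G(Y)=\eta_c^{-1}\circ d(Z)$, $G(f)=h$. For $G(g)$ we take the composite $Z\eta_c^{-1}d(Z) \to \id_{d(c)}$ obtained from $d(h)$ after correcting by $\eta_Z$ and $\tau_c$. This is precisely the 2-cell appearing in the second half of the two displayed composites, so the conditions of \Cref{Definition: volutive adjunction} are literally the triangle identities for $(G(X),G(Y),G(f),G(g))$. By freeness, $G$ therefore extends to a 2-functor. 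The volutive data $\alpha$ is then specified on generators: take $\alpha_a = \id_{d(c)}$ and $\alpha_b = \eta_c^{-1}$ (read as a 1-morphism $c \to d^2(c)$ inverse-wise), with 2-cell components built from $\eta_Z$ and $\tau_c$. The coherence modification of $G$ is given by $\tau_c$ itself, and the axiom it must satisfy reduces to the axiom on $\tau$ in \Cref{Definition: 2-contravariant Oone volutive 2-categories}. Finally, we check that the two constructions are mutually inverse up to equivalence. In one direction this is immediate. In the other, one uses $\alpha_a$ to build an invertible volutive transformation to the normalized $G$.

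\textbf{Main obstacle.} The expected difficulty is not conceptual but lies in the coherence bookkeeping. We must verify that the 2-cells obtained from $\alpha$ on generators extend to a genuine pseudonatural transformation satisfying the volutive modification axiom. The key point is that, because $\Adj$ is free, it suffices to check naturality on the generating 2-cells $f$ and $g$. For $f$ this holds by construction. For $g$ it amounts to the identity $G(g) = d(h)$ up to the $\eta,\tau$ corrections, which is our definition. To avoid drowning in these corrections, I would first treat the strict case of \Cref{Remark: Simplicification of volutive adjunctions}, where $d$ is strictly involutive and $\eta,\tau$ are trivial, and there all checks are transparent. The general case would then be handled by inserting $\eta$ and $\tau$ and invoking the axiom of \Cref{Definition: 2-contravariant Oone volutive 2-categories} at each point where two different $\eta$-corrections must be compared.
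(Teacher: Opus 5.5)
Your proposal is essentially the paper's proof: the same assignments $G(a)=c$, $G(b)=d(c)$, $G(X)=Z$, $G(Y)=\eta_c^{-1}\circ d(Z)$, $G(f)=h$, $G(g)=d(h)\bullet(\tau_c\circ\id)\bullet(\eta_Z\circ\id)$, with volutive components $\id_{d(c)}$ at $a$ and $\eta_c$ at $b$ and a modification built from $\tau_c$; conversely $Z=\alpha_a\circ G(X)$, with $h$ obtained from $G(f)$, $\alpha_X$ and the modification component at $a$, and the remaining coherence checks deferred just as in the paper. Two small corrections: the modification at $a$ gives $\alpha_b^{-1}\cong\eta_c^{-1}\circ d(\alpha_a)$ (your composite $\eta_c^{-1}\circ d(\alpha_a)^{-1}$ does not typecheck), and it is naturality of $\alpha$ at $f$ (comparing $G(F(f))=G(g)$ with $d(h)$) that holds by definition of $G(g)$, whereas naturality at $g$ (comparing $h$ with $d(G(g))$, which contains $d^2(h)$) is a genuine check that uses the 2-naturality of $\eta$ at $h$ and the axioms on $\tau$.
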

\begin{proof}
    Let $(c,Z,h)$ be a volutive adjunction in $(\B,d,\eta,\tau)$. We define a functor $G \colon \Adj \to \B$ by setting $G(a) = c, G(b) = d(c), G(X) = Z, G(Y) = \eta_c^{-1} \circ d(Z), G(f) = h, G(g) = d(h) \bullet (\tau_c \circ \id) \bullet (\eta_Z \circ \id)$. 
    We need to check that the Zorro moves are satisfied. The first Zorro move is satisfied by \Cref{Equation: first coherence condition of h in volutive adjunction}
    and the second Zorro move is satisfied by \Cref{Equation: second condition of h in volutive adjunction}. Next, we need to construct a 2-contravariant 
    $\Oone$-volutive structure $(\omega,\Xi)$ on $G$. We define an invertible 2-transformation $\omega \colon G^{2\opp} \circ F \to d \circ G$ as follows. 
    Its 1-morphism component at $a \in \Adj$ is given by the identity 1-morphism on $GF(a) = G(b) = d(c) = d(G(a))$ and its 1-morphism component at 
    $b \in \Adj$ is given by 
    \begin{equation}
        \xymatrix{
            GF(b) = G(a) = c \ar[r]^-{\eta_c} & d^2(c) =d G(b)
        }
    \end{equation}
    Its 2-morphism component at $X \colon a \to b$ in $\Adj$ is given by the identity 2-morphism filling the diagram 
    \begin{equation}
        \begin{aligned}
        \xymatrix{
            GF(a) \ar[r]^-{=} \ar[d]_-{GF(X)} & G(b) \ar[r]^-{=} \ar[d]_-{G(Y) }& d(c) \ar[d]_-{\eta_c^{-1} \circ d(Z)} \ar[r]^-{=} \ar[dr]^-{d(Z)} &dG(a) \ar[dr]^-{dG(X)}\\
            GF(b) \ar[r]_-{=} & G(a) \ar[r]_-{=} & c \ar[r]_-{\eta_c} & d^2(c) \ar[r]_-{=} &dG(b)
        }
        \end{aligned}
    \end{equation}
    Its 2-morphism component at $Y \colon b \to a$ in $\Adj$ is given by 
    \begin{equation}
        \begin{aligned}
        \xymatrix{
            GF(b) \ar[r]^-{=} \ar[d]_-{GF(Y)} & G(a) \ar[r]^-{=} \ar[d]_{G(X)} & c \ar[r]^-{\eta_c} \ar[d]_-{Z} & d^2(c) \ar[r]^-{=} \ar[d]^-{d(\eta_c^{-1} \circ d(Z))} \ar@{=>}[dl]^-{\eta_Z} & dG(b) \ar[d]^-{dG(Y)} \\
            GF(a) \ar[r]_-{=} & G(b) \ar[r]_-{=} & d(c) \ar[r]_-{=} & d(c) \ar[r]_-{=} & dG(a)
        }
        \end{aligned}
    \end{equation}
    One checks that this defines an invertible 2-transformation. Next, we define the invertible modification $\Xi$ fitting into the diagram 
    \begin{equation}
        \begin{aligned}
        \xymatrix{
        ddG \ar[r]^-{\id \circ \omega^{-1}} \ar[dr]_-{\eta \circ \id} & dGF \ar[r]^-{\omega^{-1} \circ \id} & GFF \ar[dl]^-{\id}\\
        & G &
        }
        \end{aligned}
    \end{equation}
    Its 2-morphism component at $a \in \Adj$ is given by the identity 2-morphism on $\eta_c^{-1}$. Its 2-morphism component at $b \in \Adj$ is given 
    by filling the following diagram 
    \begin{equation}
        \begin{aligned}
        \xymatrix{
        ddG(b) \ar[d]^-{=} \ar[r]^-{d(\omega_b^{-1})} & dGF(b) \ar[r]^-{\omega_{F(b)}^{-1}} \ar[d]^-{=} \ar@{=>}[dl]^-{\id} & GFF(b) \ar[d]^-{=} \ar@{=>}[dl]^-{\id} \\
        d^3(c) \ar[dr]_-{\eta_{d(c)}^{-1}} \ar[r]_-{d(\eta_c^{-1})} & d(c) \ar[r]_-{=} \ar@{=>}[d]^-{\tau_c^{-1}} & d(c) \ar[dl]^-{\id} \\
        & G(b) = d(c) &
        }
        \end{aligned}
    \end{equation}
    One checks that the triple $(G,\omega,\Xi)$ defines a 2-contravariant $\Oone$-volutive functor 
    \begin{equation}
        (G,\omega,\Xi) \colon (\Adj,F) \to (\B,d,\eta,\tau).
    \end{equation}
    Conversely, let $(G,\omega,\Xi) \colon \Adj \to \B$ be a 2-contravariant $\Oone$-volutive functor. We define a volutive adjunction 
    whose object is $c := G(a)$, whose 1-morphism $Z \colon c \to d(c)$ is given by the composition 
    \begin{equation}
        Z := \left( \xymatrix{ c = G(a) \ar[r]^-{G(X)} & G(b) = GF(a) \ar[r]^-{\omega_a} & dG(a) = d(c) } \right),
    \end{equation}
    and whose 2-morphism $h \colon \id_c \to \eta_c^{-1} \circ d(Z) \circ Z$ is given by the composition
    \begin{equation}
        \begin{aligned}
        \xymatrix@R=0.5em{
            \id_c = \id_{G(a)} = G(\id_a) \ar[r]^-{G(f)} & G(YX) = G(Y) \circ G(X) \ar[r]^-{=} & G(F(X)) \circ G(X)  \\
            \ar[r]_-{\omega_X \circ \id} & \omega_b^{-1}  dG(X)  \omega_a \omega_a^{-1} Z \ar[r]_-{=} & \omega_b^{-1} dG(X) Z \\
            \ar[r]_-{=} & \omega_b^{-1} d(\omega_a^{-1}\omega_a G(X)) Z \ar[r]_-{=} & \omega_b^{-1} d(\omega_a^{-1})d(\omega_a G(X)) Z\\
            \ar[r]_-{=} & \omega_b^{-1} d(\omega_a^{-1}) d(Z) Z \ar[r]_-{\Xi_c \circ \id} & \eta_c d(Z) Z
        }
        \end{aligned}
    \end{equation}
    One checks that the triple $(c,Z,h)$ defines a volutive adjunction.
\end{proof}
\begin{corollary}
    Lax $\Oone$-volutive categories are the same as 2-contravariant $\Oone$-volutive 2-functors $\Adj \to \Cat$. 
\end{corollary}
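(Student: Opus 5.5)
The corollary follows by composing the two preceding comparison results. I would first apply \Cref{Lemma: volutive adjunctions and volutive functors from the walking adjunction} to the specific 2-contravariant $\Oone$-volutive 2-category $(\B,d,\eta,\tau) = (\Cat,(-)^{\opp},\id,\id)$ of \Cref{Example: The Oone volutive structure on Cat}. Its domain $\Adj$ carries the structure of \Cref{Lemma: 2-contravariant Oone volutive structure on the walking adjunction}. This identifies 2-contravariant $\Oone$-volutive 2-functors $\Adj \to \Cat$ with volutive adjunctions in $\Cat$. Then \Cref{Lemma: volutive adjunctions and lax volutive categories} identifies volutive adjunctions in $\Cat$ with lax $\Oone$-volutive categories. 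Composing the two identifications gives the claim.

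To make the composite explicit, I would trace the data through. A lax $\Oone$-volutive category $(\C,d,\eta)$ becomes the triple $(\C, d, \eta)$ with $Z = d \colon \C \to \C^{\opp}$ and $h = \eta \colon \id_\C \to d^{\opp}\circ d$. This uses the simplification of \Cref{Remark: Simplicification of volutive adjunctions}, which applies because $(-)^{\opp}$ is strictly involutive and the transformations are trivial.

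The first zig-zag identity $(d(h)\circ\id)\bullet(\id\circ h) = \id_Z$ is exactly $d(\eta_a)\circ\eta_{d(a)} = \id_{d(a)}$, after accounting for the reversal of composition in $\C^{\opp}$. The second identity follows from the first by applying $(-)^{\opp}$, as noted in that remark. The resulting 2-functor $G \colon \Adj \to \Cat$ sends $a \mapsto \C$, $b \mapsto \C^{\opp}$, $X \mapsto d$, $Y \mapsto d^{\opp}$, $f \mapsto \eta$ and $g \mapsto \eta^{\opp}$. Since $\eta$ and $\tau$ are identities in $\Cat$, the structure $\omega$ on $G$ has identity components, and $\Xi$ is trivial as well.

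The main obstacle is to make sure the word ``same'' is meant consistently. The two lemmas give back-and-forth constructions, and I would check that the round trips are identities, or at least canonical equivalences. Starting from $(G,\omega,\Xi)$, the recovered $Z = \omega_a \circ G(X)$ only agrees with the original data up to the invertible $\omega_a \colon G(b) \simeq G(a)^{\opp}$. So at the level of objects one gets an equivalence rather than an equality: transporting along $\omega_a$ produces an isomorphic lax $\Oone$-volutive structure on $G(a)$. With that interpretation, as an equivalence of the relevant collections of structures, the corollary follows directly from the two lemmas.
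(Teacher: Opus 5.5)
Your proposal is correct and matches the paper's proof. The paper proves the corollary by applying the lemma identifying volutive adjunctions in $(\B,d,\eta,\tau)$ with 2-contravariant $\Oone$-volutive 2-functors $\Adj \to \B$ to $\B = (\Cat,(-)^{\opp},\id,\id)$, and then the lemma identifying volutive adjunctions in $\Cat$ with lax $\Oone$-volutive categories. Your explicit tracing of the data ($Z = d$, $h = \eta$, trivial $\omega$ and $\Xi$) is accurate, and so is your remark that ``same'' holds only up to the equivalence $\omega_a$ on the functor side; the paper leaves both points implicit.
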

\begin{proof}
    Combine \Cref{Lemma: volutive adjunctions and volutive functors from the walking adjunction} and \Cref{Lemma: volutive adjunctions and lax volutive categories}.
\end{proof}
All together, we have proved the following. 
\begin{theorem}\label{Theorem: four equivalent versions of lax O(1)-volutive categories}
    The following four notions are equivalent: 
    \begin{itemize}
        \item[(i)] Lax $\Oone$-volutive categories in the sense of \Cref{Definition: lax O(1)-volutive category},
        \item[(ii)] representable symmetric functors in the sense of \Cref{Definition: properties of pairings},
        \item[(iii)] volutive adjunctions in $\Cat$ in the sense of \Cref{Definition: volutive adjunction}, and
        \item[(iv)] 2-contravariant $\Oone$-volutive functors $\Adj \to \Cat$.
    \end{itemize}
\end{theorem}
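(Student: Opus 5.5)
The theorem is assembled from the three comparison results already in this section, so I will chain them rather than prove it from scratch. The equivalence (i)$\Leftrightarrow$(ii) is \Cref{Lemma: lax Oone volutive categories and representable symmetric functors}. The equivalence (i)$\Leftrightarrow$(iii) is \Cref{Lemma: volutive adjunctions and lax volutive categories}. The equivalence (iii)$\Leftrightarrow$(iv) is \Cref{Lemma: volutive adjunctions and volutive functors from the walking adjunction} specialized to $\B = (\Cat,(-)^{\opp},\id,\id)$, i.e.\ the preceding corollary. Since "the same as" is transitive, all four notions coincide. The remaining work is to make sure each lemma really gives a correspondence in both directions that is inverse up to the appropriate notion of equivalence, and not merely constructions going back and forth.

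For (i)$\Leftrightarrow$(ii), I will check that the two assignments are mutually inverse up to natural isomorphism. Starting from $(d,\eta)$, the pairing $\Phi_d$ has the lift $d$. The symmetry isomorphism $\Hom(d(a),b)\cong\Hom(d(b),a)$ is $f\mapsto d(f)\circ\eta_b$. Feeding this back through the Yoneda argument recovers the unit $\eta_b$ as the image of $\id_{d(b)}$. Conversely, starting from a symmetric representable $F$ with symmetry $\sigma$, I will verify the identity $d(\eta_a)\circ\eta_{d(a)}=\id_{d(a)}$, which the lemma left as "one checks". This is a triangle identity for the adjunction $d\dashv d^{\opp}$: because the adjunction is self-dual, its counit is $d^{\opp}$ applied to the unit, so the triangle identity $\varepsilon_{d}\circ d(\eta)=\id$ becomes exactly the required equation. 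For this to hold I need the symmetry to be \emph{involutive}, $\sigma\circ\sigma^{\beta}=\id$. I would make that condition explicit in the definition of "symmetric", reading the requirement $F\circ\beta\cong F$ as a homotopy fixed-point datum for the swap.

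For (iii)$\Leftrightarrow$(iv), I will record that the two constructions in the proof of \Cref{Lemma: volutive adjunctions and volutive functors from the walking adjunction} are inverse. Going from a volutive adjunction to a functor and back returns $(c,\id\circ Z,h)$ on the nose, since $\omega_a=\id$. Going from a functor to a volutive adjunction and back yields a functor isomorphic to the original via the invertible $2$-transformation whose component at $b$ is $\omega_a$. I also need to check that the "one checks" coherence for $(G,\omega,\Xi)$ holds. This reduces to the defining equation of $\tau$ in \Cref{Definition: 2-contravariant Oone volutive 2-categories}, and in the case $\B=\Cat$ it is trivial because $\eta$ and $\tau$ are identities.

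The main obstacle is the hidden coherence in (ii): without an involutivity condition on the symmetry isomorphism, the converse construction yields only an adjunction $d\dashv d^{\opp}$ whose unit need not satisfy the lax volutive identity. I would handle this first, by deriving the counit from the symmetry and confirming that the triangle identity becomes $d(\eta_a)\eta_{d(a)}=\id$ exactly when the symmetry squares to the identity.
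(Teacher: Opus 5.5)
Your decomposition matches the paper's. There the theorem is simply the conjunction of the lemma on representable symmetric functors, the lemma identifying volutive adjunctions in $\Cat$ with lax $\Oone$-volutive categories, and the corollary for $\Adj\to\Cat$. Your handling of (i)$\Leftrightarrow$(iii) and (iii)$\Leftrightarrow$(iv) is fine. Your remark that the symmetry has to be involutive is correct, and it sharpens the paper's ``one checks''.

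The step that fails is your explicit check of (i)$\Leftrightarrow$(ii). The cause is a variance problem that is already present in the lemma you cite.
\begin{itemize}
\item \emph{Your formula is ill-typed.} For $f\colon d(a)\to b$ one has $d(f)\colon d(b)\to d^2(a)$ in $\C$, which cannot be composed with $\eta_b\colon b\to d^2(b)$. So $f\mapsto d(f)\circ\eta_b$ is not a map on $\Phi_d(a,b)=\Hom_\C(d(a),b)$.
\item \emph{No other choice works in general.} In $\Ban$ one has $\Phi_d(a,\mathbb{C})=a^{**}$ while $\Phi_d(\mathbb{C},a)\cong a$. For $a=\ell^1$ these sets have different cardinalities, so $\Phi_d$ is not symmetric.
\item \emph{The converse produces the wrong structure.} Apply Yoneda to a natural bijection $\Hom_\C(d(a),b)\cong\Hom_\C(d(b),a)$ by setting $b=d(a)$. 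This yields maps $d^2(a)\to a$, not $\eta_a\colon a\to d^2(a)$. Such a bijection is an adjunction $d^{\opp}\dashv d$, not $d\dashv d^{\opp}$.
\end{itemize}

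The pairing that matches the adjunction in (iii) and (iv), namely $\Hom_{\C^{\opp}}(d(a),x)\cong\Hom_\C(a,d^{\opp}(x))$, is $\Psi_d(a,b):=\Hom_\C(a,d(b))$. This is a functor $\C^{\opp}\times\C^{\opp}\to\Set$, i.e.\ a pairing on $\C^{\opp}$ represented by $d^{\opp}$. With this variance your argument goes through verbatim:
\begin{itemize}
\item $\phi_{a,b}(f)=d(f)\circ\eta_b$ is well typed.
\item Yoneda recovers $\eta_b=\phi_{d(b),b}(\id_{d(b)})$.
\item Naturality of $\eta$ gives $\phi_{b,a}\phi_{a,b}(f)=d(\eta_b)\circ\eta_{d(b)}\circ f$. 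Hence the symmetry is involutive exactly when the lax volutive identity holds.
\end{itemize}
Without involutivity, Yoneda applied to $\phi$ and $\phi^{-1}$ separately only produces two transformations $\eta,\eta'$ satisfying the mixed identities $d(\eta_b)\circ\eta'_{d(b)}=\id$ and $d(\eta'_b)\circ\eta_{d(b)}=\id$. So you should restate (ii) for pairings on $\C^{\opp}$, with the involutive symmetry as part of the data. As written, with $\Phi_d(a,b)=\Hom_\C(d(a),b)$, the link (i)$\Leftrightarrow$(ii) does not hold.
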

We leave it to the reader to complete the definitions in the following result, providing various equivalent formulations of $\Oone$-volutive categories.
\begin{corollary}
    The following four notions are equivalent: 
    \begin{itemize}
        \item[(i)] $\Oone$-volutive categories in the sense of \Cref{Definition: O(1)-volutive category},
        \item[(ii)] non-degenerate symmetric functors in the sense of \Cref{Definition: properties of pairings},
        \item[(iii)] volutive adjoint equivalences in $\Cat$,
        \item[(iv)] 2-contravariant $\Oone$-volutive functors $\AdjEqv \to \Cat$.
    \end{itemize}
\end{corollary}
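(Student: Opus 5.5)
The plan is to run the proof of \Cref{Theorem: four equivalent versions of lax O(1)-volutive categories} again with every structure map made invertible, going around the same chain of equivalences. \textbf{(i)$\Leftrightarrow$(ii):} Start from an $\Oone$-volutive category $(\C,d,\eta)$ and form the pairing $\Phi_d(a,b)=\Hom_{\C}(d(a),b)$ as in \Cref{Lemma: lax Oone volutive categories and representable symmetric functors}. It is symmetric and representable, with lift $d$. Since $\eta$ is invertible and $d(\eta_a)\circ\eta_{d(a)}=\id$, the functor $d^{\opp}$ is an inverse of $d$ up to the isomorphisms $\eta$ and $\eta^{\opp}$. Hence the lift is an equivalence, i.e.\ $\Phi_d$ is non-degenerate.

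For the converse, take a non-degenerate symmetric pairing and let $d$ be its lift, now an equivalence. The proof of \Cref{Lemma: lax Oone volutive categories and representable symmetric functors} shows that $d^{\opp}$ is right adjoint to $d$, with unit $\eta$. An adjunction whose left adjoint is an equivalence is an adjoint equivalence, so its unit $\eta$ is invertible. This gives an $\Oone$-volutive structure.

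\textbf{(i)$\Leftrightarrow$(iii):} Define a \emph{volutive adjoint equivalence} in $(\B,d,\eta,\tau)$ as a volutive adjunction $(c,Z,h)$ with $h$ invertible. By the Zorro identities the counit $d(h)\bullet(\tau_c\circ\id)\bullet(\eta_Z\circ\id)$ is then invertible as well. In $\Cat$ we can use \Cref{Remark: Simplicification of volutive adjunctions}: here $h\colon\id\to d^{\opp}d$ is precisely the $\eta$ of the lax structure. So the argument of \Cref{Lemma: volutive adjunctions and lax volutive categories} applies verbatim, and imposing invertibility of $h$ on one side is the same as imposing invertibility of $\eta$ on the other.

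\textbf{(iii)$\Leftrightarrow$(iv):} Let $\AdjEqv$ be $\Adj$ with $f$ and $g$ required to be invertible, equivalently the localization of $\Adj$ at $\{f,g\}$. The 2-contravariant $\Oone$-volutive structure $F$ of \Cref{Lemma: 2-contravariant Oone volutive structure on the walking adjunction} swaps $f$ and $g$, so it descends to $\AdjEqv$. Write $q\colon\Adj\to\AdjEqv$ for the quotient map. A 2-contravariant $\Oone$-volutive functor out of $\AdjEqv$ is then the same as one out of $\Adj$ sending $f$ and $g$ to invertible 2-morphisms. This uses that $q$ is strictly compatible with the involutions, so the data $(\omega,\Xi)$ transfers unchanged: $\omega$ has no components at $f$ and $g$, and $\Xi$ only involves objects. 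Now apply \Cref{Lemma: volutive adjunctions and volutive functors from the walking adjunction}. Under its correspondence $G(f)=h$, and in the converse direction $h$ is $G(f)$ composed with invertible cells. So $G(f)$ is invertible exactly when $h$ is, and $G(g)$ is then invertible by the Zorro remark above.

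The main obstacle I expect is this last step: making precise the universal property of $\AdjEqv$ in the volutive setting, namely that volutive functors $\AdjEqv\to\B$ correspond to volutive functors $\Adj\to\B$ inverting $f$ and $g$, including the correspondence on the transformation data. I would first try the strict presentation by generators and relations, since then $F$ and $q$ commute on the nose. With that, the rest of the argument is bookkeeping of invertibility along the bijections already established in \Cref{Theorem: four equivalent versions of lax O(1)-volutive categories}.
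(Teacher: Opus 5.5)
Your plan is the argument the paper intends. The paper gives no proof: it asks the reader to define volutive adjoint equivalences and $\AdjEqv$ and to rerun \Cref{Theorem: four equivalent versions of lax O(1)-volutive categories} with all structure cells invertible, which is what you do.

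\textbf{A wrong justification in (iii) and (iii)$\Leftrightarrow$(iv).} You claim that ``by the Zorro identities'' the counit is invertible once $h$ is. This is false for adjunctions in general. The identity $(g\circ\id_X)\bullet(\id_X\circ f)=\id_X$ only makes the whiskered counit $g\circ\id_X$ invertible. For instance, a coreflective full subcategory $i\dashv R$ in $\Cat$ has invertible unit and non-invertible counit. Your conclusion still holds, for a different reason. The counit is $d(h)\bullet(\tau_c\circ\id)\bullet(\eta_Z\circ\id)$. Here $d(h)$ is invertible because the 2-functor $d$ preserves invertible 2-cells, and the other two factors are components of the invertible modification $\tau$ and the invertible 2-transformation $\eta$. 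This composite is exactly $G(g)$ in the forward direction of \Cref{Lemma: volutive adjunctions and volutive functors from the walking adjunction}. That is what shows $G$ inverts $g$ and factors through $\AdjEqv$. In the backward direction you only need $G(f)$ invertible, which is given.

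\textbf{A caution for (i)$\Leftrightarrow$(ii).} Here you import \Cref{Lemma: lax Oone volutive categories and representable symmetric functors}, but that lemma's argument has two problems you should not inherit.
\begin{itemize}
\item For the covariant pairing $\Hom(d(a),b)$, a natural bijection $\Hom(d(a),b)\cong\Hom(d(b),a)$ exhibits $d^{\opp}$ as a \emph{left} adjoint of $d$, not a right adjoint.
\item The $\eta$ extracted from the symmetry satisfies the coherence only if the symmetry isomorphism is involutive. On finite-dimensional vector spaces with $d=(-)^*$, rescaling the standard symmetry by $t$ with $t^2\neq 1$ breaks the coherence by a factor $t^{\pm 2}$.
\end{itemize}
In the non-degenerate case both points are easily repaired, and you should do so explicitly rather than cite the lax lemma.
\begin{itemize}
\item For (i)$\Rightarrow$(ii), use the symmetry $f\mapsto \eta_a^{-1}\circ d(f)$. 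It is bijective because $d$ is fully faithful. It is involutive by naturality of $\eta$ together with $d(\eta_a)=\eta_{d(a)}^{-1}$.
\item For (ii)$\Rightarrow$(i), take the symmetry as involutive data. The counit $d^{\opp}\circ d\to\id$ of $d^{\opp}\dashv d$ is invertible because $d$ is an equivalence; let $\eta$ be its inverse. Involutivity makes the unit and counit of this self-adjunction agree as maps $d^2(a)\to a$. The triangle identity then reads $d(\eta_a)\circ\eta_{d(a)}=\id$, i.e.\ $d(\eta_a)=\eta_{d(a)}^{-1}$.
\end{itemize}
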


\section{Relative $\Sotwo$-volutive 2-categories}\label{Section: Relative Sotwo volutive structures}
In this section, we introduce relative and lax $\Sotwo$-volutive 2-categories. We start by reviewing the theory of adjoints of 1- and 2-morphisms in a 3-category, 
highlighting in particular that the 3-category of 2-categories with adjoints has 2-adjoints. We then define $\Sotwo$-volutive 2-categories and consider the special 
case of rigid symmetric monoidal 2-categories, in which the $\Sotwo$-volutive structure is given by the Serre isomorphism. Finally, we introduce lax and 
relative $\Sotwo$-volutive 2-categories, and show that the latter notion generalizes the former. 
\subsection{Adjoints in 2- and 3-categories}
\begin{definition}
    A 2-category has \emph{right adjoints} if for each 1-morphism $X$ there exists another 1-morphism $X^R$ together with 2-morphisms 
    $\coev_X \colon \id \to X^R \circ X$ and $\ev_X \colon X \circ X^R \to \id$ satisfying the Zorro/snake/triangle moves. 
    We define left adjoints analogously. We will say that a 2-category has (all) adjoints if it has both right adjoints and left adjoints 
    for all 1-morphisms. 
\end{definition}
\begin{lemma}
    Let $\B$ be a 2-category with right adjoints. The assignment $a \mapsto a, X \mapsto X^R$ extends to a 2-functor $(-)^R \colon \B \to \B^{(1,2)\opp}$.
\end{lemma}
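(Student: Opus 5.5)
Our plan is to construct the 2-functor $(-)^R \colon \B \to \B^{(1,2)\opp}$ directly, using chosen adjunction data and mates. First we fix, for every 1-morphism $X \colon a \to b$, a right adjoint $X^R \colon b \to a$ with unit $\coev_X \colon \id_a \to X^R \circ X$ and counit $\ev_X \colon X \circ X^R \to \id_b$. For identities we choose $\id_a^R = \id_a$ with unitors as unit and counit; this is allowed because right adjoints are unique up to unique compatible isomorphism. On objects $(-)^R$ is the identity. On a 2-morphism $f \colon X \to X'$ between $X, X' \colon a \to b$ we take the mate
\begin{equation*}
    f^R := (\id_{X^R} \circ \ev_{X'}) \bullet (\id_{X^R} \circ f \circ \id_{X'^R}) \bullet (\coev_X \circ \id_{X'^R}) \colon X'^R \to X^R,
\end{equation*}
with associators and unitors suppressed. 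This reverses both 1-morphisms and 2-morphisms, which is why the target is $\B^{(1,2)\opp}$.

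The second step is functoriality on hom-categories, namely $(\id_X)^R = \id_{X^R}$ and $(g \bullet f)^R = f^R \bullet g^R$. The identity case is exactly one Zorro move for $(X,X^R)$. For composites we insert $\coev_{X'} \bullet$ (for the middle adjunction), use the interchange law to slide $\ev_{X'}$ and $\coev_{X'}$ next to each other, and cancel them with a Zorro move for $X'$. This is the standard mate calculus, and we would present it as a string-diagram argument.

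The third step builds the compositor and unitor. For $X \colon a \to b$ and $Y \colon b \to c$, the composite $X^R \circ Y^R$ is a right adjoint of $Y \circ X$, with unit $(\id \circ \coev_Y \circ \id) \bullet \coev_X$ and counit $\ev_Y \bullet (\id \circ \ev_X \circ \id)$. Uniqueness of adjoints then gives a canonical invertible 2-morphism $\mu_{X,Y} \colon (Y \circ X)^R \cong X^R \circ Y^R$, written explicitly as the mate of $\id_{YX}$ between the two adjunctions. The unitor is the identity by our choice $\id_a^R = \id_a$, or else the canonical comparison.

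The main obstacle is verifying the coherence axioms. These are naturality of $\mu$ in $X$ and $Y$, the associativity hexagon for triple composites $Z \circ Y \circ X$, and the two unit triangles. Our approach is to avoid direct computation by using the universal property: two 2-morphisms between right adjoints of the same 1-morphism are equal as soon as they are compatible with the units (equivalently, the counits), because mates are unique. Each coherence diagram compares two isomorphisms between right adjoints of a fixed 1-morphism, for example two comparisons $(ZYX)^R \cong X^R Y^R Z^R$. So it is enough to check that both sides carry the unit of one adjunction to the unit of the other. Since every map is built by composing units with Zorro cancellations, this follows by interchange together with the associator coherence of $\B$. For a non-strict $\B$ this bookkeeping of associators is the most tedious part, and we would reduce it by appealing to the coherence theorem for bicategories, replacing $\B$ by an equivalent strict 2-category.
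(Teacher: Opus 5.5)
Your proposal is correct and follows the paper's route. The paper defines $(-)^R$ on 2-morphisms by exactly the mate $(\id \circ \ev_{X'})\bullet(\id\circ f\circ\id)\bullet(\coev_X\circ\id)$ you write down and then only says ``one checks that this defines a 2-functor,'' so your Zorro-move argument, the compositor obtained from uniqueness of adjoints, and the coherence checks supply details the paper leaves implicit. One small imprecision: naturality of $\mu_{X,Y}$ is not a comparison of two isomorphisms between right adjoints of a fixed 1-morphism, since both sides are generally non-invertible maps $(Y'X')^R \to X^R Y^R$; there you should use the full mate bijection $\hom(A,B)\cong\hom(B^R,A^R)$ and check that both sides have mate $g\circ f$.
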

\begin{proof}
    We define the functor $(-)^R$ on 2-morphism level by mapping $f \colon X \to Y$ to 
    \begin{equation}
        \xymatrix@C=3.5em{
        Y^R \ar[r]^-{\coev_X \circ \id} & X^R \circ X \circ Y^R \ar[r]^-{\id \circ f \circ \id} & X^R \circ Y \circ Y^R \ar[r]^-{\id \circ \ev_Y} & X^R.
        }
    \end{equation}
    One checks that this defines a 2-functor. 
\end{proof}
\begin{lemma}\label{Lemma: compatibility of any 2-functor with adjoints}
    Let $\B,\B'$ be 2-categories with right adjoints and let $F \colon \B \to \B'$ a functor. Then, there exists an invertible 2-transformation 
    \begin{equation}
        \xi_F \colon (-)^R_{\B'} \circ F \to F^{(1,2)\opp} \circ (-)^R_{\B}.
    \end{equation}
    Let furthermore $G \colon \B \to \B'$ be another functor and let $\alpha \colon F \to G$ be a 2-transformation. 
    Then, there exists a modification $\xi_{\alpha}$ fitting into the diagram 
    \begin{equation}
        \begin{aligned}
        \xymatrix{
            (-)^R_{\B'} \circ F \ar[d]^-{\id \circ \alpha} \ar[r]^-{\xi_F} & F^{(1,2)\opp} \circ (-)^R_{\B}\\
            (-)^R_{\B'} \circ G \ar[r]_-{\xi_G}\ar@{=>}[ur]_-{\xi_{\alpha}}  & G^{(1,2)\opp} \circ (-)^R_{\B} \ar[u]_-{\alpha^{(1,2)\opp} \circ \id}.
        }
        \end{aligned}
    \end{equation}
    The 2-modifications $\xi_{\alpha}$ are moreover natural and compatible with respect to composition and units. 
\end{lemma}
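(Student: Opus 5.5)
The idea is to build $\xi_F$ from the mates correspondence. A 2-functor sends adjunctions to adjunctions, and right adjoints are unique up to unique coherent isomorphism. For each 1-morphism $X \colon a \to b$ in $\B$, apply $F$ to the adjunction $(X, X^R, \coev_X, \ev_X)$. This gives an adjunction $(F(X), F(X^R))$ in $\B'$, with unit and counit obtained from $F(\coev_X)$ and $F(\ev_X)$ by composing with the structure isomorphisms $F(\id) \cong \id$ and $F(X^R)\circ F(X) \cong F(X^R \circ X)$. Uniqueness of right adjoints then yields a canonical invertible 2-morphism
\begin{equation}
(\xi_F)_X \colon F(X)^R \xrightarrow{\coev_{F(X)} \circ \id} F(X^R)\circ F(X)\circ F(X)^R \xrightarrow{\id \circ \ev_{F(X)}} F(X^R),
\end{equation}
where the first map uses the unit of the $F$-image adjunction and the last uses the counit of the chosen adjunction in $\B'$. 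Its inverse is the analogous map with the roles of the two adjunctions swapped; the Zorro moves show that the two composites are identities.

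The 1-morphism components of $\xi_F$ are identities $F(a) \to F(a)$, since both $(-)^R_{\B'}\circ F$ and $F^{(1,2)\opp}\circ(-)^R_\B$ act as $F$ on objects. The 2-morphism components are the $(\xi_F)_X$ just defined. Note that $(-)^R$ has target $\B^{(1,2)\opp}$, so these components are to be read with the appropriate variance.

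The next step is to check the axioms of a 2-transformation. Naturality in 2-morphisms $f \colon X \to Y$ reduces, after unfolding the definition of $(-)^R$ on 2-morphisms from the preceding lemma, to a Zorro-move manipulation together with the interchange law and the fact that $F$ preserves vertical composition. Compatibility with composition $X' \circ X$ and units uses the identification $(X'X)^R \cong X^R X'^R$, via the composite adjunction, and the analogous identification in $\B'$. Every 2-morphism involved is a mate, so the required equation says that two mates of the same 2-morphism agree, which follows from the functoriality of mates under pasting.

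For a 2-transformation $\alpha \colon F \to G$ with components $\alpha_a$ and naturality 2-cells $\alpha_X \colon G(X)\alpha_a \Rightarrow \alpha_b F(X)$, I would define $(\xi_\alpha)_a$ as the mate of $\alpha_X$ with respect to the adjunctions $F(X)\dashv F(X)^R$ and $G(X)\dashv G(X)^R$. Equivalently, $(\xi_\alpha)_a$ can be taken as an appropriate pasting of $\alpha_{X^R}$ with $(\xi_F)_X$ and $(\xi_G)_X$. The cleanest choice is to let $(\xi_\alpha)_a$ be the identity on the component $\alpha_a$, with the content of the modification condition being exactly that $\alpha_{X^R}$ and the mate of $\alpha_X$ correspond under $\xi_F,\xi_G$. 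The modification axiom is then again a comparison of two pastings of mates.

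Naturality of $\xi_\alpha$ in $\alpha$ and its compatibility with vertical/horizontal composition and units follow from functoriality of the mates construction. I expect the main obstacle to be bookkeeping rather than ideas: tracking the variance conventions of $(1,2)\opp$ together with the pseudofunctoriality constraints of $F$ and $G$ in the composition axiom. To keep this manageable, I would invoke strictification and assume $F, G$ strict and $\B, \B'$ strict 2-categories, which reduces every verification to string-diagram Zorro moves.
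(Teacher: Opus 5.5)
Your construction of $\xi_F$ matches the paper's: identity 1-morphism components and, at $X$, the comparison 2-isomorphism between $F(X^R)$ and $F(X)^R$ coming from uniqueness of right adjoints. Which direction you write it in is immaterial.

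The gap is in $\xi_\alpha$. Components of a modification are indexed by objects. So neither the mate of $\alpha_X$ nor a pasting of $\alpha_{X^R}$ with $(\xi_F)_X,(\xi_G)_X$ can serve as $(\xi_\alpha)_a$. Your ``cleanest choice'' of identity components does not typecheck either.

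The modification compares $\xi_F$ with the composite $(\alpha^{(1,2)\opp}\circ\id)\bullet\xi_G\bullet(\id\circ\alpha)$. Note that the right-hand arrow points from $G^{(1,2)\opp}\circ(-)^R_{\B}$ back to $F^{(1,2)\opp}\circ(-)^R_{\B}$, so this is not a naturality square with $\alpha$ on both sides. At an object $a$, $\xi_F$ has component $\id_{F(a)}$. The composite has component $\alpha_a^R$ (from $\id\circ\alpha$), then $\id_{G(a)}$ (from $\xi_G$), then $\alpha_a$ (from $\alpha^{(1,2)\opp}$); read in $\B'$, this is $\alpha_a^R\circ\alpha_a\colon F(a)\to F(a)$. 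Hence $(\xi_\alpha)_a$ must be a 2-morphism relating $\id_{F(a)}$ and $\alpha_a^R\circ\alpha_a$. For an arbitrary 1-morphism $\alpha_a$ there is no identity or invertible 2-morphism of this kind.

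The missing idea is to take the components from the adjunction $\alpha_a\dashv\alpha_a^R$ itself. The paper sets $(\xi_\alpha)_c:=\ev_{\alpha_c}$. With the variances above, this is the unit $\id_{F(c)}\Rightarrow\alpha_c^R\circ\alpha_c$ in $\B'$ (a counit when read in $\B'^{(1,2)\opp}$). In particular, $\xi_\alpha$ is in general \emph{not} invertible. This is exactly why the proof of \Cref{Lemma: Trivializing the double adjoint on 2-categories with adjoints} cannot simply paste these squares together.

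Your remark that $\alpha_{X^R}$ and the mate of $\alpha_X$ correspond under $\xi_F,\xi_G$ is still the right ingredient, but it belongs to the verification, not the definition. The 2-cell of $\id\circ\alpha$ at $X$ is $(-)^R_{\B'}$ applied to $\alpha_X$, which is a mate with respect to all four adjunctions involved. After pasting with the unit components and applying a Zorro move for $\alpha_b\dashv\alpha_b^R$, the modification axiom at $X$ reduces to precisely that correspondence.

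The remaining claims then follow from standard facts about units, not from functoriality of mates of the $\alpha_X$. Naturality in $\alpha$ follows from dinaturality of the units with respect to $\Gamma_a$ and $\Gamma_a^R$. Compatibility with composition follows from the formula for the unit of a composite adjunction $\beta_a\alpha_a\dashv\alpha_a^R\beta_a^R$, and compatibility with units from $\id\dashv\id$ having identity unit.
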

\begin{proof}
    First, we construct the 2-transformation $\xi_F$. Recalling that $(-)^R$ is the identity on objects, we may take the 1-morphism components of $\xi_F$ to be 
    identities. The 2-morphism component of $\xi_F$ at $X \colon a \to b$ is the (unique) isomorphism $F(X^R) \to F(X)^R$ expressing the fact that $F(X^R)$ is a right adjoint of $F(X)$, 
    together with the uniqueness of adjoints. More precisely, the 2-morphisms  
    \begin{equation}
        \begin{aligned}
        \xymatrix@R=0.5em{
            F(X) \circ F(X^R) \ar[r]^-{\cong} & F(X \circ X^R) \ar[r]^-{F(\ev_X)} & F(\id) \cong \id \\
            \id \cong F(\id) \ar[r]^-{F(\coev_X)} & F(X^R \circ X) \ar[r]^-{\cong} & F(X^R) \circ F(X)
        }
        \end{aligned}
    \end{equation} 
    exhibit $F(X^R)$ as a right adjoint of $F(X)$, so that we have a 2-isomorphism 
    \begin{equation}
        \begin{aligned}
        \xymatrix@C=4.5em{
            (\xi_F)_X : F(X^R) \ar[r]^-{\id \circ \coev_{F(X)}} & F(X)^R \circ F(X) \circ F(X^R) \ar[r]^-{\id \circ F(\ev_X)} & F(X)^R;
        }
        \end{aligned}
    \end{equation}
    one checks that this defines an invertible 2-transformation $\xi_F$. Next, we construct the modification $\xi_{\alpha}$. Its 
    2-morphism component at $c \in \B$ is given by
    \begin{equation}
        \xymatrix{
        (\xi_\alpha)_c := \ev_{\alpha_c} \colon \alpha_c \circ \alpha_c^R \to \id;
        }
    \end{equation}
    one checks that this defines a modification and that these modifications are natural and compatible with composition and units in $\alpha$.
\end{proof}
\begin{lemma}\label{Lemma: Functor category and adjoints}
    Let $\B$ and $\B'$ be 2-categories with adjoints. Then $\Fun(\B,\B')$ has adjoints.
\end{lemma}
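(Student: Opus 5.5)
The plan is to construct adjoints in $\Fun(\B,\B')$ componentwise. Objects of $\Fun(\B,\B')$ are 2-functors, 1-morphisms are (pseudonatural) 2-transformations and 2-morphisms are modifications. Let $\alpha \colon F \to G$ be a 2-transformation, with 1-morphism components $\alpha_c \colon F(c) \to G(c)$ and invertible naturality 2-morphisms $\alpha_X \colon G(X)\circ\alpha_a \Rightarrow \alpha_b \circ F(X)$ for $X \colon a \to b$. I will build a right adjoint $\alpha^R \colon G \to F$ whose 1-morphism components are the right adjoints $\alpha_c^R$ in $\B'$. Its unit and counit will be the modifications with components $\coev_{\alpha_c}$ and $\ev_{\alpha_c}$. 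Left adjoints are handled by the mirror-image argument.

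The subtle point is the naturality data of $\alpha^R$. We need invertible 2-morphisms
$$(\alpha^R)_X \colon F(X)\circ\alpha_a^R \Rightarrow \alpha_b^R\circ G(X).$$
The naive choice, the mate of $\alpha_X$ under $\alpha_a \dashv \alpha_a^R$ and $\alpha_b \dashv \alpha_b^R$, is a well-defined 2-morphism, but it need not be invertible. This is exactly where I will use that $\B$ also has adjoints.

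Write $X = (X^L)^R$. Applying $F$ and $G$ and using \Cref{Lemma: compatibility of any 2-functor with adjoints} (the isomorphisms $\xi_F,\xi_G$) gives $F(X)\cong F(X^L)^R$ and $G(X)\cong G(X^L)^R$. The naturality isomorphism for $X^L \colon b \to a$ is $\alpha_{X^L} \colon G(X^L)\circ\alpha_b \Rightarrow \alpha_a\circ F(X^L)$. Taking right adjoints of both sides, which reverses composition and turns an invertible 2-morphism into an invertible one, yields an isomorphism
$$F(X^L)^R\circ\alpha_a^R \cong \alpha_b^R\circ G(X^L)^R,$$
up to the direction of the 2-cell, which I will fix by inverting if necessary. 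Transporting along $\xi_F,\xi_G$ then gives the invertible $(\alpha^R)_X$.

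The remaining checks are as follows.
\begin{itemize}
\item[(a)] The family $(\alpha^R)_X$ is natural in 2-morphisms $X\Rightarrow X'$ and compatible with composition and units in $\B$, so that $\alpha^R$ is a 2-transformation. This uses functoriality of $(-)^L,(-)^R$ together with the naturality and compatibility of the $\xi$'s from \Cref{Lemma: compatibility of any 2-functor with adjoints}.
\item[(b)] The families $\coev_{\alpha_c}$ and $\ev_{\alpha_c}$ satisfy the modification axiom against $\alpha^R\circ\alpha$ and $\alpha\circ\alpha^R$.
\item[(c)] The Zorro moves hold. These follow immediately componentwise, since modifications compose componentwise.
\end{itemize}

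I expect (b) to be the main obstacle. One has to verify that the constructed $(\alpha^R)_X$ agrees with the mate of $\alpha_X$ under $\alpha_a\dashv\alpha_a^R$ and $\alpha_b\dashv\alpha_b^R$, so that the unit and counit squares commute by the standard mate calculus. My first attempt is a string-diagram computation. Expanding $\xi_F,\xi_G$ as in the proof of \Cref{Lemma: compatibility of any 2-functor with adjoints} and cancelling snakes for $X^L\dashv X$ should reduce the adjoint-of-$\alpha_{X^L}$ description to the mate of $\alpha_X$. This uses that $\alpha_X$ and $\alpha_{X^L}$ are related through the images of $\coev$ and $\ev$ for $X^L\dashv X$ by compatibility of $\alpha$ with 2-morphisms and composition. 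Once that identification is made, invertibility comes from the first description and the modification axioms from the second, which completes the argument.
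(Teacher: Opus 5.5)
Your proposal is correct and is essentially the paper's proof. The paper likewise takes components $\alpha_c^R$, sets $(\alpha^R)_X := \alpha_{X^L}^R$ transported along $X \cong (X^L)^R$, and uses $\coev_{\alpha_c}$ and $\ev_{\alpha_c}$ componentwise as unit and counit; note that $\alpha_{X^L}^R$ already has the required direction $F(X)\circ\alpha_a^R \Rightarrow \alpha_b^R\circ G(X)$, so no inversion is needed. The paper leaves every verification implicit. Your identification of $\alpha_{X^L}^R$ with the mate of $\alpha_X^{-1}$ under $\alpha_a \dashv \alpha_a^R$, $\alpha_b \dashv \alpha_b^R$ (via the double-mate calculus and the compatibility of $\alpha$ with $X^L \dashv X$) is exactly the check needed to see that the componentwise unit and counit are modifications, and it also gives your step (a) directly, since mates of the structure cells of $\alpha$ automatically satisfy the transformation axioms.
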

\begin{proof}
    Let $\alpha \colon F \to G$ be a 2-transformation. We consider the case of right adjoints. We may define a 2-transformation $\alpha^R \colon G \to F$ with 
    1-morphism components $(\alpha^R)_a = \alpha_a^R \colon G(a) \to F(a)$ and 2-morphism components $(\alpha^R)_X := \alpha_{X^L}^R \colon F(X) \circ \alpha_a^R 
    \to \alpha_b^R \circ G(X)$. The adjunction data $\ev_{\alpha},\coev_{\alpha}$ is then given componentwise by $(\ev_{\alpha})_a := \ev_{\alpha_a} \colon \alpha_a 
    \circ \alpha_a^R \to \id$ and $(\coev_{\alpha})_a := \coev_{\alpha_a} \colon \id \to \alpha^R_a \circ \alpha_a$. The case of left/all adjoints is completely 
    analogous.
\end{proof}
\begin{definition}
    Let $\D$ be a 3-category. A 2-morphism $f \colon X \to Y$ in $\D$ has a right/left adjoint if it has a right/left adjoint as a 1-morphism in the 2-category 
    $\Hom_{\D}(X,Y)$. We will say that $\D$ has left/right/(all) adjoints for 2-morphisms if each 2-morphism has left/right/(both) adjoints. We will say that 
    $\D$ has left/right/(all) adjoints for 1-morphisms if its homotopy 2-category $\hh_2 \D$ has left/right/(all) adjoints. We will say that $\D$ has (all) 
    adjoints if it has left and right adjoints for both 1- and 2-morphisms. 
\end{definition}
\begin{definition}
    We denote by $2\hspace{-0.05cm}\Cat^{\operatorname{adj}}$ the 3-category of 2-categories with (all) adjoints, 2-functors, 2-transformations, and modifications.
\end{definition}
\begin{corollary}
    The 3-category $2\hspace{-0.05cm}\Cat^{\operatorname{adj}}$ has adjoints for 2-morphisms. In particular, we have 3-functors 
    \begin{equation}
        (-)^{1\opp} \colon 2\hspace{-0.05cm}\Cat^{\operatorname{adj}} \to (2\hspace{-0.05cm}\Cat^{\operatorname{adj}})^{2\opp}, 
        (-)^{2\opp} \colon 2\hspace{-0.05cm}\Cat^{\operatorname{adj}} \to (2\hspace{-0.05cm}\Cat^{\operatorname{adj}})^{3\opp}
    \end{equation}
    and 3-functors 
    \begin{equation}
        (-)^L, (-)^R \colon 2\hspace{-0.05cm}\Cat^{\operatorname{adj}} \to (2\hspace{-0.05cm}\Cat^{\operatorname{adj}})^{(2,3)\opp}.
    \end{equation}
\end{corollary}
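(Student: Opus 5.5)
The statement to prove is the corollary: $2\hspace{-0.05cm}\Cat^{\operatorname{adj}}$ has adjoints for 2-morphisms, and there are 3-functors $(-)^{1\opp}$, $(-)^{2\opp}$, $(-)^L$, $(-)^R$ with the indicated variances. My plan is to read the first claim off \Cref{Lemma: Functor category and adjoints} and then build the 3-functors by hand.

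\textbf{Adjoints for 2-morphisms.} A 2-morphism in $2\hspace{-0.05cm}\Cat^{\operatorname{adj}}$ is a 2-transformation $\alpha \colon F \to G$, viewed as a 1-morphism in $\Hom(\B,\B') = \Fun(\B,\B')$. By \Cref{Lemma: Functor category and adjoints} this 2-category has all adjoints whenever $\B,\B'$ do, so $\alpha$ has both adjoints.

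\textbf{The opposite 3-functors.} Both are strict operations on the data.
\begin{itemize}
\item $(-)^{1\opp}$ sends $\B \mapsto \B^{1\opp}$ and $F \mapsto F^{1\opp}$. A transformation $\alpha$ has components $\alpha_a \colon F(a) \to G(a)$, which become 1-morphisms $G(a) \to F(a)$ in $\B'^{1\opp}$. Hence $\alpha^{1\opp}$ is a transformation $G^{1\opp} \to F^{1\opp}$, and modifications keep their direction. This is exactly the variance of the target $(2\hspace{-0.05cm}\Cat^{\operatorname{adj}})^{2\opp}$.
\item $(-)^{2\opp}$ sends $\B \mapsto \B^{2\opp}$. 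Transformations keep their direction, with their naturality 2-cells reversed, while modifications reverse direction. This lands in the $3\opp$.
\end{itemize}
For both I would check two things. First, $\B^{1\opp}$ and $\B^{2\opp}$ still have all adjoints: the roles of left and right adjoints interchange, with units and counits swapped or reversed. Second, functoriality and the coherence data hold strictly, because the operations only reinterpret data.

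\textbf{The 3-functors $(-)^R$ and $(-)^L$.} These are the identity on 2-categories and on 2-functors. On 2-morphisms they send $\alpha \mapsto \alpha^R$ using the construction of \Cref{Lemma: Functor category and adjoints}. On 3-morphisms they send a modification $m \colon \alpha \to \beta$ to its mate $m^R \colon \beta^R \to \alpha^R$, defined componentwise exactly as in the lemma on $(-)^R \colon \B \to \B^{(1,2)\opp}$, applied to each $\Hom$-2-category. Reversing 2- and 3-morphisms gives the $(2,3)\opp$ variance. Locally, on each $\Fun(\B,\B')$, this is precisely the 2-functor $(-)^R \colon \Fun(\B,\B') \to \Fun(\B,\B')^{(1,2)\opp}$ already established.

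\textbf{Main obstacle.} The remaining, and hardest, step is compatibility of $(-)^R$ with the two horizontal compositions (whiskering) of transformations by 2-functors. For whiskering $H \circ \alpha$ by a 2-functor $H$, I would construct the comparison $(H\alpha)^R \cong H(\alpha^R)$ using $\xi_H$ from \Cref{Lemma: compatibility of any 2-functor with adjoints}. For whiskering $\alpha \circ K$ the comparison is an identity, since components are just restricted. The comparison for horizontal composition of transformations then comes from the canonical isomorphism $(\beta\alpha)^R \cong \alpha^R\beta^R$, given by uniqueness of adjoints. The coherence axioms follow because all these comparisons are canonical isomorphisms between adjoints, and such isomorphisms are unique once compatibility with units and counits is required. $(-)^L$ is handled dually.
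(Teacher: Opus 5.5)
Your proposal is correct and takes essentially the same route as the paper, which gives no separate proof of this corollary. The paper treats the first claim as immediate from the lemma that $\Fun(\B,\B')$ (the Hom 2-categories of $2\hspace{-0.05cm}\Cat^{\operatorname{adj}}$) has adjoints, takes the opposite 3-functors to be the evident strict ones, and obtains $(-)^L,(-)^R$ by applying the 2-categorical adjoint 2-functor locally; your variance checks are right, and your treatment of whiskering via $\xi_H$ and uniqueness of comparison isomorphisms between adjoints supplies detail the paper leaves implicit.
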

\begin{remark}
    We wish to understand the interaction of the functors $(-)^R,(-)^L$ and $(-)^{1\opp},(-)^{2\opp}$. We claim that $(-)^{1\opp} \circ (-)^R \cong (-)^L \circ (-)^{1\opp}$. 
    The functor $(-)^{1\opp} \circ (-)^R$ in particular assigns each 2-category with adjoints $\B$ to its 1-opposite $\B^{1\opp}$, each 2-functor $F \colon \B \to \B'$ 
    to its 1-opposite $F^{\opp} \colon \B^{1\opp} \to (\B')^{1\opp}$, and each 2-transformation $\alpha \colon F \to G$ to 
    $(\alpha^R)^{1\opp} \colon F^{1\opp} \to (\G')^{1\opp}$. The 1-morphism component of $(\alpha^R)^{1\opp}$ at $a \in \B^{1\opp}$ is given by 
    $\alpha^R_a \colon F(a) \to G(a)$ in $(\B')^{1\opp}$, that is, $\alpha_a^R \colon G(a) \to F(a)$ in $\B'$. We claim that $(\alpha^R)^{1\opp}$ is 
    a left adjoint of the 2-morphism $\alpha^{1\opp}$. Indeed, adjunction data is given by 
    \begin{equation}
        \begin{aligned}
        \xymatrix@R=0.5em{
            (\alpha^R)^{1\opp} \circ \alpha^{1\opp} \ar[r]^-{\cong} & (\alpha \circ \alpha^R)^{1\opp} \ar[r]^-{\ev_{\alpha}^{1\opp}} & \id^{1\opp} \cong \id \\
            \id \cong \id^{1\opp} \ar[r]^-{\coev_{\alpha}^{1\opp}} & (\alpha^R \circ \alpha)^{1\opp} \ar[r]^-{\cong} & \alpha^{1\opp} \circ (\alpha^R)^{1\opp}
        }
        \end{aligned}
    \end{equation}
    Hence, we obtain an invertible 3-transformation 
    \begin{equation}
        (-)^{1\opp} \circ (-)^R \cong (-)^L \circ (-)^{1\opp}.
    \end{equation}
    We also note that we have invertible 3-transformations 
    \begin{equation}
        (-)^{2\opp} \circ (-)^R \cong (-)^R \circ (-)^{2\opp}, \qquad (-)^{2\opp} \circ (-)^L \cong (-)^L \circ (-)^{2\opp},
    \end{equation}
    by a similar argument keeping tracking of the contravariances.
\end{remark}
\begin{lemma}\label{Lemma: Trivializing the double adjoint on 2-categories with adjoints}
    There exists an invertible 3-transformation 
    \begin{equation}
        S \colon \id_{2\hspace{-0.05cm}\Cat^{\operatorname{adj}}} \to (-)^{RR}.
    \end{equation}
\end{lemma}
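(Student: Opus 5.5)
The plan is to build $S$ directly, level by level, from the uniqueness of adjoints, in the same spirit as \Cref{Lemma: compatibility of any 2-functor with adjoints}. First I make the 3-functor $(-)^{RR}$ explicit. On objects it is the identity, since $(-)^R$ does not change the underlying 2-category. On 2-functors it is the identity as well. On a 2-transformation $\alpha \colon F \to G$ it gives the double right adjoint $\alpha^{RR} \colon F \to G$. Using the construction of \Cref{Lemma: Functor category and adjoints}, $\alpha^{RR}$ has 1-morphism components $\alpha_a^{RR}$ and 2-morphism components obtained by applying the mate construction twice. On modifications $m \colon \alpha \to \beta$ it gives the doubly transposed modification $m^{RR}$.

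With this description, I take the component of $S$ at a 2-category $\B$ to be the identity 2-functor $\id_\B$, with trivial naturality data on 2-functors. The only nontrivial piece is the 2-morphism component of $S$ at a 2-transformation $\alpha \colon F \to G$. This must be an invertible modification $S_\alpha \colon \alpha \to \alpha^{RR}$, or its reverse depending on variance.

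The idea is the following. In the 3-category $2\hspace{-0.05cm}\Cat^{\operatorname{adj}}$, the 1-morphism $\alpha$ is a left adjoint of $\alpha^R$, witnessed by $\coev_\alpha$ and $\ev_\alpha$. Likewise $\alpha^{RR}$ is a right adjoint of $\alpha^R$. This does not directly identify $\alpha$ with $\alpha^{RR}$, since one is a left adjoint and the other a right adjoint of $\alpha^R$.

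Here I use the hypothesis that $\B'$ has \emph{all} adjoints, so that each component $\alpha_a$ also has a left adjoint. The step I will rely on is that $\alpha^R$ is then also a left adjoint of $\alpha$. Concretely, I build a modification-level adjunction $\alpha^R \dashv \alpha$ componentwise, using that $\alpha_a^R$ is left adjoint to $\alpha_a$ in $\B'$. The uniqueness of right adjoints of $\alpha^R$ then yields a canonical invertible modification $S_\alpha \colon \alpha \cong \alpha^{RR}$. Its components are the canonical isomorphisms $\alpha_a \cong \alpha_a^{RR}$ induced by the uniqueness of adjoints, exactly as $(\xi_F)_X$ was built.

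The key step is precisely this existence of an ambidextrous adjunction $\alpha^R \dashv \alpha$ at the level of 2-transformations. The worry is that in a general 2-category $\alpha_a^R$ need not be left adjoint to $\alpha_a$. So this step has to be checked against the definitions: the paper's \Cref{Lemma: Functor category and adjoints} only provides both $\alpha^R$ and $\alpha^L$. If it fails, I would instead define $S_\alpha$ through a Serre-type comparison $\alpha^{RR}$ versus $\alpha$, obtained from the 2-morphism components $(\alpha^R)_X = \alpha^R_{X^L}$ with the double adjoint $X^{LL}$ intervening, and verify directly that it is invertible.

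Once $S_\alpha$ is defined, the remaining verifications are as follows.
\begin{itemize}
\item[(i)] $S_\alpha$ is a modification. This follows from the mate calculus applied to the components $(\alpha^{RR})_X$.
\item[(ii)] $S$ is natural in modifications $m \colon \alpha \to \beta$, i.e.\ $m^{RR}\circ S_\alpha = S_\beta \circ m$. This reduces componentwise to naturality of the canonical isomorphism $X\cong X^{RR}$ with respect to 2-morphisms, which is standard mate calculus.
\item[(iii)] $S$ is compatible with horizontal composition of transformations and with whiskering by 2-functors. This follows from uniqueness of adjoints: any two canonically constructed isomorphisms between adjoints agree.
\end{itemize}
I expect (iii), together with the ambidexterity issue above, to be the main obstacle. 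Invertibility of $S$ is automatic because all of its nontrivial data are invertible modifications.
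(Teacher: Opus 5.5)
There is a genuine gap, and it goes deeper than the ambidexterity step you flagged. Having all adjoints does not make $\alpha_a^R$ a left adjoint of $\alpha_a$. That would mean $\alpha_a^L\cong\alpha_a^R$, equivalently $\alpha_a\cong\alpha_a^{RR}$, which is extra pivotal-type structure.

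Your fallback cannot rescue the argument either, because the obstruction comes from the choice $S_\B=\id_\B$ with trivial $S_F$. With that choice, the component of $S$ at $\alpha\colon F\to G$ must be an invertible modification $\alpha\cong\alpha^{RR}$. Its component at $a$ is a 2-isomorphism $\alpha_a\cong\alpha_a^{RR}$ in $\B'$. Now take $\B=\star$: a 2-transformation between 2-functors $\star\to\B'$ is just a 1-morphism $X$ of $\B'$. So you would obtain $X\cong X^{RR}$ for every 1-morphism of every 2-category with adjoints. This is false in general; for instance, in the free 2-category with adjoints generated by a single 1-morphism $X$ one has $X\not\cong X^{RR}$. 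This failure is exactly the non-triviality of $(-)^{RR}$ on which the $\Sotwo$-action is built. So no choice of $S_\alpha$, Serre-type or otherwise, works once the 1-morphism components are identities.

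The missing idea is to put the double adjoint into the 1-morphism components, as the paper does.
\begin{itemize}
\item[(i)] Take $S_\B:=(-)^{RR}_\B$, which is invertible with inverse $(-)^{LL}_\B$.
\item[(ii)] Let $S_F$ be the invertible 2-transformation $(-)^{RR}_{\B'}\circ F\to F\circ(-)^{RR}_\B$ assembled from $\xi_F$ of \Cref{Lemma: compatibility of any 2-functor with adjoints} and $(\xi_F^{-1})^{(1,2)\opp}$.
\item[(iii)] At $\alpha\colon F\to G$, the two 2-transformations to be compared are $(-)^{RR}_{\B'}\circ\alpha$ and $\alpha^{RR}\circ(-)^{RR}_\B$. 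Both have the same 1-morphism components $\alpha_a^{RR}$.
\end{itemize}
So $S_\alpha$ only has to reconcile their 2-morphism components, which is canonical mate calculus. The paper does this by pasting invertible fillers through $(\alpha^R)^{(1,2)\opp}$, rather than the non-invertible $\xi_\alpha$, whose components are the counits $\ev_{\alpha_c}$. No ambidexterity is needed anywhere. Moreover, this nontrivial component $S_\B=(-)^{RR}_\B$ is exactly what \Cref{Construction: The defining Sotwo action of volutive categories} uses as the generator of the $\Sotwo$-action.
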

\begin{proof}
    We construct the 3-transformation $S \colon \id_{2\hspace{-0.05cm}\Cat^{\operatorname{adj}}} \to (-)^{RR}$. Recalling that both functors are the identity 
    on objects, we may set the 1-morphism component of $S$ at a 2-category with adjoints $\B$ to be the double right adjoint functor 
    $S_{\B} := (-)^{RR}_{\B} \colon \B \to \B$. Recalling that both functors are the identity on 1-morphism level, we may set the 2-morphism component 
    of $S$ at a 2-functor $F \colon \B \to \B'$ to be the 2-transformation 
    \begin{equation}
        \begin{aligned}
        \xymatrix@C=5em{
            (-)^{RR}_{\B'} \circ F \ar[r]^-{\id \circ \xi_F} & ((-)^R_{\B'})^{(1,2)\opp} \circ F^{(1,2)\opp} \circ (-)^R_{\B} \ar[r]^-{(\xi_F^{-1})^{(1,2)\opp} \circ \id} & F \circ (-)^{RR}_{\B}
        }
        \end{aligned}
    \end{equation}
    where $\xi_F$ is the 2-transformation described in \Cref{Lemma: compatibility of any 2-functor with adjoints}. Next, we wish to describe the 3-morphism component 
    of $S$ at a 2-transformation $\alpha \colon F \to G$. First, we note that there are non-invertible modifications filling the two squares in the following diagram:
    \begin{equation}
        \begin{aligned}
        \xymatrix@C=5em{
            (-)^{RR}_{\B'} \circ F \ar[r]^-{\id \circ \xi_F} \ar[d]^-{\id \circ \alpha} & ((-)^R_{\B'})^{(1,2)\opp} \circ F^{(1,2)\opp} \circ (-)^R_{\B} \ar[r]^-{(\xi_F^{-1})^{(1,2)\opp} \circ \id} & F \circ (-)^{RR}_{\B} \ar[d]^-{\alpha^{RR} \circ \id} \\
            (-)^{RR}_{\B'} \circ G \ar[r]^-{\id \circ \xi_G} & ((-)^R_{\B'})^{(1,2)\opp} \circ G^{(1,2)\opp} \circ (-)^R_{\B} \ar[r]^-{(\xi_G^{-1})^{(1,2)\opp} \circ \id} \ar[u]^-{\id \circ \alpha^{(1,2)\opp} \circ \id} & G \circ (-)^{RR}_{\B}.
        }
        \end{aligned}
    \end{equation}
    Here, the left square is filled with the modification $\xi_\alpha$ described in \Cref{Lemma: compatibility of any 2-functor with adjoints};
    the right square may be filled with the modification 
    \begin{equation}
        (\xi_{\alpha^R}^L)^{\opp} \colon (\alpha^{RR} \circ \id) \bullet ((\xi_F^{-1})^{(1,2)\opp} \circ \id) \bullet (\id \circ \alpha^{(1,2)\opp}) \Rightarrow 
        ((\xi_G^{-1})^{(1,2)\opp} \circ \id).
    \end{equation}
    However, we can not immediately paste these modifications together, as they and $\alpha$ are generically non-invertible. The trick we may employ is that we have 
    honest invertible 2-modifications in the diagram 
    \begin{equation}
        \begin{aligned}
        \xymatrix@C=5em{
            (-)^{RR}_{\B'} \circ F \ar[r]^-{\id \circ \xi_F} \ar[d]^-{\id \circ \alpha} & ((-)^R_{\B'})^{(1,2)\opp} \circ F^{(1,2)\opp} \circ (-)^R_{\B} \ar[r]^-{(\xi_F^{-1})^{(1,2)\opp} \circ \id} \ar[d]^-{\id \circ (\alpha^R)^{(1,2)\opp} \circ \id} \ar@{=>}[dl]^-{\cong} & F \circ (-)^{RR}_{\B} \ar[d]^-{\alpha^{RR} \circ \id} \ar@{=>}[dl]^-{\cong} \\
            (-)^{RR}_{\B'} \circ G \ar[r]_-{\id \circ \xi_G} & ((-)^R_{\B'})^{(1,2)\opp} \circ G^{(1,2)\opp} \circ (-)^R_{\B} \ar[r]_-{(\xi_G^{-1})^{(1,2)\opp} \circ \id} & G \circ (-)^{RR}_{\B}.
        }
        \end{aligned}
    \end{equation}
    instead, whose pasting yields the 3-morphism component $S_\alpha$. It is straightforward to check that this defines an invertible 3-transformations, observing 
    that each component is evidently invertible. 
\end{proof}
\begin{corollary}\label{Corollary: The identity automorphism on 2-categories with adjoints}
    We obtain an invertible 3-transformation 
    \begin{equation}
        S \colon \id_{2\hspace{-0.05cm}\Cat^{\operatorname{adj}}_{(3,1)}} \to \id_{2\hspace{-0.05cm}\Cat^{\operatorname{adj}}_{(3,1)}}
    \end{equation}
    on the underlying (3,1)-category of $2\hspace{-0.05cm}\Cat^{\operatorname{adj}}$.
\end{corollary}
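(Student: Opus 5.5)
The corollary follows from \Cref{Lemma: Trivializing the double adjoint on 2-categories with adjoints} once we identify $(-)^{RR}$ with the identity 3-functor on the underlying (3,1)-category. The plan is to restrict everything to $2\hspace{-0.05cm}\Cat^{\operatorname{adj}}_{(3,1)}$, where all 3-morphisms (modifications) are invertible. There we will show that the 3-functor $(-)^{RR}$ is canonically isomorphic to the identity. Composing the invertible 3-transformation $S \colon \id \to (-)^{RR}$ (restricted) with this isomorphism $(-)^{RR} \cong \id$ then gives the desired invertible 3-transformation $\id \to \id$.

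First, $(-)^R \colon 2\hspace{-0.05cm}\Cat^{\operatorname{adj}} \to (2\hspace{-0.05cm}\Cat^{\operatorname{adj}})^{(2,3)\opp}$ is the identity on objects and on 2-functors. On a 2-transformation $\alpha$ it returns $\alpha^R$, and on modifications it returns their mates. Restricted to the (3,1)-category, we use two facts:
\begin{itemize}
    \item In a 3-category whose 3-morphisms are all invertible, if a 2-morphism $\alpha$ has a right adjoint, then $\alpha$ is an equivalence. In that case $\alpha^R \simeq \alpha^{-1}$ canonically, because the unit and counit are invertible.
    \item Consequently, on the (3,1)-category $(-)^R$ agrees with the functor ``invert 2-morphisms'', and $(-)^{RR}$ is canonically isomorphic to ``invert twice'', i.e. to the identity.
\end{itemize}

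Concretely, the 2-morphism components of the isomorphism $(-)^{RR} \cong \id$ are the identities on $\alpha$ up to the canonical comparison $\alpha^{RR} \cong (\alpha^{-1})^{-1} = \alpha$. The 3-morphism components are induced by uniqueness of adjoints. This mirrors \Cref{Remark: Oone volutive structures and groupoids}, where inversion turns a groupoid-level structure into a volutive one.

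The main obstacle is coherence. We must check that the comparison isomorphisms $\alpha^{RR} \cong \alpha$ are compatible with composition of 2-transformations and with the mate construction on modifications, so that they assemble into an invertible 3-transformation rather than merely objectwise data. I would handle this by using uniqueness of adjoints up to unique invertible modification. Every comparison is a canonical map between two choices of adjoints, and any two composites of such canonical maps coincide, because the space of adjunction data is contractible. Finally, we compose with $S$ from \Cref{Lemma: Trivializing the double adjoint on 2-categories with adjoints}, which is already invertible. The resulting 3-transformation has 1-morphism components $S_{\B} = (-)^{RR}_{\B}$, the Serre-type autoequivalence of each $\B$.
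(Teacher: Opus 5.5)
Your proposal is correct and matches the paper's implicit argument. The paper also restricts the 3-transformation $S\colon \id\to(-)^{RR}$ of \Cref{Lemma: Trivializing the double adjoint on 2-categories with adjoints} to the underlying (3,1)-category, uses that $(-)^{RR}\simeq\id$ there because $\alpha^{RR}\cong\alpha$ canonically for invertible 2-transformations, and composes; this is how \Cref{Construction: The defining Sotwo action of volutive categories} describes the components $S_{\B}=(-)^{RR}_{\B}$, $S_F$ and $S_\alpha$. One point to state more precisely: in $2\hspace{-0.05cm}\Cat^{\operatorname{adj}}_{(3,1)}$ the 2-morphisms are themselves invertible, not only the modifications, and that is what makes every adjunction $\alpha\dashv\alpha^R$ formed in $2\hspace{-0.05cm}\Cat^{\operatorname{adj}}$ automatically an adjoint equivalence, so that $(-)^R$ restricts to ``inversion''.
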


\subsection{$\Sotwo$-volutive 2-categories}
We start with a general discussion of $\Sotwo$-actions on 3-categories. 
\begin{remark}\label{Remark: Sotwo actions on 3-categories}
First, recall the low-dimensional homotopy groups of $\Sotwo$, 
\begin{equation}
    \pi_0(\Sotwo) = \star, \pi_1(\Sotwo) = \mathbb{Z}, \pi_2(\Sotwo) = \pi_3(\Sotwo) = \dotsc = \star.
\end{equation}
In particular, giving an $\Sotwo$-action on a 3-category $\D$ amounts to specifiying a 4-functor 
\begin{equation}
    \mathrm{B}\Pi_3(\Sotwo) \to 3\hspace{-0.05cm}\Cat, \ \star \mapsto \D.
\end{equation}
which equivalently may be understood as a braided monoidal 2-functor 
\begin{equation}
    \Z \to \Aut(\id_{\D})
\end{equation}
where $\Aut(\id_{\D})$ denotes the braided monoidal 2-category of autoequivalences of the identity 3-functor on $\D$. 
\end{remark}
For the convenience of the reader, we will include some details of the notion of a braided monidal 2-functor in the following. 
\begin{remark}
    Let $\B,\C$ be braided monoidal 2-categories. In the following, we wish to spell out the data of a braided monoidal 2-functor $F \colon \B \to \C$, 
    omitting the coherence conditions on the highest level. First, we have a 2-functor $F \colon \B \to \C$. Second, we have a monoidal structure on $F$, 
    which amounts to the following data: (i) for any two objects $a,b \in \B$ 1-isomorphisms $\varphi_{a,b} \colon F(a) \otimes F(b) \to F(a \otimes b)$, and a 
    1-isomorphism $\epsilon \colon 1_{\C} \to F(1_{\B})$ (ii) for any two 1-morphisms $X \colon a \to a'$ and $Y \colon b \to b'$, 2-isomorphisms $\varphi_{X,Y}$ 
    fitting into the diagram 
    \begin{equation}
        \begin{aligned}
        \xymatrix{
            F(a) \otimes F(b) \ar[rr]^-{\varphi_{a,b}} \ar[d]_-{F(X) \otimes F(Y)} && F(a \otimes b) \ar[d]^-{F(X \otimes Y)} \ar@{=>}[dll]^-{\varphi_{X,Y}}\\
            F(a') \otimes F(b') \ar[rr]_-{\varphi_{a',b'}} && F(a' \otimes b')
        }
        \end{aligned}
    \end{equation}
    (iii) for each triple of objects $a,b,c \in \B$ a 2-isomorphism $\phi_{a,b,c}$ fitting into the diagram 
    \begin{equation}
        \begin{aligned}
        \xymatrix{
            (F(a) \otimes F(b)) \otimes F(c) \ar[rr]^-{} \ar[d]_-{\varphi_{a,b} \otimes \id} && F(a) \otimes (F(b) \otimes F(c)) \ar[d]^-{\id \otimes \varphi_{b,c}} \ar@{=>}[ddll]^-{\phi_{a,b,c}}  \\
            F(a \otimes b) \otimes F(c) \ar[d]_-{\varphi_{a \otimes b,c}} && F(a) \otimes F(b \otimes c) \ar[d]^-{\varphi_{a, b \otimes c}}\\
            F((a \otimes b) \otimes c) \ar[rr]_-{} && F(a \otimes (b \otimes c))
        }
        \end{aligned}
    \end{equation}
    (iv) for each object $a \in \B$, 2-isomorphisms $\xi_a,\zeta_a$ fitting into the diagrams 
    \begin{equation}
        \begin{aligned}
        \xymatrix{
            1_{\C} \otimes F(a) \ar[r]^-{\epsilon \otimes \id} \ar[d]^-{} & F(1_{\B}) \otimes F(a) \ar[d]^-{\varphi_{1_{\B},a}} \ar@{=>}[dl]^-{\xi_a}\\
            F(a) & F(1_{\B} \otimes a) \ar[l] \\ 
        }
        \qquad 
        \xymatrix{
            F(a) \otimes 1_{\C} \ar[r]^-{\id \otimes \epsilon} \ar[d]^-{} & F(a) \otimes F(1_{\B}) \ar[d]^-{\varphi_{a,1_{\B}}} \ar@{=>}[dl]^-{\zeta_a} \\
            F(a) & F(a \otimes 1_{\B}) \ar[l]
        }
        \end{aligned}
    \end{equation}
    where we have omitted the symbols for the associators and unitors in (iii) and (iv). The 2-isomorphisms $\varphi_{X,Y},\phi_{a,b,c},\xi_a,\zeta_a$ are subject 
    to further coherence conditions we do not spell out here. \\
    Third, we have a braided structure on $F$, which amounts to the following data: for any two objects $a,b \in \B$, 2-isomorphisms $\mu_{a,b}$ 
    fitting into the diagram 
    \begin{equation}\label{Equation: diagram for braided monoidal 2-functor}
        \begin{aligned}
        \xymatrix@C=4em{
            F(a) \otimes F(b) \ar[r]^-{\beta_{F(a),F(b)}^{\C}} \ar[d]_-{\varphi_{a,b}} & F(b) \otimes F(a) \ar[d]^-{\varphi_{b,a}} \ar@{=>}[dl]^-{\mu_{a,b}} \\
            F(a \otimes b) \ar[r]_-{F(\beta_{a,b}^{\B})} & F(b \otimes a) 
        }
        \end{aligned}
    \end{equation}
    which are subject to further coherence conditions we again do not spell out here.
\end{remark}
\begin{remark}\label{Remark: spelling out the braided monoidal structure on automorphisms of the identity}
    Let $\C$ be a 3-category. In the following, we wish to describe the braided monoidal structure on the 2-category $\Aut(\id_{\C})$ in some detail. 
    By definition, the objects of $\Aut(\id_{\C})$ are invertible 3-transformations $\alpha \colon \id_{\C} \to \id_{\C}$; the 1-morphisms are 
    3-modifications, and the 2-morphisms are pertubations between them. The monoidal structure can be described as follows: given two 
    invertible 3-transformations, $\alpha, \gamma \colon \id_{\C} \to \id_{\C}$, we may consider their composition, $\alpha \bullet \gamma$ with respect to which the 
    the identity 3-transformation $\id_{\id_{\C}}$ acts as a monoidal unit. The braided structure is given as follows. We need to supply a 3-modification 
    \begin{equation}
        \beta_{\alpha,\gamma} \colon \alpha \bullet \gamma \to \gamma \bullet \alpha;
    \end{equation}
    first, given an object $a \in \C$, we need to supply the 2-morphism component 
    \begin{equation}
        (\beta_{\alpha,\gamma})_a \colon \alpha_a \circ \gamma_a = (\alpha \bullet \gamma)_a \to (\gamma \bullet \alpha)_a = \gamma_a \circ \alpha_a.
    \end{equation}
    Recall that $\alpha$ is an invertible 3-transformation, that is, it comes in particular with invertible 1-morphism components $\alpha_a \colon a \to a$ and 
    2-morphism components $\alpha_X \colon X \circ \alpha_a \to \alpha_b \circ X$, for $X \colon a \to b$ a 1-morphism in $\C$. In particular, we may choose 
    \begin{equation}
        (\beta_{\alpha,\gamma})_a := \alpha_{\gamma_a}^{-1} \colon \alpha_a \circ \gamma_a \to \gamma_a \circ \alpha_a.
    \end{equation}
    Second, given a 1-morphism $X \colon a \to b$ in $\C$, we need to supply the 3-morphism component $(\beta_{\alpha,\gamma})_X$. For this, we recall 
    that $\alpha$ also comes with an invertible 3-morphism component at $f \colon X \to Y$ of the form 
    \begin{equation}\label{Equation: naturality 3-isomorphisms of 3-transformation}
        \begin{aligned}
        \xymatrix{
            X \circ \alpha_a \ar[r]^-{\alpha_X} \ar[d]_-{f \circ \id} & \alpha_b \circ X \ar[d]^-{\id \circ f} \ar@{=>}[dl]^-{\alpha_f} \\
            Y \circ \alpha_a \ar[r]_-{\alpha_Y} & \alpha_b \circ Y.
        }
        \end{aligned}
    \end{equation}
    and for each pair of composable 1-morphisms $X \colon a \to b, Z \colon b \to C$ an invertible 3-morphism $\alpha_{X,Z}$ fitting into the diagram 
    \begin{equation}\label{Equation: compositional 3-isomorphisms of 3-transformation}
        \begin{aligned}
        \xymatrix{
            Z \circ (X \circ \alpha_a) \ar[r]^-{\id \circ \alpha_X} \ar[d] & Z \circ \alpha_b \circ X \ar[d]^-{\alpha_Z \circ \id} \ar@{=>}[dl]^-{\alpha_{X,Z}} \\
            (Z \circ X) \circ \alpha_a \ar[r]^-{\alpha_{Z \circ X}} & \alpha_c \circ Z \circ X 
        }
        \end{aligned}
    \end{equation}
    In particular, we have invertible 3-morphisms filling the diagram 
    \begin{equation}
        \begin{aligned}
        \xymatrix@C=4.5em@R=2.5em{
            X \circ \alpha_a \circ \gamma_a \ar[r]^-{\alpha_X \circ \id} \ar[d]_-{\id \circ \alpha_{\gamma_a}^{-1}} & \alpha_b \circ X \circ \gamma_a \ar[r]^-{\id \circ \gamma_X} & \alpha_b \circ \gamma_b \circ X \ar[d]^-{\alpha_{\gamma_b}^{-1} \circ \id} \\
            X \circ \gamma_a \circ \alpha_a \ar[ur]^-{\alpha_{X \circ \gamma_a}} \ar[r]_-{\gamma_X \circ \id} & \gamma_b \circ X \circ \alpha_a \ar[r]_-{\id \circ \alpha_X} \ar[ur]^-{\alpha_{\gamma_b \circ X}} & \gamma_b \circ \alpha_b \circ X
        }
        \end{aligned}
    \end{equation}
    where the left and right triangle are filled with the 3-isomorphisms from \Cref{Equation: compositional 3-isomorphisms of 3-transformation} 
    and the middle square is filled with the 3-isomorphisms from \Cref{Equation: naturality 3-isomorphisms of 3-transformation}; more specifically, 
    from left to right we have $\alpha_{\gamma_a,X}, \alpha_{\gamma_X},$ and $\alpha_{X,\gamma_b}^{-1}$. We have now described the data of the 3-transformation 
    $\beta_{\alpha,\gamma} \colon \alpha \bullet \gamma \to \gamma \bullet \alpha$ (omitting the various coherence conditions).
\end{remark}
Having established the data of braided monoidal 2-functors and the braided monoidal structure on $\Aut(\id_{\C})$ for some 3-category $\C$, we now turn 
our attention to a particular $\Sotwo$-action. 
\begin{construction}\label{Construction: The defining Sotwo action of volutive categories}
    In the following, we wish to construct an $\Sotwo$-action on the (3,1)-category $2\hspace{-0.05cm}\Cat^{\operatorname{adj}}_{(3,1)}$. 
    By our discussion in \Cref{Remark: Sotwo actions on 3-categories}, we need to supply a braided monoidal 2-functor 
    \begin{equation}
        \rho \colon \mathbb{Z} \to \Aut(\id_{2\hspace{-0.05cm}\Cat_{(3,1)}^{\operatorname{adj}}}).
    \end{equation}
    We start by defining the underlying 2-functor. It suffices to describe the image $S$ of $1 \in \mathbb{Z}$ under this 2-functor, as monoidality will 
    require us to send $k \mapsto S^k$. Hence, we need to specify a 3-transformation 
    \begin{equation}
        S \colon \id_{2\hspace{-0.05cm}\Cat_{(3,1)}^{\operatorname{adj}}} \to \id_{2\hspace{-0.05cm}\Cat_{(3,1)}^{\operatorname{adj}}}.
    \end{equation}
    We choose the 3-transformation described in \Cref{Corollary: The identity automorphism on 2-categories with adjoints}, whose components we will repeat here 
    for the convenience of the reader. First, its 1-morphism component at $\B \in 2\hspace{-0.05cm}\Cat^{\operatorname{adj}}_{(3,1)}$ is given by
    \begin{equation}
        S_{\B} := (-)^{RR}_{\B} \colon \B \to \B,
    \end{equation}
    the double right adjoint functor on $\B$ (whose inverse is given by the double left adjoint functor on $\B$). Second, its 2-morphism component 
    at $F \colon \B \to \B'$ in $2\hspace{-0.05cm}\Cat^{\operatorname{adj}}_{(3,1)}$ is given by the canonical 2-transformation 
    \begin{equation}
        \xymatrix{
            S_F \colon F \circ S_{\B} = F \circ (-)^{RR}_{\B} \ar[r]^-{\cong} & (-)^{RR}_{\B'} \circ F = S_{\B'} \circ F
        }
    \end{equation}
    expressing compatibility of any 2-functor with the (double) right adjoint functor. Third, its 3-morphism component at $\alpha \colon F \to G$ in 
    $2\hspace{-0.05cm}\Cat^{\operatorname{adj}}_{(3,1)}$ is given by the canonical modification $S_{\alpha}$ fitting into the diagram 
    \begin{equation}
        \begin{aligned}
        \xymatrix{
            F \circ (-)^{RR}_{\B} \ar[r]^-{\cong} \ar[d]^-{\alpha \circ \id} & (-)^{RR}_{\B'} \circ F \ar[d]^-{\id \circ \alpha} \\
            G \circ (-)^{RR}_{\B} \ar[r]^-{\cong} & (-)^{RR}_{\B'} \circ G
        }
        \end{aligned}
    \end{equation}
    using the fact that (invertible) 2-transformations are automatically compatible with (double) adjoints in precisely this sense.
    One checks that this defines a 3-transformation $S \colon  \id_{2\hspace{-0.05cm}\Cat_{(3,1)}^{\operatorname{adj}}} \to \id_{2\hspace{-0.05cm}\Cat_{(3,1)}^{\operatorname{adj}}}$, so that 
    we obtain a 2-functor $\rho \colon \mathbb{Z} \to \Aut(\id_{2\hspace{-0.05cm}\Cat_{(3,1)}^{\operatorname{adj}}})$. The data of the monoidal structure on $\rho$ 
    can essentially be taken to be trivial, since we define $\rho(k) := S^k$, for which e.g. $\rho(n) \circ \rho(m) = S^n \circ S^m \cong S^{n+m} = \rho(n+m)$ 
    is essentially trivial. The braided structure on the other hand is non-trivial. Making use of the hexagon axioms and noting that the braided structure 
    on $\mathbb{Z}$ is essentially trivial, it suffices to consider the component $\beta_{-1,1}$ of the braiding. More precisely, according to 
    \Cref{Equation: diagram for braided monoidal 2-functor} we need to provide a pertubation trivializing the 3-modification obtained as the composition
    \begin{equation}
        \xymatrix{
            q(S) := \id \ar[r]^-{\cong} & S^{-1} \circ S \ar[r]^-{\beta_{S^{-1},S}} & S \circ S^{-1} \ar[r]^-{\cong} & \id.
        }
    \end{equation}
    We construct an invertible pertubation $\sigma \colon q(S) \to \id$ as follows. First, we recall that the 2-morphism component of $q(S)$ at an object 
    $\B \in 2\hspace{-0.05cm}\Cat_{(3,1)}^{\operatorname{adj}}$ is given by the composition 
    \begin{equation}
        \xymatrix{
            \id_B \ar[r]^-{\cong} & (-)^{LL}_{\B} \circ (-)^{RR}_{\B} \ar[r]^-{} & (-)^{RR}_{\B} \circ (-)^{LL}_{\B} \ar[r]^-{\cong} & \id
        }
    \end{equation}
    where the middle arrow is given by the 2-transformation expressing compatibility of the double left adjoint functor with the (double) right adjoint functor. 
    The 3-morphism component of the invertible pertubation $\sigma$ is then constructed from the Zorro 3-isomorphisms expressing the adjunctions. In particular, 
    we note that the composition 
    \begin{equation}
        \xymatrix{
            X \ar[r]^-{\cong} & (X^L)^R \ar[r]^-{\cong} & (X^R)^L \ar[r]^-{\cong} & X,
        }
    \end{equation}
    whose constituents are the various 2-isomorphisms expressing uniqueness of adjoints, is isomorphic to the identity 2-morphism on $X$. A slightly more 
    sophisticated version of this observation (involving the various double adjoints) allows us to construct the component of $\sigma$. In conclusion, 
    we obtain an invertible pertubation $\sigma \colon q(S) \to \id$. The higher coherences of the braided structure then follow from the coherences encoded 
    in the various adjunctions. 
\end{construction}
\begin{definition}
    The (3,1)-category of \emph{$\Sotwo$-volutive 2-categories} $2\hspace{-0.05cm}\Cat^{\Sotwo\text{-vol}}_{(3,1)}$ is the {(3,1)}-category of homotopy fixed points 
    with respect to the $\Sotwo$-action $\rho$ on $2\hspace{-0.05cm}\Cat_{(3,1)}^{\operatorname{adj}}$ constructed in 
    \Cref{Construction: The defining Sotwo action of volutive categories}.
\end{definition}
\begin{remark}
    We wish to spell out the data of an $\Sotwo$-volutive 2-category. An $\Sotwo$-volutive 2-category is a 2-category $\B$ with adjoints together with an 
    invertible 2-transformation $S \colon \id \to (-)^{RR}$ such that the composed modification 
    \begin{equation}\label{Diagram: SO2volution}
        \begin{aligned}
        \xymatrix{
        \operatorname{id}_{\mathcal{B}} \ar@/_4pc/[dd]_-{\id}  \ar[r]^-{\id} \ar[d] & \operatorname{id}_{\mathcal{B}} \ar[d] \ar@{=>}[dl]^-{\sigma_{\B}} \ar@/^4pc/[dd]^-{\id} \\
        (-)^{LL} \circ (-)^{RR} \ar[r] \ar[d]_{S \circ S^{-1}} &  (-)^{RR} \circ (-)^{LL} \ar[d]^-{S^{-1} \circ S} \ar@{=>}[dl]^-{\beta_{S,S^{-1}}}\\
        \operatorname{id}_{\mathcal{B}} \ar[r]_-{\id} & \operatorname{id}_{\mathcal{B}} 
        }
        \end{aligned}
    \end{equation}
    is the identity on $\operatorname{id}_{\operatorname{id}_{\mathcal{B}}}$; here, the upper square is filled with the modification $\sigma_{\B}$ described in 
    \Cref{Construction: The defining Sotwo action of volutive categories}, the lower square is filled with the modification expressing naturality 
    of the braiding $\beta$ on $\Aut_{\id_{2\hspace{-0.05cm}\Cat_{(3,1)}^{\operatorname{adj}}}}$, and the left and right diagrams are filled with the modifications expressing
    naturality of invertibility/adjunction data. \\
    We also wish to describe functors of $\Sotwo$-volutive 2-categories. Let $(\B,S)$ and $(\B',S')$ be $\Sotwo$-volutive 2-categories. A functor of 
    $\Sotwo$-volutive 2-categories is a 2-functor $F \colon \B \to \B'$ together with an invertible modification 
    \begin{equation}
        \begin{aligned}
            \eta \colon 
            \left(
            \begin{matrix}
            \xymatrix{
                & \ar@{}[d]^(.3){}="a"^(1){}="b" \ar@{=>}^{S} "a";"b" &  \\
                B \ar[rr]_-{(-)^{RR}} \ar[d]_-{F} \ar@/^2pc/[rr]^-{\id_{\B}} && B \ar[d]^-{F} \ar@{=>}[dll]^-{\cong} 
                \\
                B' \ar[rr]_-{(-)^{RR}} && B'
            }
            \end{matrix}\right)
            \;\RRightarrow\;
            \left(
            \begin{matrix}
                \xymatrix{
                B \ar[rr]^-{\id_{\B}} \ar[d]_-{F} && B \ar[d]^-{F} \ar[d]^-{F} \ar@{=>}[dll]_-{\cong}\\
                B' \ar[rr]^-{\id_{\B'}} \ar@/_2pc/[rr]_-{(-)^{RR}} & \ar@{}[d]^(0){}="a"^(0.7){}="b" \ar@{=>}^{S'} "a";"b" & B'\\
                & & 
            }
            \end{matrix}
            \right)
        \end{aligned}
    \end{equation}
    where the square in the left diagram is filled by the 2-transformation expressing compatibility of the 2-functor $F$ with (double) adjoints; we refer to 
    \cite{carqueville2025orbifoldshigherdaggerstructures} for the higher morphisms in the 3-category $2\hspace{-0.05cm}\Cat_{(3,1)}^{\Sotwo\text{-vol}}$.
\end{remark}
\begin{remark}
    The 3-category of $\Sotwo$-volutive 2-category has been discussed in \cite[Section 2.2]{carqueville2025orbifoldshigherdaggerstructures}
    with the exception of the condition presented in \Cref{Diagram: SO2volution}, which had only been hinted at there. We stress that the notion of an 
    $\Sotwo$-volutive structure there is incomplete and misses this important condition. 
\end{remark}
\begin{remark}
    The condition in \Cref{Diagram: SO2volution} asserts that the \emph{twisted quantum dimension} of $S$ is trivial. The terminology is motivated by the fact 
    that an $\Sotwo$-action on a 2-category $\B$ is the same as an invertible 2-transformation $P \colon \id_{\B} \to \id_{\B}$ whose quantum dimension 
    $q(P)$ is trivial. The quantum dimension $q(P)$ is to be understood as the quantum dimension in the category $\Aut_{\id_{\B}}$ whose braided monoidal 
    structure is described in \Cref{Remark: spelling out the braided monoidal structure on automorphisms of the identity}. Explicity, for any object 
    $P \in \Aut_{\id_{\B}}$, the quantum dimension is the composition 
    \begin{equation}
        \xymatrix{
            \id \ar[r]^-{\cong} & P^{-1} \circ P \ar[r]^-{\beta_{P^{-1},P}} & P \circ P^{-1} \ar[r]^-{\cong} & \id 
        }
    \end{equation}
    where $\beta$ is the braiding in $\Aut_{\id_{\B}}$. In the graphical calculus notation, the quantum dimension is also called the figure eight diagram. 
\end{remark}
\begin{remark}
    Let $(\B,S)$ be an $\Sotwo$-volutive 2-category. Then, the maximal subgroupoid $\B^\times \subseteq \B$ inherits an $\Sotwo$-action. Conversely, any 
    2-groupoid with $\Sotwo$-action defines an $\Sotwo$-volutive 2-category. 
\end{remark}
The proof of the following result will occupy our attention for the remainder of this section. 
\begin{proposition}\label{Proposition: rigid symmetric monoidal 2-categories have Sotwo volutive structures}
    Every rigid symmetric monoidal 2-category admits an $\Sotwo$-volutive structure.
\end{proposition}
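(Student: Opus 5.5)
Given a rigid symmetric monoidal 2-category $\B$, we need an invertible 2-transformation $S \colon \id_{\B} \to (-)^{RR}$ whose twisted quantum dimension, in the sense of \Cref{Diagram: SO2volution}, is trivial. The natural candidate is built from the Serre automorphism. Rigidity gives each object $a$ a dual $a^\vee$ with $\ev_a, \coev_a$. Set
\[
\mathbb{S}_a := (\id_a \otimes \ev_a) \circ (\beta_{a,a} \otimes \id_{a^\vee}) \circ (\id_a \otimes \ev_a^R),
\]
an invertible 1-morphism $a \to a$. By the standard results (e.g.\ Douglas--Schommer-Pries--Snyder), the Serre morphisms assemble into a pseudonatural equivalence $\mathbb{S} \colon \id_{\B} \to \id_{\B}$. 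There are canonical 2-isomorphisms $X^{RR} \circ \mathbb{S}_a \cong \mathbb{S}_b \circ X$ for every $X \colon a \to b$; this is the statement that double right adjoints are computed by conjugation with the Serre morphism. Define $S$ as follows:
\begin{itemize}
\item the 1-morphism components are $S_a := \mathbb{S}_a$;
\item the 2-morphism components are these Serre 2-isomorphisms, reinterpreted as filling the square for $\id \Rightarrow (-)^{RR}$ (recall $(-)^{RR}$ is the identity on objects).
\end{itemize}
Pseudonaturality and invertibility of $S$ follow from those of $\mathbb{S}$, together with \Cref{Lemma: compatibility of any 2-functor with adjoints} applied to the coherence isomorphisms of $\otimes$.

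The key steps, in order, are:
\begin{enumerate}
\item Construct $\mathbb{S}_a$ and its inverse. The inverse uses $\ev^L$ in place of $\ev^R$ and $\beta^{-1}$ in place of $\beta$.
\item Produce the natural isomorphisms $X^{RR}\mathbb{S}_a \cong \mathbb{S}_b X$. Since $X^R$ is the mate of the dual $X^\vee$ along $\ev, \coev$, one computes $X^{RR}$ by taking mates twice. Each mate introduces $\ev^R$ or $\coev^R$, and these combine into the Serre morphism.
\item Check that the resulting components satisfy the unit and composition axioms of a 2-transformation.
\item Verify the condition in \Cref{Diagram: SO2volution}.
\end{enumerate}

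For the last step we must show that a composite modification $\id_{\id_\B} \Rightarrow \id_{\id_\B}$ is the identity. Its component at $a$ is a 2-automorphism of $\id_a$ built from three ingredients: the perturbation $\sigma_{\B}$ (Zorro 2-isomorphisms comparing $X^{LR}$, $X^{RL}$ and $X$), the braiding $\beta_{S,S^{-1}}$ of \Cref{Remark: spelling out the braided monoidal structure on automorphisms of the identity} (built from $S_{S_a}$), and the invertibility data of $\mathbb{S}$. The plan is to reduce this component to the statement that the pseudonatural structure of $\mathbb{S}$ at the 1-morphism $\mathbb{S}_a$ itself, $\mathbb{S}_{\mathbb{S}_a}$, agrees with the canonical interchange isomorphism $\mathbb{S}_a\mathbb{S}_a \cong \mathbb{S}_a\mathbb{S}_a$. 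This reduction comes after cancelling $\sigma$ against the adjunction data used to identify $\mathbb{S}_a^{RR}$ with $\mathbb{S}_a$.

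That the components of a Serre-type transformation at itself are trivial is where symmetry, rather than mere braiding, enters. In a braided setting one would instead pick up a twist or balancing, and the condition could genuinely fail. So I expect to use $\beta_{a,a}\circ\beta_{a,a}\cong\id$ via the syllepsis, together with its coherence, to trivialize the resulting full twist. This is the analogue of the $1$-dimensional step in \Cref{Construction: O(1)-volutive categories from closed monoidal ones}, where $\beta\circ\beta=\id$ gave $1^{\eta_a}\circ\eta_{1^a}=\id$.

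I expect the main obstacle to be this last verification: tracking the many coherence 2-cells in $\sigma$ and $\beta_{S,S^{-1}}$ carefully enough to see the full twist emerge. My first attempt would be to work in a strictified, or graphical-calculus, model of a symmetric monoidal 2-category with duals, where the figure-eight diagram becomes a framed loop. The claim then is that symmetry lets this loop be unlinked to the identity.
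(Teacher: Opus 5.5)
Your proposal follows the paper's route: $S$ is the Serre isomorphism $\id \to (-)^{RR}$ (the paper builds it from the braiding-induced $\mathcal{P}$ and the duality--adjunction comparison $\mathcal{Q}$, which amounts to your ``mates twice''), and what remains is the twisted quantum dimension condition. The paper only asserts that condition as a tedious but straightforward computation, so your plan of reducing it to the triviality of $\mathbb{S}_{\mathbb{S}_a}$ and using the symmetry is, if anything, more specific than the paper; one small correction is that $\mathbb{S}$ is a pseudonatural equivalence $\id \to \id$ only on the core $\B^\times$ (since $X^{RR}$ need not be isomorphic to $X$), though this does not affect your argument because your $S$ is built from the correct isomorphisms $X^{RR}\mathbb{S}_a \cong \mathbb{S}_b X$.
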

\begin{lemma}
    Let $\B$ be a rigid symmetric monoidal 2-category. Then, there are invertible 2-transformations 
    \begin{equation}
        \mathcal{P} \colon \id \to (-)^{**} \qquad \text{and} \qquad \mathcal{Q} \colon (-)^{**} \to (-)^{LL}.
    \end{equation}
    where $(-)^*$ denotes the functor induced by duality data (for objects) and $(-)^L$ denotes the functor induced by left adjunction data (for 1-morphisms). 
\end{lemma}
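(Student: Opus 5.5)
The plan is to build both transformations from the duality data of $\B$ and then check naturality with respect to 2-morphisms. Throughout, let $\B$ be rigid symmetric monoidal. Every object $a$ has a dual $a^*$ with evaluation $\ev_a \colon a^* \otimes a \to 1$ and coevaluation $\coev_a \colon 1 \to a \otimes a^*$, subject to invertible cusp 2-morphisms. Every 1-morphism has left and right adjoints. Choosing duality data for each object makes $a \mapsto a^*$ a 2-functor $(-)^* \colon \B \to \B^{1\opp}$; it sends $X \colon a \to b$ to its mate $X^*$, built by whiskering with $\coev_a$ and $\ev_b$.

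\textbf{Step 1: constructing $\mathcal{P}$.} The object $a$ is itself a dual of $a^*$. The data is
\[
\ev' := \ev_a \circ \beta_{a,a^*} \colon a \otimes a^* \to 1,
\]
with coevaluation given by the braiding applied to $\coev_a$. The cusps are obtained from those of $(a,a^*)$ together with the symmetry $\beta \circ \beta \cong \id$. Since duals are unique up to a contractible choice, we get a canonical equivalence $\mathcal{P}_a \colon a \to a^{**}$, explicitly
\[
(\id_{a^{**}} \otimes \ev') \circ (\coev_{a^*} \otimes \id_a)
\]
up to associators. For the 2-morphism components at $X \colon a \to b$, the mate $X^{**}$ is formed from zigzags. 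We slide $X$ through them using interchange and the naturality of the braiding, which gives an invertible 2-morphism
\[
X^{**} \circ \mathcal{P}_a \cong \mathcal{P}_b \circ X.
\]
Coherence with composition follows from the coherence of mates and of the symmetric monoidal structure. This is the same pattern as the computation proving naturality of $\eta$ in \Cref{Construction: O(1)-volutive categories from closed monoidal ones}, one categorical level up.

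\textbf{Step 2: constructing $\mathcal{Q}$.} The key input is the standard fact that in a rigid monoidal 2-category, evaluation and coevaluation have adjoints. Concretely, $\ev_a^L$ together with a twist can be used as duality data exhibiting another dual of $a$. This is exactly the situation underlying the Serre automorphism. We take $\mathcal{Q}$ to have identity-like 1-morphism components, i.e.\ the equivalences $a^{**} \to a^{**}$ induced by uniqueness of duals. The real content lies in the 2-morphism components at $X \colon a \to b$, which must be invertible 2-morphisms
\[
X^{**} \cong X^{LL}
\]
compatible with the 1-morphism components. For these I would use the calculus of mates. One shows $X^* \cong (X^L)^{*}{}^{R}$, or more precisely that taking mates intertwines left and right adjoints: if $X \dashv X^R$, then $(X^R)^* \dashv X^*$ via the mated unit and counit. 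Applying this twice gives $X^{**} \cong X^{LL}$ up to the identification of duals with the reversal of adjoint variance. Naturality in 2-morphisms then follows from naturality of mates, and composition coherence from functoriality of $(-)^*$ and $(-)^L$, in the spirit of \Cref{Lemma: compatibility of any 2-functor with adjoints}.

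\textbf{Main obstacle.} I expect the hard part to be Step 2, namely verifying that the mate construction really produces $X^{LL}$ rather than $X^{RR}$. The direction depends on the conventions for $(-)^*$ being 1-contravariant and for which of $\ev$, $\coev$ is used. This is the point where the Serre morphism enters: with the wrong orientation the comparison would only hold after composing with the Serre automorphism. I would settle this first in the simplest case, $\B = \Mor(\mathrm{Vect})$ or a graphical string-diagram check, before writing the general zigzag argument.
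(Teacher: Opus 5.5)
Your Step 1 matches the paper, which also builds $\mathcal{P}$ from the symmetric braiding as in the 1-categorical case. The gap is in Step 2.

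\textbf{The mate argument proves the wrong thing.} The relation $(X^R)^*\dashv X^*$ says $(X^R)^*\cong (X^*)^L$. Iterating it, together with its mirror $(Y^L)^*\cong (Y^*)^R$, only gives $(X^R)^{**}\cong (X^{**})^R$. That is, the 2-functor $(-)^{**}$ commutes with adjoints, as every 2-functor does (\Cref{Lemma: compatibility of any 2-functor with adjoints}). It says nothing relating $X^{**}\colon a^{**}\to b^{**}$ to $X^{LL}\colon a\to b$.

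\textbf{Identity-like components cannot be right.} If $\mathcal{Q}_a\simeq\mathcal{P}_a^{-1}$, then $\mathcal{Q}\circ\mathcal{P}$ is a 2-transformation $\id\Rightarrow(-)^{LL}$ with trivial components. That is a coherent family of 2-isomorphisms $X\cong X^{LL}$, i.e.\ a pivotal structure. Rigidity and symmetry do not supply such a structure, and your mate argument certainly does not. What they supply is the Serre isomorphism of \Cref{Corollary: the Serre isomorphism}, whose components $S_a$ are in general not isomorphic to $\id_a$. For example, for $\mathrm{Cl}_1$ in the Morita 2-category of superalgebras, $S_{\mathrm{Cl}_1}$ is the parity-shifted bimodule. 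So even a completed version of your Step 2 would not give the transformation the paper uses next. For the same reason, your worry about $LL$ versus $RR$ is misplaced: in either orientation the Serre morphism must be the 1-morphism component itself, not a correction applied afterwards.

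\textbf{The missing idea} is to make the components of $\mathcal{Q}$ non-trivial and to build them from an adjoint of the evaluation. The paper works in any monoidal 2-category with duals and adjoints and constructs $\mathcal{Q}'\colon {}^{*}\!(-)\to(-)^*\circ(-)^{RR}$ with components
\[
\mathcal{Q}'_a\colon {}^{*}\!a\xrightarrow{\;\id\otimes\ev_a^R\;}{}^{*}\!a\otimes a\otimes a^*\xrightarrow{\;\widetilde{\ev}_a\otimes\id\;}a^* .
\]
Its 2-cell at $X$ is pasted from two pieces: the naturality 2-isomorphism $\widetilde{\Omega}_X$ of the left duality, and $\Omega^R_{X^L}$, the right adjoint of the naturality 2-isomorphism of $\ev$ at $X^L$.

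This is where the double adjoint genuinely enters. Passing to right adjoints in $\ev\circ(Y^*\otimes\id)\cong\ev\circ(\id\otimes Y)$ replaces $Y^*$ by $(Y^*)^R\cong(Y^L)^*$ and $Y$ by $Y^R$. With $Y=X^L$ this puts $(X^{LL})^*$ on one side and $X$ on the other. One then obtains $\mathcal{Q}$ by passing to ${}^{**}\!(-)\to(-)^{RR}$, inverting both sides, and using the symmetry. Composing with $\mathcal{P}$ gives the Serre isomorphism, whose component $S_a$ is built from $\ev_a^R$ and the braiding.

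Your remark that $\ev_a^L$ with a twist exhibits another dual of $a$ was the right ingredient. The equivalence between the two duality structures given by uniqueness of duals is exactly this non-trivial 1-morphism. It has to be kept as the component of $\mathcal{Q}$ rather than replaced by an identity.
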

\begin{proof}
    The 2-transformation $\mathcal{P}$ is constructed from the symmetric braiding similar to the 1-categorical case. In order to construct the invertible 
    2-transformation $\mathcal{Q}$, we will work in slightly more generality. Let $\C$ be any monoidal 2-category with duals and adjoints. We will construct 
    an invertible 2-transformation $\mathcal{Q}' \colon {}^{*}\!(-) \to (-)^{*} \circ (-)^{RR}$, which gives rise to a 2-transformation 
    ${}^{**}\!(-) \to (-)^{RR}$ which yields $\mathcal{Q}$ by inverting both sides in the case of $\C = \B$. We now construct $\mathcal{Q}'$. Its 1-morphism 
    component at an object $a \in \C$ is given by 
    \begin{equation}\label{Equation: 1-morphism component of Q}
    \xymatrix{
        Q_a' \colon {}^{*}\!a \ar[rr]^-{\id \circ \ev_a^{R}} && {}^{*}\!a \circ a \circ a^* \ar[rr]^-{\widetilde{\ev}_a \circ \id} && a^*
    }
    \end{equation}
    and its 2-morphism component $Q_X'$ at a 1-morphism $X \colon a \to b$ in $\C$ is given by pasting the cells in the diagram
    \begin{equation}\begin{aligned}
        \xymatrix@C=7em@R=4em{
            & {}^{*}\!b \ar[r]^-{\id \circ \ev_b^{R}} & {}^{*}\!b  b b^* \ar[dr]^-{\widetilde{\ev}_b \circ \id} \\
            {}^{*}\!a \ar[ur]^-{{}^{*}\!X} \ar[dr]_-{\id \circ \ev_a^{R}} \ar[r]_-{\id \circ \ev_b^{R}} & {}^{*}\!a b b^* \ar[r]_-{\id \circ X \circ \id} \ar[ur]_-{{}^{*}\!X \circ \id \circ \id} &  {}^{*}\!a a b^* \ar[r]^-{\widetilde{\ev}_a \circ \id}  & b^*  \\
            & {}^{*}\!a  a a^* \ar[ur]_-{\id \circ \id \circ (X^{LL})^*} \ar[r]_-{\widetilde{\ev}_a \circ \id} & a^* \ar[ur]_-{(X^{LL})^*}.
        }
    \end{aligned}\end{equation}
    here, the upper left and lower right square commute by the interchange law. The upper right diagram is filled with the naturality data of the duality 
    structure, that is, the 2-isomorphism 
    \begin{equation}
        \widetilde{\Omega}_X \colon \widetilde{\ev}_b \bullet ({}^{*}\!X \circ \id) \cong \widetilde{\ev}_a \circ (\id \circ X),
    \end{equation}
    and the lower left diagram is filled with the 2-isomorphism 
    \begin{equation}
        \Omega_{X^{L}}^{R} \colon (X \circ \id) \bullet \ev_b^{R} \cong (\id \circ (X^{LL})^*)\bullet \ev_a^{R}.
    \end{equation}
    where we have used the interaction between the duality and adjunction functors. One checks that this defines an invertible 2-transformation.
\end{proof}
\begin{corollary}\label{Corollary: the Serre isomorphism}
    Let $\B$ be a rigid symmetric monoidal 2-category. Then, there is an invertible 2-transformation
    \begin{equation}
        \mathcal{S} := \left(\xymatrix{\id \ar[r]^-{\mathcal{P}^{-1}} & {}^{**}\!(-) \ar[r]^-{\mathcal{Q}^{-1}} & (-)^{RR}} \right)
    \end{equation}
    which is called the \emph{Serre isomorphism}. Its 1-morphism component at $a \in \B$ is given by 
    \begin{equation}
        S_a \colon (\id \otimes \ev_a) \circ (\beta_{a,a} \otimes \id) \circ (\id \otimes \ev^R_a) \colon a \xymatrix{\ar[r]&} a.
    \end{equation}
\end{corollary}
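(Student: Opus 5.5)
The statement is a corollary of the preceding lemma, so the plan is mostly bookkeeping. First I would define $\mathcal{S}$ as the vertical composite of the two invertible 2-transformations supplied by that lemma. Invertibility is then automatic, since composites of invertible 2-transformations are invertible. There are two points to take care of. First, the lemma produces $\mathcal{P} \colon \id \to (-)^{**}$ and $\mathcal{Q} \colon (-)^{**} \to (-)^{LL}$, whereas the statement uses ${}^{**}\!(-)$ and $(-)^{RR}$. Second, the actual content of the corollary is the explicit formula for the 1-morphism component $S_a$.

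For the first point, I would use the intermediate transformation $\mathcal{Q}' \colon {}^{*}\!(-) \to (-)^{*}\circ(-)^{RR}$ built in the proof of the lemma. Precomposing with ${}^{*}\!(-)$ and using ${}^{*}\!(a^{*}) \cong a$ gives an invertible transformation ${}^{**}\!(-) \to (-)^{RR}$ directly. The left-dual analogue of $\mathcal{P}$, obtained again from the symmetric braiding, gives $\id \cong {}^{**}\!(-)$. So I would read $\mathcal{P}^{-1}$ and $\mathcal{Q}^{-1}$ in the statement as these transformations, which are inverse to the ones in the lemma under the canonical identifications $(-)^{**} \simeq {}^{**}\!(-)^{-1}$ and $(-)^{LL} \simeq ((-)^{RR})^{-1}$. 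I would then check that the composite $\id \to {}^{**}\!(-) \to (-)^{RR}$ is well typed.

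For the component formula, I would compute the 1-morphism components and compose them.
\begin{itemize}
\item The component of $\mathcal{P}$ at $a$ is the 1-categorical formula $a \to a^{**}$, namely $(\id \otimes \ev_a)\circ(\beta \otimes \id)\circ(\id\otimes \coev_{a^*})$, up to the usual choices.
\item The component of $\mathcal{Q}'$ at $a$ is \Cref{Equation: 1-morphism component of Q}, that is $(\widetilde{\ev}_a\otimes\id)\circ(\id\otimes\ev_a^R)$.
\end{itemize}
I would then compose these after transporting along the identifications above. The key simplification is a Zorro move for the duality pairing $({}^{*}\!a, a)$: it cancels the $\widetilde{\ev}_a$ from $\mathcal{Q}'$ against the coevaluation coming from $\mathcal{P}$. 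Using naturality of the braiding, what remains is $(\id\otimes\ev_a)\circ(\beta_{a,a}\otimes\id)\circ(\id\otimes\ev^R_a)$ as a map $a \to a\otimes a\otimes a^* \to a\otimes a\otimes a^* \to a$. Throughout I would suppress associators and unitors, since the statement does.

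The main obstacle is this last simplification. Keeping track of which dual (left or right) appears at each stage, and where the braiding sits, is fiddly. Equality holds only up to a canonical invertible 2-cell (the Zorro isomorphism), so the formula must be read as specifying the component up to canonical isomorphism. That is harmless, because any 2-transformation with isomorphic components, transported along those isomorphisms, is again invertible. I would therefore first do the computation in string-diagram form to locate the cancellation, and only then write it algebraically.
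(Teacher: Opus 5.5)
Your proposal is correct and is essentially the paper's argument. The paper gives no separate proof: it takes $\mathcal{S}$ to be the composite of the two invertible 2-transformations from the preceding lemma, read (as you do) as $\id\to{}^{**}\!(-)$ followed by the transformation ${}^{**}\!(-)\to(-)^{RR}$ obtained from $\mathcal{Q}'$, and it simply asserts the formula for $S_a$. Your Zorro-plus-braiding computation of the component is a check the paper omits, and in the symmetric setting it does give the stated $S_a$ up to canonical 2-isomorphism; the one slip is that evaluating $\mathcal{Q}'$ at ${}^{*}\!a$ gives a map into $({}^{*}\!a)^{*}\cong a$, not ${}^{*}\!(a^{*})$.
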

\begin{proof}[Proof of \Cref{Proposition: rigid symmetric monoidal 2-categories have Sotwo volutive structures}]
    Let $\B$ be a rigid symmetric monoidal 2-category, that is, $\B$ has all duals and adjoints. We have already constructed an invertible 2-transformation 
    $\mathcal{S} \colon \id_{\B} \to (-)^{RR}$ in \Cref{Corollary: the Serre isomorphism}, the Serre isomorphism. It remains to show that the twisted 
    quantum dimension of the Serre isomorphism is trivial, that is, we need to check that the diagram in \Cref{Diagram: SO2volution} is trivial. By 
    \Cref{Corollary: the Serre isomorphism}, this is a tedious but straightforward computation.
\end{proof}

\subsection{Relative $\Sotwo$-volutive 2-categories}
\begin{definition}
    A \emph{2-contravariant $\Sotwo$-volutive 3-category} consists of a 3-category $\D$ with adjoints for 2-morphisms, an invertible 3-transformation 
    $S \colon \id_{\D} \to (-)^{RR}$, and an invertible pertubation $\sigma$ trivializing the 3-modification 
    \begin{equation}\label{Diagram: SO2volution 2}
    \begin{aligned}
    \xymatrix{
        \operatorname{id}_{\D} \ar@/_4pc/[dd]_-{\id}  \ar[r]^-{\id} \ar[d] & \operatorname{id}_{\D} \ar[d] \ar@{=>}[dl]^-{\sigma_{\D}} \ar@/^4pc/[dd]^-{\id} \\
        (-)^{LL} \circ (-)^{RR} \ar[r] \ar[d]_{S^{-1} \circ S} &  (-)^{RR} \circ (-)^{LL} \ar[d]^-{S \circ S^{-1}} \ar@{=>}[dl]^-{\beta_{S^{-1},S}}\\
        \operatorname{id}_{\D} \ar[r]_-{\id} & \operatorname{id}_{\D} 
    }
    \end{aligned}
    \end{equation}
\end{definition}
\begin{remark}
    Let $(\D,S,\sigma)$ be a 2-contravariant $\Sotwo$-volutive 3-category. Then, the pair $(S,\sigma)$ induces an $\Sotwo$-action on the 
    underlying (3,1)-category $\D_{(3,1)} \subseteq \D$. Conversely, any $\Sotwo$-action on a (3,1)-category defines a 2-contravariant 
    $\Sotwo$-volutive 3-category. 
\end{remark}
\begin{example}\label{Example: The Sotwo-volutive 3-category of 2-categories with adjoints}
    The 3-category $2\hspace{-0.05cm}\Cat^{\operatorname{adj}}$ admits a 2-contravariant $\Sotwo$-volutive structure which reduces to the 
    $\Sotwo$-action on $2\hspace{-0.05cm}\Cat_{(3,1)}^{\operatorname{adj}}$ described in \Cref{Construction: The defining Sotwo action of volutive categories}. 
    This follows essentially from our explicit description of this action, together with \Cref{Lemma: Trivializing the double adjoint on 2-categories with adjoints}.
\end{example}
\begin{definition}\label{Definition: functor of 2-contravariant Sotwo volutive 3-categories}
    A \emph{functor} of 2-contravariant $\Sotwo$-volutive 3-categories consists of a 3-functor $F \colon \D \to \D'$ and an invertible modification
    \begin{equation}
        \begin{aligned}
            \eta \colon 
            \left(
            \begin{matrix}
            \xymatrix{
                & \ar@{}[d]^(.3){}="a"^(1){}="b" \ar@{=>}^{S} "a";"b" &  \\
                \D \ar[rr]_-{(-)^{RR}} \ar[d]_-{F} \ar@/^2pc/[rr]^-{\id_{\D}} && \D \ar[d]^-{F} \ar@{=>}[dll]^-{\cong} \\
                \D' \ar[rr]_-{(-)^{RR}} && \D'
            }
            \end{matrix}\right)
            \;\RRightarrow\;
            \left(
            \begin{matrix}
                \xymatrix{
                \D \ar[rr]^-{\id_{\D}} \ar[d]_-{F} && \D \ar[d]^-{F} \ar[d]^-{F} \ar@{=>}[dll]_-{\cong}\\
                \D' \ar[rr]^-{\id_{\D'}} \ar@/_2pc/[rr]_-{(-)^{RR}} & \ar@{}[d]^(0){}="a"^(0.7){}="b" \ar@{=>}^{S'} "a";"b" & \D'\\
                & & 
            }
            \end{matrix}
            \right)
        \end{aligned}
    \end{equation}
    satisfying a compatibility condition with $\sigma,\sigma'$, namely the two pertubations
    \begin{equation}
        \left(\begin{matrix}
        \xymatrix@C=0.65em{
            F \ar@/_5pc/[dd]_-{\id}  \ar[r]^-{\id} \ar[d] & F \ar[d] \ar@{=>}[dl]^-{\sigma_{\B} \circ \id} \ar@/^5pc/[dd]^-{\id} \\
            (-)^{LL}_{\D'} \circ (-)^{RR}_{\D'} \circ F \ar[r] \ar[d]_{S' \circ S'^{-1} \circ \id} &  (-)^{RR}_{\D'} \circ (-)^{LL}_{\D'} \circ F \ar[d]^-{S'^{-1} \circ S' \circ \id} \ar@{=>}[dl]^-{\beta_{S',S'^{-1}} \circ \id}\\
            F \ar[r]_-{\id} & F
        } 
        \end{matrix}\right)
        \overset{\sigma \circ \id}{\RRightarrow}
        \left(\begin{matrix}
        \xymatrix{
            F \ar[r]^-{\id} \ar[dd]^-{\id} & F \ar[dd]^-{\id} \ar@{=>}[ddl]^-{\cong} \\
            & \\
            F \ar[r]^-{\id} & F
        }
        \end{matrix}\right)
    \end{equation}
    and 
    \begin{align}\label{Equation: diagram for functor of 2-contravariant Sotwo volutive 2-categories}
        \begin{aligned}
        &\left(\begin{matrix}
        \xymatrix@C=0.65em{
            F \ar@/_4.5pc/[dd]_-{\id}  \ar[r]^-{\id} \ar[d] & F \ar[d] \ar@{=>}[dl]^-{\sigma_{\B} \circ \id} \ar@/^4.5pc/[dd]^-{\id} \\
            (-)^{LL}_{\D'} \circ (-)^{RR}_{\D'} \circ F \ar[r] \ar[d]_{S' \circ S'^{-1} \circ \id} &  (-)^{RR}_{\D'} \circ (-)^{LL}_{\D'} \circ F \ar[d]^-{S'^{-1} \circ S' \circ \id} \ar@{=>}[dl]^-{\beta_{S',S'^{-1}} \circ \id}\\
            F \ar[r]_-{\id} & F
        } 
        \end{matrix}\right) \RRightarrow  \\
        &\left(\begin{matrix}
        \xymatrix@C=0.65em{
            F \ar@/_4.5pc/[dd]_-{\id}  \ar[r]^-{\id} \ar[d] & F \ar[d] \ar@{=>}[dl]^-{\id \circ \sigma_{\B}} \ar@/^4.5pc/[dd]^-{\id} \\
            F \circ (-)^{LL}_{\D} \circ (-)^{RR}_{\D} \ar[r] \ar[d]_{\id \circ S \circ S^{-1}} & F \circ (-)^{RR}_{\D} \circ (-)^{LL}_{\D} \ar[d]^-{\id \circ S^{-1} \circ S} \ar@{=>}[dl]^-{\id \circ \beta_{S,S^{-1}}}\\
            F \ar[r]_-{\id} & F
        } 
        \end{matrix}\right) 
        \overset{\id \circ \sigma}{\RRightarrow}
        \left(\begin{matrix}
        \xymatrix{
            F \ar[r]^-{\id} \ar[dd]^-{\id} & F \ar[dd]^-{\id} \ar@{=>}[ddl]^-{\cong} \\
            & \\
            F \ar[r]^-{\id} & F
        }
        \end{matrix}\right)
        \end{aligned}
    \end{align}
    are required to be equal; we spell out the unlabeled pertubation appearing here in more detail in 
    \Cref{Remark: Pertubations in functor of 2-contravariant Sotwo volutive 2-categories}.
\end{definition}
\begin{remark}\label{Remark: Pertubations in functor of 2-contravariant Sotwo volutive 2-categories}
    In the following, we wish to describe the unlabeled pertubation in \Cref{Equation: diagram for functor of 2-contravariant Sotwo volutive 2-categories} more explicitly. 
    In order to describe the pertubation, we first need to describe the vertical 3-transformations and the 
    respective 3-modifications. The only non-trivial vertical 3-transformations are the (invertible) 3-transformations 
    \begin{align}
        \begin{aligned}
                (-)^{LL}_{\D'} \circ (-)^{RR}_{\D'} \circ F \to F \circ (-)^{LL}_{\D} \circ (-)^{RR}_{\D} \\
                (-)^{RR}_{\D'} \circ (-)^{LL}_{\D'} \circ F \to F \circ (-)^{RR}_{\D} \circ (-)^{LL}_{\D} 
        \end{aligned}
    \end{align}
    expressing the compatibility of the 3-functor $F$ with adjoints. Next, we need to describe the 3-modifications ensuring that all the horizontal and 
    vertical 3-transformations are compatible. There are five such non-trivial (invertible) 3-modifications in total, which we will now describe. 
    First, we have the two invertible 3-modifications
    \begin{equation}
        \begin{aligned}
        \xymatrix{
        F \ar[r] \ar[d] & (-)^{LL}_{\D'} \circ (-)^{RR}_{\D'} \circ F \ar[d]^{\cong} \ar@{=>}[dl]^-{\cong} \\
        F \ar[r] & F \circ (-)^{LL}_{\D} \circ (-)^{RR}_{\D} 
        }
        \hspace{1cm}
        \xymatrix{
        F \ar[r] \ar[d] & (-)^{RR}_{\D'} \circ (-)^{LL}_{\D'} \circ F \ar[d]^{\cong} \ar@{=>}[dl]^-{\cong} \\
        F \ar[r] & F \circ (-)^{RR}_{\D} \circ (-)^{LL}_{\D} 
        }
        \end{aligned}
    \end{equation}
    respectively, which record the fact that the data expressing that any 3-functor is compatible with adjoints, is compatible with the adjunction data itself. 
    Second, we have the two invertible 3-modifications obtained by pasting together the 3-modifications in the diagrams 
    \begin{equation}
        \begin{aligned}
        \xymatrix{
            (-)^{LL}_{\D'} \circ (-)^{RR}_{\D'} \circ F \ar[rr]^-{S' \circ S'^{-1} \circ \id} \ar[dr]^-{\cong} \ar[dd]^-{\cong} && F \ar[dd]^-{\id} \\
            &(-)^{LL}_{\D'} \circ F \circ (-)^{RR}_{\D} \ar[ur]^-{S' \circ \id \circ S^{-1}} \ar[dr]^-{S' \circ \id \circ S^{-1}} & \\
            F \circ (-)^{LL}_{\D} \circ (-)^{RR}_{\D} \ar[rr]^-{\id \circ S \circ S^{-1}} \ar[ur]^-{\cong} && F
        }
        \end{aligned}
    \end{equation}
    and 
    \begin{equation}
        \begin{aligned}
        \xymatrix{
            (-)^{RR}_{\D'} \circ (-)^{LL}_{\D'} \circ F \ar[rr]^-{S' \circ S'^{-1} \circ \id} \ar[dr]^-{\cong} \ar[dd]^-{\cong} && F \ar[dd]^-{\id} \\
            &(-)^{RR}_{\D'} \circ F \circ (-)^{LL}_{\D} \ar[ur]^-{S' \circ \id \circ S^{-1}} \ar[dr]^-{S' \circ \id \circ S^{-1}} & \\
            F \circ (-)^{RR}_{\D} \circ (-)^{LL}_{\D} \ar[rr]^-{\id \circ S \circ S^{-1}} \ar[ur]^-{\cong} && F
        }
        \end{aligned}
    \end{equation}
    which are in turn filled with the 3-modifications recording the interchange law (respective right triangles), terms built from $\eta$ and its inverse
    (respective upper and lower triangles), and the construction of compatibility data for iterated adjoints and a 3-functor (respective left triangle). 
    Finally, we have an (invertible) 3-modification in the diagram 
    \begin{equation}
        \begin{aligned}
        \xymatrix{
            (-)^{LL}_{\D'} \circ (-)^{RR}_{\D'} \circ F \ar[r] \ar[d] & (-)^{RR}_{\D'} \circ (-)^{LL}_{\D'} \circ F \ar[d] \ar@{=>}[dl]^-{\cong} \\
            F \circ (-)^{LL}_{\D} \circ (-)^{RR}_{\D} \ar[r] & F \circ (-)^{RR}_{\D} \circ (-)^{LL}_{\D} 
        }
        \end{aligned}
    \end{equation}
    which records the naturality of the compatibility data of iterated adjoints and any 3-functor. Having discussed all five non-trivial 3-modifications, 
    we are now in position to discuss the four individual pertubations (upper, lower, left, and right) whose pasting yields the unlabeled pertubation 
    of \Cref{Equation: diagram for functor of 2-contravariant Sotwo volutive 2-categories} we set out to describe. The left and right pertubations express 
    naturality with respect to 3-modifications (here constructed from $\eta$) of 
    the naturality data with respect to 3-transformations (here constructed from $S$ and $S'$) of the adjunction data (here for $(-)^{RR}$ and $(-)^{LL}$). 
    The upper pertubation is constructed by unwinding various levels of compatibility of adjunction data with itself. The lower pertubation expresses 
    naturality with respect to 3-modifications (here constructed from $\eta$) of the symmetric braiding $\beta$. Together, these form the unlabeled 
    pertubation in \Cref{Equation: diagram for functor of 2-contravariant Sotwo volutive 2-categories}. 
\end{remark}
\begin{remark}
    Weaker notions of functors between 2-contravariant $\Sotwo$-volutive 3-categories are possible, e.g. where $\eta$ is not required to be invertible.
\end{remark}
\begin{definition}
    A \emph{relative $\Sotwo$-volutive 2-category} consists of a 2-contravariant $\Sotwo$-volutive 3-category $\D$ and a 
    functor of 2-contravariant $\Sotwo$-volutive 3-categories $\D \to 2\hspace{-0.05cm}\Cat^{\operatorname{adj}}$.
\end{definition}
\begin{notation}
    Occasionally we will call a relative $\Sotwo$-volutive 2-category $\D \to 2\hspace{-0.05cm}\Cat^{\operatorname{adj}}$ a 
    relative $\Sotwo$-volutive 2-category \emph{over} $\D$. This terminology is motivated as usual via straightening-unstraightening.
\end{notation}
\begin{example}
    An $\Sotwo$-volutive 2-category is the same as a functor of 2-contravariant $\Sotwo$-volutive 3-categories $\star \to 2\hspace{-0.05cm}\Cat^{\operatorname{adj}}$. 
\end{example}
\begin{remark}
    Let $U \colon 2\hspace{-0.05cm}\Cat^{\operatorname{adj}} \to 2\hspace{-0.05cm}\Cat$ be the forgetful functor from the 3-category of 2-categories with adjoints 
    to the 3-category of 2-categories. This functor has a right adjoint $U^R$ which maps a 2-category $\B$ to the sub 2-category 
    with the same objects, those 1-morphisms which have both adjoints, and all 2-morphisms between them. It is not known to us whether $U$ also has a left adjoint.
\end{remark}

\subsection{Lax $\Sotwo$-volutive 2-categories}
\begin{definition}
    A \emph{lax $\Sotwo$-volutive 2-category} is a 2-category $\B$ with adjoints together with a 2-transformation $S \colon \id \to (-)^{RR}$ such that the composed 
    2-modification 
    \begin{equation}\label{Diagram: SO2volution 3}
        \begin{aligned}
        \xymatrix{
        \operatorname{id}_{\mathcal{B}} \ar@/_4pc/[dd]_-{\id}  \ar[r]^-{\id} \ar[d] & \operatorname{id}_{\mathcal{B}} \ar[d] \ar@{=>}[dl]^-{\sigma_{\B}} \ar@/^4pc/[dd]^-{\id} \\
        (-)^{LL} \circ (-)^{RR} \ar[r] \ar[d]_{S \circ S^{R}} &  (-)^{RR} \circ (-)^{LL} \ar[d]^-{S^{R} \circ S} \ar@{=>}[dl]^-{\beta_{S,S^{R}}}\\
        \operatorname{id}_{\mathcal{B}} \ar[r]_-{\id} & \operatorname{id}_{\mathcal{B}} 
        }
        \end{aligned}
    \end{equation}
    is the identity on $\operatorname{id}_{\operatorname{id}_{\mathcal{B}}}$; here, the upper square is filled with the modification $\sigma_{\B}$ described in 
    \Cref{Construction: The defining Sotwo action of volutive categories}, the lower square is filled with the modification expressing naturality 
    of the braiding $\beta$ on $\Aut_{\id_{2\hspace{-0.05cm}\Cat^{\operatorname{adj}}}}$, and the left and right diagrams are filled with the modifications expressing
    naturality of adjunction data.
\end{definition}
\begin{example}
    Any $\Sotwo$-volutive 2-category is in particular a lax $\Sotwo$-volutive 2-category. Conversely, any lax $\Sotwo$-volutive 2-category $(\B,S)$ gives rise 
    to an $\Sotwo$-volutive 2-category by considering the full sub 2-category on those objects $a$, for which $S_a$ is a 1-isomorphism. 
\end{example}
\begin{example}
    Any 0-closed symmetric monoidal 2-category with adjoints admits a lax $\Sotwo$-volutive structure. The construction is essentially the same as in the 
    case of rigid symmetric monoidal 2-categories, with the exception that the 1-morphism component of the Serre morphism, that is 
    \begin{equation}
            S_a \colon (\id \otimes \ev_a) \circ (\beta_{a,a} \otimes \id) \circ (\id \otimes \ev^R_a) \colon a \xymatrix{\ar[r]&} a,
    \end{equation}
    is generically not a 1-isomorphism anymore. In other words, we really have a lax $\Sotwo$-volutive structure, rather than an ordinary $\Sotwo$-volutive one.
\end{example}
\begin{definition}
    Consider the 2-category generated freely by two objects $a$ and $b$, two 1-morphisms $f \colon a \to b$ and $g \colon b \to a$, four 2-morphisms 
    $\ev \colon f \circ g \to \id, \coev \colon \id \to g \circ f, \widetilde{\ev} \colon g \circ f \to \id$, and $\widetilde{\coev} \colon \id \to f \circ g$, 
    subject to the Zorro moves for $\ev,\coev$ and $\widetilde{\ev},\widetilde{\coev}$, respectively. We denote this 2-category by $\AmAdj$, the 
    \emph{walking ambidextrous adjunction}. 
\end{definition}
\begin{remark}
    There are two 2-functors $\Adj \to \AmAdj$, recording the two adunctions $f \dashv g$ and $g \dashv f$. In particular, $\AmAdj$ has all adjoints, since 
    $g$ is both left and right adjoint to $f$ and vice versa. The double adjoint functor $(-)^{RR} \colon \AmAdj \to \AmAdj$ is by construction the identity 
    on objects and on 1-morphisms. We wish to understand whether $(-)^{RR}$ is really equivalent (or even equal) to the identity functor on $\AmAdj$. In order 
    to solve this problem, we need a short digression.\\
    First, we note the following: we have 2-morphisms $\ev^R \colon \id \cong \id^R \to (f \circ g)^R \cong g^R \circ f^R \cong f \circ g$ 
    and $\coev^R \colon g \circ f \cong f^R \circ g^R \cong (g \circ f)^R \to \id^R \cong \id$ which satisfy the Zorro moves by virtue of the Zorro moves 
    for $\ev,\coev$. In particular, $\coev^R,\ev^R$ exhibit $g$ as a left adjoint to $f$, in the same way as $\widetilde{\ev},\widetilde{\coev}$. \\
    However, adjunction data is essentially unique in the following sense: let $X \colon c \to d$ be a 1-morphism in any 2-category $\B$ with adjoints
    and let $(Y,\ev,\coev)$ and $(Y',\ev',\coev')$ be right adjunction data for $X \dashv (-)$. Then, there exists a unique 2-isomorphism $\Omega \colon Y \to Y'$ 
    satisfying $\ev = \ev' \bullet (\id \circ \Omega)$ and $(\Omega \circ \id) \bullet \coev = \coev'$. Explicitly, $\Omega$ is given by the composition 
    \begin{equation}
        \xymatrix@C=3em{
            Y \ar[r]^-{\coev' \circ \id} & Y' \circ X \circ Y \ar[r]^-{\id \circ \ev} & Y'.
        }
    \end{equation}
    Next, we recall the obvious fact that $X$ is a left adjoint of $X^R$, with adjunction data given by $\ev_X,\coev_X$. In particular, we have 
    2-isomorphisms $\Psi_X \colon X \cong (X^R)^L$. In fact, for $Z \colon M \to N$ any 2-morphism, we have a commutative diagram 
    \begin{equation}
        \begin{aligned}
        \xymatrix{
            M \ar[r]^-{\Psi_M} \ar[d]^-{Z} & (M^R)^L \ar[d]^-{(Z^R)^L} \\
            N \ar[r]^-{\Psi_N} & (N^R)^L
        }
        \end{aligned}
    \end{equation}
    so that $(Z^R)^L = \Psi_N \bullet Z \bullet \Psi^{-1}_M$. Combining our simple observations, we find 
    \begin{align*}
        Z^{RR} \cong (Z^R)^R &= [(\id \circ \ev_N) \bullet (\id \circ Z \circ \id) \bullet (\coev_M \circ \id)]^R \\
        &= (\id \circ \coev_M^R) \bullet (\id \circ Z^R \circ \id) \bullet (\ev_N^R  \circ \id)\\
        &= (\id \circ [\widetilde{\ev}_M \bullet (\Omega_M \circ \id)]) \bullet (\id \circ Z^R \circ \id) \bullet ([(\id \circ \Omega_N^{-1}) \bullet \widetilde{\coev}_N] \circ \id)\\
        &= (\id \circ \widetilde{\ev}_M) \bullet (\id \circ (\Omega_M \bullet Z^R \bullet \Omega_N^{-1}) \circ \id) \bullet  (\widetilde{\coev}_N \circ \id)\\
        &= (\Omega_M \bullet Z^R \bullet \Omega_N^{-1})^L \\
        &= \Omega_N \bullet (Z^R)^L \bullet \Omega_M^{-1} \\
        &= \Omega_N \bullet \Psi_N \bullet Z \bullet \Psi^{-1}_M \bullet \Omega_M^{-1} \cong (\Omega_N \bullet \Psi_N) \bullet Z \bullet (\Omega_M \bullet \Psi_M)^{-1}.
    \end{align*}
    Hence, we obtain an invertible 2-transformation $S \colon \id \cong (-)^{RR}$ whose 1-morphism components are identites and whose 2-morphism component at a 
    1-morphism $M$ is given by $\Omega_M \bullet \Psi_M$. Moreover, the twisted quantum dimension (see \Cref{Diagram: SO2volution}) of $S$ is trivial. 
\end{remark}
We record our findings in the following.
\begin{lemma}\label{Lemma: Ambidextrous adjunction is volutive}
    $\AmAdj$ admits an $\Sotwo$-volutive structure.
\end{lemma}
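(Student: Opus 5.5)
The plan is to package the computation from the preceding remark into the data of an $\Sotwo$-volutive 2-category, namely an invertible 2-transformation $S \colon \id_{\AmAdj} \to (-)^{RR}$ whose twisted quantum dimension (\Cref{Diagram: SO2volution}) is trivial. Two preliminary points need to be settled first.

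\textbf{Adjoints.} $\AmAdj$ has all adjoints. By the universal property of freely generated 2-categories, every 1-morphism is a composite word in $f$ and $g$. Each letter has both adjoints, with $g$ serving as left and right adjoint of $f$ and conversely. Adjoints of composites are then given by reversing the word and composing the adjunction data.

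\textbf{The transformation $S$.} We take the 1-morphism components to be identities, which is possible because $(-)^{RR}$ is the identity on objects and on 1-morphisms. The 2-morphism component at a 1-morphism $M$ is $\Omega_M \bullet \Psi_M \colon M \to M^{RR}$. Here $\Psi_M \colon M \cong (M^R)^L$ comes from $\ev_M,\coev_M$ exhibiting $M$ as a left adjoint of $M^R$. The map $\Omega_M$ is the unique comparison between the two left-adjunction data $(\coev^R,\ev^R)$ and $(\widetilde{\ev},\widetilde{\coev})$, given by the explicit formula $\Omega=(\id\circ\ev)\bullet(\coev'\circ\id)$.

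The displayed computation of $Z^{RR}$ in the preceding remark is exactly the naturality of these components with respect to 2-morphisms $Z \colon M \to N$. It remains to verify compatibility with composition of 1-morphisms and with identities. For this I would use the uniqueness of adjunction data: both $\Omega_{M\circ M'}\bullet\Psi_{M\circ M'}$ and the composite of the components for $M$ and $M'$ are 2-isomorphisms $M M' \to (MM')^{RR}$ intertwining the relevant adjunction data. By the uniqueness clause, with $\Omega$ uniquely characterized by $\ev = \ev'\bullet(\id\circ\Omega)$, the two must coincide. Invertibility of $S$ is automatic, since all components are identities or 2-isomorphisms.

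\textbf{Twisted quantum dimension (main obstacle).} We must show that the pasting in \Cref{Diagram: SO2volution} is the identity. Because the 1-morphism components of $S$ are identities, the braiding component $\beta_{S,S^{-1}}$, which by \Cref{Remark: spelling out the braided monoidal structure on automorphisms of the identity} is built from $S_{S_a}$, reduces to 2-morphism components of $S$ evaluated at identity 1-morphisms. Likewise $\sigma_{\B}$ reduces to the canonical isomorphisms $X \cong (X^L)^R \cong (X^R)^L \cong X$.

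The approach is to evaluate the whole pasting at each object $a\in\{a,b\}$, where it becomes a 2-endomorphism of $\id_a$. Every map appearing is a canonical comparison isomorphism between adjunction data for $\id_a$. By uniqueness of such comparisons, any composite of them from given data back to itself is the identity, so the pasting is trivial.

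The delicate part is confirming that the contributions of $\widetilde{\ev},\widetilde{\coev}$ through $\Omega$ and through $\Omega^{-1}$ genuinely cancel, rather than leaving a residual scalar. Such a residue would be a non-trivial endomorphism of $\id_a$, for instance a loop formed by $\widetilde{\ev}\bullet\coev$. I would check this first on the generators $f$ and $g$, expecting each $\Omega$ to appear paired with its inverse once the figure-eight is unwound.
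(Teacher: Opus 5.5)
Your proposal follows the paper's argument: the same $S$ with identity 1-morphism components and 2-morphism components $\Omega_M \bullet \Psi_M$, naturality read off from the displayed computation of $Z^{RR}$, and triviality of the twisted quantum dimension, which the paper simply asserts and which you reduce to components of $S$ at identity 1-morphisms. That reduction already finishes the argument, so the residual-loop worry and the proposed check on $f,g$ are unnecessary. At an object $a$ the braiding contributes only $S_{\id_a}^{\pm 1}$, and $\sigma$ and the naturality cells contribute comparison isomorphisms for the trivial adjunction $\id_a \dashv \id_a$, all of which are identities by the unit axiom of $S$. In fact, with composite adjunction data one computes $\ev^R=\widetilde{\coev}$, $\coev^R=\widetilde{\ev}$ and symmetrically $\widetilde{\ev}^R=\coev$, $\widetilde{\coev}^R=\ev$, so every $\Omega_M$ is an identity and $(-)^{RR}=\id$ on the nose.
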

\begin{construction}\label{From Sotwo volutive 2-categories to 3-categories}
    Given any 2-category $\B$, we can construct a 3-category $\B[1]$ which has two objects we denote $0$ and $1$ and only one non-trivial Hom 2-category, namely 
    $\Hom_{[1](\B)}(0,1) := \B$. If the 2-category $\B$ is equiped with an $\Sotwo$-volutive structure, then the 3-category 
    $\B[1]$ receives a 2-contravariant $\Sotwo$-volutive structure. In particular, if $\B$ has adjoints then $\B[1]$ has adjoints for 2-morphisms. 
    The notation is motivated by the notation $[1] := \{0 \rightarrow 1\}$ for the walking arrow, which we recover upon setting $\B = \star$, the final 2-category.
\end{construction}
\begin{remark}
    \Cref{From Sotwo volutive 2-categories to 3-categories} provides us with a large class of examples of 2-contravariant $\Sotwo$-volutive 3-categories. 
    Indeed, any $\Sotwo$-volutive 2-category (such as any rigid symmetric monoidal 2-category) gives rise to one. 
\end{remark}
\begin{corollary}\label{Corollary: the categorified ambidextrous adjunction admits an Sotwo volutive structure}
    $\AmAdj[1]$ is a 2-contravariant $\Sotwo$-volutive 3-category.
\end{corollary}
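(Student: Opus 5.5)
The plan is to combine \Cref{Lemma: Ambidextrous adjunction is volutive} with \Cref{From Sotwo volutive 2-categories to 3-categories}, taking $\B = \AmAdj$. By \Cref{Lemma: Ambidextrous adjunction is volutive}, $\AmAdj$ carries an $\Sotwo$-volutive structure. Its underlying data is the invertible 2-transformation $S \colon \id \to (-)^{RR}$, with identity 1-morphism components and 2-morphism components $\Omega_M \bullet \Psi_M$, and its twisted quantum dimension is trivial. So the main task is to spell out \Cref{From Sotwo volutive 2-categories to 3-categories} carefully enough to see that it really transports this structure to a 2-contravariant $\Sotwo$-volutive structure on $\AmAdj[1]$.

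First I check that $\AmAdj[1]$ has adjoints for 2-morphisms. Its only non-trivial Hom 2-category is $\Hom(0,1) = \AmAdj$, which has all adjoints. The remaining Hom 2-categories $\Hom(0,0)$, $\Hom(1,1)$ are trivial, and $\Hom(1,0)$ is empty. Hence $(-)^{RR}$ on $\AmAdj[1]$ is the identity on objects and on 1-morphisms, is $(-)^{RR}_{\AmAdj}$ on the 2- and 3-morphisms of $\Hom(0,1)$, and is the identity elsewhere.

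Next I define the invertible 3-transformation $S[1] \colon \id_{\AmAdj[1]} \to (-)^{RR}$.
\begin{itemize}
\item The 1-morphism components at $0$ and $1$ are identities.
\item The 2-morphism component at the unique 1-morphism $0 \to 1$ (an object $M$ of $\AmAdj$) is the identity, matching the identity 1-morphism components of $S$.
\item The 3-morphism components at 2-morphisms $Z \colon M \to N$ are given by the naturality 2-cells of $S$, that is, by $\Omega_N \bullet \Psi_N$ and $(\Omega_M\bullet\Psi_M)^{-1}$ from the displayed computation of $Z^{RR}$.
\end{itemize}
Compositional coherence involves only composites with identity 1-morphisms on $0$ and $1$, so it reduces to unitality of $S$ and its naturality in $\AmAdj$.

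Then I define the perturbation $\sigma$ trivializing \Cref{Diagram: SO2volution 2}. At the level of components it lives entirely in $\Hom(0,1)$. There the 3-modification of \Cref{Diagram: SO2volution 2} restricts to the 2-modification of \Cref{Diagram: SO2volution} for $(\AmAdj,S)$. This is because the braiding $\beta_{S^{-1},S}$ on $\Aut(\id_{\AmAdj[1]})$, described in \Cref{Remark: spelling out the braided monoidal structure on automorphisms of the identity}, has components built from the naturality cells $S_{S_a}$. Since the components $S_a$ at objects are identities, these collapse to the corresponding cells in $\AmAdj$. Triviality of the twisted quantum dimension of $S$ then gives the required invertible perturbation, taken to be the identity.

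I expect the main obstacle to be this last identification. One has to verify that the braiding on $\Aut(\id_{\AmAdj[1]})$, together with the $\sigma_{\D}$ built from the Zorro isomorphisms for double adjoints in the 3-category $\AmAdj[1]$, matches the 2-categorical braiding and the $\sigma_{\B}$ for $\AmAdj$. I would attempt this by direct unwinding: all 1-morphism components are identities and there is only one non-trivial Hom, so every coherence cell reduces either to the corresponding one in $\AmAdj$ or to a unitor. The remaining coherence conditions on $\sigma$ are then inherited from those of the $\Sotwo$-volutive structure on $\AmAdj$.
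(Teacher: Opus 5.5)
Your proposal is the paper's argument: the corollary is proved exactly by combining \Cref{Lemma: Ambidextrous adjunction is volutive} with \Cref{From Sotwo volutive 2-categories to 3-categories}. Your reduction of $\sigma$ to the triviality of the twisted quantum dimension of $S$ on $\AmAdj$ is also right, since the components of $\sigma$ sit at $0$ and $1$, where all Homs are trivial, so its existence is precisely that equation on $\Hom(0,1)$. One bookkeeping slip: the hom-level data of $S[1]$ on $\Hom(0,1)=\AmAdj$ is just $S$ itself, so there are two 1-morphisms $0\to 1$ (the objects $a,b$), not a unique one, both with identity components; the component at a 2-morphism $Z$ of $\AmAdj[1]$ (a 1-morphism of $\AmAdj$) is the single cell $\Omega_Z\bullet\Psi_Z\colon Z\Rightarrow Z^{RR}$; and the displayed formula for $Z^{RR}$ (where $Z$ is a 2-morphism of $\AmAdj$) is the naturality axiom of $S[1]$ with respect to 3-morphisms, not a component.
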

\begin{proof}
    Combine \Cref{Lemma: Ambidextrous adjunction is volutive} and \Cref{From Sotwo volutive 2-categories to 3-categories}.
\end{proof}
\begin{remark}
    Let $(\B,S)$ be an $\Sotwo$-volutive 2-category. Then, we have an invertible 2-modification in the diagram 
    \begin{equation}
        \begin{aligned}
        \xymatrix{
        (-)^{RR} \circ (-)^{RR} \ar[r]^-{\cong} \ar[d]_-{S^{-1} \circ \id} &(-)^{RR} \circ (-)^{RR} \ar[d]^-{\id \circ S^{-1}} \ar@{=>}[dl]_-{\cong} \\
        \id \circ (-)^{RR} \ar[r]_-{\cong} & (-)^{RR} \circ \id
        }
        \end{aligned}
    \end{equation}
    expressing the fact that the compatibility data of any functor with the (double) adjoint functor is natural. In particular, the components of 
    this modification provide 2-isomorphisms $S_a \cong S_a^{RR}$ for any object $a \in \B$ so that $S \cong S^{RR}$ in $2\hspace{-0.05cm}\Cat^{\operatorname{adj}}$.
    We will tacitly use this identification (or its equivalent form, $S^R \cong S^L$) in the following.
\end{remark}
\begin{proposition}\label{Proposition: lax and relative Sotwo volutive 2-categories}
    Any lax $\Sotwo$-volutive 2-category defines a relative $\Sotwo$-volutive 2-category over $\AmAdj[1]$.
\end{proposition}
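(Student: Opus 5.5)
Given a lax $\Sotwo$-volutive 2-category $(\B,S)$, I will construct a functor of 2-contravariant $\Sotwo$-volutive 3-categories $G \colon \AmAdj[1] \to 2\hspace{-0.05cm}\Cat^{\operatorname{adj}}$, mirroring the $\Oone$-case (\Cref{Lemma: volutive adjunctions and volutive functors from the walking adjunction}). On objects, set $G(0) = G(1) = \B$. On the unique non-trivial Hom 2-category, $G$ is a 2-functor $\AmAdj \to \Hom_{2\Cat^{\operatorname{adj}}}(\B,\B)$ determined by the generators:
\[
f \mapsto S, \qquad g \mapsto S^R \cong S^L,
\]
where the identification $S^R \cong S^L$ comes from the remark preceding the proposition. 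The 2-morphisms $\ev,\coev$ go to the right adjunction data of $S$ in $\Fun(\B,\B)$, which exists by \Cref{Lemma: Functor category and adjoints}. The 2-morphisms $\widetilde{\ev},\widetilde{\coev}$ go to the left adjunction data transported along $S^L \cong S^R$. Since $\AmAdj$ is free on these generators subject only to the two pairs of Zorro moves, and both images satisfy them, this defines the 2-functor, and hence the 3-functor $G$. Since $\AmAdj[1]$ has only the objects $0,1$, its 3-functor structure reduces to this assignment together with identities.

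The second step is to supply the invertible modification $\eta$ of \Cref{Definition: functor of 2-contravariant Sotwo volutive 3-categories}. Its components are indexed by the 1-morphisms of $\AmAdj[1]$, and the only non-trivial one is the 1-morphism $0 \to 1$, i.e.\ the objects $f,g$ of $\AmAdj$ and their composites. On the source side, $S_{\AmAdj[1]}$ is induced by the $\Sotwo$-volutive structure of \Cref{Lemma: Ambidextrous adjunction is volutive}, whose 1-morphism components are identities and whose 2-morphism components are $\Omega_M \bullet \Psi_M$. On the target side, the component of the structure on $2\hspace{-0.05cm}\Cat^{\operatorname{adj}}$ (\Cref{Example: The Sotwo-volutive 3-category of 2-categories with adjoints}) at $G(f)=S$ is the canonical 2-transformation $S \circ (-)^{RR} \cong (-)^{RR} \circ S$ from \Cref{Lemma: compatibility of any 2-functor with adjoints}. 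I will therefore define $\eta_f$ as the canonical modification comparing $S^{RR}$ with $S$: this is exactly the 2-modification $S \cong S^{RR}$ of the preceding remark. I will then define $\eta_g$ as its mate under the adjunction $S \dashv S^R$. Naturality of $\eta$ with respect to the generating 2-morphisms $\ev,\coev,\widetilde{\ev},\widetilde{\coev}$ should follow from the uniqueness of adjunction data, via the explicit formula for $\Omega$.

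The main obstacle is the compatibility of $\eta$ with the trivializing perturbations $\sigma$ and $\sigma'$, i.e.\ the equality of the two pasted perturbations in \Cref{Equation: diagram for functor of 2-contravariant Sotwo volutive 2-categories}. My plan is to reduce this, component by component at $f$, to the defining condition of a lax $\Sotwo$-volutive 2-category. That condition states that the twisted quantum dimension built from $S$ and $S^R$ in \Cref{Diagram: SO2volution 3} is the identity. On the source side, the quantum dimension of the structure on $\AmAdj$ is trivial by \Cref{Lemma: Ambidextrous adjunction is volutive}, and $G$ transports it to the diagram for $S \dashv S^R$. I will unwind the five 3-modifications of \Cref{Remark: Pertubations in functor of 2-contravariant Sotwo volutive 2-categories}. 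Four of them are built purely from uniqueness of adjoints, so they should cancel by the Zorro identities. What remains is exactly the pasting \Cref{Diagram: SO2volution 3}, which vanishes by hypothesis. The component at $g$ then follows by taking mates.
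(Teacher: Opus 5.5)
Your 3-functor agrees with the paper's up to a harmless variant. You send $0,1\mapsto\B$, which forces $a\mapsto\id_\B$ and $b\mapsto(-)^{RR}_\B$ for $f\mapsto S$ to type-check; the paper uses $\B^{(1,2)\opp}$, $(-)^R$, $(-)^L$. Your final appeal to the twisted quantum dimension is also the paper's.

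The gap is in the volutive structure $\eta$, where your bookkeeping is off by one level. $\eta$ is a 3-modification between two 3-transformations $G\Rightarrow(-)^{RR}\circ G$. Its data are therefore 2-morphisms of $2\hspace{-0.05cm}\Cat^{\operatorname{adj}}$ at the \emph{objects} $0,1$, and 3-morphisms at the 1-morphisms $a,b\colon 0\to 1$, which are the objects of $\AmAdj$. By contrast, $f$ and $g$ are 2-morphisms of $\AmAdj[1]$, where a modification carries no data, only axioms.

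The component at $0$ must be a 2-transformation $\id_\B\Rightarrow(-)^{RR}_\B$: the source side is the identity there and the target side is $S_\B=(-)^{RR}_\B$. This is exactly where the lax structure goes. The paper's proof takes $\eta$ to be given by $S$, just as a volutive functor $\star\to2\hspace{-0.05cm}\Cat^{\operatorname{adj}}$ has the $\Sotwo$-volutive structure itself as its $\eta$. You never specify these components.

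The isomorphism $S\cong S^{RR}$ you use instead is not free data. It is already the component at $f$ of the isomorphism $(-)^{RR}\circ G\cong G\circ(-)^{RR}$ in the source of $\eta$, and it is forced by $G(g)$ being both a left and a right adjoint of $G(f)$. Once $\eta_0=\eta_1=S$, it does reappear as the component at $b$. With your $G$ the component at $a$ is then trivial, so no mate is needed.

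This also breaks your last step. The hypothesis that the diagram defining a lax $\Sotwo$-volutive 2-category is the identity enters the compatibility condition through $\eta_0=S$. At the object $0$ that condition reduces, as in the case $\star\to2\hspace{-0.05cm}\Cat^{\operatorname{adj}}$, to the defining lax condition for $\eta_0$ pasted against $\sigma$ at $\B$. Transporting the (trivial) quantum dimension of $\AmAdj$ along $G$ does not produce it, so without $\eta_0=S$ your cancellation argument has nothing to reduce to.

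Moreover, with $\eta_0=S$ the modification $\eta$ is invertible only when $S$ is. For a genuinely lax structure you must therefore drop invertibility of $\eta$ and use the weaker notion of functor allowed by the paper's remark, which its proof uses tacitly.

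Finally, the identification $S^R\cong S^L$ that you borrow is established in the preceding remark only for invertible $S$. You and the paper both use it tacitly in the lax case.
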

\begin{proof}
    Let $(\B,S)$ be a lax $\Sotwo$-volutive 2-category. We define a 2-contravariant $\Sotwo$-volutive 3-functor 
    $F \colon \AmAdj[1] \to 2\hspace{-0.05cm}\Cat^{\operatorname{adj}}$ as follows. We set 
    \begin{align}
        \begin{aligned}
            &F(0) = \B, F(1) = \B^{(1,2)\opp} \\
            &F(a) = (-)^R, F(b) = (-)^L \\
            &F(f) = S, F(g) = S^R \cong S^L\\
            &F(\ev) = \ev_S, F(\coev) = \coev_S, \\
            &F(\widetilde{\ev}) = \widetilde{\ev}_{S}, F(\widetilde{\coev}) = \widetilde{\coev}_S
        \end{aligned}
    \end{align} 
    where in the last line we have tacitly identified $S^R$ and $S^L$. 
    First, this is a 3-functor since $S^R$ is a right (2-)adjoint to $S$ in $2\hspace{-0.05cm}\Cat^{\operatorname{adj}}$, so that the Zorro moves are satisfied. 
    The $\Sotwo$-volutive structure on the 3-functor, that is, the invertible modification $\eta$ is given by $S$ and the coherence condition is satisfied 
    because the twisted quantum dimension of $S$ is trivial. 
\end{proof}
\begin{remark}\label{Remark: on lax and relative Sotwo volutive 2-categories}
    2-contravariant $\Sotwo$-volutive 3-functor $F \colon \AmAdj[1] \to 2\hspace{-0.05cm}\Cat^{\operatorname{adj}}$ arising from 
    \Cref{Proposition: lax and relative Sotwo volutive 2-categories} are very special. For instance, we note that on 1-morphism level, the functor 
    $F$ assigns both $a$ and $b$ in $\AmAdj[1]$ to an invertible 1-morphism in $2\hspace{-0.05cm}\Cat^{\operatorname{adj}}$. In other words, 
    The functor $F \colon \AmAdj[1] \to 2\hspace{-0.05cm}\Cat^{\operatorname{adj}}$ admits an extension 
    \begin{equation}
        \begin{aligned}
        \xymatrix{
            (\AmAdj[1])[a^{-1},b^{-1}] \ar@{-->}[dr]^-{} & \\
            \AmAdj[1] \ar[r]^{} \ar[u] & 2\hspace{-0.05cm}\Cat^{\operatorname{adj}}
        }
        \end{aligned}
    \end{equation}
    where $(\AmAdj[1])[a^{-1},b^{-1}]$ is the 3-category obtained from $\AmAdj[1]$ by freely adding inverses to $a$ and $b$ and the vertical arrow is the 
    inclusion/localization. \\
    In a similar vein, recall that any $\Sotwo$-volutive 2-category is in particular a lax $\Sotwo$-volutive 2-category, hence also fits into this scheme. These 
    are even more special, in that the associated functor $F \colon \AmAdj[1] \to 2\hspace{-0.05cm}\Cat^{\operatorname{adj}}$ admits an extension 
    \begin{equation}
        \begin{aligned}
        \xymatrix{
            (\AmAdj[1])[a^{-1},b^{-1},\ev^{-1},\coev^{-1},\widetilde{\ev}^{-1},\widetilde{\coev}^{-1}] \ar@{-->}[ddr] & \\
            (\AmAdj[1])[a^{-1},b^{-1}] \ar[u] \ar@{-->}[dr]^-{} & \\
            \AmAdj[1] \ar[r]^{} \ar[u] & 2\hspace{-0.05cm}\Cat^{\operatorname{adj}}
        }
        \end{aligned}
    \end{equation}
    where $(\AmAdj[1])[a^{-1},b^{-1},\ev^{-1},\coev^{-1},\widetilde{\ev}^{-1},\widetilde{\coev}^{-1}]$ is the 3-category obtained from 
    $(\AmAdj[1])[a^{-1},b^{-1}]$ by also freely adding inverses to $\ev,\coev,\widetilde{\ev}$ and $\widetilde{\coev}$. Note that this  
    implies that $S \dashv S^R$ is an adjoint equivalence (and in particular that $S$ is invertible, with $S^{-1} \cong S^R \cong S^L$). 
    In particular, we have $(\AmAdj[1])[a^{-1},b^{-1},\ev^{-1},\coev^{-1},\widetilde{\ev}^{-1},\widetilde{\coev}^{-1}] \cong \star$, the final 3-cateogry. 
    In other words, we have repackaged the fact that an $\Sotwo$-volutive 3-category is a 2-contravariant $\Sotwo$-volutive 3-category 
    $\star \to 2\hspace{-0.05cm}\Cat^{\operatorname{adj}}$ by providing a more intricate (but equivalent) model for $\star$. 
\end{remark}
\begin{remark}
    Currently we do not know whether there is a variant of lax $\Sotwo$-volutive 2-categories that corresponds precisely to 
    2-contravariant $\Sotwo$-volutive 3-functors $F \colon \AmAdj[1] \to 2\hspace{-0.05cm}\Cat^{\operatorname{adj}}$, rather than such that descend 
    to a localization of the domain. 
\end{remark}

\section{Lax hermitian fixed points}\label{Section: Lax hermitian fixed points}
In this section, we discuss (lax) hermitian fixed points. We start with a discussion in the lax $\Oone$-volutive case and then turn our attention the 
the lax $\Sotwo$-volutive case. After discussing some general theory, we explain how lax $\Oone$-volutive categories and lax $\Sotwo$-volutive 2-categories 
are to be understood as lax hermitian fixed points in $\Cat$ and $2\hspace{-0.05cm}\Cat^{\operatorname{adj}}$, respectively.
\subsection{The lax $\Oone$-volutive case}
In this section we discuss the notion of a lax hermitian fixed point in a lax $\Oone$-volutive category.
\begin{definition}
    Let $(\C,d,\eta)$ be a lax $\Oone$-volutive category. A \emph{lax hermitian fixed point} in $\C$ is a pair $(a,\theta_a)$ consisting 
    of an object $a \in \C$ and a morphism $\theta_a \colon a \to d(a)$ satisfying $\theta_a = d(\theta_a) \circ \eta_a$. 
\end{definition}
\begin{remark}
    Lax hermitian fixed points have appeared in the hermitian algebraic K-theory literature before under the name 
    \emph{symmetric form}, see e.g. \cite[Section 3]{Schlichting2010}.
\end{remark}
\begin{remark}\label{Remark: hermitian fixed points of a lax Oone volutive category}
    Let $(\C,d,\eta)$ be a lax $\Oone$-volutive category. Consider the category $\hat{\C}$ whose objects are lax hermitian fixed points 
    and whose morphisms are those of $\C$. The full subcategory of those pairs $(a,\theta_a)$ for which $\theta_a$ is an isomorphism is the dagger category 
    of \Cref{Remark: Oone volutive categories give dagger categories}; here, the dagger structure assigns a morphism $X \colon (a,\theta_a) \to (b,\theta_b)$ to 
    the morphism $\dagger(X) \colon (b,\theta_b) \to (a,\theta_a)$ given as the composition 
    \begin{equation}
        \xymatrix{
            b \ar[r]^-{\theta_b} & d(b) \ar[r]^-{d(X)} & d(a) \ar[r]^-{\theta_a^{-1}} & a.
        }
    \end{equation}
    We call pairs $(a,\theta_a)$ for which $\theta_a$ is an isomorphism \emph{(honest) hermitian fixed points}. Note that, in passing to the full subcategory of hermitian fixed points it does not matter whether we consider $\C$ or 
    its associated $\Oone$-volutive category described in \Cref{Construction: Oone volutions from lax Oone volutions}: the invertibility of $\theta_a$ forces 
    the invertibility of $\eta_a$ according to the coherence condition. 
\end{remark}
\begin{definition}\label{Definition: lax hermitian fixed points in 1-contravariant Oone volutive 2-categories}
    Let $(\B,d,\eta,\tau)$ be a 1-contravariant $\Oone$-volutive 2-category. A \emph{lax hermitian fixed point} in $\B$ is a triple consisting 
    of an object $a \in \C$, a morphism $\theta_a \colon a \to d(a)$, and a 2-morphism $\omega_a \colon \theta_a \to d(\theta_a) \circ \eta_a$ satisfying 
    \begin{equation}\label{Equation: coherence condition of lax hermitian fixed point in 2-category}
        \begin{pmatrix}
            \xymatrix{
                a \ar[d]^-{\theta_a} \ar[r]^-{\eta_a} & d^2(a) \ar[dl]^-{d(\theta_a)} \\
                d(a) &
            }
        \end{pmatrix}
        = 
        \begin{pmatrix}
            \xymatrix{
                a \ar[dd]_-{\theta_a} \ar[rr]^-{\eta_a} && d^2(a) \ar[dl]_-{d^2(\theta_a)} \ar@/^3pc/[ddll]^-{d(\theta_a)} \\
                & d^3(a) \ar@/^0.5pc/[dl]^-{d(\eta_a)} & \\
                d(a) \ar@/^0.5pc/[ur]^-{\eta_{d(a)}} && 
            }
        \end{pmatrix}
    \end{equation}
    where the left hand diagram is filled with $\omega_a$ while the right hand diagram is filled with $\eta_{\theta_a}$, $\tau_a$, and $d(\omega_a)$. 
\end{definition}
\begin{variant}\label{Variant: lax hermitian fixed points in 2-contravariant Oone volutive 2-categories}
    Let $(\B,d,\eta,\tau)$ be a 2-contravariant $\Oone$-volutive 2-category. A \emph{lax hermitian fixed point} in $\B$ is a triple consisting 
    of an object $a \in \C$, a morphism $\theta_a \colon a \to d(a)$, and a 2-morphism $\omega_a \colon \eta_a \to d(\theta_a) \circ \theta_a$ satisfying 
    a coherence condition involving $d^3(a)$ similar to \Cref{Equation: coherence condition of lax hermitian fixed point in 2-category}.
\end{variant}
\begin{remark}
    Lax hermitian fixed points in the sense of \Cref{Variant: lax hermitian fixed points in 2-contravariant Oone volutive 2-categories} are essentially 
    the same as volutive adjunctions in the sense of \Cref{Definition: volutive adjunction}.
\end{remark}
\begin{example}
    Consider the 2-contravariant $\Oone$-volutive 2-category $(\Cat,(-)^{\opp},\id,\id)$. A lax hermitian fixed point in $(\Cat,(-)^{\opp},\id,\id)$ is 
    a lax $\Oone$-volutive category. 
\end{example}
\begin{remark}\label{Remark: honest hermitian fixed points}
    Let $(\B,d,\eta,\tau)$ be a 2-contravariant $\Oone$-volutive 2-category. Consider the 2-category $\hat{\B}$ whose objects are lax hermitian fixed points 
    and whose 1- and 2-morphisms are those of $\B$. The full sub 2-category of those pairs $(a,\theta_a,\omega_a)$ for which $\theta_a$ and $\omega_a$ are 
    isomorphisms is the 2-contravariant $\Oone$-dagger 2-category of \Cref{Construction: turning 2-contra Oone vol 2-cats dagger}. We call triples 
    $(a,\theta_a,\omega_a)$ for which $\theta_a$ and $\omega_a$ are invertible \emph{(honest) hermitian fixed points}.
\end{remark}
\begin{remark}
    The notion of a lax hermitian fixed point presented in \Cref{Definition: lax hermitian fixed points in 1-contravariant Oone volutive 2-categories} and 
    \Cref{Variant: lax hermitian fixed points in 2-contravariant Oone volutive 2-categories} works more generally for lax 1-, 2- or (1,2)-contravariant 
    $\Oone$-volutive 2-categories.
\end{remark}
\begin{lemma}\label{Lemma: lax hermitian fixed points in the walking adjunction}
    Consider the 2-contravariant $\Oone$-volutive 2-category $\Adj$ described in \Cref{Lemma: 2-contravariant Oone volutive structure on the walking adjunction}. 
    We claim that $a \in \Adj$ admits a lax hermitian fixed point structure, while no object admits an (honest) hermitian fixed point structure.
\end{lemma}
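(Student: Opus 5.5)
The volutive structure on $\Adj$ from \Cref{Lemma: 2-contravariant Oone volutive structure on the walking adjunction} is the strict involution $F$ with $F(a)=b$, $F(b)=a$, $F(X)=Y$, $F(Y)=X$, $F(f)=g$, $F(g)=f$, and trivial $\eta$ and $\tau$. Under \Cref{Variant: lax hermitian fixed points in 2-contravariant Oone volutive 2-categories}, a lax hermitian fixed point at $a$ is a 1-morphism $\theta_a\colon a\to F(a)=b$ together with a 2-morphism $\omega_a\colon \id_a\to F(\theta_a)\circ\theta_a$ satisfying the coherence condition. By the remark following that variant, this is essentially a volutive adjunction $(a,\theta_a,\omega_a)$ in the sense of \Cref{Definition: volutive adjunction}. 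Because $\eta$ and $\tau$ are trivial, \Cref{Remark: Simplicification of volutive adjunctions} reduces the coherence to the single identity $(F(\omega_a)\circ\id)\bullet(\id\circ\omega_a)=\id_{\theta_a}$.

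\textbf{Existence at $a$.} I will take $\theta_a:=X$ and $\omega_a:=f\colon\id_a\to Y\circ X=F(X)\circ X$. Then $F(\omega_a)=F(f)=g\colon X\circ Y\to\id_b$, and the coherence condition becomes $(g\circ\id_X)\bullet(\id_X\circ f)=\id_X$. This is exactly the first Zorro relation imposed in \Cref{Definition: walking adjunction}. The second condition of \Cref{Definition: volutive adjunction} is the other Zorro relation; it also follows by applying $F$, as noted in \Cref{Remark: Simplicification of volutive adjunctions}. So $(a,X,f)$ is a lax hermitian fixed point. Equivalently, it is the image of the identity under the correspondence of \Cref{Lemma: volutive adjunctions and volutive functors from the walking adjunction}.

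\textbf{Non-existence of honest ones.} An honest hermitian fixed point $(c,\theta_c,\omega_c)$ in the sense of \Cref{Remark: honest hermitian fixed points} needs $\theta_c\colon c\to F(c)$ to be invertible, and $F(c)\neq c$ for both objects. It therefore suffices to show that $a$ and $b$ are not equivalent in $\Adj$, i.e.\ that no 1-morphism $a\to b$ is an equivalence. The hom-categories of $\Adj$ are the usual ones. Every 1-morphism $a\to b$ is an alternating word $X(YX)^n$. Under the standard identification of $\Adj$ with the (strict) walking adjunction, whose hom-categories are built from the augmented simplex category, the 2-morphisms $\id_a\to(YX)^m$ correspond to monotone maps.

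I plan to avoid that combinatorics by exhibiting an invariant. Consider any adjunction in $\Cat$ that is not an equivalence, for instance $\star\rightleftarrows\emptyset$ (the unique functor $\emptyset\to\star$ and its right adjoint do not exist) — better, the adjunction between $\{0\}$ and $[1]=\{0\to1\}$ given by the inclusion of the terminal object and its left adjoint $[1]\to\{0\}$. By the universal property of $\Adj$ (adjunctions in $\B$ are 2-functors $\Adj\to\B$), this yields a 2-functor $\Adj\to\Cat$ sending $a\mapsto\star$ and $b\mapsto[1]$. 2-functors preserve equivalences, and $\star\not\simeq[1]$, so $a\not\simeq b$ in $\Adj$. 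Hence no honest hermitian fixed point exists at either object.

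\textbf{Main obstacle.} The substantive step is this last one: making rigorous that $\Adj$, presented by generators and relations, maps to the chosen adjunction in $\Cat$. This is the universal property stated in the remark after \Cref{Definition: walking adjunction}, so I will invoke it and only check that the chosen example satisfies the triangle identities. The rest is bookkeeping of variances in the coherence condition.
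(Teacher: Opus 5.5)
Your proposal is correct and follows the paper's proof. The pair $(X,f)$ is a lax hermitian fixed point at $a$ by the first Zorro identity, and honest ones are excluded because $a\not\simeq b$; the paper only asserts this non-equivalence, while you justify it by sending an adjunction between non-equivalent categories through a 2-functor $\Adj\to\Cat$.

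One harmless slip: your adjunction is $p\dashv i_1$, with $p\colon[1]\to\star$ and $i_1\colon\star\to[1]$ the inclusion of the terminal object. There the left adjoint $p$ is the image of $X$, so it gives $a\mapsto[1]$ and $b\mapsto\star$. Your stated assignment $a\mapsto\star$, $b\mapsto[1]$ instead comes from $i_0\dashv p$, with $i_0$ the inclusion of the initial object. Either choice gives $G(a)\not\simeq G(b)$, so the argument goes through.
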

\begin{proof}
    We will construct a lax hermitian fixed point structure for $a \in \Adj$. First, we have the 1-morphism $\theta_a := X \colon a \to F(a)=b$. 
    Second, we have the 2-morphism $\Pi_a := f \colon \id_a \to F(X) \circ X = Y \circ X$. The Zorro moves ensure that this defines a lax hermitian fixed point.
    To see that no object admits an (honest) hermitian fixed point structure, it suffices to note that $a$ and $b$ are not isomorphic. 
\end{proof}
\begin{remark}
    The lax hermitian fixed point structure on $a \in \Adj$ is not unique: a different one is given by the 1-morphism $\theta_a' := X \circ Y \circ X \colon a \to F(a)=b$ 
    together with the 2-morphism $\Pi_a' := f \circ f \colon \id_a \to F(X \circ Y \circ X) \circ X = Y \circ X \circ Y \circ X$. In fact, there are infinitely many 
    such lax hermitian fixed point structures, with the one presented in the proof of \Cref{Lemma: lax hermitian fixed points in the walking adjunction} being 
    the ``minimal'' one. This fact essentially implies \Cref{Remark: lax Oone volutive structures are many}. \\
    The object $b \in \Adj$ does not admit a lax hermitian fixed point structure, to the best of our knowledge. This apparent defect can be lifted by passing 
    to the walking ambidextrous adjunction, in which $Y$ is not only a right adjoint of $X$, but also a left adjoint. The coevaluation of the latter adjunction 
    then precisely supplies the missing 2-morphism of a lax hermitian fixed point structure on $b \in \Adj$, with 1-morphism given by $\theta_b := Y \colon b \to F(b) =a$.
\end{remark}
\begin{construction}\label{Construction: Oone volutive functors descend to lax hermitian fixed points}
    Let $(\B,d,\eta,\tau)$ and $(\B',d',\eta',\tau')$ be 2-contravariant $\Oone$-volutive 2-categories. According to \Cref{Remark: honest hermitian fixed points} we 
    may form the 2-categories $\hat{\B}$ and $\hat{\B}'$ whose objects are lax hermitian fixed points, and whose 1- and 2-morphisms are those of $\B$ and $\B'$, 
    respectively. Moreover, let $(F,\alpha,\Xi) \colon (\B,d,\eta,\tau) \to (\B',d',\eta',\tau')$ be a 2-contravariant $\Oone$-volutive functor. 
    We wish to construct a 2-functor $\hat{\B} \to \hat{\B}'$. To an object $(a,\theta_a,\omega_a) \in \hat{\B}$ we assign the triple consisting of 
    (i) the object $F(a) \in \B'$ (ii) the 1-morphism $\theta_{F(a)} := \alpha_a \circ F(\theta_a) \colon F(a) \to d'F(a)$ (iii) the 2-morphism $\omega_{F(a)}$ 
    is constructed from $\alpha_{\theta_a}, \Xi_a,$ and $\omega_a$ by appropriately filling the diagram 
    \begin{equation}
        \begin{aligned}
        \xymatrix{
            F(a) \ar[d]_-{\id_{F(a)}} \ar[r]^-{F(\theta_a)} & Fd(a) \ar[r]^-{\alpha_a} \ar[d]^-{Fd(\theta_a)} & d'F(a) \ar[d]^-{d'F(\theta_a)} \\
            F(a) \ar[r]^-{F(\eta_a)} \ar[dr]_-{\eta'_{F(a)}} & Fd^2(a) \ar[r]^-{\alpha_{d(a)}} & d'Fd(a) \ar[dl]^-{d'(\alpha_a)} \\
            & d'^2F(a) &
        }
        \end{aligned}
    \end{equation}
    One checks that $(F(a),\theta_{F(a)},\omega_{F(a)})$ defines a lax hermitian fixed point. On 1- and 2-morphism level our assignment is given simply by $F$.  
    In summary, 2-contravariant $\Oone$-volutive functors descend to the 2-categories of lax hermitian fixed points. 
\end{construction}
\begin{remark}
    It is evident from \Cref{Construction: Oone volutive functors descend to lax hermitian fixed points} that we could consider lax 2-contravariant $\Oone$-volutive 
    2-categories and lax 2-contravariant $\Oone$-volutive 2-functors instead; at no point did we need the invertibility of any map in sight. For the purposes 
    of this paper, the given construction is sufficient. 
\end{remark}
\begin{definition}\label{Definition: lax isometries}
    Let $(\C,d,\eta)$ be a lax $\Oone$-volutive category and let $(a,\theta_a)$ and $(b,\theta_b)$ be lax hermitian fixed points in $(\C,d,\eta)$. A 
    \emph{lax isometry} $(a,\theta_a) \to (b,\theta_b)$ is a morphism $X \colon a \to b$ satisfying $d(X) \circ \theta_b \circ X = \theta_a$.
\end{definition}
\begin{remark}
    Let $(\C,d,\eta)$ be a lax $\Oone$-volutive category, let $(a,\theta_a)$, $(b,\theta_b)$, and $(c,\theta_c)$ be lax hermitian fixed points in $(\C,d,\eta)$, 
    and let $X \colon (a,\theta_a) \to (b,\theta_b)$ and $Y \colon (b,\theta_b) \to (c,\theta_c)$ be lax isometries. Then, $Y \circ X \colon (a,\theta_a) \to (c,\theta_c)$
    is a lax isometry as well; indeed, we have 
    \begin{equation}
        d(Y \circ X) \circ \theta_c \circ (Y \circ X) = d(X) \circ d(Y) \circ \theta_c \circ Y \circ X = d(X) \circ \theta_b \circ X = \theta_a.
    \end{equation}
    We also note that $\id_a \colon (a,\theta_a) \to (a,\theta_a)$ is a lax isometry. 
\end{remark}
We record our findings in the following.
\begin{definition}\label{Definition: the category of lax hermitian fixed points}
    Let $(\C,d,\eta)$ be a lax $\Oone$-volutive category. We denote the category of lax hermitian fixed points and lax isometries in $(\C,d,\eta)$ by 
    $\LaxHerm(\C,d,\eta)$. 
\end{definition}
\begin{remark}
    Let $(\C,d,\eta)$ be a lax $\Oone$-volutive category. A lax isometry in its associated dagger category is the same as an ordinary isometry. 
    Indeed, recall that the dagger structure assigns each morphism $X \colon (a,\theta_a) \to (b,\theta_b)$ to $\dagger(X) = \theta_a^{-1} \circ d(X) 
    \circ \theta_b$, so that a lax isometry precisely satisfies $\dagger(X)\circ X = \id_a$.
\end{remark}
\begin{remark}
    Similarly, one defines lax unitaries to be lax isometries which are also invertible.
\end{remark}
\begin{lemma}
    Let $(F,\alpha) \colon (\C,d,\eta) \to (\C',d',\eta')$ be a lax $\Oone$-volutive functor. Then, $(F,\alpha)$ induces a functor 
    $\LaxHerm(\C,d,\eta) \to \LaxHerm(\C',d',\eta')$. 
\end{lemma}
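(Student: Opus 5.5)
The plan is to follow \Cref{Construction: Oone volutive functors descend to lax hermitian fixed points} in its 1-categorical form: transport the form $\theta_a$ along $F$ and then correct it using $\alpha$. Write the natural transformation $\alpha \colon d' \circ F \to F^{\opp} \circ d$ as a family of morphisms $\alpha_a \colon F(d(a)) \to d'(F(a))$ in $\C'$. In these terms naturality reads $d'(F(X)) \circ \alpha_b = \alpha_a \circ F(d(X))$ for every $X \colon a \to b$.

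On objects, a lax hermitian fixed point $(a,\theta_a)$ is sent to $(F(a),\theta_{F(a)})$ with
\begin{equation*}
\theta_{F(a)} := \alpha_a \circ F(\theta_a) \colon F(a) \to F(d(a)) \to d'(F(a)).
\end{equation*}
On morphisms, a lax isometry $X$ is sent to $F(X)$. Preservation of composition and identities is then inherited from $F$, so the content of the proof is two verifications.

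First, $(F(a),\theta_{F(a)})$ is a lax hermitian fixed point. Functoriality of $d'$ gives
\begin{equation*}
d'(\theta_{F(a)}) \circ \eta'_{F(a)} = d'(F(\theta_a)) \circ d'(\alpha_a) \circ \eta'_{F(a)}.
\end{equation*}
The coherence condition on $(F,\alpha)$, evaluated at $a$ and read in $\C'$, should say exactly that $d'(\alpha_a) \circ \eta'_{F(a)} = \alpha_{d(a)} \circ F(\eta_a)$. Next, naturality of $\alpha$ applied to the morphism $\theta_a \colon a \to d(a)$ gives $d'(F(\theta_a)) \circ \alpha_{d(a)} = \alpha_a \circ F(d(\theta_a))$. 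Combining these, the expression becomes $\alpha_a \circ F(d(\theta_a) \circ \eta_a)$. By the hermitian condition on $(a,\theta_a)$ this equals $\alpha_a \circ F(\theta_a) = \theta_{F(a)}$.

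Second, $F(X)$ is a lax isometry whenever $X$ is. Naturality of $\alpha$ at $X$ gives
\begin{equation*}
d'(F(X)) \circ \theta_{F(b)} \circ F(X) = d'(F(X)) \circ \alpha_b \circ F(\theta_b) \circ F(X) = \alpha_a \circ F\bigl(d(X) \circ \theta_b \circ X\bigr).
\end{equation*}
Since $X$ is a lax isometry, $d(X) \circ \theta_b \circ X = \theta_a$, so the right-hand side is $\alpha_a \circ F(\theta_a) = \theta_{F(a)}$.

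The only step I expect to need care is the first one. There one has to unwind the coherence identity for $(F,\alpha)$, which the paper states with whiskerings in $\C'^{\opp}$, and check that its component at $a$, translated into composition in $\C'$, really is $d'(\alpha_a) \circ \eta'_{F(a)} = \alpha_{d(a)} \circ F(\eta_a)$. I would do this by writing out each whiskered component explicitly and reversing the arrows that live in $\C'^{\opp}$. Everything else is naturality bookkeeping.
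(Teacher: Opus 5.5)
Your proposal is correct and is the 1-categorical version of the paper's own approach: the paper gives no separate proof of this lemma, but \Cref{Construction: Oone volutive functors descend to lax hermitian fixed points} uses the same transported form $\theta_{F(a)} = \alpha_a \circ F(\theta_a)$ and assembles the fixed-point structure from naturality of $\alpha$ at $\theta_a$, the coherence datum of $(F,\alpha)$, and the hermitian structure on $(a,\theta_a)$, as you do. Your unwinding of the coherence to $d'(\alpha_a) \circ \eta'_{F(a)} = \alpha_{d(a)} \circ F(\eta_a)$ is correct, provided the paper's $\id_{d^{\opp}}$ is read as $\id_{(d')^{\opp}}$; your additional check that lax isometries are preserved completes the argument.
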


\subsection{The lax $\Sotwo$-volutive case}\label{Lax hermitian fixed points of lax Sotwo-volutive categories}
In this section we discuss the notion of a lax hermitian fixed point in a lax $\Sotwo$-volutive 2-category. 
\begin{definition}
    Let $(\B,S)$ be a lax $\Sotwo$-volutive 2-category. A \emph{lax hermitian fixed point} in $\B$ is a pair $(a,\lambda_a)$ consisting of an object 
    $a \in \B$ and a 2-morphism $\lambda_a \colon \id_a \to S_a$.
\end{definition}
\begin{remark}
    Let $(\B,S)$ be a lax $\Sotwo$-volutive 2-category. Consider the 2-category $\hat{\B}$ whose objects are lax hermitian fixed points and whose 1- and 2-morphisms 
    are those of $\B$. The full sub 2-category of those objects for which $\lambda_a$ is a 2-isomorphism is an $\Sotwo$-dagger 2-category, see 
    \cite[Section 2.2.3]{carqueville2025orbifoldshigherdaggerstructures}; here, the $\Sotwo$-dagger structure has as its component at 
    $X \colon (a,\lambda_a) \to (b,\lambda_b)$ in $\hat{B}$ the 2-isomorphism
    \begin{equation}
        \xymatrix{
            X^{RR} \cong X^{RR} \circ \id_a \ar[r]^-{\id \circ \lambda_a} & X^{RR} \circ S_a \ar[r]^-{S_X} & S_b \circ X \ar[r]^-{\lambda_b^{-1} \circ \id} & \id_b \circ X \cong X.
        }
    \end{equation}
    We call pairs $(a,\lambda_a)$ for which $\lambda_a$ is a 2-isomorphism \emph{(honest)} hermitian fixed points. 
\end{remark}
\begin{example}
    Recall the $\Sotwo$-volutive 2-category $\AmAdj$ described in \Cref{Lemma: Ambidextrous adjunction is volutive}. We have seen that the $\Sotwo$-volutive 
    structure on $\AmAdj$ has identity 1-morphism components. In particular, every object in $\AmAdj$ admits both a lax and an honest hermitian fixed 
    point structure. 
\end{example}
\begin{definition}
    Let $(\D,S,\sigma)$ be a 2-contravariant $\Sotwo$-volutive 3-category. A \emph{lax hermitian fixed point} in $\D$ is a pair $(a,\lambda_a)$ 
    consisting of an object $a \in \B$ and a 2-morphism $\lambda_a \colon \id_a \to S_a$ satisfying a compatibility condition with $\sigma$, namely, that 
    the diagram of 3-morphisms 
    \begin{equation}\label{Equation: coherence condition of Sotwo lax hermitian fixed points}
        \begin{aligned}
        \xymatrix@C=3em{
            \id_a \ar[r]^-{\cong} \ar[d]^-{\cong} \ar@/_4pc/[dd]_-{\id} & \id_a \ar[d]^-{\cong} \ar@/^4pc/[dd]^-{\id} \\
            S_a^{-1} \circ S_a \ar[r]^-{S_{S_a^{-1}}} \ar[d]^-{\lambda_a \circ \lambda_a^R} & S_a \circ S_a^{-1} \ar[d]^-{\lambda_a^R \circ \lambda_a} \\
            \id_a \cong \id_a \circ \id_a \ar[r]^-{\cong} & \id_a \circ \id_a \cong \id_a
        }
        \end{aligned}
    \end{equation}
    equals the identity 3-morphism on $\id_{\id_a}$, where the left and right diagrams are filled with the naturality data of the adjunction/inversion data, 
    the upper diagram is filled with the 3-morphism component $\sigma_a$, and the lower diagram is filled with the naturality data of $S$ with respect to $\lambda_a$.
\end{definition}
\begin{remark}
    As in the 2-categorical case, we call a lax hermitian fixed point an (honest) hermitian fixed point if $\lambda_a$ is a 2-isomorphism. 
\end{remark}
\begin{example}
    Recall the 2-contravariant $\Sotwo$-volutive 3-category $2\hspace{-0.05cm}\Cat^{\operatorname{adj}}$ from \Cref{Example: The Sotwo-volutive 3-category of 2-categories with adjoints}.
    (Lax) hermitian fixed points in $2\hspace{-0.05cm}\Cat^{\operatorname{adj}}$ are precisely (lax) $\Sotwo$-volutive 2-categories. Indeed, 
    the object $a \in 2\hspace{-0.05cm}\Cat^{\operatorname{adj}}$ corresponds precisely to a 2-category $\B$ with adjoints, the 2-morphism 
    $\lambda_a \colon \id_a \to S_a$ corresponds precisely to a 2-transformation $\id_{\B} \to (-)^{RR}_{\B}$, and the coherence condition of 
    \Cref{Equation: coherence condition of Sotwo lax hermitian fixed points} corresponds precisely to \Cref{Diagram: SO2volution 3}. 
\end{example}
\begin{example}
    Recall the 2-contravariant $\Sotwo$-volutive 3-category $\AmAdj[1]$ from \Cref{Corollary: the categorified ambidextrous adjunction admits an Sotwo volutive structure}. 
    Every object in $\AmAdj[1]$ admits the structure of an (honest) hermitian fixed point, hence also the structure of a lax hermitian fixed point. 
\end{example}

\section{Closed 2-categories}\label{Section: Closed 2-categories}
In this section, we discuss (fully) closed symmetric monoidal 2-categories and lax volutive structures. We highlight key difference to the theory 
of rigid symmetric monoidal 2-categories, such as the fact that 1-closedness leads only to an oplax 2-functor, whereas having all adjoints leads to an honest 
2-functor (which in addition is invertible). A first instance of this phenomenonen we have seen earlier, noting that the functor underlying the canonical 
lax $\Oone$-volutive structure of a closed symmetric monoidal category is only oplax monoidal. We will illustrate our theory at the example of the Morita 2-category 
of a sufficiently nice closed symmetric monoidal category, as well as the 2-category of Profunctors.
\subsection{Lax 2-functors and 2-transformations}\label{Subsection: lax 2-functors and transformations}
In this section we review the notions of lax 2-functors and 2-transformations for the convenience of the reader, and to establish conventions.
\begin{definition}
    Let $\B$ and $\B'$ be 2-categories. A \emph{lax 2-functor} $F \colon \B \to \B'$ assigns to each object $a \in \B$ an object $F(a) \in \B'$, to each 
    1-morphism $X \colon a \to b$ in $\B$ a 1-morphism $F(X) \colon F(a) \to F(b)$ in $\B'$, to each 2-morphism $f \colon X \to Y$ in $\B$ a 2-morphism 
    $F(f) \colon F(X) \to F(Y)$ in $\B'$, to each pair of 1-morphisms $X \colon a \to b$ and $Y \colon b \to c$ in $\B$ a 2-morphism 
    $F_{Y,X} \colon F(Y) \circ F(X) \to F(Y \circ X)$ in $\B'$, and to each object $a \in \B$ a 2-morphism $F_a \colon \id_{F(a)} \to F(\id_a)$ in $\B'$, 
    subject to the coherence conditions of lax associativity and lax (left and right) unity, spelled out in \cite[Definition 4.1.2]{johnson20202dimensionalcategories}.
\end{definition}
\begin{definition}
    A lax 2-functor is called \emph{normal} (or \emph{unitary}) if each of the 2-morphisms $F_a \colon \id_{F(a)} \to F(\id_a)$ is a 2-isomorphism. 
    If moreover each $F_{Y,X}$ is a 2-isomorphism, we call $F$ a \emph{pseudofunctor} or simply \emph{2-functor}. 
\end{definition}
\begin{definition}
    Let $F,G \colon \B \to \B'$ be lax 2-functors. A \emph{lax 2-transformation} $\alpha \colon F \Rightarrow G$ consists of 1-morphism components 
    $\alpha_c \colon F(c) \to G(c)$ for each $c \in \C$ and 2-morphism components $\alpha_X \colon G(X) \circ \alpha_c \Rightarrow \alpha_d \circ F(X)$ for 
    $X \colon c \to d$ a 1-morphism in $\C$ satisfying compatibility with units and composition of 1-morphisms, and naturality with respect to 2-morphisms, 
    spelled out in \cite[Definition 4.2.1]{johnson20202dimensionalcategories}.
\end{definition}
\begin{definition}
    A lax 2-transformation is called a \emph{pseudonatural transformation} or simply \emph{2-transformation} if each $\alpha_X$ is a 2-isomorphism.
\end{definition}
\begin{variant}
    Both lax 2-functors and lax 2-transformations admit oplax versions, in which the structure 2-morphisms $F_a,F_{Y,X}$ and the 2-morphisms $\alpha_X$ 
    are reversed, respectively. We emphasize that one may consider oplax 2-transformations between lax 2-functors, as discussed in 
    \cite[Section 4.3]{johnson20202dimensionalcategories} (where oplax is called colax).
\end{variant}
\begin{remark}
    Let $\B$ and $\B'$ be 2-categories. Then, there is a 2-category $\Fun(\B,\B')$ whose objects are lax 2-functors, whose 1-morphisms are lax 2-transformations, 
    and whose 2-morphisms are modifications, see \cite[Definition 4.4.10 \& Theorem 4.4.11]{johnson20202dimensionalcategories}.
\end{remark}
\begin{remark}
    Let $\B,\B',\B''$ be 2-categories and $F \colon \B \to B'$ and $G \colon \B' \to \B'$ lax 2-functors. Then, we may compose $G$ and $F$ to obtain a lax 
    2-functor $G \circ F \colon \B \to \B'$, see \cite[Definition 4.1.26 \& Lemma 4.1.29]{johnson20202dimensionalcategories}. Moreover, there is a category 
    whose objects are 2-categories and whose morphisms are lax 2-functors, see \cite[Theorem 4.1.30]{johnson20202dimensionalcategories}.
\end{remark}
\begin{remark}
    For each lax 2-functor $F \colon \B \to \B'$, there is a \emph{1-opposite} lax 2-functor $F^{1\opp} \colon \B^{1\opp} \to \B^{1\opp}$ defined 
    in \cite[Example 4.1.10]{johnson20202dimensionalcategories}. On the other hand, there is a \emph{2-opposite} oplax 2-functor 
    $F^{2\opp} \colon \B^{2\opp} \to \B^{2\opp}$, and a \emph{(1,2)-opposite} oplax 2-functor $F^{(1,2)\opp} \colon \B^{(1,2)\opp} \to \B^{(1,2)\opp}$.
\end{remark}
\begin{remark}
    The 1-categories of 2-categories and lax (respectively oplax) 2-functors are equivalent via the 2-opposite construction. 
\end{remark}
\begin{remark}\label{Remark: oplax 2-functors and local functors}
    Note that, by definition, an (op)lax 2-functor $\B \to \B'$ induces honest functors on Hom-category level. The (op)laxness is reflected only 
    in the compability of these induced functors with units and composition.\\
    Similarly, an (op)lax natural transformation between (op)lax 2-functors induces natural transformations between the the induced functors on 
    Hom-category level; the (op)laxness is again reflected in the compatibility with respect to units and composition.\\
    Given a lax/oplax 2-functor $F \colon \B \to \B'$ which induces functors $F_{a,b} \colon \Hom_{\B}(a,b) \to \Hom_{\B'}(F(a),F(b))$, the 2-opposite 
    oplax/lax 2-functor $F \colon \B^{2\opp} \to (\B')^{2\opp}$ induces on the respective Hom-categories the functors 
    $F_{a,b}^{\opp} \colon \Hom_{\B}(a,b)^{\opp} \to \Hom_{\B'}(F(a),F(b))^{\opp}$.
\end{remark}

\subsection{1-closed 2-categories}\label{Subsection: 1-closed 2-categories}
In this section we discuss the basic theory of 1-closed 2-categories. 
\begin{definition}\label{Definition: closed 2-categories}
    Let $\B$ a 2-category. We will say that $\B$ is \emph{right 1-closed} if for every object $c \in \C$ and every 1-morphism $X \colon a \to b$ in $\B$, 
    the functor $(-) \circ X \colon \Hom_{\B}(b,c) \to \Hom_{\B}(a,c)$ admits a right adjoint. Similarly, we will say that $\B$ is \emph{left 1-closed} if 
    for every object $c \in \C$ and every 1-morphism $X \colon a \to b$ in $\B$, the functor $X \circ (-) \colon \Hom_{\B}(c,a) \to \Hom_{\B}(c,b)$ 
    admits a right adjoint. We will say that $\B$ is \emph{1-closed} if it is both left and right 1-closed.
\end{definition}
\begin{example}
    Any 2-category $\B$ with left/right adjoints is left/right 1-closed.
\end{example}
\begin{variant}
    If the compostion functors in \Cref{Definition: closed 2-categories} admit left adjoints rather than right adjoints, we call $\B$ \emph{right/left 1-coclosed}.
\end{variant}
\begin{lemma}\label{Lemma: opposites of closed 2-categories}
    The 1-opposite of a 1-closed 2-category is 1-closed again. The 2-opposite of a 1-closed 2-category is 1-coclosed. 
\end{lemma}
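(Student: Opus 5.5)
The plan is to unwind the definitions of $\B^{1\opp}$ and $\B^{2\opp}$ on Hom-categories and to observe how precomposition and postcomposition functors transform under each opposite. The whole statement reduces to bookkeeping of which Hom-category carries which functor, together with the elementary fact that a functor $G$ has a right adjoint exactly when $G^{\opp}$ has a left adjoint.

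For the 1-opposite, recall that $\Hom_{\B^{1\opp}}(a,b) = \Hom_{\B}(b,a)$ as categories, with no opposite taken on 2-morphisms, and composition reversed: $X \circ^{1\opp} Y = Y \circ X$. For a 1-morphism $X \colon a \to b$ of $\B^{1\opp}$, i.e. $X \colon b \to a$ in $\B$, I will check how its two composition functors look in $\B$.
\begin{itemize}
    \item The precomposition functor $(-) \circ^{1\opp} X \colon \Hom_{\B^{1\opp}}(b,c) \to \Hom_{\B^{1\opp}}(a,c)$ is literally the postcomposition functor $X \circ (-) \colon \Hom_{\B}(c,b) \to \Hom_{\B}(c,a)$ in $\B$.
    \item Symmetrically, postcomposition in $\B^{1\opp}$ is precomposition in $\B$.
\end{itemize}
Hence right 1-closedness of $\B^{1\opp}$ is left 1-closedness of $\B$ and vice versa. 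A 1-closed $\B$ is both, so $\B^{1\opp}$ is again 1-closed.

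For the 2-opposite, $\Hom_{\B^{2\opp}}(a,b) = \Hom_{\B}(a,b)^{\opp}$, and horizontal composition is unchanged on 1-morphisms. The precomposition functor $(-)\circ X$ in $\B^{2\opp}$ is therefore the functor $((-)\circ X)^{\opp} \colon \Hom_{\B}(b,c)^{\opp} \to \Hom_{\B}(a,c)^{\opp}$, and likewise for postcomposition. Given an adjunction $G \dashv H$ with unit $\id \to HG$ and counit $GH \to \id$, passing to opposites gives $H^{\opp} \dashv G^{\opp}$, since the same 2-morphisms now read as a counit $H^{\opp}G^{\opp} \to \id$ and a unit $\id \to G^{\opp}H^{\opp}$, and the triangle identities are preserved. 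So the right adjoint of $(-)\circ X$ in $\B$ becomes a left adjoint of the corresponding composition functor in $\B^{2\opp}$, and the same holds for $X \circ (-)$. This gives right and left 1-coclosedness, i.e. $\B^{2\opp}$ is 1-coclosed.

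I do not expect a genuine obstacle. The only point needing care is the variance bookkeeping: in the 1-opposite case one must confirm that no opposite is taken on Hom-categories, so that adjoints stay on the same side, while in the 2-opposite case the side of the adjoint flips.
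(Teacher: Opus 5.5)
Your proposal is correct. The paper states this lemma without proof and treats it as an immediate unwinding of the definitions, which is exactly what you supply. Under the 1-opposite, precomposition and postcomposition are interchanged on the nose while the Hom-categories are unchanged, so left and right 1-closedness swap and right adjoints stay right adjoints. Under the 2-opposite, each composition functor is replaced by its opposite and $G \dashv H$ becomes $H^{\opp} \dashv G^{\opp}$, so right adjoints turn into left adjoints.
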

\begin{remark}
    \Cref{Lemma: opposites of closed 2-categories} suggests that passing to the 2-opposite is not a (classical) $\Z_2$-symmetry of the (higher) category of 1-closed 
    2-categories. This is substentially different than in the case of 2-categories (with adjoints). In particular, while 2-categories with adjoints have 
    more symmetry than ordinary 2-categories, 1-closed 2-categories in contrast have less symmetry. 
\end{remark}
\begin{remark}
    We wish to spell out some of the data of a right 1-closed 2-category. Let $\B$ be a right 1-closed 2-category and $X \colon a \to b$ be a 1-morphism. 
    Then, $(-) \circ X \colon \Hom(b,c) \to \Hom(a,c)$ has a right adjoint which we denote by $(-)^X \colon \Hom(a,c) \to \Hom(b,c)$. By definition, 
    we have for any 1-morphism $Z \colon a \to c$ a 2-morphism $\ev_X^Z \colon Z^X \circ X \to Z$ so that for each $Y \colon b \to c$ we have a
    natural bijection $\hom_{\B}(Y,Z^X) \cong \hom_{\B}(Y \circ X,Z)$. \\
    Let us now assume that $\B$ is a left 1-closed 2-category and $X \colon a \to b$ is a 1-morphism. 
    Then, $X \circ (-) \colon \Hom(c,a) \to \Hom(c,b)$ has a right adjoint, denoted ${}^{X}\!(-) \colon \Hom(c,b) \to \Hom(c,a)$. By definition, we have 
    for any 1-morphism $Z \colon c \to b$ a 2-morphism $\widetilde{\ev}_X^Z \colon X \circ {}^{X}\!(Z) \to Z$ so that for each $Y \colon c \to a$ we have 
    a natural bijection $\hom_{\B}(Y,{}^{X}\!Z) \cong \hom_{\C}(X \circ Y,Z)$.
\end{remark}
\begin{construction}\label{Construction: 2-functor from closedness}
    Let $\B$ be a right 1-closed 2-category and let $a,b,c \in \B$ be objects. First, note that the assignment $\Hom(a,b)^{\opp} \times \Hom(a,c) \to \Hom(b,c)$ 
    that assigns $(X,Y) \mapsto Y^X$ is functorial. We define an oplax 2-functor $\mathcal{J} \colon \B \to \B^{(1,2)\opp}$ by setting 
    $\mathcal{J}(a) = a, \mathcal{J}(X) = \id_a^X$, and $\mathcal{J}(f) := \id_a^f$ for an object $a \in \B$, 1-morphisms $X,Y \colon a \to b$, and a 
    2-morphism $f \colon X \to Y$ in $\B$.\\
    For the sake of completeness, we also describe the coherence 2-morphisms $\id^X \circ \id^Y \to \id^{Y \circ X}$ and $\id_a \to \id_a^{\id_a}$ in $\B$ of our 
    oplax\footnote{Note that oplax functoriality data in $\B^{(1,2)\opp}$ resembles lax functoriality data in $\B$.} 2-functor $\mathcal{J}$. The former is constructed 
    as the preimage of the 2-morphism 
    \begin{equation}
        \xymatrix@C=4em{
            \id^X \circ \id^Y \circ Y \circ X \ar[r]^-{\id \circ \ev_Y \circ \id} & \id^X \circ X \ar[r]^-{\ev_X} & \id
        }
    \end{equation}
    under the bijection $\hom(\id^X \circ \id^Y,\id^{Y \circ X}) \cong \hom(\id^X \circ \id^Y \circ Y \circ X,\id)$ and the latter is constructed
    as the preimage of the 2-morphism $\id_{\id_a}$ under the bijection $\hom(\id_a,\id_a^{\id_a}) \cong \hom(\id_a \circ \id_a, \id_a) \cong \hom(\id_a,\id_a)$ 
    where we have used the right closedness of $\B$ in both cases.
\end{construction}
\begin{example}
    Let $\C$ be a closed monoidal category, which we may interpret as a 1-closed 2-category with a single object. 
    Then, the duality functor $d \colon \C \to \C^{\opp}_{\otimes^{\opp}}$ is oplax monoidal, hence induces an oplax 2-functor upon delooping. 
\end{example}
\begin{remark}
    In the case of a 2-category with right adjoints, \Cref{Construction: 2-functor from closedness} reduces to the adjunction 2-functor 
    $(-)^R \colon \B \to \B^{(1,2)\opp}$. 
\end{remark}
\begin{notation}
    \Cref{Construction: 2-functor from closedness} carries over to left 1-closed 2-category with the evident modifications. If we consider a 2-category 
    which is both left and right 1-closed, we will employ the notation $\mathcal{J}_L$ and $\mathcal{J}_R$ to distinguish the two respective oplax 2-functors.
\end{notation}
\begin{remark}\label{Remark: double closed functor does not exist}
    Let $\B$ be a 1-closed 2-category. For any two 1-morphisms $Y \colon a \to b$ and $X \colon b \to c$ in $\B$, we have 2-morphisms 
    \begin{equation}
        \xymatrix{
            \id^{\id^X} \circ \id^{\id^Y} \ar[r]^-{} & \id^{\id^Y \circ \id^X} & \id^{\id^{X \circ Y}} \ar[l]
        }
    \end{equation}
    fitting into the commutative diagram of 2-morphisms
    \begin{equation}
        \begin{aligned}
        \xymatrix{
            \id^{\id^X} \circ \id^{\id^Y} \circ (\id^Y \circ \id^X) \ar[r]^-{\id \circ \ev_{\id^Y} \circ \id} \ar[d]_-{} & \id^{\id^X} \circ \id^X \ar[d]^-{\ev_{\id^X}} \\
            \id^{\id^Y \circ \id^X} \circ (\id^Y \circ \id^X) \ar[r]^-{\ev_{\id^Y \circ \id^X}} & \id \\
            \id^{\id^{X \circ Y}} \circ (\id^Y \circ \id^X) \ar[u]_-{} \ar[r]^-{} & \id^{\id^{X \circ Y}} \circ (\id^{X \circ Y}) \ar[u]_-{\ev_{\id^{X \circ Y}}}
        }
        \end{aligned}
    \end{equation}
    where for the lower horizontal arrow we have used the oplax functoriality data of $\mathcal{J}$. Generically, the directions of the 2-morphisms 
    described above indicate that the assignment $X \mapsto \id^{\id^X}$ does not assemble into an (op)lax 2-functor $\B \to \B$.
\end{remark}
\begin{remark}
    In the case of a 2-category with adjoints, we do have a 2-functor $(-)^{RR} \colon \B \to \B$.
\end{remark}
\begin{variant}
    Let $\B$ be a 1-closed 2-category. We have 2-morphisms 
    \begin{equation}
        \xymatrix{
            \id^{^{X}\!\id} \circ \id^{^{Y}\!\id} \ar[r]^-{} & \id^{^{Y}\!\id \circ ^{X}\!\id} & \id^{^{X \circ Y}\!\id} \ar[l]
        }
    \end{equation}
    and similarly for the other possible combinations of $\mathcal{J}_L$ and $\mathcal{J}_R$. 
\end{variant}
\begin{remark}\label{Remark: maps into the left right closed 1-morphisms}
    Let $\B$ be a 1-closed 2-category. Let $X \colon a \to b$ be a 1-morphism in $\B$. We have a canonical 2-morphism
    \begin{equation}
        X \to \id^{^{X}\!\id}
    \end{equation}
    induced by the 2-morphism $\widetilde{\ev}_X \colon X \circ ^{X}\!\id \to \id$. Since the assignment $X \mapsto \id^{^{X}\!\id}$ 
    does not necessarily define an (op)lax 2-functor, it is not reasonable to ask whether the 2-morphisms we described here assemble into an (op)lax 2-transformation. 
    Completely analogously, there is a 2-morphism $X \to ^{\id^X}\!\id$.\\
    Let $f \colon X \to Y$ be a 2-morphism. We claim that the diagram of 2-morphisms 
    \begin{equation}
        \begin{aligned}
        \xymatrix{
            X \ar[r]^-{} \ar[d]^-{f} & \id^{^{X}\!\id} \ar[d]^-{\id^{^{f}\!\id}} \\
            Y \ar[r]^-{} & \id^{^{Y}\!\id}
        }
        \end{aligned}
    \end{equation}
    is commutative. Indeed, the two 2-morphisms $X \to \id^{^{Y}\!\id}$ induce two 2-morphisms $X \to ^{Y}\!\id$ which coincide by the commutativity of the 
    diagram 
    \begin{equation}
        \begin{aligned}
        \xymatrix{
            X \circ ^{Y}\!\id \ar[r]^-{\id \circ ^{f}\!\id} \ar[d]^-{f \circ \id} & X \circ ^{X}\!\id \ar[d]^-{\ev_X} \\
            Y \circ ^{Y}\!\id \ar[r]^-{\ev_Y} & \id.
        }
        \end{aligned}
    \end{equation}
    In other words, the 2-morphisms $X \to \id^{^{X}\!\id}$ are natural in $X$.
\end{remark}
\begin{remark}\label{Remark: right and left closedness for self-oneopposite 2-categories}
    Let $\B$ be a right 1-closed 2-category together with an equivalence of 2-categories $d \colon \B \to \B^{2\opp}$. 
    Then, $\B$ is left 1-closed with 
    \begin{equation}
        ^{X}\!\id = d^{-1}(\id^{d(X)})
    \end{equation}
    for any 1-morphism $X \colon a \to b$ in $\B$.
\end{remark}

\subsection{0-closed 2-categories}
In this section we consider the basic theory of 0-closed monoidal categories.
\begin{definition}
    Let $\B$ be a monoidal 2-category. We will say that $\B$ is \emph{right 0-closed} if the 2-functor $(-) \otimes a \colon \B \to \B$ admits a right adjoint 
    for every $a \in \B$. Similarly, we will say that $\B$ is \emph{left 0-closed} if the 2-functor $a \otimes (-) \colon \B \to \B$ admits a right adjoint 
    for every $a \in \B$. We will say that $\B$ is \emph{0-closed} if it is both left and right 0-closed.\\
    We will say that a symmetric monoidal 2-category $\B$ is \emph{0-closed} if its underlying monoidal 2-category is right (equivalently: left) 0-closed.
\end{definition}
\begin{remark}
    There are weaker versions of 0-closedness for monoidal 2-categories, as there are weaker versions of adjunctions between 2-functors. These will be of 
    no concern in the following.
\end{remark}
\begin{definition}
    Let $\B$ be a 2-category. A \emph{lax 1-contravariant $\Oone$-volutive structure} on $\B$ consists of a 2-functor $d \colon \B \to \B^{1\opp}$, a 
    2-transformation $\eta \colon \id \to d^{1\opp} \circ d$, and an invertible modification $\tau \colon (\id \circ \eta^{1\opp}) \bullet (\eta \circ \id) \to \id$
    satisfying the evident coherence condition over the fourfold product of $d$.
\end{definition}
\begin{remark}
    Despite the choice of terminology, a lax 1-contravariant $\Oone$-volutive structure consists entirely of pseudofunctors, transformations, etc. 
    As in the 1-categorical case, the terminology is motivated as a separation from honest $\Oone$-volutive structures and to emphasize the (op)lax monoidality of 
    the underlying functor.
\end{remark}
The proof of the following assertion is similar to its 1-categorical analogue and its rigid 2-categorical counterpart; we will be brief.
\begin{proposition}\label{Proposition: closed symmetric monoidal lax volutive twocats}
    Let $\B$ be a 0-closed symmetric monoidal 2-category. Then, $\B$ admits a lax 1-contravariant $\Oone$-volutive structure.
\end{proposition}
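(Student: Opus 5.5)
The plan is to categorify \Cref{Construction: O(1)-volutive categories from closed monoidal ones} one level up. By 0-closedness, for each $a \in \B$ the 2-functor $(-)\otimes a$ has a right adjoint $[a,-]$. This means there is an equivalence of Hom-categories
\begin{equation}
\psi \colon \Hom_{\B}(c,[a,b]) \simeq \Hom_{\B}(c\otimes a,b),
\end{equation}
pseudonatural in $b$ and $c$, with counit $\ev_a^b \colon [a,b]\otimes a \to b$. Write $d(a) := [a,1]$ and $\ev_a := \ev_a^1$.

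\textbf{Step 1: the 2-functor $d$.} For $X \colon a \to b$, I choose $d(X) := \psi^{-1}(\ev_b \circ (\id \otimes X)) \colon d(b) \to d(a)$. For $f \colon X \to Y$, I take $d(f) := \psi^{-1}(\ev_b \circ (\id\otimes f))$, so $d$ is covariant on 2-morphisms and lands in $\B^{1\opp}$. Since $\psi$ is now only an equivalence, the strict equalities of the 1-categorical computation \Cref{Equation: pulling morphisms through evaluations} become invertible 2-cells $\ev_a\circ(d(X)\otimes\id)\cong \ev_b\circ(\id\otimes X)$, namely the counit of $\psi\psi^{-1}\simeq\id$. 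Pasting these as in the functoriality diagram of the 1-categorical construction gives the compositor $d(Y\circ X)\cong d(X)\circ d(Y)$ and the unitor, because $\psi$ is fully faithful. The pseudofunctor axioms then follow from uniqueness of 2-cells under the equivalence $\psi$.

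\textbf{Step 2: the transformation $\eta$.} I set $\eta_a := \psi^{-1}(\ev_a\circ\beta_{a,d(a)}) \colon a \to d^2(a)$, exactly as before. Its 2-morphism component at $X$ is obtained by replacing each equality in the naturality chain of the 1-categorical proof by an invertible 2-cell. These 2-cells come from the naturality of $\psi$, the interchange law, the pseudonaturality 2-cells of the braiding $\beta$ (which in a symmetric monoidal 2-category is a pseudonatural equivalence), and the cells from Step 1. Pseudonaturality of $\eta$, meaning compatibility with composition and with 2-morphisms, again reduces via $\psi$ to coherence in the symmetric monoidal 2-category.

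\textbf{Step 3: the modification $\tau$ and its coherence.} The chain of equalities proving $1^{\eta_a}\circ\eta_{1^a}=\id$ becomes a chain of invertible 2-cells. The one genuinely new ingredient is the syllepsis, an invertible 2-cell $\beta\circ\beta\cong\id$ that a symmetric monoidal 2-category supplies, which replaces the step "symmetry of the braiding". Composing these cells and applying $\psi^{-1}$ yields $\tau_a \colon d(\eta_a)\circ\eta_{d(a)} \Rightarrow \id_{d(a)}$. Checking the modification axiom in $a$ is routine.

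The \textbf{main obstacle} is the coherence condition on $\tau$ over the fourfold composite of $d$. My plan there is to transport everything along $\psi$, so that both sides become 2-cells between 1-morphisms of the form $(\ldots)\otimes a \to 1$ built from evaluations and braidings. I would then invoke the coherence theorem for symmetric monoidal 2-categories, together with the syllepsis axioms (in particular that syllepsis is compatible with the hexagonators). The intended conclusion is that any two such parallel composites of structural cells agree. The thing I expect to be subtle is ensuring that the cells from Step 1 (the counits of $\psi$) interact coherently with the structural cells. I would handle this by working with an adjoint-equivalence choice of $\psi$, so that the triangle identities absorb these cells.
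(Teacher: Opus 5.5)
Your proposal follows essentially the same route as the paper. Like the paper, you take $d=1^{(-)}$ from the internal hom (the paper first builds the full 2-functor $\B^{1\opp}\times\B\to\B$, $(a,b)\mapsto b^a$, and then specializes to $b=1$), define $\eta_a$ via the braiding, obtain $\tau$ by categorifying the 1-categorical chain, and leave the coherence over $d^4$ to a check; the paper only writes "one checks", so your sketch, which also makes the syllepsis explicit, is at least as complete. One caution for the final check: the coherence theorem for symmetric monoidal 2-categories controls only composites of the symmetric monoidal constraint cells, so the terms involving $\ev_a$ and the counits of $\psi$ have to be cancelled by hand using the triangle identities, as you already anticipate.
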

\begin{proof}
    First, we note that we have a 2-functor $\B^{1\opp} \times \B \to \B, (a,b) \mapsto b^a$. To a 1-morphism $(X,Y) \colon (a,b) \to (\hat{a},\hat{b})$, it assigns 
    the 1-morphism $Y^X \colon b^a \to \hat{b}^{\hat{a}}$ corresponding under the equivalence of categories $\Hom_{\B}(b^a,\hat{b}^{\hat{a}}) \cong 
    \Hom_{\B}(b^a \otimes \hat{a},\hat{b})$ to the 1-morphism 
    \begin{equation}
        \xymatrix{
            b^a \otimes \hat{a} \ar[r]^-{\id \otimes X} & b^a \otimes a \ar[r]^-{\ev_a^b} & b \ar[r]^-{Y} & \hat{b}.
        }
    \end{equation}
    To a 2-morphism $(f,g) \colon (X,Y) \to (\overline{X},\overline{Y})$, it assigns the 2-morphism $g^f \colon Y^X \to \overline{Y}^{\overline{X}}$ 
    corresponding to the 2-morphism 
    \begin{equation}
        \begin{aligned}
        \xymatrix@C=3em{
            b^a \otimes \hat{a} \ar[r]^-{\id \otimes X} \ar[d]^-{\id} & b^a \otimes a \ar[r]^-{\ev_a^b} \ar[d]^{\id} \ar@{=>}[dl]^-{\id \otimes f}
            & b \ar[d]^-{\id}\ar[r]^-{Y} \ar@{=>}[dl]^-{\id} & \hat{b} \ar[d]^-{\id} \ar@{=>}[dl]^-{g}\\
            b^a \otimes \hat{a} \ar[r]_-{\id \otimes \overline{X}} & b^a \otimes a \ar[r]_-{\ev_a^b} & b \ar[r]_-{\overline{Y}} & \hat{b}.
        }
        \end{aligned}
    \end{equation}
    One checks that this defines a 2-functor. 
    We now define a 2-functor $d \colon \B \to \B^{1\opp}$ by assigning $a \mapsto 1^a, X \mapsto 1^X, f \mapsto 1^f$, that is, we consider the composition 
    $\B \cong \B \times \{\star\} \to \B \times \B^{1\opp} \to \B^{1\opp}$ where the last 2-functor is the 1-opposite of the one constructed above and 
    the 2-functor $\{\star\} \to \B^{1\opp}$ is determined by its image, $1$. \\
    Next, we wish to describe a 2-transformation $\eta \colon \id \to d^{1\opp} \circ d$. Its 1-morphism component at $a \in \B$ is given by 
    the 1-morphism $\eta_a \colon a \to 1^{1^a}$ corresponding to the 1-morphism $a \otimes 1^a \cong 1^a \otimes a \to 1$ where we have used the braiding of $\B$. \\   
    Next, we wish to describe a modification $\tau \colon (\id \circ \eta^{1\opp}) \bullet (\eta \circ \id) \to \id$. Its 2-morphism component at $a \in \B$ 
    is a 2-isomorphism $1^\eta_a \circ \eta_{1^a} \to \id_{1^a}$ which is constructed completely analogously to the 1-categorical case (in which 
    this is a coherence condition). One checks that this satisfies the required coherence condition.
\end{proof}

\subsection{Fully closed 2-categories}
In this section we study the basic theory of (fully) closed monoidal 2-categories.
\begin{definition}
    Let $\B$ be a (symmetric) monoidal 2-category. We will say that $\B$ is \emph{(fully) closed} if $\B$ is both 0- and 1-closed. 
\end{definition}
In the following we wish to understand the interaction between the 0- and 1-closedness (oplax) 2-functors $d$ and $\mathcal{J}$ on a fully closed monoidal 
2-category. We start by working in slightly more generality. 
\begin{construction}\label{Construction: Closed structures are compatible with everything}
    Let $\B,\B'$ be right 1-closed 2-categories and let $F \colon \B \to \B'$ be a 2-functor. Then, we obtain an oplax 2-transformation 
    $\xi^F \colon \mathcal{J}_R \circ F \to F^{(1,2)\opp} \circ \mathcal{J}_R$ as follows: the 1-morphism components are trivial, 
    and the 2-morphism component at $X \colon a \to b$ is given by the image of the 2-morphism $\ev_X \colon \id^X \circ X \to \id$ under the map 
    \begin{equation}\label{Equation: Closed structures are compatible with everything}
        \xymatrix{
            \hom(\id^X \circ X,\id) \ar[r]^-{F} & \hom(F(\id^X) \circ F(X),\id) \ar[r]^-{\psi^{-1}} & \hom(F(\id^X),\id^{F(X)})
        }
    \end{equation}
    induced by $F$ and the adjunction defining the right 1-closed structure. In other words, the component is given by 
    $\xi^F_X := \psi^{-1}(F(\ev_X)) \colon F(\id^X) \to \id^{F(X)}$ (where we have left the structure 2-morphism of the 2-functor $F$ implicit). 
    Since the 1-morphism components of this 2-transformation are trivial, it can equivalently be understood as a lax 2-transformation 
    $F^{(1,2)\opp} \circ \mathcal{J}_R \to \mathcal{J}_R \circ F$ which we denote by the same symbol.
\end{construction}
\begin{remark}
    The construction of the 2-morphisms $\xi^F_X$ in \Cref{Construction: Closed structures are compatible with everything} is completely 
    analogous under the weaker assumption that $F$ is a lax 2-functor. In this case however, $F^{(1,2)\opp}$ is an oplax 2-functor so that 
    the composition of $F$ with $\mathcal{J}_R$ is not (globally) well-defined. This is similar to the phenomenona observed in 
    \Cref{Subsection: 1-closed 2-categories}.
\end{remark}
\begin{corollary}
    Let $\B$ be a (fully) closed monoidal 2-category. We have an oplax 2-transformation $(\mathcal{J}_R)^{1\opp} \circ d \to 
    d^{(1,2)\opp} \circ \mathcal{J}_R$.
\end{corollary}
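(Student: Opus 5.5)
The plan is to obtain this directly from \Cref{Construction: Closed structures are compatible with everything}, applied to the 2-functor $F := d \colon \B \to \B^{1\opp}$ from \Cref{Proposition: closed symmetric monoidal lax volutive twocats}. Since $\B$ is fully closed it is in particular right 1-closed, so $\mathcal{J}_R \colon \B \to \B^{(1,2)\opp}$ is defined by \Cref{Construction: 2-functor from closedness}. By \Cref{Lemma: opposites of closed 2-categories}, $\B^{1\opp}$ is again 1-closed. Its right 1-closed structure is the left 1-closed structure of $\B$ read in the 1-opposite, so its closedness 2-functor is $(\mathcal{J}_R)^{1\opp}$, in the sense that right closing in $\B^{1\opp}$ corresponds to left closing in $\B$. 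The construction, with $\B' = \B^{1\opp}$, then produces an oplax 2-transformation
\[
\xi^d \colon (\mathcal{J}_R)^{1\opp} \circ d \to d^{(1,2)\opp} \circ \mathcal{J}_R .
\]
This is exactly the claimed statement.

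Concretely, the 1-morphism components of $\xi^d$ are identities. This uses that $\mathcal{J}_R$ and its 1-opposite are the identity on objects, so both composites send $a \mapsto 1^a$. The 2-morphism component at $X \colon a \to b$ is $\xi^d_X := \psi^{-1}(d(\ev_X))$. Here $d(\ev_X) \colon d(\id^X) \circ d(X) \to d(\id) \cong \id$ is taken in $\B^{1\opp}$, after inserting the invertible structure 2-morphisms of the pseudofunctor $d$, and $\psi$ is the adjunction bijection of the right 1-closed structure on $\B^{1\opp}$.

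The remaining work is to check the oplax transformation axioms. There are three.
\begin{itemize}
\item Naturality in 2-morphisms $f \colon X \to Y$: this follows from naturality of $\psi$ and functoriality of $d$ on hom-categories, together with the dinaturality of $\ev$ in $X$.
\item Unit compatibility: compare $\xi^d_{\id_a}$ with the unit constraints $\id \to \id^{\id}$ of the two oplax functors. Transport both sides through $\psi$; they become $d$ applied to the unitor identification, so they agree.
\item Composition compatibility: this must be checked against the oplax structure maps $\id^X \circ \id^Y \to \id^{Y\circ X}$ of \Cref{Construction: 2-functor from closedness}. Apply $\psi$ to both composites. Each then becomes the composite of two evaluations, in the shape of the diagram used to define the oplax constraint, with $d$ applied. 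Since $d$ is a pseudofunctor, it preserves this pasting up to its coherence isomorphisms.
\end{itemize}

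I expect the main obstacle to be bookkeeping rather than any conceptual step. First, one has to identify precisely which closed structure on $\B^{1\opp}$ appears: left versus right, and whether $(\mathcal{J}_R)^{1\opp}$ or $(\mathcal{J}_L)^{1\opp}$ results. I would settle this first by writing out the composition functors $(-)\circ X$ in $\B^{1\opp}$ explicitly. If this yields the left structure, I would instead apply the construction to the left-closed analogue, as indicated in the Notation following \Cref{Construction: 2-functor from closedness}. Second, one must track the variance of the oplax data so that the transformation lands in the stated direction. Whether $\xi^d$ comes out oplax or lax is then fixed by the convention remark at the end of \Cref{Construction: Closed structures are compatible with everything}: since the 1-morphism components are trivial, the two readings are interchangeable.
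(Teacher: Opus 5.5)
Your proposal is correct and follows the paper's route exactly: the corollary is \Cref{Construction: Closed structures are compatible with everything} applied to $F = d \colon \B \to \B^{1\opp}$, where $\B^{1\opp}$ is right 1-closed because $\B$ is left 1-closed (\Cref{Lemma: opposites of closed 2-categories}), and your component $\psi^{-1}(d(\ev_X))$ and axiom sketches fill in checks the paper leaves implicit. On the bookkeeping you flagged, the source functor is the right closing of $\B^{1\opp}$, which in terms of $\B$ is $(\mathcal{J}_L)^{1\opp}$, and that is how the statement's $(\mathcal{J}_R)^{1\opp}$ should be understood; no switch to a left-closed variant of the construction is needed, because the construction with $\B' = \B^{1\opp}$ already uses exactly this structure.
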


\subsection{A local perspective on closedness}\label{Subsection: local perspective on closedness}
In this section we develop a local perspective on closedness. We start with a motivation.
\begin{remark}
    Let $\B$ be a 1-closed symmetric monoidal 2-category. We have observed earlier that the oplax 2-functor $\mathcal{J} \colon \B \to \B^{(1,2)\opp}$ 
    induces for each pair of objects $a,b \in \B$ a functor $\Hom_{\B}(a,b) \to \Hom_{\B}(b,a)^{\opp}$. While the 2-opposite 
    $\mathcal{J}^{(1,2)\opp} \colon \B^{(1,2)\opp} \to \B$ is a lax 2-functor, so that it cannot immediately be composed with $\mathcal{J}$, it induces 
    a functor $\Hom_{\B}(b,a)^{\opp} \to \Hom_{\B}(a,b)$ so that we may compose these two (op)lax 2-functors at least locally. The primary intuition 
    behind the constructions in this section is that this local perspective is sufficient for some purposes.
\end{remark}
\begin{construction}\label{Construction: Oone volutive to Oone dagger}
    Let $(\B,d,\eta,\tau)$ be a lax 1-contravariant $\Oone$-volutive 2-category. Then, the full sub 2-category on those objects $a \in \B$ for which $\eta_a$ is 
    a 1-isomorphism inherits a 1-contravariant $\Oone$-volutive structure. This 1-contravariant $\Oone$-volutive 2-category we can then turn into a 
    1-contravariant $\Oone$-dagger 2-category $\hat{B}$ by applying the construction of \cite[Section 2.3.3]{carqueville2025orbifoldshigherdaggerstructures}. 
    To be more explicit, the objects of the 2-category $\hat{B}$ are triples $(a,\theta_a,\omega_a)$ consisting of an object $a \in \hat{B}$, a 1-isomorphism 
    $\theta_a \colon a \to d(a)$, and a 2-isomorphism $\omega_a \colon d(\theta_a) \circ \eta_a \to \theta_a$ satisfying a coherence condition over $d^3(a)$, for 
    which we refer to \cite[Section 2.3.3]{carqueville2025orbifoldshigherdaggerstructures}. The 1- and 2-morphisms of $\hat{B}$ are the same as those of $\B$.
    The functor underlying the $\Oone$-dagger structure assigns to a 1-morphism $X \colon (a,\theta_a,\omega_a) \to (b,\theta_b,\omega_b)$ the 1-morphism 
    given by
    \begin{equation}
        \xymatrix{
            b \ar[r]^-{\theta_b} & d(b) \ar[r]^-{d(X)} & d(a) \ar[r]^-{\theta_a^{-1}} & a.
        }
    \end{equation}
\end{construction}
\begin{lemma}\label{Lemma: Oone daggerification is still closed}
    Let $\B$ be a fully closed symmetric monoidal 2-category. The 1-contravariant $\Oone$-dagger 2-category constructed by combining 
    \Cref{Proposition: closed symmetric monoidal lax volutive twocats} and \Cref{Construction: Oone volutive to Oone dagger} 
    is 1-closed.
\end{lemma}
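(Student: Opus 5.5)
The key observation is that 1-closedness is a purely local condition. By \Cref{Definition: closed 2-categories} it only asks that the precomposition and postcomposition functors between Hom-categories have right adjoints. So the plan is to show that the Hom-categories of $\hat{\B}$, together with their composition functors, are equivalent to those of $\B$. 1-closedness of $\hat{\B}$ is then inherited from the 1-closedness of $\B$, which is part of the hypothesis that $\B$ is fully closed.

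\textbf{Step 1: identify the underlying 2-category.} By \Cref{Construction: Oone volutive to Oone dagger}, the objects of $\hat{\B}$ are triples $(a,\theta_a,\omega_a)$, where $a$ lies in the full sub 2-category of $\B$ on which $\eta_a$ is a 1-isomorphism. The 1- and 2-morphisms of $\hat{\B}$ are those of $\B$, and so is their composition. Hence the forgetful assignment $U \colon \hat{\B} \to \B$, $(a,\theta_a,\omega_a) \mapsto a$, is the identity on each Hom-category and strictly preserves horizontal composition. In other words, $U$ is locally an isomorphism, and its image is a full sub 2-category of $\B$. The dagger functor plays no role in this step, since 1-closedness concerns only the underlying 2-category.

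\textbf{Step 2: transport the adjoints.} Fix $X \colon (a,\theta_a,\omega_a) \to (b,\theta_b,\omega_b)$ and a third object $(c,\theta_c,\omega_c)$. The functor
\[
(-)\circ X \colon \Hom_{\hat{\B}}((b,\ldots),(c,\ldots)) \to \Hom_{\hat{\B}}((a,\ldots),(c,\ldots))
\]
is literally the functor $(-)\circ X \colon \Hom_{\B}(b,c) \to \Hom_{\B}(a,c)$. That functor has a right adjoint $(-)^X$ because $\B$ is right 1-closed. The same holds for $X \circ (-)$, using left 1-closedness and the functor ${}^X(-)$. The crucial point is that the adjoint $Z^X$ of $Z \colon a \to c$ is again a 1-morphism $b \to c$ of $\B$. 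Since $\hat{\B}$ is locally full, $Z^X$ is therefore a 1-morphism in $\hat{\B}$, and the evaluation 2-morphism $\ev_X^Z$ is a 2-morphism in $\hat{\B}$. The universal property $\hom(Y,Z^X)\cong\hom(Y\circ X,Z)$ is unchanged. This gives both left and right 1-closedness.

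\textbf{Main obstacle.} The one point needing care is to confirm that the sub 2-category where $\eta_a$ is invertible, and the passage to hermitian fixed points, are genuinely full on 1- and 2-morphisms. In particular, no condition such as isometry or unitarity is imposed on 1-morphisms. I would check this directly against \Cref{Construction: Oone volutive to Oone dagger} and the cited construction of the $\Oone$-dagger 2-category. If some restriction on 1-morphisms did appear, I would instead verify that $Z^X$ satisfies it, using the oplax compatibility $\xi^F$ of \Cref{Construction: Closed structures are compatible with everything} applied to $F=d$. As a final remark, I would observe that the dagger's 2-equivalence $\hat{\B}\to\hat{\B}^{1\opp}$ relates the left and right closed structures in the manner of \Cref{Remark: right and left closedness for self-oneopposite 2-categories}. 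This is a consistency check rather than something the proof requires.
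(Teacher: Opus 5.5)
Your proposal is correct and follows the paper's own argument: the Hom-categories and composition functors of $\hat{\B}$ are literally those of $\B$, so the right adjoints $(-)^X$ and ${}^X(-)$ witnessing 1-closedness of $\B$ carry over unchanged. The worry in your ``main obstacle'' paragraph does not arise. Construction~4.5.1 passes to the full sub 2-category on objects with $\eta_a$ invertible, and then explicitly takes the 1- and 2-morphisms of $\hat{\B}$ to be those of $\B$, with no isometry or unitarity condition. The fallback via $\xi^F$ is therefore never needed.
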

\begin{proof}
    This follows from the observation that the Hom-categories and compositions of $\hat{B}$ described in \Cref{Construction: Oone volutive to Oone dagger} 
    are the same as those of $\B$.
\end{proof}
\begin{construction}\label{Construction: towards lax volutive structures on Hom-categories of sufficiently closed dagger 2-category}
    Let $\B$ be a 1-closed 1-contravariant $\Oone$-dagger 2-category. In the following we will make extensive use of \Cref{Remark: oplax 2-functors and local functors}.
    By composition we obtain an oplax 2-functor $d \circ \mathcal{J}_R \colon \B \to \B^{2\opp}$
    which induces a functor $\overline{d}_{a,b} \colon \Hom_{\B}(a,b) \to \Hom_{\B}(a,b)^{\opp}$ for each pair of objects $a,b\in \B$. On the other hand, the 
    2-opposite $(d \circ \mathcal{J}_R)^{2\opp} \colon \B^{2\opp} \to \B$ (which is a lax 2-functor) induces a functor 
    $\overline{d}_{a,b}^{\opp} \colon \Hom_{\B}(a,b)^{\opp} \to \Hom_{\B}(a,b)$ so that we may consider the composition 
    \begin{equation}
        \overline{d}_{a,b}^{\opp} \circ \overline{d}_{a,b} \colon \Hom_{\B}(a,b) \to \Hom_{\B}(a,b).
    \end{equation}
    Now, by \Cref{Remark: right and left closedness for self-oneopposite 2-categories}, we compute
    \begin{equation}
        (\overline{d}_{a,b}^{\opp} \circ \overline{d}_{a,b})(X) = ^{\id^X}\!\id
    \end{equation}
    and similar for 2-morphisms. In particular, we have a natural transformation 
    $\overline{\eta}_{a,b} \colon \id \to \overline{d}_{a,b}^{\opp} \circ \overline{d}_{a,b}$ whose component at $X \in \Hom_{\B}(a,b)$ is given by 
    the canonical morphism $X \to ^{\id^X}\!\id$ described in \Cref{Remark: maps into the left right closed 1-morphisms}. 
\end{construction}
\begin{lemma}\label{Lemma: lax volutive structures on Hom-category level}
    In the situation of \Cref{Construction: towards lax volutive structures on Hom-categories of sufficiently closed dagger 2-category}, 
    the natural transformation $\overline{\eta}_{a,b} \colon \id \to \overline{d}_{a,b}^{\opp} \circ \overline{d}_{a,b}$ satisfies
    $\overline{d}_{a,b}((\overline{\eta}_{a,b})_X) \circ (\overline{\eta}_{a,b})_{\overline{d}_{a,b}(X)} = \id_{\overline{d}_{a,b}(X)}$ for all 
    $X \in \Hom_{\B}(a,b)$. In particular, $(\overline{d}_{a,b},\overline{\eta}_{a,b})$ defines a lax $\Oone$-volutive structure on $\Hom_{\B}(a,b)$. 
\end{lemma}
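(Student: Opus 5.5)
The plan is to reduce the statement to the 1-categorical argument of \Cref{Construction: O(1)-volutive categories from closed monoidal ones}, with the universal properties of $\id^{(-)}$ and ${}^{(-)}\!\id$ from \Cref{Definition: closed 2-categories} playing the roles of $\ev$ and $\psi$. First I make the functor $\overline{d}_{a,b}$ explicit. Up to the identification $d \colon \B \to \B^{1\opp}$ (an equivalence, since $\B$ is an $\Oone$-dagger 2-category), \Cref{Remark: right and left closedness for self-oneopposite 2-categories} says that $\overline{d}_{a,b}(X)$ is "$\id^X$ viewed in the other hom", and that $\overline{d}_{a,b}^{\opp}(W)={}^{W}\!\id$. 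Concretely, $\overline{d}_{a,b}(X)$ is characterized by the natural bijection
\[
\hom(Y,\overline{d}_{a,b}(X)) \cong \hom(Y\circ X,\id) \quad\text{(after transport along $d$)},
\]
and $\overline{d}^{\opp}_{a,b}(W)$ is characterized by
\[
\hom(V,\overline{d}^{\opp}_{a,b}(W))\cong\hom(W\circ V,\id).
\]
The component $(\overline{\eta}_{a,b})_X \colon X\to {}^{\id^X}\!\id$ corresponds under the second bijection to $\ev_X \colon \id^X\circ X\to\id$. This is the same map as the one in \Cref{Remark: maps into the left right closed 1-morphisms}, obtained by swapping left and right.

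With these descriptions, the identity to prove, $\overline{d}((\overline{\eta})_X)\circ\overline{\eta}_{\overline{d}(X)}=\id$, becomes an equation of 2-morphisms $\id^X\to\id^X$. I will check it by applying the defining bijection of $\id^X$, that is, by composing with $X$ and postcomposing with $\ev_X$, and showing that the result equals $\ev_X$. This mirrors the second computation in \Cref{Construction: O(1)-volutive categories from closed monoidal ones}. The chain of steps is as follows:
\begin{enumerate}
\item Naturality of $\psi$ lets me pull $\overline{\eta}_{\overline{d}(X)}$ inside.
\item The analogue of \Cref{Equation: pulling morphisms through evaluations}, i.e. $\ev\circ(\id^f\circ\id)=\ev\circ(\id\circ f)$, moves $\overline{d}(\overline{\eta}_X)$ across the evaluation.
\item The interchange law commutes the two 2-morphisms, which act on different composition factors.
\item The defining equations $\ev_{\id^X}$-type $\leftrightarrow$ $\widetilde{\ev}$ for $\overline{\eta}$ reduce everything to $\ev_X$.
\end{enumerate}
In the monoidal case, the final cancellation used $\beta\circ\beta=\id$. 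Here, the role of the braiding is played by the passage between left and right closedness through $d$ (\Cref{Remark: right and left closedness for self-oneopposite 2-categories}). That passage is strictly involutive up to the dagger identification $d^{1\opp}\circ d\cong\id$ with trivial 1-morphism components (\Cref{Definition: 2-contravariant Oone dagger 2-category}, 1-contravariant variant), so the two swaps cancel.

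Naturality of $\overline{\eta}_{a,b}$ has already been established in \Cref{Remark: maps into the left right closed 1-morphisms}, and functoriality of $\overline{d}_{a,b}$ comes from \Cref{Remark: oplax 2-functors and local functors}. Once the coherence identity is shown, $(\overline{d}_{a,b},\overline{\eta}_{a,b})$ therefore meets \Cref{Definition: lax O(1)-volutive category} and is a lax $\Oone$-volutive structure.

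The main obstacle is bookkeeping. Specifically, I have to verify that $\overline{d}_{a,b}$ applied to a 2-morphism $g$ really is $\id^{g}$ transported along $d$, so that the "pulling through evaluation" identity applies. I also have to track the dagger structure maps $\theta$ and the invertible $\eta$ components of $d$, which appear when identifying ${}^{W}\!\id$ with $d^{-1}(\id^{d(W)})$. My first attempt will assume a classical dagger structure, where $d$ is identity on objects and $\eta$ has identity components, so these transport isomorphisms are identities. After that, I will argue that the general coherent case differs only by conjugation with invertible 2-cells, and such conjugation preserves the identity.
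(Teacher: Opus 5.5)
Your proposal is correct and follows the route the paper indicates; its proof only says the argument is the same as in the non-lax case. You unwind $\overline{d}_{a,b}$ and $\overline{d}^{\opp}_{a,b}$ through the right and left closed universal properties, test the identity against $\ev_X$, and let the dagger structure $d$ replace $\beta\circ\beta=\id$ from \Cref{Construction: O(1)-volutive categories from closed monoidal ones}.

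One correction to your bookkeeping. In the classical case only the 1-morphism components of $\eta$ are identities. Its invertible 2-morphism components enter the identification $d(\id^{d(W)})\cong{}^{W}\!\id$, so the transport isomorphisms are not identities. What makes the two swaps cancel is the coherence axiom of the dagger structure relating $d(\eta_X)$ and $\eta_{d(X)}$, not the triviality of the 1-morphism components.
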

\begin{proof}
    Essentially the same as in the non-lax case.
\end{proof}
\begin{construction}
    In the situation of \Cref{Lemma: lax volutive structures on Hom-category level}, let $a,b,c$ be objects in $\B$ and consider the composition functor 
    $\Hom(b,c) \times \Hom(a,b) \to \Hom(a,c)$ in $\B$. Both sides carry lax $\Oone$-volutive structures, the former of which is given as the product (cf. 
    \Cref{Definition: product of lax Oone volutive categories}) of the ones on $\Hom(a,b)$ and $\Hom(b,c)$. We wish to construct a lax $\Oone$-volutive structure 
    on the composition functor. First, recall that the functors underlying the lax $\Oone$-volutive structures assign $(Y,X) \in \Hom(b,c) \times \Hom(a,b)$ 
    to $(d(\id^Y),d(\id^X))$ and $Y \circ X \in \Hom(a,c)$ to $d(\id^{Y \circ X})$, respectively. By definition, we need to construct a natural transformation 
    fitting into the diagram 
    \begin{equation}
        \begin{aligned}
        \xymatrix{
            \Hom(b,c) \times \Hom(a,b) \ar[d]^-{} \ar[r]^-{\circ} & \Hom(a,c) \ar@{=>}[dl]^-{\alpha} \ar[d]\\
            \Hom(b,c)^{\opp} \times \Hom(a,b)^{\opp} \ar[r]^-{\circ}  & \Hom(a,c)^{\opp}.
        }
        \end{aligned}
    \end{equation}
    The component of $\alpha$ at $(Y,X) \in \Hom(b,c) \times \Hom(a,b)$ we define to be the canonical 2-morphism 
    \begin{equation}
        \xymatrix{
        d(\id^Y) \circ d(\id^X) \ar[r]^-{\cong} & d(\id^X \circ \id^Y) \ar[r]^-{} & d(\id^{Y \circ X}).
        }
    \end{equation}
    expressing the (oplax) functoriality of $d$ and $\mathcal{J}$. One checks that this defines a lax $\Oone$-volutive structure on the composition functor.\\
    Let us also construct for each object $a \in \B$ a lax $\Oone$-volutive functor $\star \to \Hom(a,a)$. The underlying functor we may take to be the constant 
    functor with value $\id_a \in \Hom(a,a)$. The lax $\Oone$-volutive structure is given by the natural transformation fitting in the diagram 
    \begin{equation}
        \begin{aligned}
        \xymatrix{
            \star \ar[d] \ar[r]^-{} & \Hom(a,a) \ar@{=>}[dl]^-{\lambda}  \ar[d]^-{\overline{d}_{a,a}} \\
            \star^{\opp} \ar[r]^-{} & \Hom(a,a)^{\opp}
        }
        \end{aligned}
    \end{equation}
    assigning the unique object of $\star$ to the canonical 2-morphism 
    \begin{equation}
        \id_a \to d(\id^{\id_a})
    \end{equation}
    expressing the (oplax) functoriality of $d$ and $\mathcal{J}$.
\end{construction}
We deduce the following result.
\begin{proposition}\label{Proposition: lax enrichment}
    Let $\B$ be a 1-closed 1-contravariant $\Oone$-dagger 2-category. Then, $\B$ is enriched in lax $\Oone$-volutive categories. 
\end{proposition}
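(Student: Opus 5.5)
The plan is to assemble the enrichment from the pieces constructed immediately before the statement. The enriching category is $\Cat^{\text{lax}\Oone\text{-vol}}$ with the cartesian monoidal structure of \Cref{Definition: product of lax Oone volutive categories}. The enriched category has the objects of $\B$. For each pair $a,b$, the Hom-object is the lax $\Oone$-volutive category $(\Hom_{\B}(a,b),\overline{d}_{a,b},\overline{\eta}_{a,b})$ supplied by \Cref{Lemma: lax volutive structures on Hom-category level}. Composition is the lax $\Oone$-volutive functor $(\circ,\alpha)$ from the preceding construction, and the unit is the lax $\Oone$-volutive functor $(\id_a,\lambda)\colon \star\to\Hom(a,a)$. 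Since the paper works with 2-categories, I read ``enriched'' as enrichment over the monoidal 2-category $\Cat^{\text{lax}\Oone\text{-vol}}$. The associativity and unit constraints are then lax $\Oone$-volutive natural transformations, namely the associator and unitors of $\B$.

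\textbf{Step 1: the data are morphisms in $\Cat^{\text{lax}\Oone\text{-vol}}$.} For $(\circ,\alpha)$ I must verify the compatibility
\[(\alpha^{\opp}\circ\id_d)(\id_\circ\circ\eta)=(\id_{d^{\opp}}\circ\alpha)(\eta'\circ\id_\circ).\]
Here $\alpha_{(Y,X)}$ is built from the oplax structure maps $\id^X\circ\id^Y\to\id^{Y\circ X}$ of $\mathcal{J}_R$ (\Cref{Construction: 2-functor from closedness}) together with the pseudo-functoriality of $d$. I plan to reduce both sides, via the adjunction bijection $\psi$ defining right 1-closedness, to 2-morphisms into $\id$. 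There both sides become the same composite of evaluation maps $\ev_X,\ev_Y$ and the canonical maps $X\to {}^{\id^X}\!\id$ of \Cref{Remark: maps into the left right closed 1-morphisms}. The required naturality is exactly the naturality statement proven in that remark. The argument mirrors the proof of naturality of $\eta$ in \Cref{Construction: O(1)-volutive categories from closed monoidal ones}. The unit functor is handled the same way, using the map $\id_a\to\id_a^{\id_a}$.

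\textbf{Step 2: the associator and unitors of $\B$ are lax $\Oone$-volutive natural transformations.} Concretely, I must check that $\xi$ with components $a_{Z,Y,X}$ satisfies $(\xi^{\opp}\circ\id_d)\alpha_2(\id_{d'}\xi)=\alpha_1$, where $\alpha_1,\alpha_2$ are the two iterated composites of $\alpha$. This reduces to the lax associativity axiom for the oplax 2-functor $\mathcal{J}_R$, combined with the pseudofunctor coherence of $d$. The unit cases reduce to the lax unitality axioms. The pentagon and triangle identities then hold automatically, because 2-morphisms in $\Cat^{\text{lax}\Oone\text{-vol}}$ are just natural transformations satisfying a property, and these identities already hold in $\B$.

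\textbf{Main obstacle.} I expect Step 1 for composition to be the hardest part. The functors $\overline{d}$ combine the honest 2-functor $d$ (the dagger) with the merely oplax $\mathcal{J}_R$, and $\overline{d}^{\opp}\overline{d}(X)={}^{\id^X}\!\id$ was identified using \Cref{Remark: right and left closedness for self-oneopposite 2-categories}. To control this, I would first rewrite everything in terms of $\mathcal{J}_L$ and $\mathcal{J}_R$ alone, using ${}^X\!\id=d^{-1}(\id^{d(X)})$. I would then transpose twice along the left and right closedness adjunctions, until the equation is one between 2-morphisms $X\circ Y\circ(\text{stuff})\to\id$ built from evaluations. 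At that point the identity should follow from the triangle identities of the closedness adjunctions and the interchange law.
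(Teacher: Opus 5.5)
Your proposal is correct and follows the paper's route. The paper deduces the proposition directly from \Cref{Lemma: lax volutive structures on Hom-category level} and the lax $\Oone$-volutive structures $\alpha$ on composition and $\lambda$ on units, with ``one checks''; your Step 2, which reduces the associator/unitor conditions to the oplax coherence of $d\circ\mathcal{J}_R$, fills in a point the paper leaves implicit. One small correction to Step 1: the input you need is not the naturality of $X\mapsto{}^{\id^X}\!\id$ from the remark, but the naturality of the transposition bijection $\psi$ together with the fact that $d(\overline{\eta}_X)$ is the transpose of $d(\ev_X)$, after which both sides transpose to $d$ applied to $\ev_X\bullet(\id\circ\ev_Y\circ\id)$.
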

\begin{remark}
    While categories enriched in $\Oone$-volutive categories are the same as 2-contravariant $\Oone$-dagger 2-categories 
    (see \Cref{Remark: 2-contra dagger 2-cats and Oone volutive enrichments}), we are not aware of a result 
    like this identifying categories enriched in lax $\Oone$-volutive categories with some flavour of ``classical'' dagger structure. We stress again that 
    simply asking only for lax ingredients in the definition of a 2-contravariant $\Oone$-volutive/dagger structure leads to immediate problems, due to the 
    fact that the 2-opposite changes the variance.
\end{remark}
As a consequence of \Cref{Proposition: lax enrichment}, we may perform the following. 
\begin{construction}\label{Passing to lax hermitian fixed points}
    Let $\B$ be a 1-closed 1-contravariant $\Oone$-dagger 2-category. Since $\B$ is enriched in lax $\Oone$-volutive categories, we may locally pass to 
    the categories of lax hermitian fixed points and lax isometries. This procedure is compatible with compsitions and units by the functoriality of 
    passing to $\LaxHerm$ as well as \Cref{Proposition: lax enrichment}. Explicitly,  we may associate to the lax $\Oone$-volutive category
    $(\Hom_{\B}(a,b),\overline{d}_{a,b},\overline{\eta}_{a,b})$ its category of lax hermitian fixed points 
    $\LaxHerm((\Hom_{\B}(a,b),\overline{d}_{a,b},\overline{\eta}_{a,b}))$. Here, an object in this category consists 
    of a 1-morphism $X \colon a \to b$ together with a 2-morphism $\theta_X \colon X \to \overline{d}_{a,b}(X)$ satisfying $\overline{d}_{a,b}(\theta_X) \circ 
    (\overline{\eta}_{a,b})_X = \theta_X$, and morphisms are lax isometries. To see the compatibility with compositions and units explicitly, let 
    $(X,\theta_X)$ be a lax hermitian fixed point in $(\Hom_{\B}(a,b),\overline{d}_{a,b},\overline{\eta}_{a,b})$ and 
    let $(Y,\theta_Y)$ be a lax hermitian fixed point in $(\Hom_{\B}(b,c),\overline{d}_{b,c},\overline{\eta}_{b,c})$. We claim that $Y \circ X$ admits the 
    structure of a lax hermitian fixed point in $(\Hom_{\B}(a,c),\overline{d}_{a,c},\overline{\eta}_{a,c})$. Indeed, we have a 2-morphism 
    \begin{equation}
        \xymatrix{
        Y \circ X \ar[r]^-{\theta_Y \circ \theta_X} & d(\id^Y) \circ d(\id^X) \ar[r]^-{\cong} & d(\id^X \circ \id^Y) \ar[r]^-{} & d(\id^{Y \circ X}).
        }
    \end{equation}
    Similarly, for any object $a$ the identity 1-morphism $\id_a$ admits the structure of a lax hermitian fixed point given by 
    the canonical 2-morphism $\id_a \to d(\id^{\id_a})$.
\end{construction}
\begin{remark}
    It is evident from the construction that the composition of two (honest) hermitian fixed points does not need to be an (honest) hermitian fixed point, 
    as the last morphism in the construction is generically non-invertible (as part of the oplax functoriality data of $\mathcal{J}$). This is illustrated further 
    in \Cref{Subsection: Bornological algebras}. On a structural level, this is a reflection of the fact that lax $\Oone$-volutive functors between lax 
    $\Oone$-volutive categories do not induce ($\Oone$-volutive/dagger) functors between the associated $\Oone$-volutive/dagger categories. 
\end{remark}
\begin{remark}
    Summarizing the results of this section, from each fully closed symmetric monoidal 2-category we may extract a 1-contravariant $\Oone$-dagger 2-category 
    enriched in lax $\Oone$-volutive categories, whose lax hermitian fixed points we may study locally.
\end{remark}

\subsection{Example: Morita 2-categories}\label{Subsection: Morita 2-categories}
In this section, we discuss duality/closedness in Morita 2-categories. We first establish some notation, loosely following \cite{dutta2025moritainfty}. 
Let $\C$ be a monoidal category. Let $A,B,C$ be monoids in $\C$ and let $M$ be an $A$-$B$-bimodule and $N$ be a $B$-$C$-bimodule. 
The \emph{balanced tensor product} of $M$ and $N$ over $B$ is the coequalizer (if it exists)
\begin{equation}
    M \otimes_B N := \coeq \left( M \otimes B \otimes N \rightrightarrows M \otimes N \right)
\end{equation}
where the two maps are induced by the left and right $B$-actions on $M$ and $N$, respectively, supressing associators. We will say that a monoidal category
$\C$ admits (a calculus of) balanced tensor products if all balanced tensor products exist and the respective coequalizers are preserved by the monoidal 
structure, see \cite[Definition 2.7]{dutta2025moritainfty}. We then obtain a 2-category\footnote{Recall that we use the term 2-category synonymously for bicategory. 
In \cite{dutta2025moritainfty} an $(\infty,2)$-category of monoids modeled by a 2-complicial set is constructed.} $\Alg(\C)$ whose objects are monoids, whose 1-morphisms 
are bimodules, whose 2-morphisms are intertwiners, and whose composition of 1-morphisms is given by the balanced tensor product (with the induced bimodule actions).\\

Assuming now in addition that $\C$ is a symmetric monoidal category, the 2-category $\Alg(\C)$ inherits a symmetric monoidal structure with respect to 
which every object $A$ has a dual, given by the opposite algebra $A^{\opp}$ whose multiplication is that of $A$ precomposed with the symmetric braiding.
\begin{construction}\label{Construction: towards closedness in morita categories part one}
    Let $\C$ be a right closed monoidal category. Let $A$ and $B$ be monoids in $\C$, let $M$ be a left $A$-module and 
    let $N$ be a left $B$-module in $\C$. We will abuse notation in the following and denote underlying objects of $M$ and $N$ in $\C$ by the same symbol.  
    By assumption of right closedness, there is an object $M^N$ in $\C$ together with a morphism $\ev_N^M \colon M^N \otimes N \to M$ such that for every other object 
    $P \in \C$ the induced map $\psi \colon \hom_{\C}(P,M^N) \to \hom_{\C}(P \otimes N,M)$ is a natural bijection. In particular, we have
    \begin{align*}
            \hom_{\C}(A \otimes M^N,M^N) &\cong \hom_{\C}(A \otimes M^N \otimes N,M) \\
            \hom_{\C}(M^N \otimes B, M^N) &\cong \hom_{\C}(M^N \otimes B \otimes N, M) 
    \end{align*}
    In consequence, the morphisms 
    \begin{equation}
        \begin{aligned}
        \xymatrix@R=0.5em{
            A \otimes M^N \otimes N \ar[r]^-{\id \otimes \ev_N^M} & A \otimes M \ar[r]^-{\lambda_M} & M  \\
            M^N \otimes B \otimes N \ar[r]^-{\id \otimes \lambda_N} & M^N \otimes N \ar[r]^-{\ev_N^M} & N
        }
        \end{aligned}
    \end{equation}
    induce morphisms $\lambda_{M^N} \colon A \otimes M^N \to M^N$ and $\rho_{M^N} \colon M^N \otimes B \to M^N$, respectively. 
    We claim that these morphisms equip $M^N$ with the structure of an $A$-$B$-bimodule. We first note the commutativity of the diagrams 
    \begin{equation}\label{Equation: verifying action property of internal hom}
        \begin{aligned}
        \xymatrix{
            A \otimes A \otimes M^N \otimes N \ar[d]^-{\id \otimes \ev_N^M} \ar[r]^-{\mu_A \otimes \id} & A \otimes M^N \otimes N \ar[d]^-{\id \otimes \ev_N^M} \\
            A \otimes A \otimes M \ar[d]^-{\id \otimes \lambda_M} \ar[r]^-{\mu_A \otimes \id} & A \otimes M \ar[d]^-{\lambda_M} \\
            A \otimes M \ar[r]^-{\lambda_M} & M 
        }
        \hspace{1cm} 
        \xymatrix{
            M^N \otimes B \otimes B \otimes N \ar[r]^-{\id \otimes \mu_B \otimes \id} \ar[d]^-{\id \otimes \lambda_N} & M^N \otimes B \otimes N \ar[d]^-{\id \otimes \lambda_N} \\
            M^N \otimes B \otimes N \ar[r]^-{\id \otimes \lambda_N} \ar[d]^-{\id \otimes \lambda_N} & M^N \otimes N \ar[d]^-{\ev_N^M} \\
            M^N \otimes N \ar[r]^-{\ev_N^M} & M^N
        }
        \end{aligned}
    \end{equation}
    where the upper square in the left diagram commutes by the interchange law, the lower square in the left diagram commutes by the action property of $\lambda_M$, 
    the lower square in the right diagram commutes evidently, and the upper square in the right diagram commutes by the action property of $\lambda_N$.
    The naturality of the bijections $\psi$ described above allow us to deduce the action properties of $\lambda_{M^N}$ 
    and $\rho_{M^N}$ from the commutative diagrams in \Cref{Equation: verifying action property of internal hom}. Next, we claim that the left and right action 
    $\lambda_{M^N}$ and $\rho_{M^N}$ are compatible. To see this, we note the commutativity of the diagram 
    \begin{equation}
        \begin{aligned}
        \xymatrix{
            A \otimes M^N \otimes B \otimes N \ar[r]^-{\id \otimes \lambda_N} \ar[d]^-{\lambda_{M^N} \otimes \id} & A \otimes M^N \otimes N \ar[d]^-{\lambda_{M^N} \otimes \id} \\
            M^N \otimes B \otimes N \ar[r]^-{\id \otimes \lambda_N} \ar[d]^-{\id \otimes \lambda_N} & M^N \otimes N \ar[d]^-{\ev_N^M} \\
            M^N \otimes N \ar[r]^-{\ev_N^M} & N 
        }
        \end{aligned}
    \end{equation}
    where the lower square commutes evidently, while the upper square commutes by the naturality of the bijections $\psi$. Summarizing, for a left $A$-module 
    $(M,\lambda_M)$ and a left $B$-module $(N,\lambda_N)$ we obtain an $A$-$B$-bimodule $(M^N,\lambda_{M^N},\rho_{M^N})$. 
\end{construction}
\begin{construction}\label{Construction: intertwiner space objects}
    In the situation of \Cref{Construction: towards closedness in morita categories part one}, asumme in addition that we are given a monoid $C$ in $\C$ and 
    $M$ and $N$ carry right $C$-actions so that $M$ is an $A$-$C$-bimodule and $N$ is a $B$-$C$-bimodule. We wish to construct two morphisms of the form
    \begin{equation}
        M^N \rightrightarrows M^{N \otimes C}
    \end{equation}
    Invoking the right closedness, we may use the bijection $\hom_{\C}(M^N, M^{N \otimes C}) \cong \hom_{\C}(M^N \otimes N \otimes C,M)$ so that it suffices 
    to construct two elements in the latter set. The two maps we choose are 
    \begin{equation}
        \begin{aligned}
        \xymatrix@R=0.5em{
            M^N \otimes N \otimes C \ar[r]^-{\ev_N^M \otimes \id} & M \otimes C \ar[r]^-{\rho_M} & M \\
            M^N \otimes N \otimes C \ar[r]^-{\id \otimes \rho_N} & M^N \otimes N \ar[r]^-{\ev_N^M} & M  
        }
        \end{aligned}
    \end{equation}
    Given these two maps, we may ask whether the respective equalizer 
    \begin{equation}\label{Equation: Intertwiner mapping object}
        M^N_C := \equa(M^N \rightrightarrows M^{N \otimes C})
    \end{equation}
    exists in the category of $A$-$B$-bimodules in $\C$. 
\end{construction}
\begin{construction}\label{Construction: towards closedness in Morita 2-categories}
    Let $\C$ be a right closed monoidal category, $A,B$ and $C$ monoids in $\C$, $M$ an $A$-$C$-bimodule, and $N$ a $B$-$C$-bimodule.
    Assume the balanced tensor product $M^N \otimes_B N$ and the object $M^N_C$ exists. 
    For every $A$-$B$-bimodule $P$, we wish to construct a (natural) bijection 
    \begin{equation}\label{Equation: toward Morita closedness}
        \xymatrix{
            \hom_{A,B}(P,M^N_C) \ar[r]^-{} & \hom_{A,C}(P \otimes_B N,M),
        }
    \end{equation}
    recalling the (natural) bijection $\hom_{\C}(P,M^N) \cong \hom_{\C}(P \otimes N,M)$ following from the closedness assumption on $\C$. 
    First, we note that we have a map $\hom_{A,B}(P,M^N_C) \to \hom_{A,B}(P,M^N)$ induced by the equalizer map. Second, we have a map 
    \begin{equation}
        \xymatrix{
            \hom_{A,B}(P,M^N) \ar[r]^-{} & \hom_{\C}(P,M^N) \ar[r]^-{\cong} & \hom_{\C}(P \otimes N,M)
        }
    \end{equation}
    whose image we wish to understand. First, it is evident that each element in the image will be an intertwiner of left $A$-modules. Second, we 
    note for every intertwiner of left $B$-modules $f \colon P \to M^N$, we have a commutative diagram
    \begin{equation}
        \begin{aligned}
        \xymatrix{
            & P \otimes B \otimes N \ar[dl]_-{\id \otimes \lambda_N} \ar[d]^-{f \otimes \id} \ar[r]^-{\rho_P \otimes \id} & P \otimes N \ar[d]^-{f \otimes \id}\\
            P \ar[dr]_-{f \otimes \id} \otimes N & M^N \otimes B \otimes N \ar[d]^-{\id \otimes \lambda_N} \ar[r]^-{\rho_{M^N} \otimes \id} & M^N \otimes N \ar[dl]^-{\id} \\
            & M^N \otimes N & 
        }
        \end{aligned}
    \end{equation}
    where the left diagram commutes by the interchange law, the right upper square commutes by assumption of $f$ being an intertwiner of left $B$-modules, 
    and the lower triangle commutes by definition of the right $B$-action on $M^N$. In particular, the two outer composite maps in the diagram
    \begin{equation}
        P \otimes B \otimes N \rightrightarrows M
    \end{equation}
    are equal. By the universal property of the balanced tensor product, $f$ hence induces a map $P \otimes_B N \to M$. Hence, we obtain a bijection 
    \begin{equation}
        \hom_{A,B}(P,M^N) \cong \hom_{A}(P \otimes_B N,M).
    \end{equation}
    Finally, we wish to understand the image of the composition 
    \begin{equation}
        \xymatrix{
            \hom_{A,B}(P,M^N_C) \ar[r]^-{} & \hom_{A,B}(P,M^N) \ar[r]^-{\cong} & \hom_{A}(P \otimes_B N,M)
        }
    \end{equation}
    By the universal property of $M^N_C$, the image of the fitst map consists precisely of those intertwiners $P \to N^M$ for which postcomposition 
    with the two maps $M^N \rightrightarrows M^{N \otimes C}$ coincides. By construction of these maps, we may characterize the image of the composite 
    map $\hom_{A,B}(P,M^N_C) \to \hom_{A}(P \otimes_B N,M)$ as the interwiners of right $C$-modules. All together, we have constructed a bijection 
    \begin{equation}\label{Equation: toward Morita closedness}
        \xymatrix{
            \hom_{A,B}(P,M^N_C) \ar[r]^-{} & \hom_{A,C}(P \otimes_B N,M),
        }
    \end{equation}
    which is natural by the closedness of $\C$ (implying naturality of $\hom_{\C}(P,M^N) \cong \hom_{\C}(P \otimes N,M)$), the universal property 
    of the balanced tensor product, and the universal property of \Cref{Construction: intertwiner space objects}. 
\end{construction}
We deduce the following result from our discussion so far. 
\begin{theorem}\label{Theorem: closedness of the Morita 2-category}
    Let $\C$ be a right closed monoidal category which admits equalizers and (a calculus of) balanced tensor products. Then, the 
    2-category $\Mor(\C)$ is right 1-closed. If $\C$ is moreover closed symmetric monoidal, then $\Mor(\C)$ is 1-closed and has duals for objects.
\end{theorem}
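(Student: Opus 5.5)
Right 1-closedness of $\Mor(\C)$ means: for every 1-morphism $N \colon B \to C$ (a $B$-$C$-bimodule; we adopt the convention matching the bijection in \Cref{Construction: towards closedness in Morita 2-categories}) and every object $A$, the functor $(-)\otimes_B N \colon \Hom(A,B) \to \Hom(A,C)$ has a right adjoint. The candidate right adjoint sends an $A$-$C$-bimodule $M$ to $M^N_C$ of \Cref{Construction: intertwiner space objects}. We first check that this object exists. Since $\C$ admits equalizers, we form the equalizer in $\C$ of the two maps $M^N \rightrightarrows M^{N\otimes C}$. Both maps are intertwiners of $A$-$B$-bimodules: under $\psi$ this reduces to diagram chases of the kind in \eqref{Equation: verifying action property of internal hom}, using that the left $A$-action enters only through $M$ and the right $B$-action only through $N$, both of which commute with the right $C$-actions. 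The $\C$-equalizer therefore inherits an $A$-$B$-bimodule structure. Here we need $A \otimes (-)$ and $(-)\otimes B$ to be compatible with the equalizer; the cleanest route is to define the actions on the equalizer by the universal property, after checking that $A\otimes E \to A\otimes M^N \to M^N$ equalizes the pair. This shows that the equalizer is also the equalizer in bimodules.

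We then invoke \Cref{Construction: towards closedness in Morita 2-categories}, which provides the natural bijection $\hom_{A,B}(P,M^N_C)\cong \hom_{A,C}(P\otimes_B N,M)$ for every $A$-$B$-bimodule $P$; balanced tensor products exist by assumption. The only remaining step is naturality in $M$. This follows because $M \mapsto M^N$ is functorial via the closed structure, the parallel pair is natural in $M$, and hence $M\mapsto M^N_C$ is functorial. A bijection natural in both variables is exactly an adjunction $(-)\otimes_B N \dashv (-)^N_C$. This proves right 1-closedness.

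For the symmetric case, the braiding identifies $A$-$B$-bimodules with $B^{\opp}$-$A^{\opp}$-bimodules. This gives an equivalence $\Mor(\C)\to\Mor(\C)^{1\opp}$ that sends $A\mapsto A^{\opp}$ and is compatible with balanced tensor products, since the braiding preserves the defining coequalizers. Under this equivalence, left composition $X\circ(-)$ corresponds to right composition, so left 1-closedness follows from right 1-closedness, in the spirit of \Cref{Remark: right and left closedness for self-oneopposite 2-categories} but using the 1-opposite. Closed symmetric implies right closed, so the first part applies. Duals for objects were already noted: $A^{\opp}$ is dual to $A$, with evaluation and coevaluation given by $A$ regarded as an $A\otimes A^{\opp}$-$1$ and a $1$-$A^{\opp}\otimes A$ bimodule. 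The snake isomorphisms are the unit isomorphisms $A\otimes_A A\cong A$, which hold by the calculus of balanced tensor products.

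The main obstacle is the bookkeeping in the second paragraph: checking that the equalizer in $\C$ carries compatible bimodule actions, and that the bijection of \Cref{Construction: towards closedness in Morita 2-categories} really restricts to the equalizer. That argument only used the universal property in the form ``maps $P\to M^N$ equalizing the pair,'' so this restriction is where the care is needed.
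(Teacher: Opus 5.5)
Your proposal is correct and follows the paper's route: the right adjoint is $M\mapsto M^N_C$ from \Cref{Construction: intertwiner space objects}, and the adjunction is the bijection of \Cref{Construction: towards closedness in Morita 2-categories}. Your additional steps fill in what the paper leaves implicit, namely bimodule-linearity of the parallel pair, the $\C$-equalizer inheriting the actions because the equalizer map is monic, and left closedness via the braiding-induced equivalence $\Mor(\C)\simeq\Mor(\C)^{1\opp}$. One labeling point: with $N\colon B\to C$ and $P\in\Hom(A,B)$, the functor $(-)\otimes_B N$ is post-composition, which the paper's definition calls \emph{left} 1-closedness, so to match the first claim of the statement you should orient an $A$-$B$-bimodule as a 1-morphism $B\to A$.
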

\begin{remark}
    \Cref{Theorem: closedness of the Morita 2-category} holds under slightly weaker assumptions on the existence of (co)limits: it suffices that $\C$ admits 
    (a calculus of) balanced tensor products and intertwiner mapping objects in the sense of \Cref{Construction: intertwiner space objects}. 
\end{remark}
\begin{lemma}\label{Lemma: closedness and balanced tensor products}
    Let $\C$ be a left and right closed monoidal category which admits coequalizers. Then $\C$ admits (a calculus of) balanced tensor products.
\end{lemma}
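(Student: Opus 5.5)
The plan is to check the two requirements in the definition of a calculus of balanced tensor products \cite[Definition 2.7]{dutta2025moritainfty}: the coequalizers defining $M \otimes_B N$ must exist, and they must be preserved by the monoidal structure. Existence needs no work, since $\C$ admits coequalizers by hypothesis. So for any monoids $A,B,C$, any $A$-$B$-bimodule $M$ and any $B$-$C$-bimodule $N$, the object
\begin{equation*}
M \otimes_B N = \coeq\left(M \otimes B \otimes N \rightrightarrows M \otimes N\right)
\end{equation*}
exists in $\C$. The substance of the lemma is therefore the preservation statement.

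The key observation is that closedness makes tensoring a left adjoint. Right closedness in the sense of \Cref{Definition: right closed monoidal category} says that $(-) \otimes a$ is left adjoint to $(-)^a$ for every $a \in \C$. Left closedness says the same for $a \otimes (-)$. Left adjoints preserve all colimits that exist, in particular coequalizers. Hence for any objects $X,Y \in \C$ the functor $X \otimes (-) \otimes Y \cong (X \otimes (-)) \circ ((-) \otimes Y)$ preserves coequalizers, and the coequalizer diagram above stays a coequalizer after tensoring on either side. If the definition in \cite{dutta2025moritainfty} asks for preservation in each variable separately, or for the two-sided version $X \otimes (-) \otimes Y$, both follow immediately, up to associators.

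It remains to equip $M \otimes_B N$ with its $A$-$C$-bimodule structure and to check associativity of the balanced tensor product. The left $A$-action is induced by $\lambda_M \otimes \id_N$ on $M \otimes N$. To descend it, I will identify $A \otimes (M \otimes_B N)$ with the coequalizer of $A \otimes M \otimes B \otimes N \rightrightarrows A \otimes M \otimes N$, using the preservation just established. Then I will check that $\lambda_M \otimes \id_N$ is compatible with the two parallel maps; this holds because the left $A$-action and right $B$-action on $M$ commute. The right $C$-action is treated symmetrically, using right tensoring. The action axioms then follow from the uniqueness part of the universal property of the coequalizer.

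For associativity, $(M \otimes_B N) \otimes_C P \cong M \otimes_B (N \otimes_C P)$, I will use the standard argument: both sides are the colimit of the same diagram over $M \otimes N \otimes P$. This is a consequence of commuting coequalizers with tensoring, via a "$3 \times 3$" lemma for reflexive coequalizers. The point needing care is that these coequalizers are reflexive, with common section given by the units of $B$ and $C$, so that the iterated coequalizer computes the joint one. This bookkeeping, rather than any conceptual difficulty, is the main obstacle. Whatever the exact formulation of the definition, it reduces to the fact that $X \otimes (-)$ and $(-) \otimes Y$ preserve coequalizers, which is exactly what left and right closedness provide.
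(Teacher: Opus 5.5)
Your proposal is correct and is essentially the paper's proof: existence of $M \otimes_B N$ is immediate from the assumed coequalizers, and preservation holds because left and right closedness make $a \otimes (-)$ and $(-) \otimes a$ left adjoints, which therefore preserve coequalizers. The paper stops there, since a calculus of balanced tensor products only asks for existence and preservation; your descent of the bimodule actions and your associativity argument belong to the construction of $\Alg(\C)$ rather than to this lemma, and associativity in fact needs only one-variable preservation plus the fact that coequalizer maps are epimorphisms, not reflexivity.
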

\begin{proof}
    By assumption, $\C$ admits coequalizers hence in particular balanced tensor products. It remains to show that the functors 
    $a \otimes (-), (-) \otimes a \colon \C \to \C$ preserve balanced tensor products for any $a \in \C$.
    By assumption $\C$ is left and right closed, so that both these functors have right adjoints, hence preserve (small) colimits and in particular coequalizers. 
    This finishes the proof. 
\end{proof}
\begin{corollary}\label{Corollary: Morita 2-categories of nice categories}
    Let $\C$ be a closed symmetric monoidal category with (co)equalizers. Then, $\Mor(\C)$ is a symmetric monoidal 1-closed 2-category with duals.
\end{corollary}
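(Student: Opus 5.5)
The plan is to obtain the corollary directly by combining \Cref{Lemma: closedness and balanced tensor products} with \Cref{Theorem: closedness of the Morita 2-category}. The only work is to check that the hypotheses of the theorem are met.

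First, let $\C$ be closed symmetric monoidal with equalizers and coequalizers. In the symmetric case, right closedness is equivalent to left closedness: one uses the braiding $a \otimes (-) \cong (-) \otimes a$, as recorded in \Cref{Definition: right closed monoidal category}. So $\C$ is both left and right closed and has coequalizers, and \Cref{Lemma: closedness and balanced tensor products} applies. It shows that $\C$ admits a calculus of balanced tensor products: the coequalizers defining $M \otimes_B N$ exist, and they are preserved by $a \otimes (-)$ and $(-) \otimes a$, since these functors are left adjoints. Hence the 2-category $\Mor(\C) = \Alg(\C)$ is well defined.

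Second, I would verify that $\C$ admits equalizers in the sense required by \Cref{Construction: intertwiner space objects}. That construction needs the equalizer $M^N_C$ to exist in the category of $A$-$B$-bimodules, not merely in $\C$. I expect this to be the only point needing an argument. The approach is to show that the forgetful functor from $A$-$B$-bimodules to $\C$ creates equalizers. The key input is that the two parallel maps $M^N \rightrightarrows M^{N \otimes C}$ are bimodule maps. This holds for the bimodule structure on $M^{N \otimes C}$ from \Cref{Construction: towards closedness in morita categories part one}, where $N \otimes C$ carries its left $B$-action, and it follows from the naturality of $\psi$ together with the commuting of the $C$-action with the $B$-action on $N$ and with the $A$-action on $M$. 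Given this, take the equalizer $E$ in $\C$. The actions $A \otimes E \to E$ and $E \otimes B \to E$ are then induced by the universal property, because $A \otimes (-)$ and $(-) \otimes B$ applied to the equalizer diagram give maps that the parallel pair equalizes. Note that $A \otimes E$ need not itself be an equalizer for this; we only need $A \otimes E \to A \otimes M^N \to M^N$ to equalize the pair, which uses bimodule-linearity of the pair. The action axioms then follow from the fact that the equalizer inclusion is a monomorphism.

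Finally, \Cref{Theorem: closedness of the Morita 2-category} in its symmetric form now applies. It gives that $\Mor(\C)$ is 1-closed and has duals for objects, with the dual of $A$ given by $A^{\opp}$ as discussed before \Cref{Construction: towards closedness in morita categories part one}. The symmetric monoidal structure on $\Mor(\C)$ is the one inherited from $\C$, as recalled there. Combining these gives the corollary.
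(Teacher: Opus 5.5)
Your proposal is correct and follows the paper's proof, which simply combines \Cref{Theorem: closedness of the Morita 2-category} with \Cref{Lemma: closedness and balanced tensor products}. Your extra check is valid and fills in a step the paper leaves implicit: you verify that the parallel pair $M^N \rightrightarrows M^{N \otimes C}$ consists of bimodule maps, so the forgetful functor from $A$-$B$-bimodules to $\C$ creates the equalizer and $M^N_C$ exists as a bimodule.
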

\begin{proof}
    Combine \Cref{Theorem: closedness of the Morita 2-category} and \Cref{Lemma: closedness and balanced tensor products}. 
\end{proof}
\begin{example}
    The 2-category $\Mor(\cBorn)$ of complete Bornological algebras is a symmetric monoidal 1-closed 2-category with duals. 
\end{example}
\begin{example}
    The 2-category $\Mor(\Ch(R))$ of dg-algebras over a commutative ring $R$ is a symmetric monoidal 1-closed 2-category with duals. 
\end{example}
\begin{lemma}\label{Lemma: Functors between Morita 2-categories}
    Let $\C,\D$ be closed symmetric monoidal categorues with (co)equalizers and let $F \colon \C \to \D$ be a symmetric monoidal functor which preserves 
    (co)equalizers. Then, $F$ induces a symmetric monoidal 2-functor $\Mor(F) \colon \Mor(\C) \to \Mor(\D)$.
\end{lemma}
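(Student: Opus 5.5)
The plan is to build $\Mor(F)$ by hand, level by level, and then promote the symmetric monoidal structure of $F$ to that of $\Mor(F)$.

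\textbf{Objects and bimodules.} On objects, a monoid $(A,\mu_A,u_A)$ in $\C$ is sent to $F(A)$ with multiplication
\begin{equation*}
F(A)\otimes F(A) \xrightarrow{\varphi} F(A\otimes A) \xrightarrow{F(\mu_A)} F(A)
\end{equation*}
and unit $1_\D \to F(1_\C) \to F(A)$, where $\varphi$ denotes the monoidal structure isomorphisms of $F$. Associativity and unitality then follow from those of $A$ together with the coherence axioms for $\varphi$. In the same way, an $A$-$B$-bimodule $M$ is sent to $F(M)$, with actions obtained by precomposing $F(\lambda_M)$ and $F(\rho_M)$ with $\varphi$. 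An intertwiner $f$ is sent to $F(f)$, which is again an intertwiner by naturality of $\varphi$. Hence the assignment is strictly functorial on each Hom-category.

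\textbf{Compatibility with composition.} The crux is to produce invertible 2-cells
\begin{equation*}
F(M)\otimes_{F(B)} F(N) \cong F(M\otimes_B N).
\end{equation*}
The balanced tensor product $M\otimes_B N$ is the coequalizer of the two maps $M\otimes B\otimes N \rightrightarrows M\otimes N$, and $F$ preserves coequalizers. Transporting along the isomorphisms $\varphi$ identifies the image of this parallel pair with the pair $F(M)\otimes F(B)\otimes F(N)\rightrightarrows F(M)\otimes F(N)$ defining $F(M)\otimes_{F(B)}F(N)$; this uses naturality of $\varphi$ and the definition of the transported actions. The universal property therefore yields a canonical isomorphism. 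One then checks that it is an $F(A)$-$F(C)$-intertwiner, again by naturality of $\varphi$. The unit constraint $F(A)\cong F(A)$, with $F(A)$ regarded as the identity bimodule, is the identity. The associativity and unit coherence axioms of a pseudofunctor reduce, via uniqueness of maps out of (iterated) coequalizers, to the coherence axioms of $\varphi$; I expect them to go through routinely. Balanced tensor products exist in both categories and are preserved by $\otimes$ by \Cref{Lemma: closedness and balanced tensor products}.

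\textbf{Symmetric monoidal structure.} The tensor product in $\Mor$ is the tensor product of monoids and bimodules in the underlying category. Since $F$ is symmetric monoidal, the isomorphisms $\varphi_{A,B}\colon F(A)\otimes F(B)\to F(A\otimes B)$ are monoid isomorphisms; this uses compatibility of $\varphi$ with the braiding, which enters the multiplication on $A\otimes B$. Taking the corresponding invertible bimodules as the 1-morphism components $\varphi_{a,b}$ from the remark on braided monoidal 2-functors gives the monoidal data. The higher 2-cells $\varphi_{X,Y}$, $\phi_{a,b,c}$, $\xi_a$, $\zeta_a$ and the braiding 2-cells $\mu_{a,b}$ are all induced from the coherence isomorphisms of $F$, through the canonical isomorphisms of balanced tensor products with invertible bimodules.

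\textbf{Main obstacle.} I expect the hardest step to be verifying the higher coherences of the symmetric monoidal 2-functor, since these involve balanced tensor products of the form $F(M\otimes M')\otimes_{F(B\otimes B')}\dots$. The approach I would try first is to observe that every 2-cell in question is the unique map out of a coequalizer determined by its restriction to the underlying tensor product. Each coherence equation then reduces to the corresponding equation for the symmetric monoidal functor $F$ on $\C$, which holds by assumption.
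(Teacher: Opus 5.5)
Your proposal is correct. The paper states this lemma without proof, and your construction is the standard argument it takes for granted: transport monoids, bimodules and intertwiners along the strong monoidal structure $\varphi$, obtain $F(M\otimes_B N)\cong F(M)\otimes_{F(B)}F(N)$ from preservation of the defining coequalizer, and check all pseudofunctor and symmetric monoidal coherences after precomposing with the quotient maps (these stay epimorphisms after tensoring, since $\D$ is closed). Note that your argument uses only preservation of coequalizers and strong monoidality of $F$; the equalizer hypothesis plays no role.
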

\begin{corollary}
    Let $\C$ be a closed symmetric monoidal category with (co)equalizers and let $K \colon \C \to \C$ be a symmetric monoidal involution. Then, 
    $K$ induces a symmetric monoidal involution $\Mor(K) \colon \Mor(\C) \to \Mor(\C)$.
\end{corollary}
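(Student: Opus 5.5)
The plan is to apply \Cref{Lemma: Functors between Morita 2-categories} with $\D = \C$ and $F = K$, and then show that the symmetric monoidal 2-functor $\Mor(K)$ squares to (something equivalent to) the identity. Since $K$ is an involution, it comes with a symmetric monoidal natural isomorphism $\kappa \colon \id_{\C} \cong K \circ K$, or in the strict case an equality $K^2 = \id$. Being an equivalence of categories, $K$ preserves all (co)equalizers, so the hypotheses of the lemma hold and $\Mor(K)$ is a symmetric monoidal 2-functor.

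First I would recall how $\Mor(K)$ acts, as built in the proof of \Cref{Lemma: Functors between Morita 2-categories}:
\begin{itemize}
\item a monoid $(A,\mu_A,u_A)$ goes to $K(A)$, with multiplication $K(\mu_A)$ precomposed with the monoidal structure map $K(A)\otimes K(A) \cong K(A \otimes A)$;
\item bimodules and intertwiners are treated similarly;
\item the composition constraint $K(M)\otimes_{K(B)} K(N) \cong K(M \otimes_B N)$ comes from $K$ preserving the coequalizer defining the balanced tensor product.
\end{itemize}

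The main step is functoriality of $\Mor(-)$ in symmetric monoidal functors.
\begin{itemize}
\item For two such functors $F, G$, the composite $\Mor(G)\circ\Mor(F)$ should agree with $\Mor(G \circ F)$ up to an invertible symmetric monoidal 2-transformation. I expect this to be an equality on underlying objects and maps, with the coherence 2-cells matching because the coequalizer comparison maps compose.
\item A symmetric monoidal natural isomorphism $\kappa \colon F \Rightarrow G$ should induce an invertible 2-transformation $\Mor(\kappa)$. Its 1-morphism component at $A$ is the invertible bimodule given by $G(A)$, regarded as an $F(A)$-$G(A)$-bimodule through the algebra isomorphism $\kappa_A$. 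Its 2-cell components at $M$ are the intertwiners induced by $\kappa_M$.
\item Applying both facts gives $\Mor(K)\circ\Mor(K) \simeq \Mor(K^2) \simeq \Mor(\id) = \id$.
\end{itemize}

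If "involution" is meant with a coherence condition, namely $K(\kappa_a) = \kappa_{K(a)}$, I would check that this condition is carried over componentwise by $\Mor(-)$.

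I expect the main obstacle to be the naturality squares of $\Mor(\kappa)$ at a bimodule $M$, because composition in $\Mor$ passes through balanced tensor products. I would first try the case of a strict involution, $K^2 = \id$ strictly monoidal, where $\Mor(K)^2 = \id$ on the nose once the comparison coequalizers are chosen compatibly. The general case would then follow by transporting along $\kappa$, using uniqueness of coequalizer comparison maps to establish every required 2-cell identity.
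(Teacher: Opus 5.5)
Your proposal is correct and follows the paper's route. The paper's proof simply observes that $K$, being invertible, has both adjoints and hence preserves (co)limits, and then invokes \Cref{Lemma: Functors between Morita 2-categories}; it leaves implicit that $\Mor(K)$ squares to the identity up to equivalence, so your pseudofunctoriality argument for $\Mor(-)$ (with respect to composites of symmetric monoidal functors and monoidal natural isomorphisms) fills in detail the paper omits rather than giving a different proof.
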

\begin{proof}
    By assumption $K$ admits an inverse, hence admits both a left and a right adjoint, hence preserves small limits and colimits. The claim then follows 
    from \Cref{Lemma: Functors between Morita 2-categories}.
\end{proof}
\begin{example}
    The cocomplete closed symmetric monoidal category $\cBorn$ admits a symmetric monoidal involution $\overline{(-)} \colon \cBorn \to \cBorn$ which 
    assigns each complete Bornological vector space $V$ to its complex conjugate $\overline{V}$. It induces a symmetric monoidal involution 
    $\Mor(\overline{(-)}) \colon \Mor(\cBorn) \to \Mor(\cBorn)$ which assigns each complete Bornological algebra $A$ to its complex conjugate $\overline{A}$.
\end{example}

\subsection{Example: Profunctors}\label{Subsection:Profunctors}
In this section we study the theory of profunctors from the point of view of fully closed symmetric monoidal 2-categories.
\begin{definition}
    Let $\C$ and $\D$ be categories. A \emph{profunctor} $\C \nrightarrow \D$ is a functor $F \colon \D^{\opp} \times \C \to \Set$.
\end{definition}
\begin{remark}
    Profunctors are also called \emph{distributors} or \emph{correspondences}. 
\end{remark}
\begin{remark}\label{Remark: the double category of profunctors}
    Profunctors $F \colon \C \nrightarrow \C'$ and $G \colon \C' \nrightarrow \C''$ can be composed via a (co)end, i.e.\ we define the 
    composition $G \circ F \colon \C \nrightarrow \C''$ to be 
    \begin{equation}
    (G \circ F)(c'',c) := \int^{c' \in \C'} G(c'',c') \times F(c',c)
    \end{equation}
    For more details and background on profunctors, we refer to \cite[Section 5]{Loregian2021}. 
    Categories, profunctors, and natural transformations form a double category whose horizontal 2-category we denote by $\Prof$. The identity 
    1-morphism of an object $\C$ is the profunctor $\id_{\C} \colon \C \nrightarrow \C$ given by the functor $\C(-,-) \colon \C^{\opp} \times \C \to \Set$. 
    Every functor $F \colon \C \to \D$ induces two profunctors, namely, $\D(1,F) \colon \C \nrightarrow \D,\; \D(1,F)(d,c) = \D(d,F(c))$
    and $\D(F,1) \colon \D \nrightarrow \C,\; \D(F,1)(c,d) = \D(F(c),d)$; moreover, $\D(F,1) \dashv \D(1,F)$. In other words, 
    these two constructions of \emph{representable} profunctors are the companions and conjoints in the double category. 
    The two embeddings of $\Cat$ into $\Prof$ discussed above are called the \emph{co-} and \emph{contravariant embeddings}.
\end{remark}
\begin{remark}
    The cartesian product (together with the evident braiding) equips $\Prof$ with the structure of a symmetric monoidal 2-category.
\end{remark}
\begin{lemma}\label{Lemma: Prof is rigid}
    Every object $\C$ in $\Prof$ is dualizible, with the (left) dual given by the opposite category $\C^{\opp}$. In particular $\Prof$ admits a 1-contravariant 
    $\Oone$-volutive structure.
\end{lemma}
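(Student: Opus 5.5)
We exhibit explicit duality data for each category $\C$ in $\Prof$ and then invoke the general principle, already used in the paper, that a symmetric monoidal 2-category with duals for objects carries a 1-contravariant $\Oone$-volutive structure. This principle is the 2-categorical analogue of \Cref{Proposition: rigid symmetric monoidal categories are Oone volutive}; its lax version is \Cref{Proposition: closed symmetric monoidal lax volutive twocats}.

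For the duality data, the candidate dual of $\C$ is $\C^{\opp}$. Since the monoidal unit of $\Prof$ is the terminal category $\star$, we need two profunctors:
\begin{itemize}
\item an evaluation $\ev_{\C} \colon \C^{\opp} \times \C \nrightarrow \star$, i.e. a functor $\star^{\opp} \times \C^{\opp} \times \C \to \Set$;
\item a coevaluation $\coev_{\C} \colon \star \nrightarrow \C \times \C^{\opp}$, i.e. a functor $(\C \times \C^{\opp})^{\opp} \times \star \to \Set$.
\end{itemize}
In both cases we take the hom-functor $\C(-,-)$, with the variables arranged so that the variances match.

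Next we verify the two Zorro identities. The composite
\begin{equation}
\C \cong \star \times \C \xrightarrow{\coev \times \id} \C \times \C^{\opp} \times \C \xrightarrow{\id \times \ev} \C \times \star \cong \C
\end{equation}
is computed by the coend formula of \Cref{Remark: the double category of profunctors}. It gives a profunctor whose value at $(c',c)$ is
\begin{equation}
\int^{(x,y)} \C(c',x) \times \C(y,x)\times \C(y,c),
\end{equation}
suitably indexed. Two applications of the co-Yoneda lemma reduce this to $\C(c',c)$, which is the identity profunctor $\id_{\C}$. The other Zorro identity is the mirror-image computation with the roles of $\C$ and $\C^{\opp}$ exchanged.

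Finally we fix the structural isomorphisms. Symmetric monoidal 2-categories only require Zorro isomorphisms rather than equalities, so the co-Yoneda isomorphisms themselves serve as the invertible 2-morphisms. If one wants coherent duality data, i.e. satisfying the swallowtail equations, one either adjusts them by the standard promotion of a dual pair to a coherent one, or checks the swallowtails directly by naturality of co-Yoneda. We do the latter: co-Yoneda isomorphisms are canonical, so both sides of each swallowtail are the unique natural isomorphism built from composing units.

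With all duals in place, we obtain the volutive structure by the same recipe as in the rigid case. We take $d = (-)^* \colon \Prof \to \Prof^{1\opp}$, sending a profunctor $F$ to its mate via $\ev/\coev$. We take $\eta \colon \id \to (-)^{**}$ from the symmetric braiding, exactly as the transformation $\mathcal{P}$ in the rigid 2-categorical lemma preceding \Cref{Corollary: the Serre isomorphism}; concretely, $(\C^{\opp})^{\opp} = \C$ strictly, so $\eta$ may essentially be taken trivial. The modification $\tau$ comes from the Zorro data.

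We expect the main obstacle to be bookkeeping of variances. We must check that, under the paper's convention $F \colon \D^{\opp} \times \C \to \Set$ for $\C \nrightarrow \D$, the chosen dual is indeed the \emph{left} dual, and that $d$ lands in $\Prof^{1\opp}$ rather than $\Prof^{2\opp}$. The coherence of $\tau$ over the fourfold composite of $d$ then follows from strictness of $(-)^{\opp}$ together with the swallowtail equations.
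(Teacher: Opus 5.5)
Your proposal is correct and follows the paper's proof. The paper likewise takes the hom-functor $\C(-,-)$, reinterpreted as profunctors $\C^{\opp}\times\C \nrightarrow \star$ and $\star \nrightarrow \C\times\C^{\opp}$, as evaluation and coevaluation, checks the Zorro moves by the ninja (co-)Yoneda lemma, and gets the volutive structure from the strictly involutive $(-)^{\opp}$, so the higher coherence data is trivial as you anticipate. Two small points: your coend integrand should read $\C(c',x)\times\C(x,y)\times\C(y,c)$, not $\C(c',x)\times\C(y,x)\times\C(y,c)$. Also, your swallowtail argument is too quick, since $\C(-,-)$ can have nontrivial automorphisms (they correspond to invertible natural transformations $\id_{\C}\Rightarrow\id_{\C}$), so canonicity alone does not force uniqueness; swallowtails are not needed for dualizability anyway, and the standard promotion to coherent duality data you mention covers them.
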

\begin{proof}
    We construct evaluation and coevaluation profunctors witnessing the opposite category $\C^{\opp}$ as a dual of $\C$. 
    Consider the profunctor $\widetilde{\ev} \colon \C^{\opp} \times \C \nrightarrow 1$ which is given by the functor 
    \begin{equation}
        \xymatrix{
        1^{\opp} \times \C^{\opp} \times \C \cong \C^{\opp} \times \C \ar[rr]^-{\C(-,-)} && \Set
        }
    \end{equation}
    i.e.\ the identity profunctor suitably reinterpreted. Similarly, we consider the profunctor 
    $\widetilde{\coev} \colon 1 \nrightarrow \C \times \C^{\opp}$ given by the functor 
    \begin{equation}
        \xymatrix{
        (\C \times \C^{\opp})^{\opp} \times 1 \cong \C^{\opp} \times \C \ar[rr]^-{\C(-,-)} && \Set
        }
    \end{equation}
    which is also the identity profunctor suitably reinterpreted. To see that the Zorro moves are satisfied, we simply note that 
    \begin{equation}
        \int^{c' \in \C} \C(c'',c') \times \C(c',c) \cong \C(c'',c)
    \end{equation}
    according to the ninja Yoneda lemma, see \cite[Proposition 2.2.1]{Loregian2021}. This finishes the proof.
\end{proof}
\begin{remark} \label{Remark: Spelling out the O(1)-volution on Prof}
    We wish to spell out the action of the dualization 2-functor $(-)^{\opp} \colon \Prof \to \Prof^{1\opp}$ on 1- and 2-morphisms. 
    Let $F \colon \C \nrightarrow \D$ be a profunctor. Then, 
    $F^{\opp} \colon \D^{\opp} \nrightarrow \C^{\opp}$ is given by the functor 
    \begin{equation}
    \xymatrix{
    F^{\opp} \colon \C \times \D^{\opp} \ar[r]^-{\cong} & \D^{\opp} \times \C \ar[r]^-{F} & \Set
    }
    \end{equation}
    where in the first step we have used the symmetric braiding on $\Prof$. Explicitly, $F^{\opp}(c,d) = F(d,c)$. The action of the dualization 
    functor on 2-morphisms is then given by sending a natural transformation $\alpha \colon F \to G$ to the natural transformation 
    $\alpha^{\opp} \colon F^{\opp} \to G^{\opp}$ given by
    \begin{equation}
    \xymatrix{ \C \times \D^{\opp} \ar[r]^-{\cong} &\D^{\opp} \times \C \rtwocell^F_G{\alpha}& \Set };
    \end{equation}
    explicitly, its component at $(c,d) \in \D \times \D^{\opp}$ is given by $\alpha^{\opp}_{(c,d)} := \alpha_{(d,c)}$. We note that, 
    since $(-)^{\opp}$ squares to the identity strictly, the higher coherence data of our $\Oone$-volutive structure is trivial.
\end{remark}
\begin{remark}\label{Remark: comparing opposite functors in Prof}
    In this remark wish to compare the dualization 2-functor $(-)^{\opp} \colon \Prof \to \Prof^{1\opp}$ to the 2-functor $(-)^{\opp} \colon \Cat \to \Cat^{2\opp}$.
    There is a commutative diagram of 2-functors 
    \begin{equation}\begin{aligned}
        \xymatrix{
            \Cat \ar[d]_-{(-)^{\opp}} \ar[r]^-{} & \Prof \ar[d]^-{(-)^{\opp}} \\ 
            \Cat^{2\opp} \ar[r]_-{} & \Prof^{1\opp}
        }
    \end{aligned}
    \end{equation}
    where the horizontal arrows are the co- and contravariant embeddings of $\Cat$ into $\Prof$. This follows immediately from our explicit descriptions.
\end{remark}
The following is a useful characterization of the Cauchy/idempotent completion, presented e.g. in \cite{borceux1986cauchy}.
\begin{lemma}
    Let $\C$ be a (small) category. Then, the following are equivalent 
    \begin{itemize}
        \item[(i)] $\C$ is Cauchy complete, and 
        \item[(ii)] for every (small) category $\D$ a profunctor $\D \nrightarrow \C$ has a left adjoint if and only if it is representable (that is, 
        induced by a functor $F \colon \D \to \C$ via \Cref{Remark: the double category of profunctors}).
    \end{itemize}
\end{lemma}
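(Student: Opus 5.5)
The plan is to prove both implications through the standard characterisation of Cauchy completeness: $\C$ is Cauchy complete exactly when every idempotent splits, equivalently when every absolutely small colimit exists in $\C$. The bridge to profunctors is that left adjoint profunctors into $\C$ correspond to small projective presheaves on $\C$.

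\emph{First step: reduce to $\D = \star$.} A profunctor $P \colon \D \nrightarrow \C$ is a functor $\C^{\opp} \times \D \to \Set$, equivalently a family of presheaves $P(-,d) \in \mathcal{P}(\C)$ depending functorially on $d \in \D$. I will show that $P$ has a left adjoint in $\Prof$ if and only if each $P(-,d)$, viewed as a profunctor $\star \nrightarrow \C$, has one. One direction is restriction along $d \colon \star \to \D$; representable profunctors have adjoints, and composing adjunctions gives an adjunction. For the converse, I assemble the pointwise left adjoints $Q_d \colon \C \nrightarrow \star$ into a profunctor $Q \colon \C \nrightarrow \D$. Functoriality in $d$ comes from the mates correspondence. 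The unit and counit are then built componentwise, and the Zorro moves hold componentwise.

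Similarly, $P$ is representable if and only if each $P(-,d)$ is representable, because choosing representing objects $F(d)$ extends uniquely to a functor $F \colon \D \to \C$ by Yoneda. So condition (ii) is equivalent to the same statement with $\D = \star$: a presheaf $P$ on $\C$ has a left adjoint in $\Prof$ if and only if it is representable.

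\emph{Second step: identify left adjoints with small projectives.} Let $P \colon \star \nrightarrow \C$ have left adjoint $Q \colon \C \nrightarrow \star$, a copresheaf. The counit $\epsilon \colon P \circ Q \to \id_{\C}$ and unit $\eta \colon \id_\star \to Q \circ P = \int^c Q(c) \times P(c)$ give the following.
\begin{itemize}
\item An element $\eta \in \int^c Q(c)\times P(c)$. It is represented by some $(q,p) \in Q(c_0) \times P(c_0)$.
\item Maps $P(c) \times Q(c') \to \C(c,c')$, natural in both variables.
\end{itemize}
From these data I define $e \colon c_0 \to c_0$ as the image of $(p,q)$ under the counit. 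I then check, via the triangle identities, that $e$ is idempotent and that $P$ is the retract of $\yon(c_0)$ cut out by $e$. That is, $P$ is the splitting of the idempotent $\yon(e)$ in $\mathcal{P}(\C)$. Conversely, every retract of a representable has a left adjoint, namely the corresponding retract of $\C(c_0,-)$, with unit and counit induced from those of $\yon(c_0)$.

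Hence the profunctors $\star \nrightarrow \C$ with left adjoints are exactly the retracts of representables. This is the main technical step: the coend element $\eta$ is only an equivalence class, so I must verify that the triangle identities force idempotency of $e$ and make the retraction independent of the choice of representative.

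\emph{Conclusion.} If $\C$ is Cauchy complete, then $\yon(e)$ splits as $\yon$ of the splitting of $e$ in $\C$, since $\yon$ preserves splittings. Hence every retract of a representable is representable, and (i) implies (ii). Conversely, assume (ii) and let $e$ be an idempotent on $c$. The image presheaf of $\yon(e)$ is a retract of a representable, so by the second step it has a left adjoint. By (ii) it is then representable by some object $c'$. Yoneda transports the retraction data to maps $c' \to c \to c'$ splitting $e$, so $\C$ is Cauchy complete. Smallness of $\C$ is used only to ensure that the coends and presheaf categories live in $\Set$.
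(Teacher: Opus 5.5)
Your argument is correct. The paper does not prove this lemma at all; it only cites Borceux--Dejean. What you give is essentially the standard proof from that source:
\begin{itemize}
\item reduce pointwise to profunctors $\star \nrightarrow \C$ via mates;
\item identify those admitting the relevant adjoint with retracts of representable presheaves;
\item transport idempotent splittings through the Yoneda embedding.
\end{itemize}
The step you single out as the main technical point is in fact easy. For \emph{any} representative $(q,p)$ of the unit, the triangle identity for $P$ reads $P(\epsilon(x,q))(p) = x$ for all $x \in P(c)$. Together with naturality of $\epsilon$, this directly gives $e \circ e = e$ and a retraction $P \to \yon(c_0) \to P$ whose other composite is $\yon(e)$. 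Different representatives merely produce isomorphic splittings, so no independence check is needed.

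One terminological point needs fixing. The unit $\id_\star \to Q \circ P$ and counit $P \circ Q \to \id_\C$ that you use exhibit $P \dashv Q$. So $Q$ is a \emph{right} adjoint of $P$, and what you actually prove is ``$P$ is a left adjoint if and only if it is representable''. With the paper's composition formula this is the correct form: $\yon(c) \dashv \C(c,-)$, and more generally $\C(1,F) \dashv \C(F,1)$. This is the reverse of the paper's claim $\D(F,1) \dashv \D(1,F)$ in its profunctor remark. Read literally, ``has a left adjoint'' would make the lemma false. For example, take $\C$ discrete on two objects: the terminal presheaf has the terminal copresheaf as a left adjoint, but it is not representable. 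You should therefore phrase your steps as ``is a left adjoint'' (or ``has a right adjoint''), which matches the data you actually use.
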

\begin{remark}\label{Remark: The 2-category of Profunctors is closed}
    The 2-category $\Prof$ of categories, profunctors, and natural transformations is right 1-closed, see \cite[Proposition 4.3]{Benabou1973LesDistributeurs}. 
    Given 1-morphisms $X \colon \C \nrightarrow \D$ and $Y \colon \C \nrightarrow \E$ in $\Prof$, that is, functors 
    $X \colon \D^{\opp} \times \C \to \Set$ and $Y \colon \E^{\opp} \times \C \to \Set$, the profunctor $Y^X \colon \E^{\opp} \times D \to \Set$ 
    is defined on objects as the equalizer of the diagram 
    \begin{equation}
        \Pi_{c \in \C} \hom(X(d,c),Y(e,c)) \rightrightarrows \Pi_{c,c' \in \C} \hom(X(d,c) \times \hom_{\C}(c,c'), Y(e,c'))
    \end{equation}
    where the first map assigns to an object $\Pi_{c \in \C} f_c \in \Pi_{c \in \C} \hom(X(d,c),Y(e,c))$ the object which on the pair $c,c' \in \C$ is defined to be 
    \begin{equation}
        \xymatrix{
            X(d,c) \times \hom(c,c') \ar[r]^-{f_c \times \id} & Y(e,c) \times \hom(c,c') \ar[r]^-{\ev_c^Y} & Y(e,c')
        }
    \end{equation}
    where in the last step we have used the map $\ev_c^Y$ corresponding under the standard hom-tensor adjunction in $\Set$ to the map 
    $\hom_{\C}(c,c') \to \hom(Y(e,c) \to Y(e,c')), f \mapsto Y(\id_e,f)$. Similarly, the second map is defined to be the composition 
    \begin{equation}
        \xymatrix{
            X(d,c) \times \hom(c,c')\ar[r]^-{\ev_c^X} & X(d,c') \ar[r]^-{f_{c'}} & Y(e,c').
        }
    \end{equation}
    We will not give all details of this construction and refer to \cite{Benabou1973LesDistributeurs} instead.
\end{remark}
\begin{remark}\label{Remark: Prof is a categorification of Set}
    We have an equivalence of closed monoidal categories
    \begin{equation}
        (\Hom_{\Prof}(\star,\star),\circ) \cong (\Set,\times)
    \end{equation}
    where $\Set$ is the category of sets equiped with the cartesian product. Indeed, by definition a profunctor 
    $\star \to \star$ is a functor $\star^{\opp} \times \star \cong \star \to \Set$, hence can be understood as a specific set; 
    similarly, a natural transformation between such profunctors amounts to a map between sets. It is also clear that the definition 
    of the horizontal composition in $\Prof$ reduces to the cartesian product of sets. 
\end{remark}
\begin{remark}
    Since $\Prof$ is symmetric monoidal, right 1-closed, and has duals, it is also left closed. Let $X \colon \D \nrightarrow \C$ and $Y \colon \E \nrightarrow \C$
    be 1-morphisms in $\Prof$. We wish to compute the 1-morphism ${}^{X}\!Y :=  d^{-1}(d(Y)^{d(X)})$ in the following, where $d=(-)^{\opp}$ denotes the dualization 
    functor on $\Prof$. By definition, $d(X) \colon \C^{1\opp} \nrightarrow \D^{1\opp}$ is given by $d(X)(d,c) = X(c,d)$ and similarly for $d(Y)$. Next, we compute 
    $d(Y)^{d(X)} \colon \D^{1\opp} \nrightarrow \E^{1\opp}$ on objects to be the equalizer of
    \begin{equation}
        \Pi_{c \in \C} \hom(X(c,d),Y(c,e)) \rightrightarrows \Pi_{c,c' \in \C} \hom(X(c,d) \times \hom_{\C}(c,c'), Y(c',e)).
    \end{equation}
    Finally, $d^{-1}(d(Y)^{d(X)}) \colon \E \nrightarrow \D$ is given on $(d,e) \in \D^{1\opp} \times \E$ again as this equalizer.
    In this generality, this is all that we can say here. \\
    We now consider a more special situation, in which we want to compare the left and right closed assignments more explicitly: given a 1-morphism 
    $Z \colon \C \nrightarrow \D$ in $\Prof$, we may consider the 1-morphism 
    $\id_{\C}^Z \colon \D \nrightarrow \C$ which is given on an object $(\hat{c},d) \in \C^{1\opp} \times \D$ as the equalizer of
    \begin{equation}
        \Pi_{c \in \C} \hom(Z(d,c),\hom_{\C}(\hat{c},c)) \rightrightarrows \Pi_{c,c' \in \C} \hom(Z(d,c) \times \hom_{\C}(c,c'), \hom_{\C}(\hat{c},c')).
    \end{equation}
    On the other hand, we evaluate ${}^{Z}\!\id_{\D} \colon \D \nrightarrow \C$ on an object $(c,\hat{d}) \in \C^{1\opp} \times \D$ to be the equalizer of
    \begin{equation}
        \Pi_{d \in \D} \hom(Z(d,c),\hom_{\D}(d,\hat{d})) \rightrightarrows \Pi_{d,d' \in \D} \hom(Z(d,c) \times \hom_{\D}(d,d'),\hom_{\D}(d',\hat{d})).
    \end{equation}
    We note that by the closedness of the monoidal category $\Set$, we have 
    \begin{align*}
        &\hom(Z(d,c) \times \hom_{\C}(c,c'), \hom_{\C}(\hat{c},c')) \cong \\
        &\hom(Z(d,c),\hom_{\Set}(\hom_{\C}(c,c'),\hom_{\C}(\hat{c},c'))).
    \end{align*}
    and similar for the left handed version.
\end{remark}
\begin{remark}
    In this remark, we wish to study (lax) hermitian fixed points in the context of the fully closed symmetric monoidal 2-category $\Prof$.
    First, since $\Prof$ is symmetric monoidal with duals, it carries an $\Oone$-volutive structure, so that we may consider the associated 
    $\Oone$-dagger 2-category $\B := S_{\Oone}\Prof$ without steps in between. The objects of this $\Oone$-dagger 2-category are triples 
    $(\C,\theta_{\C},\omega_{\C})$ consisting of a category $\C$, an invertible profunctor $\theta_{\C} \colon \C \to \C^{\opp}$, and an invertible 
    transformation $\omega_{\C} \colon \id_{\C} \to \theta_{\C}^{\opp} \circ \theta_{\C}$ satisfying a coherence condition. Note that $\Oone$-volutive 
    categories are special objects in $\B$: they are precisely those objects, for which $\theta_{\C}$ is an honest functor\footnote{Under certain additional 
    assumptions, invertible profunctors and invertible functors are the same, such as if $\C$ is Cauchy complete.}.\\
    According to our general theory, $\B$ is enriched in lax $\Oone$-volutive categories, so that we may consider the categories of lax hermitian fixed 
    points and isometries locally, i.e. consider $\LaxHerm(\Hom_{\B}(a,b))$. The objects of this category are lax $\Oone$-volutive profunctors, and the 
    morphisms are lax $\Oone$-volutive transformations between such. 
\end{remark}
\begin{remark}
    Via the embedding $\Cat \to \Prof$ and its compatibility with passing to opposites explained in \Cref{Remark: comparing opposite functors in Prof},
    we could develop the theory of lax $\Oone$-volutive categories and their (higher) morphisms purely from the perspective of the fully closed symmetric 
    monoidal 2-category $\Prof$. From this duality based point of view, $\Prof$ hence carries an advantage over $\Cat$, the latter of which neither 
    being 1-closed nor having duals for objects. 
\end{remark}

\section{Examples and Applications}\label{Appendix: supplementary background material}
In this appendix, we wish to give a more detailed account on specific examples of interest, for instance, (complete) bornological vector spaces and modules over $*$-rings.
We will review some of the general theory and discuss their respective (lax) $\Oone$-volutive structures. We will also consider the 2-category of 
Hilbert spaces, relations, and inclusions, discuss how unbounded operators are subsumed by this theory, and whether it carries a dagger structure. 
Finally, we illustrate some of the developed theory on fully closed symmetric monoidal 2-categories in the context of complete bornological algebras. 
\subsection{Bornological vector spaces}\label{Subsection: Bornological vector spaces}
In this section we wish to give a more detailed account of (complete) bornological vector spaces, which we will consider again in \Cref{Subsection: Bornological algebras}.
Most of the material covered in this section is well-known and we do no claim originality, see e.g. \cite{Meyer2007Local}.
\begin{definition}\label{Definition: bornology}
    A \emph{bornology} on a set $X$ is a collection $\mathfrak{B} \subset \mathcal{P}(X)$ of subsets of $X$ such that 
    (i) $\bigcup_{B \in \mathfrak{B}} B = X$ (ii) if $B \in \mathfrak{B}$ and $A \subseteq B$ then $A \in \mathfrak{B}$
    (iii) if $B_1,...,B_n \in \mathfrak{B}$, then $\bigcup_{1 \leq i \leq n} B_i \in \mathfrak{B}$. A \emph{bornological space}
    is a set $X$ together with a bornology $\mathfrak{B}$. If $(X,\mathfrak{B}_X),(Y,\mathfrak{B}_Y)$ are bornological sets, a 
    map $f \colon X \to Y$ is called \emph{bounded} if $f(B) \in \mathfrak{B}_Y$ for all $B \in \mathfrak{B}_X$. 
\end{definition}
\begin{example}\label{Example: von Neumann bornology}
    Let $X$ be a semi-normed space. Then, we define a bornology on $X$ by declaring a set $B \subseteq X$ to be bounded if $\sup_{b \in B} \| b \| < \infty$. 
    We call this bornology the \emph{von Neumann bornology} on $X$. In particular, we may think of any normed or Banach space as a bornological space.
\end{example}
\begin{definition}
    A \emph{(complex) bornological vector space} is a (complex) vector space internal to the category of bornological spaces. 
\end{definition}
\begin{remark}
    Explicitly, a bornological vector space consists of a vector space together with a bornology so that addition and scalar multiplication are 
    bounded, where the underlying field (here $\mathbb{C}$) is equiped with the standard bornology. We could also consider $\mathbb{R}$ as the underlying field.
\end{remark}
\begin{definition}
    A bornological vector space is called 
    \begin{itemize}
        \item[(i)] \emph{convex} if every bounded set is contained in a bounded disk,
        \item[(ii)] \emph{separated} if every bounded set is contained in a bounded norming disk,
        \item[(iii)] \emph{complete} if every bounded set is contained in a bounded completant disk.
    \end{itemize}
    where we infer the definitions above from \cite[Definition 2.1.12]{aretz2025functorialitybornological} and the preceding text in loc. cit.
\end{definition}
\begin{remark}\label{Remark: von Neumann bornologies and bornological notions}
    The von Neumann bornology on a semi-normed space is convex. %Indeed, let $B$ be bounded. Define $M := \sup_{b \in B} \| b \|$. Then $B$ is contained in the disk of radius $M$.
    The von Neumann bornology on a semi-normed space is separated if and only if the space is normed. 
    The von Neumann bornology on a normed space is complete if and only if the space is Banach.
\end{remark}
\begin{definition}
    We denote by $\Born$ the category of convex bornological vector spaces and bounded linear maps, by $\sBorn$ the full subcategory of separated convex 
    bornological vector spaces, and by $\cBorn$ the full subcategory of complete convex bornological vector spaces. 
\end{definition}
\begin{remark}
    Every bornological vector spaces in this paper will be convex, so that we may leave this adjective implicit throughout. 
\end{remark}
\begin{remark}
    The category of complete (convex) bornological vector spaces is closely related to the ind-completion of the category of Banach spaces and bounded 
    linear maps: it is the category of \emph{injective/monomorphic} ind-objects in $\Ban$. We refer to \cite[Proposition 5.15]{ProsmansSchneiders2000} 
    for a precise account and proof of this statement.
\end{remark}
We recall the following results from \cite[Section 2.3]{aretz2025functorialitybornological} and \cite{Meyer2007Local}.
\begin{lemma}
    The inclusion functors $I \colon \sBorn \to \Born$ and $J \colon \cBorn \to \sBorn$ admit left adjoints 
    \begin{equation}
        \xymatrix{
        \cBorn \ar@<0.5ex>[r]^J & \sBorn \ar@<0.5ex>[l]^{\operatorname{Comp}} \ar@<0.5ex>[r]^I & \Born \ar@<0.5ex>[l]^{\operatorname{Sep}}
        }
    \end{equation} 
    Moreover, $\Born, \sBorn$, and $\cBorn$ have all (small) limits and colimits, see \cite[Proposition 1.126]{Meyer2007Local}. 
    We summarize \Cref{Remark: von Neumann bornologies and bornological notions} by noting the diagram of fully faithful functors, see \cite[Section 1.1.5]{Meyer2007Local},
    \begin{equation}
        \begin{aligned}
        \xymatrix{
            \cBorn \ar[r]^{\subset} & \sBorn \ar[r]^{\subset} & \Born \\
            \Ban \ar[u]^{\vN} \ar[r]^{\subset} & \Nor \ar[r]^{\subset} \ar[u]^{\vN} & \sNor \ar[u]^{\vN}
        }
        \end{aligned}
    \end{equation}
    where $\Ban,\Nor$, and $\sNor$ denote the categories of Banach spaces, normed spaces, and semi-normed spaces, respectively, 
    all with bounded linear maps as morphisms. 
\end{lemma}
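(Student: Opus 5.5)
The lemma collects three assertions: the adjunctions $\operatorname{Sep}\dashv I$ and $\operatorname{Comp}\dashv J$, the existence of all small limits and colimits, and full faithfulness of the von Neumann functors in the diagram. I would prove them in the order adjunctions, then limits and colimits, then the diagram.

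\textbf{Adjunctions.} For $\operatorname{Sep}$, I would set $\operatorname{Sep}(V) := V/N$. Here $N$ is the intersection of all kernels of bounded linear maps from $V$ into separated spaces. Equivalently, $N$ is the smallest subspace making the quotient bornology separated, where the quotient bornology consists of images of bounded sets. Every bounded linear map $V \to W$ with $W$ separated kills $N$, so it factors uniquely through $V/N$. This gives $\hom_{\Born}(V, IW) \cong \hom_{\sBorn}(\operatorname{Sep} V, W)$.

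For $\operatorname{Comp}$, I would use the description of a separated convex space as the colimit of the normed spaces $V_B$, where $B$ runs over bounded disks. I would then set
\begin{equation}
\operatorname{Comp}(V) := \operatorname{Sep}\Big(\varinjlim_B \widehat{V_B}\Big),
\end{equation}
with the colimit taken in $\Born$. A bounded map $V\to W$ with $W$ complete sends each $V_B$ into some Banach space $W_{B'}$. It therefore extends uniquely to the completion $\widehat{V_B}$, and these extensions glue on the colimit. The main obstacle is that the naive colimit need not be separated, because the completions may identify elements. This is exactly why $\operatorname{Sep}$ is applied again, and one must then check that the result is still complete. Here I would simply invoke \cite[Section 2.3]{aretz2025functorialitybornological} and \cite{Meyer2007Local}, as the statement indicates.

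\textbf{Limits and colimits.} In $\Born$, limits are computed on underlying vector spaces. Products carry the product bornology and equalizers carry the subspace bornology; both are convex. Colimits are quotients of direct sums equipped with the finest compatible convex bornology. Separated and complete spaces are closed under limits formed in $\Born$, since subspaces and products of completant disks behave well. Hence $\sBorn$ and $\cBorn$ have limits computed in $\Born$. Their colimits are obtained by applying the reflectors $\operatorname{Sep}$ and $\operatorname{Comp}$ to the colimit formed in $\Born$. The reference for this step is \cite[Proposition 1.126]{Meyer2007Local}.

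\textbf{The diagram.} The horizontal inclusions are full by definition. The functors $\vN$ are well defined on objects by \Cref{Remark: von Neumann bornologies and bornological notions}. On morphisms, a linear map between semi-normed spaces is bounded in the operator sense if and only if it maps norm-bounded sets to norm-bounded sets. Thus $\vN$ is bijective on hom-sets, which gives full faithfulness. Commutativity of the squares holds on the nose.
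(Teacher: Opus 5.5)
The paper gives no argument of its own here; the lemma is simply recalled from \cite{Meyer2007Local} and \cite{aretz2025functorialitybornological}. Your sketch reconstructs the standard arguments behind those citations, and it is correct in substance: $\operatorname{Sep}$ as the quotient by the smallest subspace with separated quotient, $\operatorname{Comp}$ as the separated quotient of the colimit in $\Born$ of the Banach completions $\widehat{V_B}$, limits and colimits via the reflections, and full faithfulness of $\vN$ because operator-boundedness and bornological boundedness coincide. It buys self-containedness where the paper buys brevity.

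The one step you defer can be closed in a few lines. The direct sum $X := \bigoplus_B \widehat{V_B}$ is complete. The separated quotient of $X/R$ is $X/\overline{R}$, where $\overline{R}$ is the smallest bornologically closed subspace containing $R$. For complete $X$ and a bornologically closed subspace $N$, one has $(X/N)_{q(B)} \cong X_B/(N\cap X_B)$ for every completant bounded disk $B$. This quotient is a Banach space, so $X/N$ is complete.

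One justification needs repairing. You say complete spaces are closed under limits formed in $\Born$ because ``subspaces \dots\ of completant disks behave well''. As stated this is false: a subspace of a complete space is complete only if it is bornologically closed, and a non-closed subspace of a Banach space is a counterexample. Equalizers are nevertheless fine. For $f,g \colon X \to Y$, the set $\ker(f-g)\cap X_B$ is the kernel of a continuous map $X_B \to Y_D$ into a normed space, hence closed in the Banach space $X_B$.

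More simply, closure of $\sBorn$ and $\cBorn$ under limits formed in $\Born$ is automatic. Both are replete full reflective subcategories, which your first step already establishes.

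A smaller point: in your construction of $\operatorname{Sep}$, the claim that $V/N$ is separated deserves a line. Use the injective bounded map $V/N \to \prod_f W_f$ into a separated product. A convex space admitting such a map has no nonzero bounded subspace, and is therefore separated.
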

We also infer the following results from \cite[Proposition 1.111]{Meyer2007Local} or \cite[Sections 2.4-2.6]{aretz2025functorialitybornological}.
\begin{lemma}\label{Lemma: Bornological vector spaces and closed symmetric monoidal structures}
    $\cBorn$ is a closed symmetric monoidal category with respect to the completed tensor product of complete convex bornological vector spaces.
\end{lemma}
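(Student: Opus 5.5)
The plan is to build the closed symmetric monoidal structure on $\cBorn$ in three stages. First put one on $\Born$, then descend it to $\sBorn$ and finally to $\cBorn$ using the reflections $\operatorname{Sep}$ and $\operatorname{Comp}$.

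\textbf{Step 1: $\Born$.} For convex bornological spaces $V,W$, equip the algebraic tensor product $V \otimes W$ with the convex bornology generated by the sets $\operatorname{disk}(A \otimes B)$, where $A \subseteq V$ and $B \subseteq W$ are bounded. Its universal property is that bounded linear maps $V \otimes W \to U$ correspond to bilinear maps $V \times W \to U$ that are bounded on products of bounded sets.

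The internal hom $\Hom(W,U)$ is the space of bounded linear maps, with the equibounded bornology: a set $H$ is bounded if $H(B)$ is bounded in $U$ for every bounded $B \subseteq W$. One checks directly that bounded maps $V \to \Hom(W,U)$ are the same as bounded bilinear maps $V \times W \to U$. This gives the adjunction $-\otimes W \dashv \Hom(W,-)$. The associator, unitors (unit $\mathbb{C}$) and symmetry are inherited from vector spaces, and they are bounded because they preserve the generating disks.

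\textbf{Step 2: passing to $\sBorn$ and $\cBorn$.} Define $V \hat\otimes W := \operatorname{Comp}(\operatorname{Sep}(V \otimes W))$, using the reflections from the preceding lemma. The key closure facts I need are these: if $U$ is separated (respectively complete), then $\Hom(W,U)$ is separated (respectively complete). Separatedness is inherited pointwise. For completeness, I would take a bounded set $H$ and show that its disked hull is contained in a completant bounded disk. The candidate is $\{f : f(B_i) \subseteq D_i\}$, where the $B_i$ range over bounded disks of $W$ and $D_i$ over completant disks of $U$ containing $H(B_i)$. Completeness should then follow by a Cauchy-sequence argument in each normed space $U_{D_i}$.

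Given these closure facts, the standard reflective-subcategory argument (Day's reflection theorem) yields, for complete $U$, natural bijections
\[
\hom(V \hat\otimes W, U) \cong \hom(V \otimes W, U) \cong \hom(V, \Hom(W,U)).
\]
So $\hat\otimes$ on $\cBorn$ is closed with internal hom $\Hom(W,U)$. Day's theorem also transports associativity and the symmetry, since the reflector is strong monoidal once the exponential ideal property holds. Concretely, this is the natural isomorphism $\operatorname{Comp}\operatorname{Sep}(L(V) \otimes W) \cong \operatorname{Comp}\operatorname{Sep}(V \otimes W)$, where $L = \operatorname{Comp} \circ \operatorname{Sep}$, and it follows from the Yoneda lemma together with the adjunction.

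\textbf{Main obstacle.} The delicate step is proving that completeness of $U$ passes to $\Hom(W,U)$; that is, that the complete objects form an exponential ideal. The completion functor $\operatorname{Comp}$ is not simply a quotient or a subspace, so I would lean on Meyer's description of complete bornological spaces as colimits of Banach spaces $U_D$. If the direct argument with completant disks becomes messy, I would instead cite \cite[Proposition 1.111]{Meyer2007Local} for exactly this property and reduce the remaining verifications to Day's theorem.
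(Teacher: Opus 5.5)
Your argument is correct, but it is not what the paper does: the paper gives no proof of this lemma and simply cites \cite[Proposition 1.111]{Meyer2007Local} and \cite[Sections 2.4--2.6]{aretz2025functorialitybornological}, so only your fallback coincides with it. Your route isolates the one analytic input, namely that $\Hom(W,U)$ with the equibounded bornology is complete whenever $U$ is complete, for arbitrary convex $W$. Everything else then follows formally. The closed symmetric monoidal structure on $\Born$ is elementary, since both sides of the adjunction are bounded bilinear maps. Day's reflection theorem for the reflector $\operatorname{Comp}\circ\operatorname{Sep}$ then transports tensor product, unit $\mathbb{C}$, associator, symmetry and internal hom to $\cBorn$, and makes the reflector strong monoidal. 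Complete spaces are automatically separated, so you can apply Day's theorem in one step and need only the completeness half of your closure facts. Your sketch of that half works with $T=\{f : f(B)\subseteq D_B \text{ for all bounded } B\}$, whose gauge is $\|f\|_T=\sup_B\sup_{b\in B}\|f(b)\|_{D_B}$. Two details need checking. First, limits of $f_n(w)$ taken in different Banach spaces $U_{D_B}$ and $U_{D_{B'}}$ must coincide; compare them in $U_{D''}$ for a bounded disk $D''\supseteq D_B\cup D_{B'}$, which is normed because $U$ is separated. Second, with $M\geq\sup_n\|f_n\|_T$, the limit map sends $B$ only into the closure of $MD_B$ in $U_{D_B}$. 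That closure lies in $2MD_B$, so the limit is still bounded and lies in $2MT$. Compared with the bare citation, your approach shows exactly where completeness enters. It also directly yields the explicit internal hom $\Born(V,W)$ with the equibounded bornology, and hence the dual $\Born(V,\mathbb{C})$, which the paper later uses to describe the lax $\Oone$-volutive structure on $\cBorn$. The citation is shorter but leaves these identifications to the reader.
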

\begin{remark}
    The tensor product of convex bornological vector spaces is defined by the following universal property. Let $V$ and $W$ be convex bornological 
    vector spaces. Their tensor product, if it exists, is a convex bornological vector space $V \otimes^b W$ together with a bounded bilinear map 
    $\tau \colon V \times W \to V \otimes^b W$ such that for all $U \in \Born$ and all bounded bilinear maps $\varphi \colon V \times W \to U$ 
    there is a unique linear map $f \colon V \otimes^b W \to U$ such that the diagram 
    \begin{equation}
        \begin{aligned}
        \xymatrix{
            V \times W \ar[r]^-{\tau} \ar[d]_-{\varphi} & V \otimes^b W \ar[dl]^-{\exists ! f}  \\
            U 
        }
        \end{aligned}
    \end{equation}
    is commutative. Moreover, the tensor product of bornological spaces exists; it is given by the algebraic tensor product of the underlying 
    vector spaces together with the bornology described in \cite[Proposition 2.4.2]{aretz2025functorialitybornological}.\\
    The tensor product of complete bornological vector spaces is given by the completion of the tensor product of convex bornological vector spaces, 
    $V \hat{\otimes} W := \operatorname{Comp}(V \otimes^b W)$.
\end{remark}
\begin{remark}
    The tensor product of complete convex bornological vector spaces recovers the projective tensor product $\otimes_{\pi}$ of Banach spaces (and more generally 
    Frechet spaces, see \cite[Definition 1.15 \& Theorem 1.87]{Meyer2007Local}) in the sense that $\vN(V \otimes_{\pi} W) \cong \vN(V) \hat{\otimes} \vN(W)$ holds 
    for all Banach spaces $V$ and $W$.
\end{remark}
\begin{corollary}\label{Corollary: bornological vector spaces are lax volutive}
    The category $\cBorn$ of complete convex bornological vector spaces and bounded linear maps admits a lax $\Oone$-volutive structure.
\end{corollary}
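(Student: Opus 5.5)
The statement follows directly by combining \Cref{Lemma: Bornological vector spaces and closed symmetric monoidal structures} with \Cref{Theorem: closed symmetric monoidal categories are lax volutive}. The former asserts that $\cBorn$, equipped with the completed tensor product $\hat{\otimes}$ and monoidal unit $\mathbb{C}$ (with its standard bornology), is closed symmetric monoidal. The latter, via \Cref{Construction: O(1)-volutive categories from closed monoidal ones}, then produces the lax structure. Concretely, the functor is $d(V) := \mathbb{C}^V$, the internal hom into the unit, which is the space of bounded linear functionals $V \to \mathbb{C}$ with the equibounded bornology. The unit is $\eta_V \colon V \to \mathbb{C}^{\mathbb{C}^V}$, obtained as $\psi^{-1}$ of $\ev_V \circ \beta_{V,\mathbb{C}^V}$. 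The identity $d(\eta_V) \circ \eta_{d(V)} = \id_{d(V)}$ is exactly the last computation in that construction, which uses only the symmetry of the braiding.

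I would order the write-up as follows. First, recall the symmetric monoidal structure on $\cBorn$: the completed tensor product $V \hat{\otimes} W = \operatorname{Comp}(V \otimes^b W)$, together with the braiding induced by the flip $v \otimes w \mapsto w \otimes v$ on algebraic tensors, which extends to the completion by functoriality of $\operatorname{Comp}$. Second, recall that the right adjoint to $(-) \hat{\otimes} W$ is the space of bounded linear maps with the equibounded bornology, which is complete when the target is complete. Third, invoke the theorem. Optionally, I would add a sanity check that on Banach spaces, via $\vN$, this recovers \Cref{Example: Banach spaces are lax volutive}: for a Banach space $V$ the bounded dual with its equibounded bornology is the von Neumann bornology of $V^*$, and $\eta_V$ is the evaluation map $\iota_V$.

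The only genuine input is \Cref{Lemma: Bornological vector spaces and closed symmetric monoidal structures}, which the paper cites from \cite{Meyer2007Local}. If I had to justify it rather than cite it, the main obstacle would be the closedness of the \emph{completed} tensor product. One has the adjunction $\hom(U \otimes^b W, X) \cong \hom(U, \Hom(W,X))$ in $\Born$, and for complete $X$ the left adjointness of $\operatorname{Comp}$ gives $\hom(\operatorname{Comp}(U \otimes^b W), X) \cong \hom(U \otimes^b W, X)$. The remaining point is checking that $\Hom(W,X)$ is complete whenever $X$ is. I would prove this by showing that an equibounded set lies in a completant disk built from a completant disk in $X$. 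The symmetric monoidal coherences themselves are then inherited from $\Born$ through $\operatorname{Comp}$.
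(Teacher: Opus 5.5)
Your proposal is correct and matches the paper's proof, which is the one-line combination of the lemma that $\cBorn$ is closed symmetric monoidal under the completed tensor product with the theorem that every closed symmetric monoidal category is lax $\Oone$-volutive. Your extra unpacking (the dual as bounded functionals with the equibounded bornology, $\eta$ as evaluation, and the $\vN$ sanity check against Banach spaces) agrees with the paper's subsequent remark and lemma.
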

\begin{proof}
    Combine \Cref{Lemma: Bornological vector spaces and closed symmetric monoidal structures} and \Cref{Theorem: closed symmetric monoidal categories are lax volutive}.
\end{proof}
\begin{remark}
    We wish to describe the lax $\Oone$-volutive structure constructed in \Cref{Corollary: bornological vector spaces are lax volutive} explicitly. 
    First, let $V$ and $W$ be complete convex bornological vector spaces. Them, the exponential object $W^V$ is the vector space $\Born(V,W)$ 
    together with the complete convex bornology described in \cite[Section 2.6 \& Proposition 2.5.1]{aretz2025functorialitybornological}. In particular, 
    the functor of our lax $\Oone$-volutive structure assigns each complete convex bornological vector space $V$ to its dual space $V' := \Born(V,\mathbb{C})$.
    The natural transformation of our lax $\Oone$-volutive structure has as its component at $V$ the canonical map 
    $\iota_V \colon V \to \Born(\Born(V,\mathbb{C}),\mathbb{C})$.
\end{remark}
\begin{definition}
    A complete convex bornological vector space $V$ is called \emph{reflexive} if $\iota_V$ is an isomorphism. 
\end{definition}
\begin{corollary}
    The category of reflexive complete convex bornological vector spaces $\cBorn^{\refl}$ admits an $\Oone$-volutive structure.
\end{corollary}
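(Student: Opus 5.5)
The plan is to obtain the claim directly from \Cref{Construction: Oone volutions from lax Oone volutions}, applied to the lax $\Oone$-volutive category $\cBorn$ of \Cref{Corollary: bornological vector spaces are lax volutive}. That construction singles out the full subcategory of those objects $V$ for which the component $\eta_V$ of the lax structure is an isomorphism. Here $\eta_V$ is the canonical map $\iota_V \colon V \to \Born(\Born(V,\mathbb{C}),\mathbb{C})$, as described in the remark following \Cref{Corollary: bornological vector spaces are lax volutive}. So this full subcategory is exactly $\cBorn^{\refl}$, by the definition of reflexivity.

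Next I verify that the restricted data actually define an $\Oone$-volutive structure in the sense of \Cref{Definition: O(1)-volutive category}. First, the duality functor $d = (-)' = 1^{(-)}$ must send reflexive spaces to reflexive spaces; this is the one point needing an argument. The lax coherence gives $\iota_V' \circ \iota_{V'} = \id_{V'}$. If $\iota_V$ is invertible, so is $\iota_V' = d(\iota_V)$, since functors preserve isomorphisms. Hence $\iota_{V'} = (\iota_V')^{-1}$ is invertible, and $V'$ is reflexive. So $d$ restricts to a functor $\cBorn^{\refl} \to (\cBorn^{\refl})^{\opp}$.

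Second, $\eta$ restricted to $\cBorn^{\refl}$ is a natural isomorphism by definition. The condition $d(\eta_V) = \eta_{d(V)}^{-1}$ is then exactly the lax coherence identity $d(\eta_V)\circ\eta_{d(V)} = \id$, now that both factors are invertible. Naturality and the coherence itself are inherited from $\cBorn$ because the subcategory is full.

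I expect the only mildly nontrivial step to be the closure of reflexives under duality, and the argument above disposes of it formally. No bornological analysis is required beyond identifying $\eta_V$ with $\iota_V$, which the paper has already done.
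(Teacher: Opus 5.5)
Your proposal is correct and matches the paper's proof, which applies \Cref{Construction: Oone volutions from lax Oone volutions} to the lax $\Oone$-volutive category $\cBorn$ of \Cref{Corollary: bornological vector spaces are lax volutive}. Your argument that the dual of a reflexive space is reflexive, via the identity $d(\eta_V)\circ\eta_{d(V)}=\id$, fills in a step the paper's construction leaves implicit.
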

\begin{proof}
    Combine \Cref{Corollary: bornological vector spaces are lax volutive} and \Cref{Construction: Oone volutions from lax Oone volutions}.
\end{proof}
\begin{lemma}
    The functor $\vN \colon \Ban \to \cBorn$ is lax $\Oone$-volutive. Moreover, it descends to an $\Oone$-volutive functor 
    $\vN \colon \Ban^{\refl} \to \cBorn^{\refl}$. 
\end{lemma}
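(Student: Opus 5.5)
To prove the lemma, I would produce a natural transformation $\alpha \colon \vN(V)' \to \vN(V^*)$ comparing the two duality functors. Then I would check the single coherence equation from the explicit description of lax $\Oone$-volutive functors, and finally restrict to reflexive objects. I work with the structures already fixed earlier: the lax $\Oone$-volutive structure $((-)^*,\iota)$ on $\Ban$ from \Cref{Example: Banach spaces are lax volutive}, and the one on $\cBorn$ from \Cref{Corollary: bornological vector spaces are lax volutive}. On $\cBorn$ the duality functor is $V \mapsto V' = \Born(V,\mathbb{C})$ and the unit is the evaluation map $\iota_V$.

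\textbf{Step 1: the comparison map.} For a Banach space $V$, a linear functional on $V$ is bounded for the von Neumann bornology exactly when it maps the unit ball to a bounded set, that is, exactly when it is norm-continuous. Hence the underlying vector spaces of $\vN(V)'$ and $\vN(V^*)$ coincide. Next I compare bornologies. The bornology on $\Born(\vN V,\mathbb{C})$ described in \cite{aretz2025functorialitybornological} is that of equibounded sets: sets of functionals uniformly bounded on every bounded subset. Since every bounded subset of $\vN V$ lies in a multiple of the unit ball, these are precisely the norm-bounded subsets of $V^*$. Therefore the identity map is an isomorphism $\alpha_V \colon \vN(V)' \to \vN(V^*)$ in $\cBorn$. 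Naturality in $V$ is immediate, since both sides act on a map by precomposition. I expect this identification of the internal-hom bornology to be the only real point of the proof. If the reference's description is less explicit than assumed, I would instead argue via the universal property: the closed structure gives $\hom_{\cBorn}(U,\vN(V)') \cong \hom_{\cBorn}(U \hat\otimes \vN V,\mathbb{C})$, and I would show these are exactly the bounded maps $U \to \vN(V^*)$, using that $\vN V$ has a bornology generated by a single disk.

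\textbf{Step 2: coherence.} I need the identity $(\alpha^{\opp} \circ \id_d)(\id_F \circ \eta) = (\id_{d^{\opp}} \circ \alpha)(\eta' \circ \id_F)$, evaluated at a Banach space $V$. Both sides are maps $\vN V \to (\vN V)''$ that agree with $\vN(V^{**})$ after the identifications of Step 1. Each side sends $v \mapsto (\phi \mapsto \phi(v))$: one side is $\vN(\iota_V)$ followed by $\alpha$-identifications, and the other is the bornological evaluation map $\iota_{\vN V}$. Since all the $\alpha$'s are identities on underlying sets, the equation holds pointwise. This makes $(\vN,\alpha)$ a lax $\Oone$-volutive functor, and in fact one with invertible $\alpha$.

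\textbf{Step 3: descent.} Because $\alpha$ is invertible, the remark after \Cref{Definition: product of lax Oone volutive categories} tells us that $(\vN,\alpha)$ restricts to the subcategories of \Cref{Construction: Oone volutions from lax Oone volutions}. Concretely, if $\iota_V$ is an isomorphism, then by Step 2 the map $\iota_{\vN V}$ equals $\vN(\iota_V)$ up to the isomorphisms $\alpha$, so it is an isomorphism as well, and $\vN V \in \cBorn^{\refl}$. The restricted pair $(\vN,\alpha)$ then satisfies the same coherence equation and has invertible $\alpha$. Hence it is an $\Oone$-volutive functor $\Ban^{\refl} \to \cBorn^{\refl}$, which is the second claim of the lemma.
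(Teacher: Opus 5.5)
Your proof is correct and follows the paper's argument. It identifies $\vN(V^*)$ with $\Born(\vN V,\mathbb{C})$ via the identity map by matching norm-bounded sets with equibounded sets, notes that the canonical double-dual maps agree under this identification, and uses invertibility of the comparison $\alpha$ to descend to reflexive objects; your Step~3 also makes explicit a point the paper leaves implicit, namely that the coherence equation with invertible $\alpha$ forces $\iota_{\vN V}$ to be an isomorphism whenever $\iota_V$ is.
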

\begin{proof}
    Let $V$ be a Banach space. We claim that $\vN(V^*) \cong \Born(\vN(V),\mathbb{C})$ as bornological vector spaces. By definition, both sides 
    have the same underlying vector spaces, so that it suffices to show that their bounded sets coincide. First, recall that $V^*$ is a Banach space 
    equiped with the operator norm. In particular, a set $A \subset V^*$ is bounded if and only if 
    $\sup_{f \in A} \| f \| = \sup_{f \in A} \sup_{ \|x\| \leq 1} |f(x)| < \infty$. In other words, $A$ is bounded if and only if the image of the unit 
    ball $A(B_V)$ is bounded in $\mathbb{C}$. In particular, any bounded set in $\Born(V,\mathbb{C})$ is bounded in $V^*$. Conversely, any bounded set $A$
    in $V^*$ is bounded in $\Born(V,\mathbb{C})$ since $| f(x) | \leq \| f \| \cdot  \| x \|  \leq (\sup_{g \in A} \| g \|) \cdot \| x \| < \infty$ for all 
    $f \in A, x \in V$. This proves our claim. Similarly, the respective double dual spaces coincide, and the canonical morphisms into the double dual 
    spaces coincide. Noting that the lax $\Oone$-volutive structure on $\vN$ is already a natural isomorphism, the second claim follows immediately. 
\end{proof}

\subsection{Modules over star-rings}\label{Subsection: Modules over star-rings}
In this section we wish to give a more detailed account of modules over $*$-rings. We start with a discussion of dualities for modules over ordinary (non-commutative) rings.
\begin{construction}
    Let $R$ be a (possibly non-commutative) ring and let $M$ and $N$ be right $R$-modules. Then the set $\hom_{R}(M,N)$ of intertwiners generically does not 
    inherit a right $R$-module structure. Assume now in addition that $S$ is a second (possibly non-commutative) ring and $N$ is an $S$-$R$-bimodule. 
    Then $\hom_{R}(M,N)$ carries the structure of a left $S$-module: we take the pointwise addition together with the $S$-action 
    \begin{equation}
        (s \triangleright \phi)(m) = s \phi(m).
    \end{equation}
    In particular, since every ring $R$ is canonically an $R$-$R$-bimodule over itself, $\hom_{R}(M,R)$ carries the structure of a left $R$-module. 
    We will refer to $M^{\vee_r} := \hom_{R}(M,R)$ as the \emph{dual} left $R$-module to $M$. 
\end{construction}
\begin{variant}
    Similarly, we may start with a left $R$-module $M$ in which case the dual $M^{\vee_\ell}$ will be a right $R$-module. 
\end{variant}
\begin{construction}\label{Construction: dualization functor for non-commutative modules}
    Let $R$ and $S$ be (possibly non-commutative) rings, let $M_1,M_2$ be right $R$-modules, and let $N_1,N_2$ be $S$-$R$-bimodules. Furthermore, let 
    $f \colon M_1 \to M_2$ be an intertwiner of right $R$-modules and let $g \colon N_2 \to N_1$ an intertwiner of $S$-$R$-bimodules. Then, we obtain 
    an intertwiner of left $S$-modules $\hom_{R}(M_2,N_2) \to \hom_{R}(M_1,N_1), \phi \mapsto g \circ \phi \circ f$. This construction is clearly compatible 
    with respect to compositions of intertwiners $f,f'$ and $g,g'$. \\
    In particular, for any intertwiner of right $R$-modules $f \colon M_1 \to M_2$ we obtain an intertwiner of left $R$-modules $M_2^{\vee_r} \to M_1^{\vee_r}$. 
    The assignment $M \mapsto M^{\vee}_r$ hence defines a functor $(-)^{\vee_r} \colon \Mod_R \to {}_{R}\!\Mod^{\opp}$. 
\end{construction}
\begin{variant}
    Similarly, we obtain a functor $(-)^{\vee_\ell} \colon {}_{R}\!\Mod \to \Mod_R^{\opp}$. 
\end{variant}
\begin{construction}
    Let $R$ be a (possibly non-commutative) ring. For each right $R$-module $M$, we obtain an intertwiner of right $R$-modules $\iota_M \colon M \to M^{\vee_r \vee_\ell}, 
    m \mapsto \ev_m$ where $\ev_m(\phi) = \phi(m)$ is the evaluation at $m$. The assignment $M \mapsto \iota_M$ defines a natural transformation 
    $\iota^r \colon \id_{\Mod_R} \to (-)^{\vee_r \vee_\ell}$. \\
    The natural transformation $\iota^r$ satisfies a coherence condition over the triple dual, namely $\iota_M^{\vee} \circ \iota_{M^\vee} = \id_{M^\vee}$. 
    The proof that $\iota^r$ defines a natural transformation and is coherent in the sense of the previous sentence is completely analogous to the one 
    given in \Cref{Example: Banach spaces are lax volutive}.
\end{construction}
\begin{variant}
    Similarly, we obtain a natural transformation $\iota^{\ell} \colon \id_{{}_{R}\!\Mod} \to (-)^{\vee_\ell \vee_r}$ which satisfies the evident coherence condition.
\end{variant}
\begin{lemma}\label{Lemma: The adjunction involving modules over non-commutative rings}
    The assignment $a \mapsto \Mod_R, b \mapsto {}_{R}\!\Mod^{\opp}, X \mapsto (-)^{\vee_r}, Y \mapsto ((-)^{\vee_\ell})^{\opp}, f \mapsto \iota^r, g \mapsto (\iota^{\ell})^{\opp}$
    defines a 2-functor $\Adj \to \Cat$. 
\end{lemma}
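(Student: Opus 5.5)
The plan is to use the universal property of the walking adjunction $\Adj$ of \Cref{Definition: walking adjunction}. A strict 2-functor $\Adj \to \Cat$ is determined by the images of the generators $a,b,X,Y,f,g$, subject to one requirement: the two triangle (Zorro) identities must hold in $\Cat$. So the task reduces to three checks. First, the assigned data has the right types. Second, $(-)^{\vee_r} \colon \Mod_R \to {}_{R}\!\Mod^{\opp}$ is left adjoint to $((-)^{\vee_\ell})^{\opp} \colon {}_{R}\!\Mod^{\opp} \to \Mod_R$, with unit $\iota^r$ and counit $(\iota^\ell)^{\opp}$. Third, the two Zorro equations hold.

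\textbf{Typing.} The composite $Y \circ X$ is sent to $((-)^{\vee_\ell})^{\opp} \circ (-)^{\vee_r} = (-)^{\vee_r\vee_\ell}$, an endofunctor of $\Mod_R$. So $f \mapsto \iota^r \colon \id \to (-)^{\vee_r\vee_\ell}$ has the right source and target.

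The composite $X \circ Y$ goes to $(-)^{\vee_r} \circ ((-)^{\vee_\ell})^{\opp}$ on ${}_{R}\!\Mod^{\opp}$, which is $((-)^{\vee_\ell\vee_r})^{\opp}$. The counit $g$ must be a natural transformation $X\circ Y \to \id$ in ${}_{R}\!\Mod^{\opp}$. This is the same as a natural transformation $\id \to (-)^{\vee_\ell\vee_r}$ in ${}_{R}\!\Mod$, i.e.\ $\iota^\ell$. So $(\iota^\ell)^{\opp}$ is correctly typed. This is the usual reading of a contravariant self-adjunction "on the right" as a 1-morphism adjunction in $\Cat$.

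\textbf{First Zorro identity.} It reads $(g \circ \id_X)\bullet(\id_X \circ f) = \id_X$. Unwinding the opposites, at a right module $M$ it becomes the following equation in ${}_{R}\!\Mod$:
\begin{equation}
(\iota^r_M)^{\vee_r} \circ \iota^\ell_{M^{\vee_r}} = \id_{M^{\vee_r}}.
\end{equation}
Here the $\opp$ turns the composite $M^{\vee_r} \to M^{\vee_r\vee_\ell\vee_r} \to M^{\vee_r}$ around appropriately. This is exactly the triple-dual coherence stated alongside the construction of $\iota^r$, with the roles of the two dualities as recorded there and in the variant for $\iota^\ell$.

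I will verify it elementwise, mirroring \Cref{Example: Banach spaces are lax volutive}. For $\phi \in M^{\vee_r}$ and $m \in M$,
\begin{equation}
\bigl((\iota^r_M)^{\vee_r}(\iota^\ell_{M^{\vee_r}}(\phi))\bigr)(m) = \iota^\ell_{M^{\vee_r}}(\phi)(\iota^r_M(m)) = \iota^r_M(m)(\phi) = \phi(m).
\end{equation}

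\textbf{Second Zorro identity.} It reads $(\id_Y \circ g)\bullet(f\circ \id_Y)=\id_Y$. At a left module $N$ it becomes
\begin{equation}
(\iota^\ell_N)^{\vee_\ell}\circ \iota^r_{N^{\vee_\ell}} = \id_{N^{\vee_\ell}},
\end{equation}
which is the evident coherence of the variant for $\iota^\ell$, checked by the same one-line evaluation computation.

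\textbf{Main obstacle.} The only real difficulty is bookkeeping. One must correctly translate whiskering and vertical composition in $\Cat$ involving the opposite category ${}_{R}\!\Mod^{\opp}$ into the concrete duals. In particular one must make sure that $(\iota^\ell)^{\opp}$ is whiskered on the correct side, so that the two Zorro moves land on $\iota^r$ applied to the duals as displayed above, and not on a mismatched composite. I would first write out the components of each whiskering at an object, and then compare with the two displayed coherence equations.
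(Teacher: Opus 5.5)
Your proposal is correct and matches the argument the paper intends. The paper states this lemma without a written proof, relying on the universal property of $\Adj$ and on the triple-dual coherences of $\iota^r$ and $\iota^\ell$ recorded just before it; these are exactly the two Zorro identities for $(-)^{\vee_r} \dashv ((-)^{\vee_\ell})^{\opp}$ with unit $\iota^r$ and counit $(\iota^\ell)^{\opp}$. Your unwinding of the whiskerings pins these down correctly as the mixed equations $(\iota^r_M)^{\vee_r}\circ\iota^\ell_{M^{\vee_r}}=\id$ and $(\iota^\ell_N)^{\vee_\ell}\circ\iota^r_{N^{\vee_\ell}}=\id$, which is the precise form of the paper's loosely written condition $\iota_M^{\vee}\circ\iota_{M^\vee}=\id_{M^\vee}$.
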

Without further assumption, \Cref{Lemma: The adjunction involving modules over non-commutative rings} is the best statement we can make. 
We now consider more favorable conditions, starting with an observation/definition. 
\begin{remark}
    The category of rings $\Ring$ carries an $\Oone$-action which assigns each ring $R$ to its opposite $R^{\opp}$. The category of $\star$-rings 
    is precisely the category of $\Oone$-homotopy fixed points with respect to this action. Spelled out, a $\star$-structure on a ring is an isomorphism 
    $\star \colon R \to R^{\opp}$ satisfying $\star^2=\id_R$. 
\end{remark}
\begin{example}
    Any commutative ring $R$ is a $*$-ring with respect to the identity $\id \colon R \to R$. 
\end{example}
\begin{example}
    The quaternions $\mathbb{H}$ together with the quaternionic conjugation form a $\star$-ring. 
\end{example}
\begin{proposition}\label{Proposition: modules over star rings admits lax volutive structure}
    Let $R$ be a (possibly non-commutative) $*$-ring. Then, $\Mod_R$ admits a lax $\Oone$-volutive structure. 
\end{proposition}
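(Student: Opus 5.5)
The plan is to use the $*$-structure to turn left $R$-modules back into right $R$-modules, and then compose with the dualization functors from \Cref{Construction: dualization functor for non-commutative modules}. Concretely, let $\sigma \colon {}_{R}\!\Mod \to \Mod_R$ be the \emph{twisting} functor. It is the identity on underlying abelian groups and on morphisms, and a left module $N$ is given the right action $n \cdot r := r^* n$. Since $*$ is an anti-automorphism with $*^2=\id$, this is a right action: $(n\cdot r)\cdot s = s^*r^*n = (rs)^*n = n\cdot(rs)$. Because $*$ is involutive, $\sigma$ is in fact an isomorphism of categories, with inverse the analogous twist $\sigma' \colon \Mod_R \to {}_{R}\!\Mod$. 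We then define
\begin{equation}
    d := \sigma^{\opp} \circ (-)^{\vee_r} \colon \Mod_R \to {}_{R}\!\Mod^{\opp} \to \Mod_R^{\opp}.
\end{equation}
On objects this sends $M$ to $\hom_R(M,R)$ with right action $(\phi\cdot r)(m) = r^*\phi(m)$.

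Next I compute $d^{\opp}\circ d$ and identify it with the double dual $(-)^{\vee_r\vee_\ell}$, suitably twisted. The key observation is a natural isomorphism of left modules $(\sigma N)^{\vee_r} \cong \sigma'(N^{\vee_\ell})$, obtained by post-composing with $*$: a right-linear map $\psi \colon \sigma N \to R$ corresponds to the left-linear map $\psi^* := * \circ \psi$. To check this, note that $\psi(n\cdot r) = \psi(r^* n) = \psi(n) r$ gives $\psi^*(r^*n) = r^*\psi^*(n)$, so $\psi^*$ is left-linear; the module structures match in the same way. Using this isomorphism, $d^2(M) = \sigma((\sigma M^{\vee_r})^{\vee_r}) \cong M^{\vee_r\vee_\ell}$ as right modules. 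I then define $\eta_M \colon M \to d^2(M)$ directly by $\eta_M(m)(\phi) := \phi(m)^*$ for $\phi \in d(M)$. Right $R$-linearity in $\phi$ for the twisted structure reads $\eta_M(m)(\phi\cdot r) = (r^*\phi(m))^* = \phi(m)^* r$, which holds. Linearity in $m$ and naturality are routine, exactly as in \Cref{Example: Banach spaces are lax volutive}.

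It remains to verify the coherence condition $d(\eta_M)\circ\eta_{d(M)} = \id_{d(M)}$. For $\phi\in d(M)$ and $m\in M$ we compute
\begin{equation}
    \bigl(d(\eta_M)(\eta_{d(M)}(\phi))\bigr)(m) = \eta_{d(M)}(\phi)(\eta_M(m)) = \bigl(\eta_M(m)(\phi)\bigr)^* = (\phi(m)^*)^* = \phi(m),
\end{equation}
where the last step uses $*^2=\id$. Here $d$ acts on morphisms by precomposition, since $\sigma$ does not change underlying maps.

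I expect the main obstacle to be bookkeeping rather than anything conceptual. One has to make sure the twisted actions are placed consistently, so that every map above, in particular $\eta_M(m)$ and $d(f)=(-)\circ f$, is genuinely $R$-linear for the correct side. If the direct formula for $\eta$ causes sign or side trouble, the fallback is to transport $\iota^r$ from the construction preceding \Cref{Lemma: The adjunction involving modules over non-commutative rings} along the isomorphism $\sigma$, so that the coherence follows from that of $\iota^r$.
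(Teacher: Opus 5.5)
Your proof is correct, and it takes a different route from the paper's. The paper never writes down $d$ and $\eta$ on $\Mod_R$ explicitly. It invokes characterization (iv) of \Cref{Theorem: four equivalent versions of lax O(1)-volutive categories}, namely that lax $\Oone$-volutive categories are 2-contravariant $\Oone$-volutive functors $\Adj \to \Cat$. It starts from the 2-functor $\Adj \to \Cat$ of \Cref{Lemma: The adjunction involving modules over non-commutative rings}, which exists for any ring and is built from $(-)^{\vee_r}$, $((-)^{\vee_\ell})^{\opp}$, $\iota^r$ and $(\iota^\ell)^{\opp}$. It then asserts, without details, that the $*$-structure gives a 2-contravariant $\Oone$-volutive structure on this functor.

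You instead verify \Cref{Definition: lax O(1)-volutive category} directly. You twist ${}_{R}\!\Mod \cong \Mod_R$ via $*$, set $d = \sigma^{\opp}\circ(-)^{\vee_r}$ and $\eta_M(m)(\phi) = \phi(m)^*$, and check the axioms. Your computations are right, including right-linearity of $\eta_M$ in $m$, which you call routine: $\phi(mr)^* = r^*\phi(m)^* = (\eta_M(m)\cdot r)(\phi)$.

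The two routes carry the same data. Unwinding the paper's argument through \Cref{Lemma: volutive adjunctions and volutive functors from the walking adjunction} with the evident choice $\omega_a = \sigma^{\opp}$ gives $Z = \omega_a \circ G(X)$, which is your $d$. Transporting $\iota^r$ along $\Phi \mapsto {*}\circ\Phi$ gives exactly your $\eta_M$, as your fallback anticipates. So the difference is one of packaging.

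Your version is self-contained and explicit. It shows where anti-multiplicativity of $*$ enters (the twist is an action, and $\eta_M(m)$ is linear) and where $*^2=\id$ enters (the coherence identity). It also does not depend on the equivalence theorem, whose proof in the paper is only sketched.

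The paper's version instead places the example in the relative framework. The adjunction is available for every ring, and the $*$-structure is exactly what upgrades it to an $\Oone$-volutive functor out of $\Adj$.
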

\begin{proof}
    We have already constructed in \Cref{Lemma: The adjunction involving modules over non-commutative rings} a 2-functor $\Adj \to \Cat$ 
    whose value at $a \in \Adj$ is $\Mod_R$. It remains to describe a 2-contravariant $\Oone$-volutive structure on this functor. The construction of such 
    a structure from the $*$-structure is straightforward. 
\end{proof}
\begin{corollary}
    Let $R$ be a (possibly non-comuutative) $*$-ring. Then, the category of reflexive right $R$-modules $\Mod_R^{\refl}$ admits an $\Oone$-volutive structure. 
\end{corollary}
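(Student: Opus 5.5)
The plan is to apply \Cref{Construction: Oone volutions from lax Oone volutions} to the lax $\Oone$-volutive structure on $\Mod_R$ from \Cref{Proposition: modules over star rings admits lax volutive structure}, after identifying the full subcategory on which $\eta$ is invertible with $\Mod_R^{\refl}$. So the first step is to write out that lax structure explicitly. The functor $d \colon \Mod_R \to \Mod_R^{\opp}$ sends $M$ to $M^{\vee_r} = \hom_R(M,R)$, which is naturally a left $R$-module. Transporting along the $*$-structure makes it a right module: for $\phi \in M^{\vee_r}$ set $\phi \cdot r := r^* \phi$. I will write $\overline{M^{\vee_r}}$ for $M^{\vee_r}$ with this right structure. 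Unwinding the 2-functor $\Adj \to \Cat$ of \Cref{Lemma: The adjunction involving modules over non-commutative rings} together with its volutive structure, the natural transformation $\eta_M \colon M \to d^2(M)$ should be the composite of $\iota_M \colon M \to M^{\vee_r \vee_\ell}$ with the canonical identification of $M^{\vee_r\vee_\ell}$ with $\overline{\overline{M^{\vee_r}}^{\vee_r}}$. That identification is given by post-composing functionals with $*$, sending $\psi \mapsto (\phi \mapsto \psi(\phi)^*)$. Its inverse is obtained by applying $*$ again, using $*^2=\id$, so it is bijective, $R$-linear and natural.

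The key observation is that this second factor is always an isomorphism. Hence $\eta_M$ is invertible if and only if $\iota_M$ is, that is, if and only if $M$ is reflexive in the usual sense. This identifies the category $\hat{\C}$ of \Cref{Construction: Oone volutions from lax Oone volutions} with the full subcategory $\Mod_R^{\refl}$.

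To finish, I invoke \Cref{Construction: Oone volutions from lax Oone volutions}: the restriction of $(d,\eta)$ to $\hat{\C}$ is an $\Oone$-volutive structure. I should check that $d$ restricts to an endofunctor of $\hat{\C}$, but this follows from the coherence $d(\eta_M)\circ\eta_{d(M)} = \id$. If $\eta_M$ is invertible then so is $d(\eta_M)$, so $\eta_{d(M)} = d(\eta_M)^{-1}$ is invertible and $d(M)$ lies in $\hat{\C}$ again. The required identity $d(\eta_M) = \eta_{d(M)}^{-1}$ is the same coherence equation.

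The main obstacle is the first step: extracting $\eta$ from the 2-contravariant $\Oone$-volutive 2-functor in \Cref{Proposition: modules over star rings admits lax volutive structure}, whose volutive data the paper only sketched. I would handle it via the equivalence in \Cref{Theorem: four equivalent versions of lax O(1)-volutive categories}, namely the proof of \Cref{Lemma: volutive adjunctions and volutive functors from the walking adjunction}. There $h$ is $G(f)$ composed with the component $\omega_X$ of the volutive structure, and $\omega$ comes from the $*$-twist. So $\eta_M$ is $\iota_M$ followed by an isomorphism, which is all the argument needs.
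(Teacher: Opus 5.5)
Your proposal is correct and follows the paper's proof, which simply combines \Cref{Proposition: modules over star rings admits lax volutive structure} with \Cref{Construction: Oone volutions from lax Oone volutions}. Your extra checks fill in details the paper leaves implicit. You show that $\eta_M$ is $\iota_M$ followed by the $*$-twist isomorphism, so the subcategory where $\eta$ is invertible is exactly $\Mod_R^{\refl}$. You also show that the coherence $d(\eta_M)\circ\eta_{d(M)}=\id$ forces $d$ to preserve this subcategory.
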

\begin{proof}
    Combine \Cref{Proposition: modules over star rings admits lax volutive structure} and \Cref{Construction: Oone volutions from lax Oone volutions}. 
\end{proof}
\begin{remark}
    A more sophisticated version of the discussion here (involving Banach $*$-modules) allows us to reconstruct the dagger structure on the category 
    of quaternionic hermitian(/Hilbert) spaces from the lax $\Oone$-volutive structure on the category of Banach modules over the $*$-ring $\mathbb{H}$. 
    The discussion here is sufficient only for the finite-dimensional case. 
\end{remark}

\subsection{Unbounded operators}\label{Subsection: unbounded operators}
In this section we discuss unbounded operators and their higher category. We will start by reviewing some standard material in the theory of 
unbounded operators, which may be found e.g. in \cite{Schmuedgen2012unbounded}.
\begin{definition}
    Let $B$ and $B'$ be Banach spaces. An \emph{unbounded operator} $B \to B'$ is a linear map $f \colon D(f) \to B'$ where $D(f)$ is a linear subspace of $B$, 
    called the \emph{domain} of $f$. 
\end{definition}
\begin{example}
    Let $B$ and $B'$ be Banach spaces. Any bounded operator $f \colon B \to B'$ is in particular an unbounded operator with domain $D(f)=B$. 
\end{example}
A good class of examples of unbounded operators is given by differential operators. The following is one such example.
\begin{example}
    Consider the Hilbert space $L^2(\mathbb{R})$ of square integrable functions and the differentiation operator $\mathrm{d}/\mathrm{d}\mathrm{x}$ with domain 
    the (dense) subspace of Schwartz functions $\mathcal{S}(\mathbb{R}) \subseteq L^2(\mathbb{R})$.
\end{example}
\begin{remark}\label{Remark: composition of unbounded operators}
    Unbounded operators can be (unitaly and associatively) added and composed, with domains $D(f + g) := D(f) \cap D(g)$ for $f,g \colon B \to B'$ and 
    $D(f \circ g) := \{x \in D(g) | g(x) \in D(f) \}$ for $g \colon B \to B'$ and $f \colon B' \to B''$, respectively. However, in general we have 
    \begin{equation}
        f(g+h) \neq fg + fh
    \end{equation}
    for $g,h \colon B \to B'$ and $f \colon B' \to B'$. A counterexample\footnote{which can be found in lecture notes \cite{Remline2021unbounded}.} 
    is given by the unbounded operators $f = d/dx, g=d/dx, h = -d/dx$ on the Banach space $C^0([0,1])$; indeed, we have $D(f(g+h)) = C^1([0,1])$ while 
    $D(fg+fh) = C^2([0,1])$.
\end{remark}
\begin{definition}
    Let $f \colon B \to B'$ be an unbounded operator.
    \begin{itemize}
        \item The \emph{graph} of $f$ is the linear subspace $\Gamma(f) := \{(x,f(x)) | x \in \D(f)\} \subseteq B \oplus B'$.
        \item An unbounded operator $\hat{f} \colon B \to B'$ is called an \emph{extension} of $f$ if $\Gamma(f) \subseteq \Gamma(\hat{f})$; equivalently, 
                $D(f) \subseteq D(\hat{f})$ and $\hat{f}x = fx$ for all $x \in D(f)$.
        \item $f$ is said to be \emph{closed} if $\Gamma(f) \subseteq B \oplus B'$ is a closed subset. 
        \item $f$ is said to be \emph{closeable} if it admits a closed extension.
        \item $f$ is said to be \emph{densely defined} if $D(f)$ is dense in $\B$. 
    \end{itemize}
\end{definition}
\begin{remark}
    Any bounded densely defined operator $f \colon D(f) \subseteq B \to B'$ extends to all of $B$, by continuity.%Recall that for normed spaces, boundedness and continuity are the same notion.
\end{remark}
\begin{remark}
    By the closed graph theorem, if an unbounded operator $f \colon B \to B'$ is closed and has $D(f) = B$, then $f$ is bounded. 
\end{remark}
\begin{notation}
    We denote by $\overline{T}$ the (unique) \emph{closure} of a closable unbounded operator $T$, that is, the smallest closed 
    extension of $T$.
\end{notation}
We now turn our attention to Hilbert spaces, following \cite[Section 1.2]{Schmuedgen2012unbounded}.
\begin{construction}\label{Construction: adjoint of a densely defined unbounded operator}
    Let $H$ and $H'$ be Hilbert spaces. Let $T \colon H \to H'$ be an unbounded operator. We define 
    \begin{equation}
        D(T^\dagger) := \{y \in H' \ | \ \exists z \in H : \langle Tx,y \rangle = \langle x , z \rangle \forall x \in D(T) \}
    \end{equation}%Asking for z to be unique will likely result in a subset which is not a linear subspace.
    which is clearly a linear subspace of $H'$. In order for the element $z$ to be unique, we assume in addition that $T$ is densely defined; 
    in this case, we have $\langle x, z - z' \rangle = 0$ for all $x \in D(T)$ in a dense subset of $H$, implying that $z = z'$. We may then define 
    $T^\dagger y = z$, obtaining an unbounded operator $T^\dagger \colon H' \to H$.
\end{construction}
\begin{notation}
    Let $T \colon H \to H'$ be a densely defined unbounded operator. The unbounded operator  $T^\dagger \colon H' \to H$ described in 
    \Cref{Construction: adjoint of a densely defined unbounded operator} is called the \emph{adjoint} of $T$. By construction, we have 
    \begin{equation}\label{Equation: adjunction equation for unbounded operators}
        \langle Tx, y \rangle = \langle x, T^\dagger y\rangle \ \forall x \in D(T), \ y \in D(T^\dagger).
    \end{equation}
    %The domain of $T^\dagger$ does not need to be dense in $H'$.
\end{notation}
\begin{warning}
    Many of the properties that the adjoint of a bounded operator satisfies do not carry over to this more general setting. 
    We will collect desired results under favorable additional assumptions in the following.
\end{warning}
\begin{proposition}[{\cite[Propositions 1.6 \& 1.7 and Theorem 1.8]{Schmuedgen2012unbounded}}]\label{Proposition: properties of the adjoint operator}
    Let $T \colon H \to H'$ be a densely defined unbounded operator. Then: 
    \begin{itemize}
        \item[(i)] $T^\dagger \colon H' \to H$ is a closed unbounded operator, 
        \item[(ii)] $T$ is closeable if and only if $D(T^\dagger)$ is dense in $H'$,
        \item[(iii)] if $T$ is closable, then $\overline{T}^\dagger = T^\dagger$ and we have $\overline{T} = (T^\dagger)^\dagger =: T^{\dagger \dagger}$, and 
        \item[(iv)] $T$ is closed if and only if $T = T^{\dagger \dagger}$. 
    \end{itemize}
    Furthermore, let $S \colon H' \to H''$ be another densely defined unbounded operator so that $ST$ is densely defined. Then 
    \begin{itemize}
        \item[(v)] $\Gamma(T^\dagger S^\dagger) \subset \Gamma((ST)^\dagger)$, and 
        \item[(vi)] if $S$ is bounded (and hence extends to a bounded operator $\hat{S}$ defined on all of $H'$), 
            then $T^\dagger \hat{S}^\dagger = (\hat{S}T)^\dagger$. 
    \end{itemize}
\end{proposition}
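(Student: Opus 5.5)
The statement is \cite[Propositions 1.6 \& 1.7 and Theorem 1.8]{Schmuedgen2012unbounded}, so we may simply cite it; if we were to prove it ourselves, we would use the standard \emph{graph method}, which reduces everything to elementary facts about orthogonal complements in the Hilbert space $H \oplus H'$.

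The key construction is the unitary
\begin{equation}
    V \colon H \oplus H' \to H' \oplus H, \qquad V(x,y) := (-y,x).
\end{equation}
The first step is to check directly from \Cref{Construction: adjoint of a densely defined unbounded operator} the identity
\begin{equation}
    \Gamma(T^\dagger) = \big(V\Gamma(T)\big)^{\perp} \subseteq H' \oplus H.
\end{equation}
Indeed, $(y,z) \perp V\Gamma(T)$ means $-\langle Tx,y\rangle + \langle x,z\rangle = 0$ for all $x \in D(T)$. Density of $D(T)$ is used only to ensure that this orthogonal complement is the graph of a single-valued operator. Statement (i) then follows at once, since orthogonal complements are closed.

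For (ii), we would use that $\overline{\Gamma(T)} = \Gamma(T)^{\perp\perp}$, and that $T$ is closable if and only if this closure contains no vector of the form $(0,y)$ with $y \neq 0$. Applying $V$ and the identity above, we get $\Gamma(T)^{\perp\perp} = V^{-1}\Gamma(T^\dagger)^{\perp}$. A direct computation then shows that $(0,y) \in \Gamma(T)^{\perp\perp}$ if and only if $y \perp D(T^\dagger)$. Hence closure of the graph is a graph exactly when $D(T^\dagger)^\perp = 0$, i.e.\ when $D(T^\dagger)$ is dense.

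For (iii), we first note that taking the closure of the graph does not change its orthogonal complement, which gives $\overline{T}^\dagger = T^\dagger$. Applying the graph identity twice, with the unitary for $H' \oplus H$, which is $-V^{-1}$, yields $\Gamma(T^{\dagger\dagger}) = \Gamma(T)^{\perp\perp} = \Gamma(\overline{T})$. Statement (iv) is then immediate from (i) and (iii): if $T = T^{\dagger\dagger}$ then $T$ is closed by (i), and conversely if $T$ is closed then $T = \overline{T} = T^{\dagger\dagger}$.

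Statement (v) is a direct check. If $y \in D(T^\dagger S^\dagger)$, then for every $x \in D(ST)$ we have
\begin{equation}
    \langle STx, y\rangle = \langle Tx, S^\dagger y\rangle = \langle x, T^\dagger S^\dagger y\rangle,
\end{equation}
so $y \in D((ST)^\dagger)$ with $(ST)^\dagger y = T^\dagger S^\dagger y$. For (vi) it remains to prove the reverse inclusion. Here $\hat{S}^\dagger$ is bounded and everywhere defined, so $D(T^\dagger \hat{S}^\dagger) = \{ y \mid \hat{S}^\dagger y \in D(T^\dagger)\}$. Given $y \in D((\hat{S}T)^\dagger)$, for all $x \in D(T)$ we have $\langle Tx, \hat{S}^\dagger y \rangle = \langle \hat{S}Tx, y\rangle = \langle x, (\hat{S}T)^\dagger y\rangle$. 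This exhibits $\hat{S}^\dagger y \in D(T^\dagger)$, so the two domains, and hence the two operators, agree.

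The only step requiring genuine care is (ii). There one must correctly track the signs in $V$ and the identification $\Gamma(T)^{\perp\perp} = V^{-1}\Gamma(T^\dagger)^\perp$; every other statement is bookkeeping once the graph identity is in place.
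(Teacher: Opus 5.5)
Your proposal is correct and is essentially the paper's route: the paper gives no proof of its own and cites Schmüdgen, whose proofs use exactly your graph identity $\Gamma(T^\dagger) = (V\Gamma(T))^\perp$ with $V(x,y) = (-y,x)$, and the paper repeats the same orthogonal-complement argument (via its operator $J$) in its later lemma on adjoints and closures of relations. In (iii), say explicitly that $T^{\dagger\dagger}$ is defined only because (ii) makes $D(T^\dagger)$ dense for closable $T$.
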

\begin{remark}
    \Cref{Proposition: properties of the adjoint operator} suggests that the differences of the theory of ordinary dagger categories and unbounded operators are 
    too great to be ignored. If one were to attempt to formalize the latter in the setting of the former, one would first of all have to consider 
    closed and densely defined operators. However, the composition of closed and densely defined operators is not necessarily closed and densely defined, 
    so that it is not clear which category to consider; second, while the supposed dagger structure would be identity-on-object and strictly involutive, 
    it need not be a functor without further assumptions. However, we do note that there is a category of Hilbert spaces and unbounded operators.
\end{remark}
In the following, we pursue a relational approach towards organizing unbounded operators and their adjoints, partially following lecture notes 
\cite{Remline2021unbounded} (which treat the case of unbounded operators on a single Hilbert space). The theory is motivated by the following
observation.
\begin{remark}
    Let $T \colon H \to H'$ be an unbounded operator. Then, $T$ and its domain can be covered from its graph $\Gamma(T)$. Indeed, we obtain the domain as 
    \begin{equation}
        D(T) := \{x \in H \ | \ (x,y) \in \Gamma(T) \ \text{for some} \ y \in H' \}
    \end{equation}
    and the operator $T$ by setting $Tx = y$ for the unique $y \in H'$ with $(x,y) \in \Gamma(T)$. 
\end{remark}
\begin{definition}
    Let $H$ and $H'$ be Hilbert spaces. A \emph{relation} from $H$ to $H'$ is a linear subspace of $H \oplus H'$. 
    If $H'=H$, we will call a relation from $H$ to $H'$ a \emph{relation on $H$}.
\end{definition}
\begin{example}
    Let $T \colon H \to H'$ be an unbounded operator. Then $\Gamma(T)$ is a relation from $H$ to $H'$.
\end{example}
\begin{definition}\label{Definition: various notions for relations}
    Let $V$ be a relation from $H$ to $H'$ and $W$ be a relation from $H'$ to $H''$. We define the following notions:
    \begin{itemize}
        \item the \emph{diagonal} relation on $H$ is the linear subspace $\Delta_H := \{(x,x) \ | \ x \in H\}$, 
        \item the \emph{reverse}\footnote{In the literature, the reverse is also called the \emph{inverse}. We do not use this terminology since it is not to 
        be understood as an inverse with respect to the composition of relations we describe, with unit given by the diagonal relation.} of 
        $V$ is the relation from $H'$ to $H$ given by
        \begin{equation}
            V^{\rev} := \{(y,x) \in H' \oplus H \ | \ (x,y) \in V \subseteq H \oplus H'\},
        \end{equation}
        \item the \emph{closure} $\overline{V}$ of $V$ is the closure of the linear subspace $V \subseteq H \oplus H'$. 
        We call a relation \emph{closed} if $\overline{V} = V$,
        \item the \emph{adjoint} of $V$ is the relation from $H'$ to $H$ given by 
        \begin{equation}
            V^\dagger := \{(u,v) \in H' \oplus H \ | \ \langle v,x \rangle_{H} = 
            \langle u,y \rangle_{H'} \ \text{for all} \ (x,y) \in V \},
        \end{equation}
        to see that this is a linear subspace, let $(u,v),(s,t) \in V^\dagger$. Then 
        $\langle v+t, x \rangle_H = \langle v,x \rangle_H + \langle t, x \rangle_H = \langle u,y \rangle_{H'} + \langle s,y \rangle_{H'} = \langle u+s,y \rangle$ 
        for all $(x,y) \in V$, hence $(u+s,v+t) \in V^\dagger$, and
        \item the \emph{composition} of $V$ and $W$ is the relation from $H$ to $H''$ given by 
        \begin{equation}
            W \circ V := \{ (x,z) \in H \oplus H'' \ | \ \exists y \in H' : (x,y) \in V \land (y,z) \in W \},
        \end{equation}
        to see that this is a linear subspace, let $(x,z),(x',z') \in W \circ V$. Then there are $y,y'$ such that $(x,y),(x',y') \in V$ and $(y,z),(y',z') \in W$, 
        hence $(x+x',y+y') \in V$ and $(y+y',z+z') \in W$, hence $(x+x',z+z') \in W \circ V$ and similarly $(\lambda x,\lambda z) \in W \circ V$ for any complex 
        scalar $\lambda$. 
    \end{itemize}
\end{definition}
\begin{lemma}\label{Lemma: the graph is compatible with composition}
    Let $T \colon H \to H'$ and $S \colon H' \to H''$ be unbounded operators. Then $\Gamma(ST) = \Gamma(S) \circ \Gamma(T)$.
\end{lemma}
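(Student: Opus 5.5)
The plan is a direct double inclusion of subspaces of $H \oplus H''$. The inclusion $\Gamma(ST) \subseteq \Gamma(S) \circ \Gamma(T)$ comes first. Recall $D(ST) = \{x \in D(T) \mid Tx \in D(S)\}$ from \Cref{Remark: composition of unbounded operators}. Take an element $(x, STx)$ with $x \in D(ST)$ and use the witness $y := Tx \in H'$. Then $(x,y) \in \Gamma(T)$ because $x \in D(T)$. Also $(y, STx) = (Tx, S(Tx)) \in \Gamma(S)$ because $Tx \in D(S)$. By the definition of composition of relations in \Cref{Definition: various notions for relations}, this gives $(x,STx) \in \Gamma(S) \circ \Gamma(T)$.

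For the reverse inclusion, let $(x,z) \in \Gamma(S) \circ \Gamma(T)$. Then there is some $y \in H'$ with $(x,y) \in \Gamma(T)$ and $(y,z) \in \Gamma(S)$. The first condition means $x \in D(T)$ and $y = Tx$. The second means $y \in D(S)$ and $z = Sy$. So $Tx \in D(S)$, which puts $x$ in $D(ST)$, and $z = S(Tx) = (ST)x$. Hence $(x,z) \in \Gamma(ST)$.

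The only point needing care is that an element of a graph determines its second coordinate uniquely, namely $y = Tx$. This follows because $T$ is a function on $D(T)$, so the witness $y$ in the composition of relations is forced. I do not expect any real obstacle: nothing about closedness, density or boundedness is used, and the argument is purely set-theoretic.
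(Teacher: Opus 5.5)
Your proof is correct and takes the same approach as the paper: both unwind the definitions of $D(ST)$, the graph, and composition of relations. In both, the only substantive point is that $(x,y)\in\Gamma(T)$ forces $y = Tx$ (and likewise for $S$); the paper writes this as a chain of set equalities rather than an explicit double inclusion.
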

\begin{proof}
    We have 
    \begin{align*}
        \Gamma(ST) &= \{ (x,STx) \ | \ x \in D(ST) \} \\
        &= \{ (x,STx) \ | \ x \in D(T) \land T(x) \in D(S) \} \\
        &= \{ (x,STx) \ | \ (x,Tx) \in \Gamma(T) \land (Tx,STx) \in \Gamma(S) \} \\
        &= \Gamma(S) \circ \Gamma(T)
    \end{align*}
    where in the last equation we have used the evident inclusion together with the reverse inclusion which is obtain by applying twice the observation 
    that if $(x,y) \in \Gamma(T)$ for fixed $x$, then $y$ is unique. 
\end{proof}
\begin{notation}
    We denote by $\Hilb^{\unb}$ the category of Hilbert spaces and unbounded operators, whose composition is described in 
    \Cref{Remark: composition of unbounded operators}. We denote by $\HilbRel$ the category of Hilbert spaces and relations, whose composition is 
    described in \Cref{Definition: various notions for relations}.
\end{notation}
\begin{proposition}
    The assignment $T \mapsto \Gamma(T)$ extends to an identity-on-objects functor $\Hilb^{\unb} \to \HilbRel$. 
\end{proposition}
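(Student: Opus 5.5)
To show that $T \mapsto \Gamma(T)$ is an identity-on-objects functor $\Hilb^{\unb} \to \HilbRel$, I would check directly that it preserves identities and composition. On objects it is the identity. A morphism $T \colon H \to H'$ in $\Hilb^{\unb}$, an unbounded operator with domain $D(T) \subseteq H$, is sent to its graph $\Gamma(T) \subseteq H \oplus H'$. This is a linear subspace because $D(T)$ is a linear subspace and $T$ is linear on it, so $\Gamma(T)$ is a relation from $H$ to $H'$. Well-definedness on morphisms is therefore immediate.

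For compatibility with composition I would invoke \Cref{Lemma: the graph is compatible with composition}. It gives $\Gamma(ST) = \Gamma(S) \circ \Gamma(T)$ for all unbounded operators $T \colon H \to H'$ and $S \colon H' \to H''$. The composition $ST$ here is the one of \Cref{Remark: composition of unbounded operators}, with domain $D(ST) = \{x \in D(T) \mid Tx \in D(S)\}$. The composite of relations is the one of \Cref{Definition: various notions for relations}. These are exactly the compositions of $\Hilb^{\unb}$ and $\HilbRel$, so composition is preserved with nothing further to prove.

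For identities, the identity morphism of $H$ in $\Hilb^{\unb}$ is the bounded operator $\id_H$ with domain $D(\id_H) = H$. Its graph is $\{(x,x) \mid x \in H\} = \Delta_H$, the diagonal relation. I would then check that $\Delta_H$ really is the unit for composition of relations. For a relation $V$ from $H$ to $H'$, the composite $V \circ \Delta_H$ consists of the pairs $(x,z)$ such that there is $y$ with $(x,y) \in \Delta_H$ and $(y,z) \in V$. This forces $y = x$, so $V \circ \Delta_H = V$. The same argument gives $\Delta_{H'} \circ V = V$.

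Associativity of composition in $\HilbRel$ holds for set-theoretic reasons: both bracketings consist of the pairs $(x,w)$ admitting intermediate elements $y,z$ with $(x,y) \in V$, $(y,z) \in W$ and $(z,w) \in U$. Associativity and unitality in $\Hilb^{\unb}$ are as asserted in \Cref{Remark: composition of unbounded operators}. With both sides genuine categories, the functor axioms are exactly the two checks above. I expect no real obstacle; the only point needing care is to use the domain convention of \Cref{Remark: composition of unbounded operators}, since the lemma depends on it.
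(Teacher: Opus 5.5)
Your proposal is correct and follows the paper's proof: composition is handled by \Cref{Lemma: the graph is compatible with composition}, and the identity on $H$ goes to the diagonal relation $\Delta_H$. Your extra checks that $\Delta_H$ is a two-sided unit and that composition of relations is associative are details the paper leaves implicit.
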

\begin{proof}
    Compatibility with composition is the subject of \Cref{Lemma: the graph is compatible with composition}. The identity morphism on $H$ 
    is assigned to the diagonal relation on $H$, which is the identity in $\HilbRel$.
\end{proof}
\begin{lemma}\label{Lemma: the graph is compatible with the adjoint}
    Let $T \colon H \to H'$ be a densely defined unbounded operator. Then $\Gamma(T^\dagger) = \Gamma(T)^\dagger$. 
\end{lemma}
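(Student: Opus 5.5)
We prove the equality $\Gamma(T^\dagger) = \Gamma(T)^\dagger$ of linear subspaces of $H' \oplus H$ by unwinding both definitions elementwise and establishing the two inclusions. By \Cref{Construction: adjoint of a densely defined unbounded operator}, the graph of the adjoint is
\begin{equation}
    \Gamma(T^\dagger) = \{(y,z) \in H' \oplus H \ | \ y \in D(T^\dagger),\ z = T^\dagger y\}.
\end{equation}
By \Cref{Definition: various notions for relations}, the relational adjoint is
\begin{equation}
    \Gamma(T)^\dagger = \{(u,v) \in H' \oplus H \ | \ \langle v,x\rangle_H = \langle u, Tx\rangle_{H'} \ \text{for all} \ x \in D(T)\}.
\end{equation}
One caveat: the relational definition pairs $\langle v,x\rangle$ against $\langle u,y\rangle$, whereas the operator adjoint uses $\langle Tx,y\rangle = \langle x,z\rangle$. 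These are complex conjugates of one another, so the defining conditions are equivalent. I will record this conjugation step explicitly so that the argument does not depend on the inner-product convention.

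For the inclusion $\Gamma(T^\dagger) \subseteq \Gamma(T)^\dagger$, take $y \in D(T^\dagger)$ and set $z = T^\dagger y$. \Cref{Equation: adjunction equation for unbounded operators} gives $\langle Tx,y\rangle = \langle x,z\rangle$ for all $x \in D(T)$. Conjugating yields $\langle z,x\rangle = \langle y,Tx\rangle$, which is exactly the condition for $(y,z) \in \Gamma(T)^\dagger$. This direction does not use density.

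For the reverse inclusion, take $(u,v) \in \Gamma(T)^\dagger$. Conjugating the defining identity gives $\langle Tx,u\rangle = \langle x,v\rangle$ for all $x \in D(T)$. So $v$ witnesses the existential condition defining $D(T^\dagger)$, and hence $u \in D(T^\dagger)$. It remains to show $T^\dagger u = v$. This is the only step that needs density, and it is the main (mild) obstacle. By construction, $T^\dagger u$ is the unique $z$ with $\langle Tx,u\rangle = \langle x,z\rangle$ for all $x \in D(T)$. Uniqueness is precisely the argument already given in \Cref{Construction: adjoint of a densely defined unbounded operator}: $v - T^\dagger u$ is orthogonal to the dense subspace $D(T)$, so it is orthogonal to all of $H$ and therefore zero. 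Thus $(u,v) = (u,T^\dagger u) \in \Gamma(T^\dagger)$, and the two inclusions together give the equality.
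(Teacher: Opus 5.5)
Your proof is correct and is essentially the paper's argument. The paper writes the same unwinding as one chain of set equalities: density characterizes $v = T^\dagger u$, then it uses the adjunction equation and the definition of $D(T^\dagger)$. You instead split it into two inclusions and make explicit the complex conjugation that the paper applies silently.
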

\begin{proof}
    We have 
    \begin{align*}
        \Gamma(T^\dagger) &= \{(z,T^\dagger(z)) | z \in \D(T^\dagger)\} \\
        &= \{(u,v) \in D(T^\dagger) \oplus H \ | \ \langle v,x \rangle_{H} = \langle T^\dagger u,x \rangle_{H} \ \text{for all} \ x \in D(T) \} \\ 
        &= \{(u,v) \in D(T^\dagger) \oplus H \ | \ \langle v,x \rangle_{H} = \langle u, Tx \rangle_{H'} \ \text{for all} \ x \in D(T) \} \\
        &= \{(u,v) \in H' \oplus H \ | \ \langle v,x \rangle_{H} = \langle u,Tx \rangle_{H'} \ \text{for all} \ x \in D(T) \} \\
        &= \{(u,v) \in H' \oplus H \ | \ \langle v,x \rangle_{H} = \langle u,y \rangle_{H'} \ \text{for all} \ (x,y) \in \Gamma(T)\} \\
        &= \Gamma(T)^\dagger
    \end{align*}
    where in the second equation we have used that $T$ is densely defined, in the third equation we have used 
    \Cref{Equation: adjunction equation for unbounded operators}, in the fourth equation we have used the evident inclusion together with the reverse inclusion 
    guaranteed by the definition of $D(T^\dagger)$, and in the remaining equations we only used definitions.
\end{proof}
Summarizing, we can define the adjoint of any relation which recovers the adjoint of an unbounded densely defined operator. In the following we investigate 
functorial properties of the adjoint relation operation.
\begin{lemma}\label{Lemma: relations and adjoints}
    Let $V$ be a relation from $H$ to $H'$ and $W$ be a relation from $H'$ to $H''$. Then $V^\dagger \circ W^\dagger  \subseteq (W \circ V)^{\dagger}$.
\end{lemma}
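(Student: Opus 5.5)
The inclusion is proved by direct element-chasing from the definitions of adjoint and composition of relations in \Cref{Definition: various notions for relations}. First we fix the variances. $V^\dagger$ is a relation from $H'$ to $H$, and $W^\dagger$ is a relation from $H''$ to $H'$. Hence $V^\dagger \circ W^\dagger$ is a relation from $H''$ to $H$, and so is $(W \circ V)^\dagger$.

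Take an arbitrary $(u,v) \in V^\dagger \circ W^\dagger \subseteq H'' \oplus H$. By definition of composition there is some $w \in H'$ with
\begin{equation}
    (u,w) \in W^\dagger \qquad \text{and} \qquad (w,v) \in V^\dagger .
\end{equation}
Unwinding the definition of the adjoint relation gives two conditions:
\begin{equation}
    \langle w, y \rangle_{H'} = \langle u, z \rangle_{H''} \ \text{for all } (y,z) \in W,
    \qquad
    \langle v, x \rangle_{H} = \langle w, y \rangle_{H'} \ \text{for all } (x,y) \in V .
\end{equation}

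To show $(u,v) \in (W \circ V)^\dagger$, we must check that $\langle v,x\rangle_H = \langle u,z\rangle_{H''}$ for every $(x,z) \in W \circ V$. Given such $(x,z)$, there is by definition some $y \in H'$ with $(x,y) \in V$ and $(y,z) \in W$. Chaining the two conditions above yields
\begin{equation}
    \langle v,x\rangle_H = \langle w,y\rangle_{H'} = \langle u,z\rangle_{H''} .
\end{equation}
This is exactly the required identity, so $(u,v) \in (W \circ V)^\dagger$ and the inclusion follows.

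No step here is a real obstacle; the only care needed is in bookkeeping of variances. One must keep the ordering convention of the adjoint, where $(u,v) \in V^\dagger$ pairs the $H'$-component $u$ with the $H$-component $v$, consistent with the order of pairs in the composition. Once the intermediate witness $w$ is chosen, the argument is a single chain of equalities.

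It is also worth recording why only an inclusion can be expected, in analogy with \Cref{Proposition: properties of the adjoint operator}(v). For an element of $(W\circ V)^\dagger$, nothing produces the intermediate vector $w \in H'$. So equality would require extra hypotheses, such as boundedness of one factor as in part (vi).
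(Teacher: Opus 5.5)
Your proof is correct and is essentially the paper's own argument: you take the intermediate witness $w \in H'$ from the definition of $V^\dagger \circ W^\dagger$, then chain the defining identities of $V^\dagger$ and $W^\dagger$ along a witness for $(x,z) \in W \circ V$ to obtain $\langle v,x\rangle_H = \langle u,z\rangle_{H''}$. Your closing remark on why only an inclusion should be expected is a heuristic aside and is not needed for the proof.
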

\begin{proof}
    We claim that the following equations and inclusions hold.
    \begin{align*}
            V^\dagger \circ W^\dagger 
            &= \{(u,v) \in H'' \oplus H \ | \ \exists y \in H': (u,y) \in W^\dagger \land (y,v) \in V^\dagger \} \\
            &= \{(u,v) \in H'' \oplus H \ | \ \exists y \in H': \langle y,x \rangle_{H'} = \langle u, z \rangle_{H''} \ \text{for all} \ (x,z) \in W \\
            &\hspace{1cm} \land \langle v,a \rangle_H = \langle y, b \rangle_{H'} \ \text{for all} \ (a,b) \in V\}\\
            &\subseteq \{(u,v) \in H'' \oplus H \ | \ \langle v,s \rangle_{H} = 
            \langle u,t \rangle_{H''} \ \text{for all}  \ (s,t) \in H \oplus H'' \\ 
            & \hspace{1cm} \text{such that} \ \exists o \in H': (s,o) \in V \land (o,t) \in W \} \\
            &= \{(u,v) \in H'' \oplus H \ | \ \langle v,s \rangle_{H} = 
            \langle u,t \rangle_{H''} \ \text{for all} \ (s,t) \in W \circ V \} \\
            &= (W \circ V)^{\dagger} &
    \end{align*}
    The only non-trivial statement here is the inclusion, which we will prove in the following. Let $(s,t) \in H \oplus H''$ such that there exists an $o \in H'$ 
    with $(s,o) \in V$ and $(o,t) \in W$. By assumption, there then exists a $y \in H'$ such that 
    \begin{equation}
        \langle v, s \rangle_H = \langle y, o \rangle_{H'} = \langle u,t \rangle_{H''},
    \end{equation}
    proving the claim.
\end{proof}
\begin{corollary}
    Let $T \colon H \to H'$ and $S \colon H' \to H''$ be densely defined unbounded operators so that $S \circ T \colon H \to H''$ is densely defined. Then
    \begin{equation}
        \Gamma(T^\dagger S^\dagger) \subseteq \Gamma((ST)^\dagger)
    \end{equation}
\end{corollary}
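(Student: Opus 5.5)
This corollary should follow by combining the two graph lemmas already proved and then applying the inclusion of \Cref{Lemma: relations and adjoints}.

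First, since $T$ and $S$ are densely defined, \Cref{Lemma: the graph is compatible with the adjoint} gives
\begin{equation}
    \Gamma(T^\dagger) = \Gamma(T)^\dagger \qquad \text{and} \qquad \Gamma(S^\dagger) = \Gamma(S)^\dagger .
\end{equation}
Second, by \Cref{Lemma: the graph is compatible with composition}, applied to the unbounded operators $S^\dagger \colon H'' \to H'$ and $T^\dagger \colon H' \to H$, we get
\begin{equation}
    \Gamma(T^\dagger S^\dagger) = \Gamma(T^\dagger) \circ \Gamma(S^\dagger) = \Gamma(T)^\dagger \circ \Gamma(S)^\dagger .
\end{equation}
This composition lemma only needs $S^\dagger$ and $T^\dagger$ to be unbounded operators, with no density hypothesis. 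That holds by \Cref{Construction: adjoint of a densely defined unbounded operator}, which uses the density of $S$ and $T$.

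Third, apply \Cref{Lemma: relations and adjoints} with $V = \Gamma(T)$ and $W = \Gamma(S)$. This yields
\begin{equation}
    \Gamma(T)^\dagger \circ \Gamma(S)^\dagger \subseteq (\Gamma(S) \circ \Gamma(T))^\dagger .
\end{equation}
Rewriting $\Gamma(S) \circ \Gamma(T) = \Gamma(ST)$ via the composition lemma, the right-hand side becomes $\Gamma(ST)^\dagger$. Since $ST$ is assumed densely defined, \Cref{Lemma: the graph is compatible with the adjoint} identifies this with $\Gamma((ST)^\dagger)$. Chaining the displays gives the claimed inclusion.

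There is no substantial obstacle; the step needing care is checking that each lemma's hypotheses hold where it is used. In particular, the density of $ST$ is used exactly once, in the final identification $\Gamma(ST)^\dagger = \Gamma((ST)^\dagger)$. As a consistency check, this recovers item (v) of \Cref{Proposition: properties of the adjoint operator}.
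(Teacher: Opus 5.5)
Your proof is correct and follows essentially the same chain as the paper. The paper applies the composition lemma to $T^\dagger S^\dagger$, the graph--adjoint lemma to $T$ and $S$, then \Cref{Lemma: relations and adjoints}, and finally the composition lemma for $ST$. You additionally make explicit the last identification $\Gamma(ST)^\dagger = \Gamma((ST)^\dagger)$ via the density of $ST$, which the paper's displayed chain leaves implicit.
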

\begin{proof}
    We have 
    \begin{align*}
        \Gamma(T^\dagger S^\dagger) &= \Gamma(T^\dagger) \circ \Gamma(S^\dagger)  \tag*{\text{by \Cref{Lemma: the graph is compatible with composition}}} \\
        &= \Gamma(T)^\dagger \circ \Gamma(S)^\dagger \tag*{\text{by \Cref{Lemma: the graph is compatible with the adjoint}}} \\
        &\subseteq (\Gamma(S) \circ \Gamma(T))^\dagger \tag*{\text{by \Cref{Lemma: relations and adjoints}}} \\
        &= \Gamma(ST)^\dagger \tag*{\text{by \Cref{Lemma: the graph is compatible with composition}}}
    \end{align*}
\end{proof}
\begin{lemma}\label{Lemma: the adjoint of the diagonal is the diagonal}
    Let $H$ be a Hilbert space. Then $\Delta_H^\dagger = \Delta_H$
\end{lemma}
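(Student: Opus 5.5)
We show the two inclusions $\Delta_H \subseteq \Delta_H^\dagger$ and $\Delta_H^\dagger \subseteq \Delta_H$ separately, working directly from the definition of the adjoint relation in \Cref{Definition: various notions for relations}. Since $\Delta_H$ is a relation from $H$ to $H$, its adjoint is
\begin{equation*}
    \Delta_H^\dagger = \{(u,v) \in H \oplus H \ | \ \langle v,x \rangle_H = \langle u,x \rangle_H \ \text{for all} \ x \in H\}.
\end{equation*}
Here we used that the pairs $(x,y) \in \Delta_H$ are exactly those with $y = x$ and $x \in H$ arbitrary.

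For the inclusion $\Delta_H \subseteq \Delta_H^\dagger$, take $(u,u) \in \Delta_H$. The defining condition $\langle u,x\rangle_H = \langle u,x\rangle_H$ then holds trivially for every $x$.

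For the reverse inclusion, take $(u,v) \in \Delta_H^\dagger$. Then $\langle v-u,x\rangle_H = 0$ for all $x \in H$. Choosing $x = v-u$ gives $\|v-u\|^2 = 0$, so $v = u$ and $(u,v) \in \Delta_H$.

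No step is a genuine obstacle. The one point to get right is the ordering convention in the definition of $V^\dagger$, namely that the pair $(u,v)$ is tested against $\langle v,x\rangle_H = \langle u,y\rangle_{H'}$. For the diagonal this convention is harmless, since both slots live in the same space $H$. The argument only uses non-degeneracy of the inner product on all of $H$, so no density or closedness hypotheses enter. In particular, this is the relational counterpart of the statement that the adjoint of the identity operator is the identity, consistent with \Cref{Lemma: the graph is compatible with the adjoint}.
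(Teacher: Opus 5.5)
Your proof is correct and follows essentially the same route as the paper: unwind the definition of $\Delta_H^\dagger$ to the condition $\langle v,x\rangle_H = \langle u,x\rangle_H$ for all $x \in H$, then conclude by non-degeneracy of the inner product. The only difference is that you make the non-degeneracy step explicit by testing against $x = v-u$, which the paper leaves implicit.
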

\begin{proof}
    We have 
    \begin{align*}
        \Delta_H^\dagger 
        &= \{(u,v) \in H \oplus H \ | \ \langle v,x \rangle_{H} = 
            \langle u,y \rangle_{H} \ \text{for all} \ (x,y) \in \Delta_H \} \\
        &= \{(u,v) \in H \oplus H \ | \ \langle v,x \rangle_{H} = 
            \langle u,x \rangle_{H} \ \text{for all} \ x \in H \} 
        = \Delta_H
    \end{align*}
    where in the last line we have used the non-degeneracy of the inner product.
\end{proof}
\begin{definition}\label{Definition: Mapping sets of relations}
    Let $H$ and $H'$ be Hilbert spaces and let $V$ and $W$ be relations from $H$ to $H'$. The \emph{mapping set} $\hom(V,W)$ is defined as follows: 
    \begin{equation}
        \hom(V,W) := 
        \begin{cases}
            \{\star\} \hspace{1cm} \text{if} \hspace{1cm} V \subseteq W \\
            \emptyset \hspace{1.25cm} \text{else}\\
        \end{cases}
    \end{equation}
\end{definition}
\begin{lemma}\label{Lemma: inclusion and adjoints for relations}
    Let $H$ and $H'$ be Hilbert spaces, and let $V$ and $W$ be relations from $H$ to $H'$. Then $V \subseteq W \Rightarrow W^\dagger \subseteq V^\dagger$.
\end{lemma}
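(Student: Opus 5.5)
The plan is a direct elementwise verification from the definition of the adjoint relation in \Cref{Definition: various notions for relations}. The adjoint $V^\dagger$ is cut out by a family of linear conditions, one for each element of $V$. Enlarging the relation adds conditions, and so shrinks the adjoint. It is the same order-reversing phenomenon as for annihilators.

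Concretely, assume $V \subseteq W$ and let $(u,v) \in W^\dagger \subseteq H' \oplus H$. By definition, this means that $\langle v,x \rangle_H = \langle u,y \rangle_{H'}$ for all $(x,y) \in W$. Now take an arbitrary $(x,y) \in V$. Since $V \subseteq W$, we also have $(x,y) \in W$, and hence the identity $\langle v,x \rangle_H = \langle u,y \rangle_{H'}$ holds. As $(x,y) \in V$ was arbitrary, $(u,v)$ satisfies the defining condition of $V^\dagger$, so $(u,v) \in V^\dagger$. This gives $W^\dagger \subseteq V^\dagger$.

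There is no genuine obstacle here. The only points to keep straight are the ordering conventions: the adjoint of a relation from $H$ to $H'$ is a relation from $H'$ to $H$, and the pairings are taken in $H$ and $H'$ respectively. Unlike \Cref{Lemma: relations and adjoints}, no existential quantifier has to be handled, so the argument is purely formal.

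Optionally, I would record the interpretation in terms of \Cref{Definition: Mapping sets of relations}. The lemma says that $(-)^\dagger$ is a well-defined, contravariant assignment on mapping sets $\hom(V,W) \to \hom(W^\dagger,V^\dagger)$, which is the form in which it will be used when organizing relations, inclusions and adjoints into a 2-category.
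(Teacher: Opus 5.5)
Your proof is correct and is essentially the paper's argument. The paper writes $W^\dagger$ as the set cut out by the conditions indexed by $(x,y)\in W$ and notes that it lies inside the set cut out by the fewer conditions indexed by $(x,y)\in V$, which is $V^\dagger$; your elementwise check does the same thing.
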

\begin{proof}
    We have 
    \begin{align*}
        W^\dagger 
        &= \{(u,v) \in H' \oplus H \ | \ \langle v,x \rangle_{H} = \langle u,y \rangle_{H'} \ \text{for all} \ (x,y) \in W \},\\
        &\subseteq \{(u,v) \in H' \oplus H \ | \ \langle v,x \rangle_{H} = \langle u,y \rangle_{H'} \ \text{for all} \ (x,y) \in V \}
        = V^\dagger,
    \end{align*}
    proving the claim.
\end{proof}
\begin{lemma}
    Let $H,H'$, and $H''$ be Hilbert spaces, let $V \subseteq W$ be relations from $H$ to $H'$ and $V' \subseteq W'$ relations from $H'$ to $H''$. Then 
    $V' \circ V \subseteq W' \circ W$.
\end{lemma}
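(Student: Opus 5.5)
The argument is a direct element chase with the composition of relations from \Cref{Definition: various notions for relations}. Relations here are plain linear subspaces, and composition is defined by an existential over the middle Hilbert space, so no topological input such as closures or density is needed.

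Fix relations $V \subseteq W$ from $H$ to $H'$ and $V' \subseteq W'$ from $H'$ to $H''$. Take an arbitrary element $(x,z) \in V' \circ V$. By definition of composition there exists $y \in H'$ with $(x,y) \in V$ and $(y,z) \in V'$. The inclusions $V \subseteq W$ and $V' \subseteq W'$ then give $(x,y) \in W$ and $(y,z) \in W'$. The same $y$ therefore witnesses $(x,z) \in W' \circ W$, which establishes $V' \circ V \subseteq W' \circ W$.

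I expect no real obstacle; the only thing to check is that the witness $y$ can be reused unchanged. It is worth recording the intended reading of the statement. Combined with \Cref{Definition: Mapping sets of relations}, it says that horizontal composition of relations is functorial on the posetal mapping sets $\hom(V,W)$. Since those sets have at most one element, all coherence conditions hold automatically, and this gives the 2-categorical structure on $\HilbRel$ with inclusions as 2-morphisms. Together with \Cref{Lemma: inclusion and adjoints for relations}, which says the adjoint reverses 2-morphisms, this is presumably what the subsequent discussion of a (lax) dagger structure on that 2-category uses.
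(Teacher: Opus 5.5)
Your proof is correct and matches the paper's argument exactly: take $(x,z) \in V' \circ V$ with witness $y$, apply the two inclusions, and reuse the same $y$ to conclude $(x,z) \in W' \circ W$.
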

\begin{proof}
    Let $(x,z) \in V' \circ V$. Then there exists a $y \in H'$ such that $(x,y) \in V$ and $(y,z) \in V'$. Hence $(x,y) \in W$ and $(y,z) \in W'$ and 
    in consequence $(x,z) \in W' \circ W$.
\end{proof}
\begin{notation}
    We denote by $\HilbRel^{\text{bi}}$ the 2-category of Hilbert spaces and relations, with the respective mapping sets of 
    \Cref{Definition: Mapping sets of relations} as the sets of 2-morphisms. 
\end{notation}
\begin{proposition}
    The construction $V \to V^\dagger$ of \Cref{Definition: various notions for relations} extends to an identity on objects 
    lax 2-functor $\HilbRel^{\text{bi}} \to (\HilbRel^{\text{bi}})^{(1,2)\opp}$.
\end{proposition}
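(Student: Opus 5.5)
We have to produce an identity-on-objects lax 2-functor $\dagger\colon \HilbRel^{\bi} \to (\HilbRel^{\bi})^{(1,2)\opp}$, using the lemmas on relations proved just above. We send a Hilbert space $H$ to itself and a relation $V$ from $H$ to $H'$ to $V^\dagger$, which is a relation from $H'$ to $H$; this is a 1-morphism $H \to H'$ in the (1,2)-opposite. On 2-morphisms we use \Cref{Lemma: inclusion and adjoints for relations}: if $V \subseteq W$, then $W^\dagger \subseteq V^\dagger$. This gives the required map $\hom(V,W) \to \hom(W^\dagger,V^\dagger)$, which is a 2-morphism $V^\dagger \to W^\dagger$ in the (1,2)-opposite. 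Each Hom-category of $\HilbRel^{\bi}$ is a poset, so the induced assignment on Hom-categories is automatically a functor.

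Next we supply the lax structure 2-morphisms. We must keep track of variance: in $\B^{(1,2)\opp}$, composites are reversed and 2-cells point backwards. Take composable relations $V\colon H\to H'$ and $W\colon H'\to H''$. The lax comparison $\dagger(W)\circ^{(1,2)\opp}\dagger(V) \Rightarrow \dagger(W\circ V)$ then translates in $\HilbRel^{\bi}$ into an inclusion between $V^\dagger\circ W^\dagger$ and $(W\circ V)^\dagger$, with its direction fixed by the opposite.

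\Cref{Lemma: relations and adjoints} gives exactly $V^\dagger\circ W^\dagger\subseteq (W\circ V)^\dagger$. I will carefully check that this direction matches the convention for lax functors. If it turns out to match only after the 2-cell reversal, that is precisely why the target is the (1,2)-opposite rather than the 1-opposite.

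For the unit comparison $\id_H \Rightarrow \dagger(\id_H)$, we use \Cref{Lemma: the adjoint of the diagonal is the diagonal}: $\Delta_H^\dagger=\Delta_H$. This gives an equality, hence an invertible unit constraint in either direction, so $\dagger$ is even normal.

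Finally, the coherence axioms: lax associativity, left unity and right unity. These are equalities between 2-morphisms with the same source and target in $\HilbRel^{\bi}$. Every hom-set there is either empty or a singleton, so any two parallel 2-morphisms coincide and the axioms hold automatically. The same observation handles naturality of the comparison 2-cells in $V$ and $W$.

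The only genuine obstacle is the bookkeeping of directions in the (1,2)-opposite. We must confirm that the inclusion from \Cref{Lemma: relations and adjoints} is oriented correctly for a lax, rather than oplax, functor; that identification is the one step I would check most carefully.
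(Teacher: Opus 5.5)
Your construction is the same as the paper's. You use the same assignments, \Cref{Lemma: inclusion and adjoints for relations} on 2-cells, \Cref{Lemma: relations and adjoints} for composition, and \Cref{Lemma: the adjoint of the diagonal is the diagonal} for units. Your remark that all coherence and naturality axioms hold automatically, because the hom-sets of $\HilbRel^{\bi}$ are subsingletons, is correct and fills in what the paper leaves implicit.

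The gap is the one step you deferred, and your fallback explanation for it points the wrong way. With the paper's definition of a lax 2-functor, the constraints are $F(Y)\circ F(X)\Rightarrow F(Y\circ X)$ in the \emph{target}. In $(\HilbRel^{\bi})^{(1,2)\opp}$ the composite $\dagger(W)\circ\dagger(V)$ is the relation $V^\dagger\circ W^\dagger$, and a 2-cell $P\Rightarrow Q$ is an inclusion $Q\subseteq P$. So a lax constraint would be an inclusion $(W\circ V)^\dagger\subseteq V^\dagger\circ W^\dagger$. That is the reverse of \Cref{Lemma: relations and adjoints}, and it is false in general. Take $V=\Gamma(0)=H\oplus 0$ and $W=\Gamma(S)$ with $S$ unbounded self-adjoint, so $D(S)\neq H$. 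Then $V^\dagger\circ W^\dagger=D(S)\oplus 0\subsetneq H\oplus 0=(W\circ V)^\dagger$. The reversal of 2-cells in the target is exactly what turns the lemma's inclusion into an \emph{oplax} constraint, so it cannot rescue laxness. The $(1,2)$-opposite is needed only because $\dagger$ reverses both relations and inclusions, not because of the orientation of the comparison cell.

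What the lemmas actually give is a normal oplax 2-functor $\HilbRel^{\bi}\to(\HilbRel^{\bi})^{(1,2)\opp}$. Equivalently, they give a normal lax 2-functor $(\HilbRel^{\bi})^{2\opp}\to(\HilbRel^{\bi})^{1\opp}$. This matches the convention the paper itself states in the footnote to \Cref{Construction: 2-functor from closedness}, where formally identical data $\mathcal{J}(X)\circ\mathcal{J}(Y)\Rightarrow\mathcal{J}(Y\circ X)$ is called oplax. The paper's own proof of this proposition calls the inclusion ``the lax 2-cell'' without checking orientation, so it has the same mislabeling. Once ``lax'' is corrected in either of these two ways, your argument goes through verbatim.
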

\begin{proof}
    The assignment is well-defined by construction and \Cref{Lemma: inclusion and adjoints for relations}. It respects composition 
    of 2-morphisms, since $V \subseteq W \subseteq X$ implies $X^\dagger \subseteq W^\dagger \subseteq V^\dagger$. We have 
    constructed the lax 2-cell implementing comaptibility with composition of 1-morphisms in \Cref{Lemma: relations and adjoints} and 
    we have constructed the 2-cell implementing compatibility with identity 1-morphisms in \Cref{Lemma: the adjoint of the diagonal is the diagonal}.
\end{proof}
\begin{remark}
    There are more intricate versions of mapping spaces between relations; we have chosen the one in \Cref{Definition: Mapping sets of relations} 
    as a minimalistic approach to interpret \Cref{Lemma: relations and adjoints} as a lax compatibility of the adjoint operation with composition of relations.
\end{remark}
Having established functorial properties of the adjoint operation, we now turn our attention to the closure of relations.
\begin{lemma}\label{Lemma: compatibility of closure and composition}
    Let $V$ be a relation from $H$ to $H'$ and let $W$ be a relation from $H'$ to $H''$. Then, $\overline{W \circ V} \subseteq \overline{W} \circ \overline{V}$.
\end{lemma}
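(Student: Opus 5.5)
The plan is a sequential argument in the Hilbert space $H \oplus H''$, followed by a compactness argument to produce the intermediate vector. Let $(x,z) \in \overline{W \circ V}$. Since $H \oplus H''$ is a Hilbert space, hence metrizable, I will pick a sequence $(x_n,z_n) \in W \circ V$ with $x_n \to x$ in $H$ and $z_n \to z$ in $H''$. By \Cref{Definition: various notions for relations}, for each $n$ there is some $y_n \in H'$ with $(x_n,y_n) \in V$ and $(y_n,z_n) \in W$. The goal is to produce a single $y \in H'$ with $(x,y) \in \overline{V}$ and $(y,z) \in \overline{W}$. This gives $(x,z) \in \overline{W} \circ \overline{V}$ by the definition of composition.

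The key step is to pass to a limit of the intermediate vectors $y_n$. I will not ask for norm convergence, which is hopeless in general. Instead I will use weak compactness.
\begin{itemize}
\item[(i)] Arrange that $(y_n)$ can be chosen bounded in $H'$.
\item[(ii)] By the Banach--Alaoglu theorem, i.e.\ weak sequential compactness of bounded sets in a Hilbert space, pass to a subsequence with $y_n \rightharpoonup y$ weakly.
\item[(iii)] Then $(x_n,y_n) \rightharpoonup (x,y)$ weakly in $H \oplus H'$, since $x_n \to x$ in norm, and likewise $(y_n,z_n) \rightharpoonup (y,z)$ in $H' \oplus H''$.
\item[(iv)] The sets $\overline{V}$ and $\overline{W}$ are norm-closed linear subspaces, hence convex and norm-closed, hence weakly closed by Mazur's lemma. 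Therefore $(x,y) \in \overline{V}$ and $(y,z) \in \overline{W}$, as required.
\end{itemize}

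For step (i), the freedom in choosing $y_n$ helps. The set of admissible intermediates for $(x_n,z_n)$ is an affine translate of $\ker(V)^{\rev} \cap \ker(W)$, meaning vectors $y$ with $(0,y)\in V$ and $(y,0)\in W$. So I will take $y_n$ of minimal norm in the closure of this affine set, after replacing $V,W$ by their closures, so that the kernels are closed. I then hope to bound $\|y_n\|$ by a constant times $\|(x_n,z_n)\|$ via an open-mapping type argument. The candidate is the closed map $\overline{V} \times_{H'} \overline{W} \to H \oplus H''$ given by $((a,b),(b,c)) \mapsto (a,c)$, which has closed domain.

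I expect step (i) to be the main obstacle. The open mapping argument needs the range of this map to be closed, and that is exactly the kind of hypothesis that is not automatic for relations. If this is the case the argument could break down, and the statement might need an additional assumption such as bounded intermediates or closed range. I would first try to derive closed range from the structure at hand. Failing that, I would isolate the precise hypothesis needed and check whether the statement as worded survives without it.
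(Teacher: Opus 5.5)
Your step (i) is a genuine gap that cannot be closed, because the statement is false for infinite-dimensional Hilbert spaces. Take $H=H'=H''=\ell^2$, let $T$ be the closed operator $(Tx)_k=kx_k$ on $D(T)=\{x\in\ell^2 : (kx_k)_k\in\ell^2\}$, and put $V=\Gamma(T)$ and $W=\ell^2\oplus\{0\}$, the graph of the zero operator. Both relations are closed, and $W\circ V=D(T)\oplus\{0\}$. Its closure is $\ell^2\oplus\{0\}$, since $D(T)$ contains all finitely supported sequences. However, $\overline{W}\circ\overline{V}=W\circ V=D(T)\oplus\{0\}$ does not contain $((1/k)_k,0)$, because $(k\cdot 1/k)_k\notin\ell^2$.

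Along the truncations $x^{(N)}\to(1/k)_k$, the intermediate vector is forced to be $y_N=Tx^{(N)}$, which has norm $\sqrt{N}$. The admissible set is a singleton, so the minimal-norm choice does not help. The range $\overline{W}\circ\overline{V}$ of your map $\overline{V}\times_{H'}\overline{W}\to H\oplus H''$ is not closed, so no open-mapping estimate is available. Your own steps (ii)--(iv) even show that along \emph{every} approximating sequence the intermediates must blow up here.

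More conceptually, for closed $V$ and $W$ the claimed inclusion says exactly that $W\circ V$ is closed, i.e.\ that closed relations compose to closed relations. This already fails for a closed unbounded operator followed by the zero operator. The paper's proof has precisely the gap you isolated: after choosing the $y_n$, it writes $(x,y)=\lim_n(x_n,y_n)$ without any reason for $(y_n)$ to converge.

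What your proposal does establish is correct and worth keeping. Steps (ii)--(iv) are sound: bounded sequences in $H'$ have weakly convergent subsequences, the pairs then converge weakly, and closed subspaces are weakly closed. So the inclusion holds whenever the intermediates can be chosen bounded along some approximating sequence. Examples are when $V$ is the graph of a bounded everywhere-defined operator, or when $W^{\rev}$ is the graph of a bounded operator $H''\to H'$.

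The closed-range route needs no open-mapping argument at all. If $\overline{W}\circ\overline{V}$ is closed, then $\overline{W\circ V}\subseteq\overline{\overline{W}\circ\overline{V}}=\overline{W}\circ\overline{V}$ follows directly from monotonicity of composition. So the correct outcome of your last paragraph is that the statement does not survive as worded. It needs an extra hypothesis of this kind, and so do the paper's later claims that relies on it: that closed relations compose to closed relations, and that closure is an oplax 2-functor.
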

\begin{proof}
    Let $(x,z) \in \overline{W \circ V}$. By definition of the closure, there exists a sequence $(x_n,z_n)_n \in W \circ V$ with $\lim_n (x_n,z_n)_n =(x,z)$. 
    By definition of the composition, there exists a sequence $(y_n)_n \in H'$ with $(x_n,y_n) \in V$ and $(y_n,z_n) \in W$ for all $n \in \mathbb{N}$. Hence, 
    we have $(x,y) = \lim_n (x_n,y_n) \in \overline{V}$ and $(y,z) = \lim_n (y_n,z_n) \in \overline{W}$, using the fact that limits are computed pointwise.
    Hence, $(x,z) \in \overline{W} \circ \overline{V}$. 
\end{proof}
\begin{lemma}\label{Lemma: the diagonal of a Hilbert space is closed}
    Let $H$ be a Hilbert space. Then $\overline{\Delta_H} = \Delta_H$.
\end{lemma}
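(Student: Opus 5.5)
The plan is to show that the diagonal $\Delta_H = \{(x,x) \mid x \in H\}$ is a closed subset of $H \oplus H$. Since $\Delta_H \subseteq \overline{\Delta_H}$ always holds, this gives $\overline{\Delta_H} = \Delta_H$.

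I will use the sequential characterization of closure in the Hilbert space $H \oplus H$, which is a metric space. This is the same kind of reasoning as in \Cref{Lemma: compatibility of closure and composition}.

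The steps are as follows.
\begin{enumerate}
\item Take $(x,y) \in \overline{\Delta_H}$. By definition of the closure there is a sequence $(x_n,x_n) \in \Delta_H$ with $(x_n,x_n) \to (x,y)$.
\item Limits in $H \oplus H$ are computed componentwise, since the norm satisfies $\|(a,b)\|^2 = \|a\|^2 + \|b\|^2$. Hence $x_n \to x$ and $x_n \to y$ in $H$.
\item Limits in the metric space $H$ are unique, so $x = y$. Therefore $(x,y) \in \Delta_H$.
\end{enumerate}

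Alternatively, one can note that $\Delta_H$ is the kernel of the bounded linear map $H \oplus H \to H$, $(x,y) \mapsto x - y$. A preimage of the closed set $\{0\}$ under a continuous map is closed. This also shows closedness directly.

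There is no real obstacle in this argument. The only point needing care is that the componentwise description of convergence uses the direct sum norm on $H \oplus H$, and this is immediate.
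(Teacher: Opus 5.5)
Your proof is correct and is essentially the paper's argument. The paper simply cites the general fact that a topological space is Hausdorff if and only if its diagonal is closed in the product, and your sequential argument (componentwise convergence plus uniqueness of limits in the metric space $H$) is a direct verification of the needed direction of that fact; your alternative, realizing $\Delta_H$ as the kernel of the bounded linear map $(x,y) \mapsto x - y$, is equally valid and uses the linear structure instead.
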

\begin{proof}
    Recall that a topological space $X$ is Hausdorff if and only if the diagonal is closed as a subset of the product space $X \times X$. 
    Recalling that every Hilbert space is a metric space and in particular Hausdorff, this implies the claim.
\end{proof}
\begin{proposition}
    The operation $V \mapsto \overline{V}$ extends to an identity-on-objects oplax 2-functor $\HilbRel^{\text{bi}} \to \HilbRel^{\text{bi}}$. 
\end{proposition}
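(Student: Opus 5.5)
The plan mirrors the proof for $V \mapsto V^\dagger$: since $\HilbRel^{\text{bi}}$ is locally posetal, every coherence condition of an oplax 2-functor holds automatically once the relevant 2-cells exist. The proof therefore reduces to producing the data and checking that it is well defined.

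On objects the assignment is the identity, and on 1-morphisms it sends $V \mapsto \overline{V}$. The closure of a linear subspace of $H \oplus H'$ is again a linear subspace, because addition and scalar multiplication are continuous. For 2-morphisms we must check that $V \subseteq W$ implies $\overline{V} \subseteq \overline{W}$, which holds because $\overline{W}$ is a closed set containing $V$. Functoriality on Hom-categories is then automatic, since they are posets.

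Next we supply the oplax structure 2-cells. Because the target is not opposed, oplax here means 2-cells $F(W \circ V) \to F(W) \circ F(V)$ and $F(\id_H) \to \id_{F(H)}$. The first is exactly the inclusion $\overline{W \circ V} \subseteq \overline{W} \circ \overline{V}$ of \Cref{Lemma: compatibility of closure and composition}. The second is the inclusion $\overline{\Delta_H} \subseteq \Delta_H$, which is in fact an equality by \Cref{Lemma: the diagonal of a Hilbert space is closed}; so the functor is even normal.

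Finally, the naturality of these cells in $V,W$, the associativity hexagon and the unit triangles are all equalities between 2-morphisms in a Hom-poset, where any two parallel 2-cells coincide. They therefore hold automatically, in line with the conventions of \cite[Definition 4.1.2]{johnson20202dimensionalcategories}.

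The only step needing attention is orienting the structure cells correctly as oplax rather than lax. This is settled by the direction of the inclusion in \Cref{Lemma: compatibility of closure and composition}: the reverse inclusion $\overline{W} \circ \overline{V} \subseteq \overline{W \circ V}$ can fail, so the functor is genuinely oplax and not a pseudofunctor.
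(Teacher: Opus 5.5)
Your argument is, step for step, the paper's own proof. It uses monotonicity of closure on 2-cells, the unitor from \Cref{Lemma: the diagonal of a Hilbert space is closed}, the compositor from \Cref{Lemma: compatibility of closure and composition}, and gets coherence for free from local posetality. Everything except the compositor is fine. The compositor, however, does not exist, because \Cref{Lemma: compatibility of closure and composition} is false. Its proof picks $y_n$ with $(x_n,y_n)\in V$ and $(y_n,z_n)\in W$, then uses $y=\lim_n y_n$, but nothing forces $(y_n)$ to converge.

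Here is a counterexample. Let $H=\ell^2(\mathbb{N})$ and let $T$ be the diagonal operator $(Tx)_n=nx_n$ on $D(T)=\{x\in H \mid (nx_n)_n\in H\}$. This $T$ is closed and densely defined, with $D(T)\neq H$. Take the closed relations $V=\Gamma(T)$ and $W=\Gamma(0)=H\oplus 0$. By \Cref{Lemma: the graph is compatible with composition}, $W\circ V=\Gamma(0\circ T)=D(T)\oplus 0$, hence
\[
\overline{W\circ V}=H\oplus 0\not\subseteq D(T)\oplus 0=\overline{W}\circ\overline{V},
\]
as witnessed by $x=(1/n)_n$. For any sequence in $W\circ V$ approximating $(x,0)$, the middle elements are forced to be $Tx_n$. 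These cannot converge, since $\Gamma(T)$ is closed and $x\notin D(T)$. Since the 2-cells of $\HilbRel^{\text{bi}}$ are exactly the inclusions, there is no other candidate compositor. So the proposition as stated is false, and no proof along any route can be completed. The same example refutes the paper's later remark that composites of closed relations are closed.

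Your last sentence therefore draws the wrong conclusion. The reverse inclusion $\overline{W}\circ\overline{V}\subseteq\overline{W\circ V}$ does fail too, but that only shows that closure is not lax either; it does not make it oplax.

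What your reasoning does establish is the compositor under a boundedness hypothesis on the middle terms. If the $y_n$ can be chosen with $\|y_n\|\le C$, a subsequence converges weakly to some $y$. Since closed subspaces are weakly closed, this gives $(x,y)\in\overline{V}$ and $(y,z)\in\overline{W}$. For example, if $V=\Gamma(B)$ for an everywhere defined bounded $B$, then $W\circ V$ is the preimage of $W$ under the continuous map $(x,z)\mapsto(Bx,z)$. Hence $\overline{W\circ V}\subseteq\overline{W}\circ V=\overline{W}\circ\overline{V}$. Any correct version of the statement needs a restriction of this kind on the relations being composed. The well-definedness, normality and automatic-coherence parts of your argument would then carry over unchanged.
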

\begin{proof}
    To see that the assignment is well-defined, we note that an inclusion of relations $V \subseteq W$ implies that $\overline{V} \subseteq \overline{W}$. 
    The oplax functoriality is subject of \Cref{Lemma: the diagonal of a Hilbert space is closed} and \Cref{Lemma: compatibility of closure and composition}. 
    Compatibility with composition of 2-morphisms follows from the observation that, for any two Hilbert spaces $H$ and $H$', an inclusion of relations 
    $V \subseteq W \subseteq X$ implies that $\overline{V} \subseteq \overline{W} \subseteq \overline{X}$ for any three relations $V,W,X$ from $H$ to $H'$
\end{proof}
\begin{remark}
    We emphasize that the adjoint is a lax 2-functor, whereas the closure is an oplax 2-functor on $\HilbRel^{\text{bi}}$.
\end{remark}
\begin{remark}
    The composition of two closed relations is closed again: indeed, let $V$ be a closed relation from $H$ to $H'$ and let $W$ be a closed relation from 
    $H'$ to $H''$. Combining the observation $W \circ V \subseteq \overline{W \circ V}$ with the inclusion 
    \begin{equation}
        \overline{W \circ V} \subseteq \overline{W} \circ \overline{V} = W \circ V,
    \end{equation}
    where in the first step we have used \Cref{Lemma: compatibility of closure and composition} and in the second step we have used our assumption 
    that $V$ and $W$ are closed, we find that $W \circ V = \overline{W \circ V}$. Together with \Cref{Lemma: the diagonal of a Hilbert space is closed}, 
    this implies that Hilbert spaces, closed relations, and inclusions form a sub 2-category $\HilbRel^{\text{cl,bi}} \subseteq \HilbRel^{\text{bi}}$. 
\end{remark}
In the following we wish to study the involutiveness of the adjoint relation construction. Our discussion is a (slight) generalization of the discussion in 
\cite{Remline2021unbounded} to relations between different Hilbert spaces. We refer in particular to \cite[Theorem 11.7]{Remline2021unbounded} for the following result.
\begin{lemma}\label{Lemma: relations adjoints and closedness}
    Let $H$ and $H'$ be Hilbert spaces and let $V$ be a relation from $H$ to $H'$. Then we have
    \begin{itemize}
        \item[(i)] $\overline{V^\dagger} = V^\dagger$,
        \item[(ii)] $V^{\dagger \dagger} = \overline{V}$, and
        \item[(iii)] $\overline{V}^{\dagger} = V^\dagger$. 
    \end{itemize}
    Note that this implies in particular that $V^\dagger = \overline{V^{\dagger}} = V^{\dagger \dagger \dagger}$.
\end{lemma}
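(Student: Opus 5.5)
The plan is to reduce everything to orthogonal complements in the Hilbert space $H \oplus H'$, equipped with the inner product $\langle (x,y),(x',y')\rangle = \langle x,x'\rangle_H + \langle y,y'\rangle_{H'}$. Consider the unitary $J \colon H \oplus H' \to H' \oplus H$, $J(x,y) := (-y,x)$. First I would verify directly from \Cref{Definition: various notions for relations} that
\begin{equation}
    V^\dagger = J(V)^{\perp} = J(V^{\perp}) \subseteq H' \oplus H.
\end{equation}
Indeed, $(u,v) \in V^\dagger$ means $\langle v,x\rangle_H - \langle u,y\rangle_{H'} = 0$ for all $(x,y)\in V$. This is exactly $\langle (u,v), (-y,x)\rangle = 0$, up to the convention on which slot is conjugate-linear; the condition is symmetric under conjugation, so the convention does not matter. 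The second equality holds because $J$ is unitary. This identification is the only genuinely new step, and the main thing to check carefully is the sign: the minus sign in $J$ is exactly what turns the defining equality $\langle v,x\rangle = \langle u,y\rangle$ into an orthogonality condition.

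With this in hand, (i) is immediate, since an orthogonal complement is always a closed subspace. For (iii), I use the standard fact that $S^\perp = \overline{S}^\perp$ for any subspace $S$, which follows from continuity of the inner product. Since $J$ is a homeomorphism, $J(\overline{V}) = \overline{J(V)}$, and therefore $\overline{V}^\dagger = \overline{J(V)}^\perp = J(V)^\perp = V^\dagger$.

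For (ii), apply the formula twice, now with the corresponding unitary $J' \colon H' \oplus H \to H \oplus H'$, $J'(u,v) := (-v,u)$. This gives
\[
V^{\dagger\dagger} = J'\bigl((J(V))^{\perp}\bigr)^{\perp} = \bigl(J'J(V)\bigr)^{\perp\perp} = (-V)^{\perp\perp} = V^{\perp\perp},
\]
because $J'J(x,y) = J'(-y,x) = (-x,-y)$ and $V$ is a linear subspace. By the projection theorem in the Hilbert space $H\oplus H'$, we have $V^{\perp\perp} = \overline{V}$, which proves (ii).

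The final remark then follows by combining the three parts: (i) gives $V^\dagger = \overline{V^\dagger}$, and (ii) applied to the relation $V^\dagger$ gives $\overline{V^\dagger} = V^{\dagger\dagger\dagger}$. Completeness of $H$ and $H'$ is used only in the double-complement step $V^{\perp\perp} = \overline{V}$.
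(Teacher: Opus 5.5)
Your proof is correct and takes essentially the same route as the paper: identify $V^\dagger$ with the orthogonal complement of $V$ transported by a sign-flipping unitary $J$, then use that orthogonal complements are closed and that $V^{\perp\perp}=\overline{V}$. The differences are cosmetic. Your $J(x,y)=(-y,x)$ makes the coordinate swap explicit, which the paper's $J$ glosses over, and you prove (iii) directly from $S^\perp=\overline{S}^\perp$ instead of deducing it from (i) and (ii) as the paper does.
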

\begin{proof}
    (i) First, define a bounded operator $J \colon H \oplus H' \to H \oplus H', (h',h) \mapsto (h',-h)$. We claim that $V^\dagger = (JV)^{\perp}$ where 
    $JV$ denotes the image of $V$ under $J$ and $(-)^{\perp}$ denotes the orthogonal complement with respect to the inner product on $H \oplus H'$. 
    Indeed, for $(u,v) \in H' \oplus H$ we have
    \begin{align*}
        (u,v) \in V^\dagger &\Leftrightarrow \langle v,x \rangle_H = \langle u, y \rangle_{H'} \ \forall \ (x,y) \in V  \\
        &\Leftrightarrow 0 = \langle v,x \rangle_H + \langle u, - y \rangle_{H'} \ \forall \ (x,y) \in V \\
        &\Leftrightarrow 0 = \langle v,x \rangle_H + \langle u, z \rangle_{H'} \ \forall \ (x,z) \in JV \\
        &\Leftrightarrow (u,v) \in (JV)^{\perp}.
    \end{align*}
    However, the orthogonal complelement $(JV)^{\perp}$ is closed so that $V^\dagger$ is closed as well. (ii) We first note that $J(V^\perp) = (JV)^{\perp}$ 
    by (sesqui)linearity of the inner products. We then compute
    \begin{equation}
        V^{\dagger \dagger} = (JV^\dagger)^{\perp} = J((JV)^{\perp})^{\perp} = (J^2V)^{\perp \perp} = V^{\perp \perp} = \overline{V}
    \end{equation}
    where the last equation is a standard fact. (iii) We compute
    \begin{equation}
        \overline{V}^{\dagger} \overset{(ii)}{=} (V^{\dagger \dagger})^\dagger = (V^{\dagger})^{\dagger \dagger} \overset{(ii)}{=} \overline{V^{\dagger}} \overset{(i)}{=} V^\dagger.
    \end{equation}
    This finishes the proof.
\end{proof}
\begin{corollary}
    The inclusions $V \subseteq \overline{V} = V^{\dagger \dagger}$ induce a lax 2-transformation $(-)^{\dagger \dagger} \to \id_{\HilbRel^{\bi}}$ 
    with identity 1-morphism components. Its 2-morphism componets are trivial on the sub 2-category $\HilbRel^{\text{cl,bi}} \subseteq \HilbRel^{\text{bi}}$ 
    of Hilbert spaces, closed relations, and inclusions between them.
\end{corollary}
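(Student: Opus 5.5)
We build the lax 2-transformation $\varepsilon \colon (-)^{\dagger\dagger} \to \id_{\HilbRel^{\bi}}$ directly from the explicit data, using that $\HilbRel^{\bi}$ is locally posetal. This reduces every coherence condition to an existence statement for inclusions. First we record that $(-)^{\dagger\dagger}$ is a lax 2-functor. It is the composite of the lax 2-functor $(-)^\dagger \colon \HilbRel^{\bi} \to (\HilbRel^{\bi})^{(1,2)\opp}$ with its $(1,2)$-opposite. Its lax structure cells
\[
V^{\dagger\dagger} \circ W^{\dagger\dagger} \subseteq (V \circ W)^{\dagger\dagger}, \qquad \Delta_H \subseteq \Delta_H^{\dagger\dagger}
\]
come from applying \Cref{Lemma: relations and adjoints} twice, with \Cref{Lemma: inclusion and adjoints for relations} reversing the inclusion in between. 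Alternatively, \Cref{Lemma: relations adjoints and closedness}(ii) rewrites them as $\overline{V}\circ\overline{W} \subseteq \overline{V\circ W}$, and this is exactly the content to be checked.

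Next we specify the data of $\varepsilon$. The 1-morphism component at $H$ is $\id_H = \Delta_H$. At a relation $V \colon H \to H'$ we need a 2-cell comparing $\id \circ V^{\dagger\dagger} = V^{\dagger\dagger}$ with $V \circ \id = V$, with the direction fixed by the lax convention. Here the inclusion $V \subseteq \overline{V} = V^{\dagger\dagger}$ from \Cref{Lemma: relations adjoints and closedness}(ii) is the candidate. Existence of a 2-cell is all that is required, since hom-categories are posets. The same fact makes all coherence axioms automatic: naturality with respect to 2-morphisms, compatibility with units, and compatibility with composition are each equalities of parallel 2-cells in a poset. The only thing to confirm is that every composite involved is well typed. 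The unit axiom uses $\Delta_H^{\dagger\dagger} = \overline{\Delta_H} = \Delta_H$ by \Cref{Lemma: the diagonal of a Hilbert space is closed}. The composition axiom uses only that the relevant composite 2-cells exist, and they do.

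The main obstacle is the variance bookkeeping. The inclusion $V \subseteq V^{\dagger\dagger}$ points from $\id$ toward $(-)^{\dagger\dagger}$, while the transformation is stated as $(-)^{\dagger\dagger}\to\id$. The two must be reconciled by the paper's convention for lax components $\alpha_X \colon G(X)\circ\alpha_c \Rightarrow \alpha_d \circ F(X)$. With $F = (-)^{\dagger\dagger}$, $G = \id$ and identity components, this asks for $V \Rightarrow V^{\dagger\dagger}$, which is precisely $V \subseteq \overline{V}$. The check is therefore consistent. The same convention must be matched against the lax structure of $(-)^{\dagger\dagger}$ in the composition axiom; we would verify this explicitly before relying on posetality.

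Finally, on $\HilbRel^{\text{cl,bi}}$ every $V$ satisfies $\overline{V}=V$, so each component is the identity inclusion $V = V^{\dagger\dagger}$. This gives the triviality statement.
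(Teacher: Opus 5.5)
Your overall skeleton is the intended one. The paper gives no separate proof: it reads the corollary off from $V\subseteq\overline{V}=V^{\dagger\dagger}$, local posetality, $\overline{\Delta_H}=\Delta_H$, and $\overline{V}=V$ for closed relations.

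However, your first paragraph rests on a false claim: $(-)^{\dagger\dagger}$ is not a lax 2-functor with structure cells $V^{\dagger\dagger}\circ W^{\dagger\dagger}\subseteq(V\circ W)^{\dagger\dagger}$. The $(1,2)$-opposite of the lax 2-functor $(-)^\dagger$ is oplax, so their composite has no a priori lax or oplax structure; the paper remarks exactly this. Concretely, two applications of \Cref{Lemma: relations and adjoints}, with \Cref{Lemma: inclusion and adjoints for relations} in between, only yield
\[
(V\circ W)^{\dagger\dagger}\subseteq (W^\dagger\circ V^\dagger)^\dagger \supseteq V^{\dagger\dagger}\circ W^{\dagger\dagger},
\]
which compares neither side with the other. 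In fact the inclusion $\overline{V}\circ\overline{W}\subseteq\overline{V\circ W}$ fails in general. Let $W=\{(0,0)\}$, viewed as a relation from $\mathbb{C}$ to $H$. Let $V=\Gamma(T)$ for a non-closable operator $T\colon H\to\mathbb{C}$, for instance $T(d+tu)=t$ on $D+\mathbb{C}u$, where $D\subsetneq H$ is dense and $u\notin D$. Then $V\circ W=\{(0,0)\}$ is closed. On the other hand, $(u,1)-(d_n,0)\to(0,1)$ for $d_n\in D$ with $d_n\to u$, so $(0,1)\in\overline{\Gamma(T)}$ and hence $(0,1)\in\overline{V}\circ\overline{W}$.

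This matters for your verification of the composition axiom. The standard lax-naturality pasting uses the structure cell $\overline{V}\circ\overline{W}\Rightarrow\overline{V\circ W}$ of $F=(-)^{\dagger\dagger}$, and that cell does not exist. So ``the relevant composite 2-cells exist, and they do'' is false as you have set it up.

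The repair is to use the correct structure, which is oplax. By \Cref{Lemma: compatibility of closure and composition} and \Cref{Lemma: relations adjoints and closedness}(ii), $\overline{V\circ W}\subseteq\overline{V}\circ\overline{W}$, as the paper records in its summary remark. The compatibility-with-composition condition then compares two 2-cells $V\circ W\Rightarrow\overline{V}\circ\overline{W}$:
\[
V\circ W\subseteq V\circ\overline{W}\subseteq\overline{V}\circ\overline{W}
\qquad\text{and}\qquad
V\circ W\subseteq\overline{V\circ W}\subseteq\overline{V}\circ\overline{W}.
\]
The first whiskers your components; the second passes through the oplax cell. Both exist, so posetality forces them to agree. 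With that correction, your treatment of the components, the unit condition, and the triviality on $\HilbRel^{\text{cl,bi}}$ goes through unchanged.
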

\begin{remark}
    We summarize the results of this section in the following. 
    \begin{itemize}
        \item There is a 2-category $\HilbRel^{\bi}$ of Hilbert spaces, relations (containing graphs of unbounded operators), and inclusions. 
        \item There is a lax identity-on-objects 2-functor $(-)^\dagger \colon \HilbRel^{\bi} \to (\HilbRel^{\bi})^{(1,2)\opp}$ which 
        assigns each relation $V$ to its adjoint $V^\dagger$ (generalizing the adjoint operator construction for a densely defined unbounded operator), and each inclusion
        to the induced inclusion of adjoint relations. The (normal) lax functoriality data is described in \Cref{Lemma: the diagonal of a Hilbert space is closed} 
        and \Cref{Lemma: relations and adjoints}.
        \item There is a (normal) oplax identity-on-objects 2-functor $(-)^{\dagger\dagger} \colon \HilbRel^{\bi} \to \HilbRel^{\bi}$ whose (normal) 
        oplax functoriality data for relations $V$ from $H$ to $H'$ and $W$ from $H'$ to $H''$ is given by the inclusions 
        \begin{equation}
            (W \circ V)^{\dagger \dagger} = \overline{W \circ V} \subseteq \overline{W} \circ \overline{V} = W^{\dagger \dagger} \circ V^{\dagger\dagger}
        \end{equation}
        where we have used \Cref{Lemma: compatibility of closure and composition} and \Cref{Lemma: relations adjoints and closedness} (ii).
        \item There is an identity 1-morphism component lax 2-transformation $\chi \colon (-)^{\dagger \dagger} \to \id_{\HilbRel^{\bi}}$ whose 2-morphism components are 
        given by the inclusions $V \subseteq \overline{V} = V^{\dagger \dagger}$. Moreover, we have $V^{\dagger} = V^{\dagger \dagger \dagger}$ so that 
        $\chi$ satisfies a coherence property over the triple adjoint. 
        \item The operation $(-)^{\dagger}$ restricts to a lax 2-functor $\HilbRel^{\text{cl,bi}} \to (\HilbRel^{\text{cl,bi}})^{(1,2)\opp}$ on the sub 2-category 
        of closed relations $\HilbRel^{\text{cl,bi}} \subseteq \HilbRel^{\text{bi}}$, which satisfies $(-)^{\dagger \dagger} = \id$. 
    \end{itemize}
\end{remark}
\begin{remark}
    A particularly interesting aspect of the theory we have developed here is that the lax 2-functor $(-)^\dagger$ gives rise to an oplax 2-functor 
    $(-)^{\dagger\dagger}$. A priori, the (1,2)-opposite of $(-)^\dagger$ would be an oplax 2-functor, so that it is not clear whether the composition 
    of $(-)^\dagger$ and its (1,2)-opposite would be any kind of 2-functor at all, whether pseudo, lax, or oplax. This is the primary reason why we 
    do not state our results as the existence of a kind of lax higher dagger structure on $\HilbRel^{\bi}$, despite the obvious resemblence. 
    The same objection still holds assuming closed relations only.
\end{remark}

\subsection{Bornological algebras}\label{Subsection: Bornological algebras}
In this subsection, we focus on the closed symmetric monoidal category with (co)equalizers $\cBorn$ of complete (convex) bornological vector spaces. 
According to \Cref{Theorem: closedness of the Morita 2-category}, $\Mor(\cBorn)$ is a symmetric monoidal 2-category which is 1-closed and has duals 
for objects. In the following, we wish to study the procedures discussed in \Cref{Subsection: Morita 2-categories} in the present context.\\

Let $A$ be an object in $\Mor(\cBorn)$, that is, a complete bornological algebra. Recall that the identity 1-morphism $A \to A$ in $\Mor(\cBorn)$ is given 
by the Bornological $A$-$A$-bimodule $A$. We compute 
\begin{equation}
    A^A_A = \equa (A^A \rightrightarrows A^{A \otimes A}) = \hom_A(A,A) \cong A
\end{equation}
where in the last step we have used that an intertwiner of $A$-modules $A \to A$ is completely determined by its value on $1 \in A$. Next, let us consider 
three objects $A,B,C$ and two 1-morphisms $X \colon A \to B$ and $Y \colon B \to C$ in $\Mor(\cBorn)$. We compute $B^X_B = \hom_B(X,B)$ and $C^Y_C = \hom_C(Y,C)$. 
On the other hand, we have $C^{Y \otimes_B X}_C = \hom_C(Y \otimes_B X,C)$. By closedness of $\Mor(\cBorn)$ we have an intertwiner of $A$-$C$-bimodules
\begin{align*}
    \hom_B(X,B) \otimes_B \hom_C(Y,C) &\to \hom_C(Y \otimes_B X,C)\\
    \phi \otimes \psi &\mapsto \left( 
    \xymatrix{
        Y \otimes_B X \ar[r]^-{\id \otimes_B \phi} & Y \otimes_B B \cong Y \ar[r]^-{\psi} & C
    } \right)
\end{align*}
which however does not need to be an isomorphism in general. This finishes our discussion of the structure 2-morphisms of our lax 
2-functor $\mathcal{J}$ on $\Mor(\cBorn)$: the ones implementing unitality are isomorphisms while the ones expressing compatibility with composition are not (unless 
one imposes strong finiteness conditions on the modules at hand). \\

Recalling that $\Mor(\cBorn)$ is symmetric monoidal and has duals for objects as well as a symmetric monoidal involution given by complex conjugation, it carries 
in particular an $\Oone$-volutive structure, whose underlying functor assigns each bornological algebra $A$ to its complex conjugate opposite $\overline{A}^{\opp}$, each 
bornological $A$-$B$-bimodule to its associated bornological $\overline{B}^{\opp}$-$\overline{A}^{\opp}$-bimodule, 
and similar for 2-morphisms. Passing to the $\Oone$-dagger 2-category $S_{\Oone}\Mor(\cBorn)$ amounts to considering bornological $*$-algebras\footnote{Strictly 
speaking, the $\star$-structures are implemented via invertible bimodules rather than algebra isomorphisms. The latter are however a special case of the former.}, 
as well as the same 1- and 2-morphisms as before. In the following we will denote the dagger structure by $(d,\eta)$. Let $M \colon A \to B$ be a 1-morphism in 
$S_{\Oone}\Mor(\cBorn)$.
Then, we have the $A$-$B$-bimodule $J(M) = B^M_B$ and moreover the $B$-$A$-bimodule $dJ(M) = \overline{B^M_B} = \overline{\hom_B(M,B)}$ where the bimodule actions of the latter are 
defined using the $*$-structures of $A$ and $B$. \\
We are now in position to discuss (lax) hermitian fixed points in this context. In order to make the discussion more digestible, one may employ the 
formulation of (lax) hermitian fixed points in terms of pairings. Let $M \colon A \to B$ be a 1-morphism together with a lax hermitian fixed point structure 
$\theta_M \colon M \to \overline{\hom_B(M,B)}$. We may interpret this data as a potentially degenerate $B$-valued hermitian pairing on $M$. Assume now we are given another 
1-morphism $N \colon B \to C$ together with a lax hermitian fixed point structure. Then their composition as defined in 
\Cref{Passing to lax hermitian fixed points} is $N \otimes_B M$ equipped with the pairing given by
\begin{equation}\label{Equation: pretensorproduct of hilbert modules}
    \langle m \otimes n, m' \otimes n' \rangle = \langle n, n'\langle m,m' \rangle \rangle
\end{equation}
where we have used the pairings corresponding to $\theta_M$ and $\theta_N$ on the right hand side, as well 
as the right $B$-action on $N$.\\

We have emphasized in our general discussion that the pairing of \Cref{Equation: pretensorproduct of hilbert modules} does not need to be non-degenerate 
at this point, that is, the corresponding lax hermitian fixed point structure $\theta_{N \otimes_B M}$ does not need 
to be an isomorphism, even if $\theta_M$ and $\theta_N$ are. In the present case, however, there is a procedure one may perform to dispose of potential 
degeneracies. Namely, one may quotient out the isotropic vectors, that is, those satisfying $\langle v,v \rangle = 0$. After suitable completion, 
one obtains an honest hermitian fixed point. With this new composition, we obtain a sub 2-category 
whose 1-morphisms are honest hermitian fixed points, whose Hom-categories carry dagger structures in consequence, and which 
in total carries a fully dagger structure. We stress again that these are facts specific to the present context, and do not generalize well 
to an arbitrary fully closed symmetric monoidal 2-category.\\

In the remainder of this section, we wish to draw comparisons to von Neumann algebras and Hilbert/hermitian bimodules. Recall first that any von Neumann 
algebra is in particular a Banach $\star$-algebra, hence also a complete bornological $\star$-algebra, that is, a special object in the $\Oone$-dagger 2-category 
$S_{\Oone}\Mor(\cBorn)$. Here we recall that the functor $\Ban \to \cBorn$ is symmetric monoidal, hence induces a functor $\Alg(\Ban) \to \Alg(\cBorn)$ where 
$\Alg(\C)$ refers to the category of algebras and algebra homomorphisms in the monoidal category $\C$. \\
Recall further the classical result \cite[Theorem 3.2]{Paschke1973} that for any Hilbert module over a von Neumann algebra, the pairing induces an isomorphism 
rendering it an (honest) hermitian fixed point in our sense; this is a generalization of the classical Riesz representation theorem, to which it reduces upon 
considering the trivial algebra $\mathbb{C}$. Any Hilbert (bi)module over a von Neumann algebra in the sense of \cite{Paschke1973,Paschke1974} therefore defines 
an (honest) hermitian fixed point in our sense, where we again used the functor $\Ban \to \cBorn$. The modified composition law we have explained above is precisely 
the composition for such Hilbert bimodules (or rather their bornological analogues).

\addcontentsline{toc}{section}{References}
\printbibliography

\medskip 

\noindent
Universität Wien, Fakultät für Physik, Boltzmanngasse 5, 1090 Wien, Österreich\\
\href{mailto:tim.lueders@univie.ac.at}{tim.lueders@univie.ac.at}

\end{document}